\newtheorem{definition}{\bf Definition}[section]
\newtheorem{theorem}[definition]{\bf Theorem}
\newtheorem{lemma}[definition]{\bf Lemma}
\newtheorem{proposition}[definition]{\bf Proposition}
\newtheorem{corollary}[definition]{\bf Corollary}
\newtheorem{remark}[definition]{\bf Remark}
\newcommand{\vep}{\varepsilon}
\renewcommand{\d}{\text{\rm d}}
\newcommand{\e}{\mathrm{e}}
\newcommand{\R}{\mathbb R}
\newcommand{\lam}{\lambda}
\DeclareMathOperator*{\esssup}{ess\,sup}
\newcommand{\iI}{I_{[0,+\infty)}}
\newcommand{\iII}{I_{[u_0(x),+\infty)}}
\newcommand{\iIo}{I_{[\uo,+\infty)}}
\newcommand{\iIIal}{I_{[\alpha,+\infty)}}
\newcommand{\uo}{\alpha}
\newcommand{\uol}{\alpha_\mu}
\newcommand{\uon}{\alpha_n}
\begin{document}

\title[Traveling waves for irreversible Allen-Cahn equations]{Traveling wave dynamics for Allen-Cahn equations with strong irreversibility}
\author{Goro Akagi}
\address{Goro Akagi: Mathematical Institute, Tohoku University, 6-3 Aoba, Aramaki, Aoba-ku, Sendai 980-8578 Japan}
\email{goro.akagi@tohoku.ac.jp}
\author{Christian Kuehn}
\address{Christian Kuehn: Technical University of Munich, Department of Mathematics, Boltzmannstra\ss e 3, D-85748 Garching bei M\"unchen, Germany.}
\email{ckuehn@ma.tum.de}
\author{Ken-Ichi Nakamura}
\address{Ken-Ichi Nakamura: Institute of Science and Engineering, Kanazawa University, Kakuma-machi, Kanazawa-shi, Ishikawa 920-1192 Japan}
\email{k-nakamura@se.kanazawa-u.ac.jp}
\date{\today}

\keywords{Allen-Cahn equation with strong irreversibility, positive-part function, doubly nonlinear equation, obstacle problem, $L^2_{\rm loc}$ solutions, $C^1$ regularity, traveling wave solution, exponential stability.}

\begin{abstract} 
Constrained gradient flows are studied in fracture mechanics to describe \emph{strongly irreversible} (or \emph{unidirectional}) evolution of cracks. The present paper is devoted to a study on the long-time behavior of non-compact orbits of such constrained gradient flows. More precisely, traveling wave dynamics for a one-dimensional fully nonlinear Allen-Cahn type equation involving the positive-part function is considered. Main results of the paper consist of a construction of a one-parameter family of \emph{degenerate} traveling wave solutions (even identified when coinciding up to translation) and exponential stability of such traveling wave solutions with some basin of attraction, although they are unstable in a usual sense.
\end{abstract}

\subjclass[2010]{\emph{Primary}: 35C07; \emph{Secondary}: 35R35, 47J35} 

\maketitle

%\tableofcontents

\section{Introduction}\label{S:I}
 
The Allen-Cahn~\cite{Allen+Cahn:1979} equation (or Ginzburg-Landau equation, or Nagumo~\cite{Nagumo} equation) is the $L^2$ gradient flow of the Ginzburg-Landau free energy functional and often used in phase-field models for describing various separation processes of two different phases, e.g., in binary alloys. More precisely, evolution of an order-parameter $u(x,t)$ is described by the $L^2$ gradient flow of a free energy, and then, $u(x,t)$ eventually gets close to two different values $a_\pm$, which correspond to two different phases and may minimize (a part of) the free energy, on most of the domain as $t \to +\infty$. The traveling wave dynamics for the classical Allen-Cahn equation is well studied; see e.g.~\cite{Aronson+Weinberger:1974,XChen,Kuehn19,Sandstede2002,Volpert+etal:1994} and references therein. Yet, even minor variations lead to very challenging traveling wave problems. These variations have emerged in many different research areas recently including e.g.~fractional waves~\cite{AcKu,Chmaj:2013} and stochastic waves~\cite{HamsterHupkes,KuehnSPDEwaves}. In this work, we study a variation of the Allen-Cahn equation arising from its motivation as a phase field model in fracture mechanics. The idea of phase-field models can already be found in the study of brittle fracture, which was initiated by Griffith in 1920s and then also developed from mathematical points of view by many authors. In classical Griffith's criteria, crack \emph{irreversibly} extends, provided that the release rate of the elastic energy exceeds some critical value due to a (virtual) development of the crack. In a variational model proposed by Francfort and Marigo~\cite{FraMar98}, an energy functional $\mathcal F$ is defined for crack (sets) and displacement fields as the sum of the elastic energy of material and the surface energy of crack (see also~\cite{Francfort}). The Francfort-Marigo energy $\mathcal F$ is similar to the so-called Mumford-Shah energy, which has been used in image segmentation and can be regularized with the aid of an order-parameter (see~\cite{AmTo90,AmTo92}). In the same spirit, introducing an order-parameter $u(x,t)$ which may describe the degree of damage (e.g., $u=1$ for completely damaged parts and $u=0$ for sound parts), one can also regularize the Francfort-Marigo energy $\mathcal F$ as
$$
\mathcal F_\vep(u,v) = \underbrace{\frac 1 2 \int_\Omega (1-u)^2 |\nabla v|^2 \, \d x}_{\text{elastic energy}} - \int_\Omega g v \, \d x + \underbrace{\int_\Omega \left( \frac \vep 2 |\nabla u|^2 + \frac 1 {2\vep} u^2 \right) \, \d x}_{\text{surface energy of crack}}
$$
where $\vep > 0$ is a regularization parameter, $v = v(x,t)$ denotes the displacement field and $g = g(x,t)$ is an external force (see Giacomini~\cite{Giacom05} and references therein). % Here we note that the gradient of the displacement makes no contribution to the elastic energy at completely damaged parts, i.e., $u=1$. Roughly speaking, the sound state $u = 0$ is stabler than the damaged state $u=1$ when $|\nabla v|$ is small enough. Indeed, the contribution of the elastic part to $F_\vep$ is smaller than the other part. On the other hand, the damaged state $u=1$ gets stabler when $|\nabla v|$ gets larger.  
Then the evolution of the order-parameter is formulated as a gradient flow of the regularized energy $\mathcal F_\vep$. However, due to the irreversible nature of fracture phenomena, one may consider a \emph{constrained} gradient flow, instead of standard ones, such as
\begin{equation}\label{mod-gf}
\partial_t u = \big( - \partial_u \mathcal F_\vep(u,v) \big)_+ 
\end{equation}
whose solutions comply with the constraint $\partial_t u \geq 0$, i.e., the evolution of damage is strongly irreversible (or unidirectional). %Moreover, \eqref{mod-gf} can be equivalently rewritten (e.g., in an $L^2$ frame) as
%$$
%\partial_t u = -\partial_u \mathcal F_\vep(u,v) - \eta, \quad \eta \in \partial \iI(\partial_t u),
%$$
%where $\eta$ plays a role of a Lagrange multiplier to modify the gradient $\partial_u \mathcal F_\vep(u,v)$ of the energy $\mathcal F_\vep$ at $(u,v)$ to preserve the constraint $\partial_t u \geq 0$. The gradient flow structure and the constraint inherent in \eqref{mod-gf} may conflict each other. Indeed, the former one let the state (= solution) be stabilized. 
On the other hand, such a stabilization may conflict with the constraint, and then, the constraint prevents the stabilization. We may expect that the balance between such two different effects becomes obvious in the long-time behavior of solutions, and therefore, in this paper, we shall make an attempt to investigate long-time behaviors of such constrained gradient flows. To this end, we shall particularly study a \emph{constrained Allen-Cahn equation}, which is simpler than the brittle fracture model but still possesses a gradient flow structure as well as the irreversibility constraint.

Let us consider the Cauchy problem for the one-dimensional Allen-Cahn equation with the positive-part function,
\begin{equation}\label{irAC}
 \partial_t u = \big( \partial_x^2 u - f(u) \big)_+ \ \mbox{ in } \R \times (0,+\infty),
  \quad u|_{t = 0} = u_0 \ \mbox{ in } \R,
\end{equation}
where $(\,\cdot\,)_+ = \max \{\cdot,0\} \geq 0$ and $f$ is a function in $\R$ satisfying a typical assumption for \emph{bi-stable} nonlinearity,
\begin{equation}\label{f-1}
\begin{cases}
f(a_\pm) = f(a_0) = 0,\quad f'(a_\pm) > 0,\\ 
f > 0 \ \mbox{ in } (a_-,a_0),\quad f < 0 \ \mbox{ in } (a_0,a_+)
\end{cases}
\end{equation}
for some $-\infty < a_- < a_0 < a_+ < +\infty$ and
\begin{equation}\label{f-2}
f \in C^2(\R), %\quad f(0)=0, 
\quad \hat f \geq 0, \quad f' \geq - \lam
\end{equation}
for some primitive function $\hat f$ of $f$ and a constant $\lambda > 0$, that is, any primitive function of $f$ is bounded from below on $\R$ and there exists a monotone function $\beta : \R \to \R$ such that
\begin{equation}\label{f-lam-convex}
f(u) = \beta(u) - \lambda u.
\end{equation}
A typical example of $f$ complying with \eqref{f-1} and \eqref{f-2} is $f(u) = u^3 - u$ (and then, $a_\pm = \pm 1$, $a_0 = 0$, $\beta(u)=u^3$ and $\lam =1$).

The Cauchy-Dirichlet problem for \eqref{irAC} posed on any smooth bounded domain (in $\R^N$) has been first studied in~\cite{ae1}, where fundamental issues (e.g., well-posedness and regularity), characteristic features of the equation such as \emph{partial} smoothing  effect  and \emph{partial} energy-dissipation, and long-time behaviors of solutions are discussed. As in~\cite{ae1}, the fully nonlinear PDE of \eqref{irAC} (not in a divergence form) is equivalently rewritten in the following \emph{doubly-nonlinear} form:
\begin{equation}\label{irAC-dn}
 \partial_t u + \eta = \partial_x^2 u - f(u), \quad \eta \in \partial \iI(\partial_t u),
\end{equation}
where $\partial \iI$ denotes the subdifferential of the indicator function supported over the half-line $[0,+\infty)$ (see \eqref{subdiff} below for definition). Indeed, \eqref{irAC-dn} can be derived by applying the inverse mapping $\alpha(\cdot) : \R \to 2^{\R}$ of the positive-part function $(\cdot)_+$ given by
$$
\alpha(s) = s + \partial \iI(s) \quad \mbox{ for } \ s \geq 0
$$
to both sides of \eqref{irAC}. Equation \eqref{irAC-dn} is now in a divergence form and hence better suited for applying energy methods than the fully nonlinear form \eqref{irAC}. Moreover, \eqref{irAC-dn} can be regarded as a \emph{constrained} gradient flow of the free energy
$$
\mathcal F(u) := \frac 1 2 \int_\R |\partial_x u|^2 \, \d x + \int_\R \hat f(u) \, \d x
$$
(under a certain energy framework) and $\eta$ plays a role of a Lagrange multiplier to modify the pure gradient flow of the energy $\mathcal F$ to satisfy the constraint condition $\partial_t u \geq 0$. Furthermore, by comparison between \eqref{irAC} and \eqref{irAC-dn}, one can also find the relation,
$$
\eta = -\big( \partial_x^2 u - f(u) \big)_-,
$$
where $(s)_- := \max \{ - s, 0\} \geq 0$ for $s \in \R$. Another finding of the paper~\cite{ae1} is a  non-trivial  reformulation of \eqref{irAC} as the following obstacle problem:
\begin{equation}\label{pde-obs0}
 \partial_t u + \eta = \partial_x^2 u - f(u), \quad \eta \in \partial \iII(u),
\end{equation}
where $\iII$ is the indicator function supported over the interval $[u_0(x),+\infty)$, and the inclusion above is also equivalent (e.g., in the $L^2$ setting) to the relation,
$$
\min \left\{ u - u_0 , \partial_t u - \partial_x^2 u + f(u) \right\} = 0.
$$
It is noteworthy that the obstacle function above coincides with the initial datum, and a similar obstacle problem is also studied as a model of American option evaluation (see~\cite{LauSal09,CafFig13}). In~\cite{ae1}, these equivalent forms of the equation indeed play crucial roles for analysis, and moreover, they will do so in the present paper as well.

 In~\cite{ae1}, convergence of each solution to a single equilibrium is also proved for (pre)compact orbits of solutions to \eqref{irAC}, and moreover, in~\cite{ae2}, stability analysis of equilibria is carried out. On the other hand, asymptotic analysis on non-compact orbits of solutions to \eqref{irAC} has never been studied.  In the present paper, we restrict ourselves to \emph{traveling wave solutions} for \eqref{irAC}, that is,
\begin{equation}\label{u-phi}
u(x,t) = \phi(x - x_0 - ct)
\end{equation}
with $x_0 \in \R$, a velocity constant $c$ and a monotone profile function $\phi$. Traveling wave dynamics has been a topic of great interest in the field of reaction-diffusion equations such as the Allen-Cahn equation,
\begin{equation}\label{cac}
\partial_t u - \partial_x^2 u + f(u) = 0 \ \mbox{ in } \R \times \R_+.
\end{equation}
Substituting \eqref{u-phi} to the equation above, one can derive
 \begin{equation}\label{phi-cac}
  - \phi'' + f(\phi) = c\phi' \ \mbox{ in } \R,
 \end{equation}
which is often called a \emph{profile equation}. Traveling waves were studied in celebrated papers~\cite{Fisher} and~\cite{KPP} for the so-called Fisher-KPP equation whose reaction term is classified as a \emph{mono-stable} reaction, and then, a bi-stable reaction such as \eqref{cac} was also studied in~\cite{Kanel}. Thereafter there have been made many contributions devoted to this field. In particular, existence, uniqueness (up to translation) as well as exponential stability of traveling wave solutions are proved for bi-stable reactions in~\cite{FM}. The existence and uniqueness are proved through the use of phase-plane analysis, and the proof of the exponential stability relies on a classical parabolic regularity theory, a sub- and supersolution method and the theory of Dynamical Systems (e.g., $\omega$-limit set and linearization technique). In~\cite{XChen}, the stability result is extended to nonlocal reaction diffusion equations by developing a sub- and supersolution method and by excluding the other ingredients of the proof in~\cite{FM}. Beyond phase plane methods and sub-/super-solutions, several other methods to construct traveling waves and to prove their stability exist. We refer the interested reader to~\cite{Aronson+Weinberger:1974,Kuehn19,Sandstede2002,Volpert+etal:1994} and references therein.

In order to construct a traveling wave solution for \eqref{irAC}, one may simply substitute $u(x,t) = \phi(x-ct)$ to \eqref{irAC}, and then, the following profile equation is derived:
\begin{equation}\label{phi2}
 - c \phi' = \big( \phi'' - f(\phi) \big)_+ \ \mbox{ in } \R,
\end{equation}
which is however severely nonlinear. Here, instead of \eqref{phi2}, we introduce the following auxiliary obstacle problem as an alternative profile equation:
\begin{equation}\label{phi}
 - c \phi' - \phi'' + \partial \iIo(\phi) + f(\phi) \ni 0 \ \mbox{ in } \R
\end{equation}
for some $\alpha \in (a_-,a_0)$ (cf.~\eqref{alpha-hyp} below). Here $\partial \iIo$ denotes the subdifferential operator (with domain $D(\partial \iIo) = [\uo,+\infty)$) defined by
\begin{equation}\label{subdiff}
\partial \iIo(s) = \begin{cases}
		    \{0\} &\mbox{ if } s > \uo,\\
		    (-\infty,0] &\mbox{ if } s = \uo,\\
		    \emptyset &\mbox{ if } s < \uo
		   \end{cases}
\end{equation}
of the indicator function $\iIo$ supported over $[\uo,+\infty)$, that is,
$$
\iIo(s) = \begin{cases}
0 &\mbox{ if } s \geq \uo,\\
+\infty &\mbox{ otherwise.}
\end{cases}
$$
We further impose the asymptotic conditions,
\begin{equation}\label{phi-bc}
 \lim_{\xi \to - \infty} \phi(\xi) = \uo, \quad  \lim_{\xi \to +\infty} \phi(\xi) = a_+, \quad \phi' \geq 0 \ \mbox{ in } \R
\end{equation}
and
\begin{equation}\label{phi-bc2}
 \lim_{\xi \to \pm \infty} \phi'(\xi) = 0,
\end{equation}
on the profile $\phi$. Indeed, by \eqref{f-1}, all $u \equiv \alpha \in (a_-,a_0)$ are constant steady-states of \eqref{irAC}, even though they are not critical points of the free energy. Hence we shall construct an entire solution to \eqref{irAC} (in the form \eqref{u-phi}) which is a heteroclinic orbit connecting $u(\cdot,t) \equiv \alpha \in (a_-,a_0)$ at $t=-\infty$ and $u(\cdot,t) \equiv a_+$ at $t=+\infty$. Roughly speaking, $u_0 \equiv \alpha$ can be regarded as an ``initial datum at $t = -\infty$'', and hence, due to the reformulation of \eqref{irAC} in terms of the obstacle problem, the entire solution $u(x,t)$ may also solve \eqref{pde-obs0} with the obstacle $u_0 \equiv \alpha$ for $x, t \in \R$. Then \eqref{phi} can be derived by substituting \eqref{u-phi} to \eqref{pde-obs0}.

Our result reads,
\begin{theorem}[Existence of traveling wave solutions for \eqref{irAC}]\label{T:TW}
Assume that \eqref{f-1} with $f \in C^1(\R)$ holds. Let $\uo$ be a constant satisfying
\begin{equation}\label{alpha-hyp}
 \alpha \in (a_-,a_0), \quad \int^{a_+}_\alpha f(z) \, \d z < 0.
\end{equation}
Then the strongly irreversible Allen-Cahn equation \eqref{irAC} admits a traveling wave solution $u(x,t) = \phi(x-ct)$ for some $c \in \R$. More precisely, there exists $c$ such that \eqref{phi2} possesses a solution $\phi$ satisfying \eqref{phi-bc} and \eqref{phi-bc2}. Moreover, the following conditions hold true\/{\rm :}
 \begin{enumerate}
  \item $\phi \in W^{2,\infty}(\R)$, $\phi' \in L^2(\R)$, $\phi$ solves \eqref{phi} in $\R$, 
  \item $\uo \leq \phi < a_+$ and $0 \leq \phi' \leq C_1$ in $\R$, $-C_2 \leq c < 0$,
  \item if $\phi(s) > \uo$ at $s \in \R$, then $\phi'(s) > 0$
 \end{enumerate}
 for some constants $C_1,C_2 > 0$ depending only on $\uo$ and $f$. Furthermore, there exists $s_* \in \R$ such that
 \begin{equation}\label{phi-deg}
 \phi(s) = \uo \ \mbox{ on } (-\infty,s_*], \quad \phi(s) > \uo \ \mbox{ in } (s_*,+\infty).
 \end{equation}
In particular, $\phi \in C^2((s_*,+\infty))$, $\phi' > 0$ in $(s_*,+\infty)$ and $\phi$ solves \eqref{phi-cac} in $(s_*,+\infty)$.
\end{theorem}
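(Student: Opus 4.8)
The plan is to construct the profile $\phi$ as a limit of solutions to the profile equation \eqref{phi} posed on bounded intervals, where the velocity $c$ appears as an unknown eigenvalue-type parameter fixed by a shooting/normalization argument. First I would set up, for each large $L>0$, the truncated obstacle problem for $\phi$ on $(-L,L)$ with boundary conditions $\phi(-L)=\alpha$, $\phi(L)=a_+$, treating $c$ as a free parameter; by a degree or sub-/supersolution argument (using the bistable structure \eqref{f-1} and the sign condition $\int_\alpha^{a_+}f<0$ from \eqref{alpha-hyp}) one gets a monotone solution $\phi_L$ together with a value $c_L$, with uniform bounds $\alpha\le\phi_L\le a_+$, $0\le\phi_L'\le C_1$ and $-C_2\le c_L\le 0$; the sign $c_L<0$ is precisely where the energy inequality $\int_\alpha^{a_+}f<0$ enters, since multiplying the profile equation by $\phi'$ and integrating relates $c\int|\phi'|^2$ to $\int_\alpha^{a_+}f$. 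Then I would pass to the limit $L\to\infty$ along a subsequence, using the uniform $W^{2,\infty}$ bound to get local $C^{1,\gamma}$ convergence, and normalize by translation (e.g. fixing $\phi(0)$ at some value strictly between $\alpha$ and $a_+$, or anchoring the free-boundary point) to prevent the profile from escaping; this yields $\phi\in W^{2,\infty}(\R)$ solving \eqref{phi} on all of $\R$ with $c\in[-C_2,0]$, $\alpha\le\phi\le a_+$, $0\le\phi'\le C_1$, which gives item (1) and most of item (2).

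Next I would establish the asymptotic behavior \eqref{phi-bc}, \eqref{phi-bc2} and the strict inequality $\phi<a_+$. Monotonicity of $\phi$ forces the limits $\phi(\pm\infty)$ to exist; these limits must be zeros of $f$ lying in $[\alpha,a_+]$ with $\partial\iIo$ vanishing there, hence $\phi(-\infty)\in\{\alpha,a_0\}$ and $\phi(+\infty)\in\{a_0,a_+\}$. To rule out $\phi(+\infty)=a_0$ or $\phi(-\infty)=a_0$ I would again invoke \eqref{alpha-hyp} together with an energy/Hamiltonian identity for \eqref{phi-cac} on the region where the obstacle is inactive — if the profile connected $\alpha$ to $a_0$ it would carry the wrong sign of $c$ (standard bistable theory gives $\text{sign}(c)=\text{sign}(\int$ of $f)$ over the connected values), contradicting $c<0$, and the degeneracy interval lets us realize the connection to $a_+$. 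The derivative limits \eqref{phi-bc2} follow from $\phi'\in L^2$ (obtained in the $L\to\infty$ passage from $c_L\int|\phi_L'|^2$ being bounded) plus the uniform bound on $\phi''$, via a standard Barbălat-type argument. Then $\phi'\in L^2(\R)$, completing (1) and (2), and $\phi<a_+$ strictly since equality at a finite point would, by the strong maximum principle applied to \eqref{phi-cac} where $\phi>\alpha$, force $\phi\equiv a_+$.

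Finally I would analyze the free boundary to get the degeneracy structure \eqref{phi-deg} and items (3) and the $C^2$ regularity on $(s_*,+\infty)$. Define $s_*:=\sup\{\,s:\phi(s)=\alpha\,\}$ (finite since $\phi(+\infty)=a_+>\alpha$, and $s_*>-\infty$ is where I'd use that the set $\{\phi=\alpha\}$ is nonempty — guaranteed if the normalization anchors a contact point, or else one shows it must be nonempty because a smooth heteroclinic from $\alpha$ would again have the wrong sign of $c$). On $(s_*,+\infty)$ the obstacle is inactive, so $\eta=0$, $\phi$ solves the classical bistable ODE \eqref{phi-cac}, and elliptic regularity bootstraps $\phi$ to $C^2$; strict monotonicity $\phi'>0$ there follows from the ODE uniqueness theorem, since $\phi'(s_0)=0$ with $\phi(s_0)\in(\alpha,a_+)$ a non-equilibrium would force $\phi$ to be the constant solution. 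For item (3) in the remaining generality (a point $s\le s_*$ with $\phi(s)>\alpha$ cannot occur by monotonicity, and for $s>s_*$ it is the statement just proved), and on $(-\infty,s_*]$ one checks $\phi\equiv\alpha$ solves \eqref{phi} with $\eta=-f(\alpha)\le 0$ in $\partial\iIo(\alpha)$ — here one uses $f(\alpha)>0$ from \eqref{f-1} since $\alpha\in(a_-,a_0)$. The main obstacle I anticipate is the simultaneous determination of $c$ and prevention of the profile degenerating to a trivial or wrongly-connected solution in the limit $L\to\infty$: the normalization must be chosen carefully so that it is both preserved under the limit and strong enough to pin down a genuine heteroclinic with an active obstacle, and verifying $c<0$ (strictly) rather than merely $c\le 0$ requires the strict inequality in \eqref{alpha-hyp} to be propagated through the limit.
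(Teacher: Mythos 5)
Your proposal is a genuinely different route from the paper. The paper does \emph{not} truncate the domain: it keeps $\R$ fixed and regularizes the multivalued term $\partial\iIo$ in two stages (Moreau--Yosida regularization $(\partial\iIo)_\mu$ followed by a $C^1$-smoothing $g_n$ of $\gamma_\mu$). For each smooth bistable $g_n$ the classical Fife--McLeod existence theorem gives a strictly monotone heteroclinic $(\phi_n,c_n)$ with $c_n<0$ out of the box, so the hard existence question is delegated entirely to standard theory; the work is then in obtaining $n$- and $\mu$-uniform bounds and passing to the limit, using demiclosedness of maximal monotone graphs to identify the limiting section $\xi\in\gamma(\phi)$. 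Your plan instead truncates to $(-L,L)$, keeps the obstacle term intact, and treats $c$ as a free eigenvalue parameter to be fixed by a normalization and a shooting/degree argument. Both schemes end the same way (uniform $W^{2,\infty}$ bounds, local $C^{1}$ compactness, identification of the limit) and your treatment of the asymptotics, $\phi'\to0$, $\phi<a_+$, $\phi'>0$ on $\{\phi>\alpha\}$, and the $C^2$ regularity on $(s_*,+\infty)$ all match the paper's arguments in substance.

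The place where your route has a real, not merely expository, gap is the bounded-domain existence step. You assert that ``by a degree or sub-/supersolution argument \ldots one gets a monotone solution $\phi_L$ together with a value $c_L$,'' but for a second-order obstacle problem on $(-L,L)$ with both Dirichlet data and an undetermined first-order coefficient $c$, this is precisely the hard part of the problem and is not a standard result one can cite. You would have to (i) establish for each fixed $c$ well-posedness and comparison for the obstacle boundary-value problem, (ii) show the shooting map (say $c\mapsto\phi_{L,c}(0)$) is continuous and attains the normalization value, and (iii) prove that the resulting $\phi_L$ is monotone. None of this is automatic: the obstacle makes the comparison structure asymmetric, and monotonicity of solutions to elliptic obstacle problems in one dimension requires an argument (the paper gets monotonicity for free from the classical theory applied to the smooth $g_n$). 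Related to this, your justification that the contact set $\{\phi=\alpha\}$ is nonempty in the limit is not quite right: you appeal to a ``wrong sign of $c$'' heuristic, but the sign of $c$ is already negative and consistent with a profile approaching $\alpha$ asymptotically. The actual obstruction (which the paper uses) is that if $\phi>\alpha$ everywhere then $\phi''+c\phi'=f(\phi)\geq d_0>0$ near $-\infty$ with $d_0:=\inf_{[\alpha,\vep_0]}f>0$, so $\phi'+c\phi$ is forced to $+\infty$ as $s\to-\infty$, contradicting boundedness; equivalently, in your truncation you must show the contact point at $x=-L$ does not escape to $-\infty$ after normalizing, and your proposal flags this concern without resolving it. Both issues are fixable, but they are exactly what the paper's two-step regularization is designed to avoid.
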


We here emphasize that a conflict between stabilization and the irreversible constraint become apparent; indeed, the constrained Allen-Cahn equation \eqref{irAC} admits a one-parameter family of \emph{degenerate} traveling wave solutions (even identified when coinciding up to translation), although the traveling wave solution for the classical one \eqref{cac} can be uniquely (up to translation) determined (under \eqref{f-1}) and it is strictly monotone. Furthermore, the velocity of traveling waves for \eqref{irAC} can be negative, even though the potential $\hat{f}$ is balanced, i.e., $\hat f(a_+) = \hat f(a_-)$, under which the velocity of the traveling wave for \eqref{cac} must be zero.

\begin{remark}
{\rm
\begin{enumerate}
 \item As for the case $\uo = a_-$, any solution $\phi$ of \eqref{phi} with $\uo=a_-$ solves \eqref{phi-cac} in $\R$, i.e., it is also the traveling wave (profile) for \eqref{cac}. Then $\phi$ is strictly monotone. % (i.e., $\phi$ is a profile function for the classical Allen-Cahn equation \eqref{cac}); indeed, suppose on the contrary that $\phi$ satisfies \eqref{phi-deg} with $\uo = a_-$ at some $s_* \in \R$. Then we derive by $\phi(s_*) = a_-$ and $\phi'(s_*) = 0$ that $\phi \equiv a_-$ is the unique solution to \eqref{phi-cac} equipped with the initial condition. However, this contradicts \eqref{phi-deg}. Hence, $\phi(x)$ takes a value in $(a_-,a_+)$ for any $x \in \R$ and solves \eqref{phi-cac} in $\R$. Moreover, $\phi' > 0$ in $\R$, and therefore, $\phi$ also solves \eqref{phi2} over $\R$, provided that $c  <  0$.
 \item We emphasize that $\phi \not\in C^2(\R)$ under the assumption \eqref{alpha-hyp}. Indeed, one can observe that $\phi$ loses $C^2$ regularity at $s_*$ as follows:
$$
\lim_{s \nearrow s_*}\phi''(s) = 0 \ \mbox{ and } \ \lim_{s \searrow s_*}\phi''(s) = f(\alpha) > 0.
$$
On the other hand, $\phi$ is smooth elsewhere.
\end{enumerate}
}
\end{remark}

One can further verify that
\begin{proposition}[Unique determination of velocity and profile]\label{P:uni-det}
For each $\alpha$ satisfying the assumptions of Theorem \ref{T:TW},
\begin{enumerate}
 \item the velocity $c_\alpha$ is uniquely determined\/{\rm ;}
 \item the profile function $\phi_\alpha$ is uniquely {\rm (}up to translation{\rm )} determined.
\end{enumerate}
 Moreover, $c_\alpha$ is strictly decreasing in $\alpha \in (a_-,a_0)$, and furthermore, the map $\alpha \mapsto (c_\alpha,\phi_\alpha)$ is continuous from $(a_-,a_0)$ to $\R\times C(\R)$. 
\end{proposition}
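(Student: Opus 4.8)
The plan is to reduce the whole statement to a one‑dimensional shooting problem in the phase plane of the classical profile equation~\eqref{phi-cac}, exploiting the degenerate structure~\eqref{phi-deg}. First I would observe that \eqref{phi-deg} is forced on \emph{every} traveling wave of \eqref{irAC} connecting $\alpha$ to $a_+$: by \eqref{phi-bc}--\eqref{phi-bc2} one has $\phi'\to0$, $\phi''\to0$ at $-\infty$, so $f(\alpha)=0$ would be needed if $\phi>\alpha$ near $-\infty$, contradicting $f(\alpha)>0$; hence the degeneracy set is a half‑line $(-\infty,s_*]$, and $s_*<+\infty$ because $\phi(+\infty)=a_+$. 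Normalizing the translation by $s_*=0$, the profile on $[0,+\infty)$ is precisely the orbit $\gamma_c(s)=(\phi(s),\phi'(s))$ of the planar system $\phi'=\psi,\ \psi'=f(\phi)-c\psi$ that issues from $(\alpha,0)$ into $\{\psi>0\}$ (it does, since there $\psi'=f(\alpha)>0$), stays in the box $B:=\{\alpha<\phi<a_+,\ \psi>0\}$, and tends to the saddle $(a_+,0)$ as $s\to+\infty$, while on $(-\infty,0]$ it equals $\alpha$; conversely, any velocity $c$ admitting such a connecting orbit gives a traveling wave. Thus (1) reduces to showing that exactly one velocity $c=c_\alpha$ admits a connecting orbit, and (2) then follows because for the fixed $c_\alpha$ the orbit through $(\alpha,0)$ is unique (Picard--Lindel\"of, $f\in C^1$), the $\equiv\alpha$ part is rigid, and translation is the only remaining freedom.

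For uniqueness of $c_\alpha$ I would invoke the standard monotone dependence of the orbit on $c$. Inside $B$ write the orbit as a graph $\psi=\Psi_c(\phi)$; with $E_c:=\tfrac12\Psi_c^2$ this becomes $\d E_c/\d\phi=f(\phi)-c\sqrt{2E_c}$ with $E_c(\alpha)=0$. Since the right‑hand side is strictly decreasing in $c$ for $E>0$, a comparison (Gronwall) argument — legitimate despite the non‑Lipschitz behaviour at $E=0$, because $E_c(\phi)\sim f(\alpha)(\phi-\alpha)$ near $\alpha$ makes $1/\sqrt{E_c}$ integrable there — gives $E_{c_1}>E_{c_2}$ on $(\alpha,a_+)$ whenever $c_1<c_2$ and both orbits stay in $B$ up to $a_+$. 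On the other hand a connecting orbit enters the saddle tangent to its stable eigendirection, so $\Psi_c(\phi)\sim|\lambda_-(c)|\,(a_+-\phi)$ as $\phi\uparrow a_+$, with $\lambda_-(c)=\tfrac12(-c-\sqrt{c^2+4f'(a_+)})$ and $c\mapsto|\lambda_-(c)|$ strictly increasing; hence two connecting velocities $c_1<c_2$ would force $\Psi_{c_1}>\Psi_{c_2}$ on $(\alpha,a_+)$ yet $\Psi_{c_1}<\Psi_{c_2}$ near $a_+$, a contradiction. So $c_\alpha$ is unique (its existence being Theorem~\ref{T:TW}). Equivalently, by the same monotonicity the ``overshoot'' velocities (orbit reaching $\phi=a_+$ with $\psi>0$) form an interval unbounded below and the ``undershoot'' ones (orbit meeting $\psi=0$ at some $\phi<a_+$) an interval unbounded above, the unique separating value being the connecting one.

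For the strict monotonicity in (3), fix $a_-<\alpha_1<\alpha_2<a_0$ and run the orbit of the \emph{same} velocity $c:=c_{\alpha_2}$ from the two starting points. Since $f>0$ on $[\alpha_1,\alpha_2]\subset(a_-,a_0)$, the orbit from $(\alpha_1,0)$ cannot return to $\psi=0$ before $\phi=\alpha_2$ and is already strictly positive there, whereas the orbit from $(\alpha_2,0)$ vanishes there; comparison for the scalar equation then yields $\Psi_{c,\alpha_1}(\phi)>\Psi_{c,\alpha_2}(\phi)$ for all $\phi\in(\alpha_2,a_+]$, so $\Psi_{c_{\alpha_2},\alpha_1}(a_+)>\Psi_{c_{\alpha_2},\alpha_2}(a_+)=0$. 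Thus, for the $\alpha_1$‑problem, $c_{\alpha_2}$ is an overshoot velocity, which by the dichotomy above satisfies $c_{\alpha_2}<c_{\alpha_1}$; hence $\alpha\mapsto c_\alpha$ is strictly decreasing.

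For the continuity in (4): being monotone, $\alpha\mapsto c_\alpha$ has one‑sided limits at every $\alpha_0$, finite by the bound $-C_2\le c_\alpha<0$ of Theorem~\ref{T:TW}(2) (with $C_2$ locally bounded in $\alpha$). If $\alpha_n\to\alpha_0$ and $c_{\alpha_n}\to\bar c$, then continuous dependence of the planar system on its starting point and on $c$ — applied on compact $\phi$‑subintervals of $(\alpha_0,a_+)$ and complemented near $+\infty$ by the exponential approach to the saddle at a rate $|\lambda_-(c)|$ that stays bounded away from $0$ uniformly for $c$ in a compact set — shows $\Psi_{c_{\alpha_n},\alpha_n}$ converges to a connecting orbit for $\alpha_0$ with velocity $\bar c$, whence $\bar c=c_{\alpha_0}$ by the uniqueness in (1), so $c_\alpha$ is continuous. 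Continuity of $\phi_\alpha$ in $C(\R)$ (with the normalization $s_*=0$) then follows by combining $\|\phi_\alpha-\phi_{\alpha_0}\|_{L^\infty(-\infty,0]}=|\alpha-\alpha_0|$ with continuous dependence of the profile ODE on $(\alpha,c_\alpha)$ on a compact $[0,T]$ and with the uniform‑in‑$\alpha$ exponential smallness of $|\phi_\alpha-a_+|$ on $[T,+\infty)$. I expect the main obstacle to be exactly these uniformity issues at the two degenerate ends — $\phi=\alpha$, where $\Psi$ vanishes and the graph description breaks down, and the saddle $\phi=a_+$ — whereas the monotone‑shooting skeleton itself is routine.
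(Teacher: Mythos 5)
Your proposal is correct and reaches the same conclusions, but by a genuinely different route from the paper. The paper (Proposition~\ref{P:tw-unique} and Corollary~\ref{C:c-order}) also reduces to the scalar first-order equation $\d\psi/\d\phi=f(\phi)/\psi-c$, but the key step there is the integrating-factor identity
\[
\frac{\d}{\d\phi}\Bigl[(\psi_1-\psi_2)\exp\Bigl(\int_{a_0}^{\phi}\frac{f}{\psi_1\psi_2}\Bigr)\Bigr]=-(c_1-c_2)\exp\Bigl(\int_{a_0}^{\phi}\frac{f}{\psi_1\psi_2}\Bigr),
\]
which forces strict monotonicity of the bracket if $c_1\ne c_2$; the contradiction comes purely from the vanishing of the bracket at \emph{both} endpoints $\phi\to\alpha$ and $\phi\to a_+$. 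No linearization at the saddle is used. You instead compare the energies $E_c=\tfrac12\Psi_c^2$ directly and close the contradiction by the exponential approach rate $\Psi_c(\phi)\sim|\lambda_-(c)|(a_+-\phi)$, whose dependence on $c$ is the opposite of what the energy comparison produced. Both are sound; yours is the more geometric ``monotone shooting'' picture, while the paper's is more elementary and sidesteps the stable-manifold theorem entirely. Your argument for strict decrease of $c_\alpha$ (running the $c_{\alpha_2}$-orbit from $(\alpha_1,0)$ and showing $c_{\alpha_2}$ is an overshoot velocity for $\alpha_1$) mirrors the paper's corollary, which again just reuses the integrating-factor inequality with the extra information $\psi_1(\alpha_2)>0$. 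For continuity, the paper uses the monotone boundedness of $c_\alpha$ plus the $L^2$ bound on $\phi'_\alpha$ from Lemma~\ref{L:c} and Ascoli--Arzel\`a, then upgrades local to global uniform convergence by exploiting the monotonicity of the profiles; your compactness-via-continuous-dependence argument is the same skeleton, and the uniformity issue you flag at $\phi=a_+$ is exactly what the paper handles by the monotonicity trick $\sup_{s\ge R}|\phi-\phi_{\alpha_n}|\le 2(a_+-\phi(R))+|\phi(R)-\phi_{\alpha_n}(R)|$.

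Two minor remarks. First, the derivation that the degeneracy set is a half-line (your opening observation) is already part of Theorem~\ref{T:TW} and \eqref{phi-deg}, so it need not be re-proved here. Second, you correctly flag the non-Lipschitz square root at $E=0$; this can be avoided cleanly by noting that $E_{c_1}-E_{c_2}$ has the positive leading term $(c_2-c_1)\tfrac{2}{3}\sqrt{2f(\alpha)}(\phi-\alpha)^{3/2}$ near $\alpha$, after which the standard first-crossing argument applies in the region $\Psi>0$ where the field is Lipschitz. Alternatively, the saddle-rate step can be bypassed altogether by combining the strict inequality $\Psi_{c_1}>\Psi_{c_2}$ with the integral identity from Lemma~\ref{L:c}, namely $c_j\int_\alpha^{a_+}\Psi_{c_j}\,\d\phi = W(a_+)-W(\alpha)$ for $j=1,2$, which immediately gives $|c_1|\int\Psi_{c_1}>|c_2|\int\Psi_{c_2}$ when $c_1<c_2<0$ and contradicts the equality of the right-hand sides.
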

\noindent
We refer the reader to Proposition \ref{P:tw-unique}  and Corollaries \ref{C:c-order} and \ref{C:conti-cphi} in the next section  for more details. In what follows, we denote by $c_\alpha$ and $\phi_\alpha$ the velocity and unique profile function for each $\alpha$, and moreover, we  normalize  that $s_* = 0$ in \eqref{phi-deg}, that is,
\begin{equation}\label{phi-base}
\phi_\alpha \equiv \alpha \ \mbox{ in } (-\infty, 0] \ \mbox{ and } \ 
\phi_\alpha > \alpha \ \mbox{ in } (0,+\infty),
\end{equation}
by translation.

We are now moving on to a discussion on traveling wave dynamics of more general solutions to \eqref{irAC}. We first exhibit well-posedness for \eqref{irAC} in a certain  setting suitable for considering  traveling wave solutions.  We shall not use \eqref{f-1} for a while, but always need \eqref{f-2} with $f(0)=0$, which together with \eqref{f-lam-convex} implies $\beta(0)=0$. The condition $f(0)=0$ will be employed to construct $L^2$ solutions $u_n$ of \eqref{irAC} for compactly supported data $u_{0,n}$ (see \S \ref{Ss:loc-ex} below), otherwise $f(u_n)$ never lies on $L^2(\R)$. On the other hand, it is not necessary to assume $f(0)=0$ explicitly, provided that $f$ has a zero (see \eqref{f-1}). Indeed, one can assume that the zero of $f$ coincides with the origin without any loss of generality by translation in $u$.  %\CKcomment{Somehow the above assumptions do not seem consistent as $f$ may now have four zeros at $0,a_0,a_\pm$. Probably this is not intended: either fix one of the three zeros of $f$ to zero, or remove the condition $f(0)=0$. I will leave it to you to select a choice here.}

The following theorem ensures existence of \emph{$L^2_{\rm loc}$ solutions} for \eqref{irAC} (see Definition \ref{D:sol} below for more details on $L^2_{\rm loc}$ solutions). 
\begin{theorem}[Existence of $L^2_{\rm loc}$ solutions]\label{T:loc-L2-ex}
Assume \eqref{f-2}  with $f(0)=0$.  Let $T > 0$ and assume that
\begin{equation}
u_0 \in H^2_{\rm loc}(\R) \cap L^\infty(\R)\label{u0-loc-H2} 
\end{equation}
{\rm  (see Notation below){\rm }} and there exist constants $\gamma_\pm \in \R$ such that 
\begin{equation}
\gamma_- \leq \inf_{x\in\R} u_0(x) \leq \sup_{x\in\R} u_0(x) \leq \gamma_+ \ \mbox{ and } \ \pm f(\gamma_\pm) \geq 0.\label{mp}
\end{equation}
Then \eqref{irAC} admits a bounded $L^2_{\rm loc}$ solution $u = u(x,t)$ on $[0,T]$. Furthermore, $u=u(x,t)$ also solves \eqref{pde-obs0}, whose solution is uniquely determined by the initial datum, in $Q_T := \R \times (0,T)$. Moreover, for any non-negative $\rho \in C^\infty_c(\R)$, it holds that
\begin{equation}\label{eta-est}
\esssup_{t \geq 0} \int_\R |\eta(\cdot,t)|^2 \rho \, \d x
\leq \int_\R \big|\big(\partial_x^2 u_0-f(u_0)\big)_-\big|^2 \rho \, \d x.
\end{equation}
\end{theorem}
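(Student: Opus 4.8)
The plan is a two-step approximation: first solve \eqref{irAC} on the whole line for compactly supported data, then recover the $L^2_{\rm loc}$ solution for a general bounded $H^2_{\rm loc}$ datum by spatial localization. Fix cut-offs $\chi_n\in C_c^\infty(\R)$ with $\chi_n\equiv 1$ on $[-n,n]$, $0\le\chi_n\le 1$ and $\operatorname{supp}\chi_n\subset(-n-1,n+1)$, and set $u_{0,n}:=\chi_n u_0\in H^2(\R)$. Then $u_{0,n}\to u_0$ in $H^2_{\rm loc}(\R)$ and, since $\chi_n$ is $[0,1]$-valued and $f(0)=0$, the truncated bounds $\tilde\gamma_-:=\min\{\gamma_-,0\}\le u_{0,n}\le\max\{\gamma_+,0\}=:\tilde\gamma_+$ hold with $\pm f(\tilde\gamma_\pm)\ge 0$. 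Because $f(0)=0$ makes $f(u_{0,n})$ compactly supported, the construction of $L^2(\R)$ solutions for compactly supported data (the content of \S\ref{Ss:loc-ex}, resting on the bounded-domain theory of \cite{ae1}) provides, for each $n$, a solution $u_n$ of \eqref{irAC} --- equivalently of \eqref{irAC-dn}, and of \eqref{pde-obs0} with $\eta_n\in\partial\iIIn(u_n)$ --- with $u_n\in L^\infty(Q_T)$, together with the comparison bounds $\tilde\gamma_-\le u_n\le\tilde\gamma_+$ on $Q_T$, the complementarity relation $\eta_n\,(u_n-u_{0,n})=0$ a.e.\ in $Q_T$, and, for every non-negative $\rho\in C_c^\infty(\R)$, the weighted bound $\esssup_{t}\int_\R|\eta_n(\cdot,t)|^2\rho\,\d x\le\int_\R|(\partial_x^2 u_{0,n}-f(u_{0,n}))_-|^2\rho\,\d x$.

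To obtain bounds uniform in $n$, fix a non-negative $\rho\in C_c^\infty(\R)$ and take $n$ large enough that $\operatorname{supp}\rho\subset(-n,n)$; then $u_{0,n}=u_0$ on $\operatorname{supp}\rho$, so the right-hand side of the weighted bound equals $\int_\R|(\partial_x^2 u_0-f(u_0))_-|^2\rho\,\d x<\infty$ and is independent of $n$. Hence $(\eta_n)$ is bounded in $L^\infty(0,T;L^2_{\rm loc}(\R))$, and, since $(u_n)$ is bounded in $L^\infty(Q_T)$ and $f$ is continuous, so is $g_n:=\partial_t u_n-\partial_x^2 u_n=-f(u_n)-\eta_n$. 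Viewing $u_n$ as a solution of the heat equation with right-hand side $g_n$ and initial datum $u_{0,n}$ (bounded in $H^1_{\rm loc}$), a Caccioppoli-type bootstrap on nested spatial cut-offs --- testing against $u_n\rho^2$ to bound $\partial_x u_n$ in $L^2_{\rm loc}(Q_T)$, then against $\partial_t u_n\tilde\rho^2$ to bound $\partial_t u_n$ in $L^2_{\rm loc}(Q_T)$ and $\partial_x u_n$ in $L^\infty(0,T;L^2_{\rm loc})$, absorbing the remaining boundary-type terms --- shows that $(u_n)$ is bounded in $L^\infty(0,T;H^1_{\rm loc})\cap H^1(0,T;L^2_{\rm loc})$, whence $\partial_x^2 u_n=\partial_t u_n-g_n$ and $\eta_n$ are bounded in $L^2_{\rm loc}(Q_T)$ and $(u_n)$ is bounded in $L^2(0,T;H^2_{\rm loc})$.

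Next I would pass to the limit. By the Aubin--Lions--Simon compactness theorem, along a subsequence $u_n\to u$ strongly in $L^2(0,T;H^1_{\rm loc})$ and in $C([0,T];L^2_{\rm loc})$ and a.e.\ in $Q_T$, while $\partial_t u_n\rightharpoonup\partial_t u$, $\partial_x^2 u_n\rightharpoonup\partial_x^2 u$ and $\eta_n\rightharpoonup\eta$ weakly in $L^2_{\rm loc}(Q_T)$ (and $\eta_n\rightharpoonup\eta$ weakly-$*$ in $L^\infty(0,T;L^2_{\rm loc})$), and $f(u_n)\to f(u)$ in $L^2_{\rm loc}(Q_T)$ by continuity of $f$ and the uniform $L^\infty$ bound. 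Letting $n\to\infty$ in \eqref{irAC-dn} yields $\partial_t u+\eta=\partial_x^2 u-f(u)$ a.e.\ in $Q_T$, with $\partial_t u\ge 0$, $\tilde\gamma_-\le u\le\tilde\gamma_+$, and $u|_{t=0}=\lim_n u_{0,n}=u_0$ (using the $C([0,T];L^2_{\rm loc})$ convergence). Since only \emph{weak} convergence of $\partial_t u_n$ is available, the constraint cannot be passed to the limit in the form $\eta_n\in\partial\iI(\partial_t u_n)$; instead I pass it through the obstacle reformulation. From the strong/a.e.\ limits, $u\ge u_0$ and $\eta\le 0$ a.e.\ in $Q_T$; and for every non-negative $\varphi\in C_c^\infty(Q_T)$, the weak--strong pairing of $\eta_n\rightharpoonup\eta$ against $(u_n-u_{0,n})\varphi\to(u-u_0)\varphi$ (both in $L^2_{\rm loc}$) gives $\int_{Q_T}\eta\,(u-u_0)\,\varphi\,\d x\,\d t=\lim_n\int_{Q_T}\eta_n\,(u_n-u_{0,n})\,\varphi\,\d x\,\d t=0$; since $\eta\,(u-u_0)\le 0$ a.e., this forces $\eta\,(u-u_0)=0$ a.e., i.e.\ $\eta\in\partial\iII(u)$. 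By the equivalences established in \cite{ae1}, $u$ is thus a bounded $L^2_{\rm loc}$ solution of \eqref{irAC} solving \eqref{pde-obs0}; and \eqref{eta-est} follows by passing to the limit in the approximate weighted bound (whose right-hand side equals $\int_\R|(\partial_x^2 u_0-f(u_0))_-|^2\rho\,\d x$ for $n$ large) via the weak-$*$ lower semicontinuity of $v\mapsto\esssup_t\int_\R|v(\cdot,t)|^2\rho\,\d x$.

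Finally, for uniqueness of \eqref{pde-obs0}: if $u,\tilde u$ are bounded $L^2_{\rm loc}$ solutions with the same obstacle $u_0$ and $w:=u-\tilde u$, then $\partial_t w-\partial_x^2 w+f(u)-f(\tilde u)=-(\eta-\tilde\eta)$, with $(\eta-\tilde\eta)w\ge 0$ by monotonicity of the obstacle subdifferential and $(f(u)-f(\tilde u))w\ge-\lambda w^2$ by \eqref{f-lam-convex}; testing against $w\rho_R^2$ with a rescaled cut-off ($\rho_R\equiv 1$ on $[-R,R]$, $|\rho_R'|\le C/R$) and using $w\in L^\infty(Q_T)$ leads to $\frac{\d}{\d t}\int_\R w^2\rho_R^2\,\d x\le 2\lambda\int_\R w^2\rho_R^2\,\d x+C_*/R$; since $w(\cdot,0)=0$, Gronwall's inequality gives $\int_\R w(\cdot,t)^2\rho_R^2\,\d x\le \frac{C_*}{2\lambda R}(\e^{2\lambda T}-1)$, and letting $R\to\infty$ forces $w\equiv 0$. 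I expect the two genuinely delicate points to be: (i) the \emph{uniform local} a priori estimates --- on $\R$ there is no coercive global energy (the Ginzburg--Landau functional of a non-decaying bounded datum is infinite), so every energy inequality must be localized and its boundary remainders controlled by a bootstrap, the weighted $\eta$-estimate inherited from \cite{ae1} being the indispensable input; and (ii) the identification of the multivalued limit $\eta\in\partial\iII(u)$ for a \emph{moving} obstacle $u_{0,n}\to u_0$, which is exactly why the limit must be taken in the obstacle form and why the weak--strong convergence of the product $\eta_n(u_n-u_{0,n})$ --- rather than a Minty-type argument on a fixed monotone graph --- is the crucial device.
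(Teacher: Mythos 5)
Your proposal is correct and follows the paper's overall architecture (cut off the datum, inherit $L^2$ solutions from \cite{ae1}, derive local energy and $\eta$-bounds uniform in $n$, pass to the limit by local compactness, then prove uniqueness by a weighted energy argument as in Lemma \ref{L:cp-obs+}), but it diverges from the paper at the single genuinely delicate step: identifying the nonlinear constraint on $\eta$ in the limit. The paper establishes $\eta\in\partial\iI(\partial_t u)$ \emph{directly} by a Minty-type argument: it multiplies the approximate equation by $\partial_t u_n\varphi$, expands the energy identity, and uses weak lower semicontinuity of the $H^1$- and potential-terms to conclude $\limsup_n\int\!\!\int\partial_t u_n\,\eta_n\,\varphi\le\int\!\!\int\partial_t u\,\eta\,\varphi$, which forces $\eta\in\partial\iI(\partial_t u)$; the obstacle form $\eta\in\partial\iII(u)$ is then a \emph{corollary}, obtained at the very end by demiclosedness of $\partial\iI(u_n-u_{0,n})\ni\eta_n$ under $\eta_n\rightharpoonup\eta$ weakly, $u_n-u_{0,n}\to u-u_0$ strongly. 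You instead promote that last observation to the primary step: since $\eta_n\le0$, $u_n\ge u_{0,n}$, and $\eta_n(u_n-u_{0,n})=0$ a.e., the weak–strong pairing yields $\eta(u-u_0)=0$ a.e.\ in the limit, hence $\eta\in\partial\iII(u)$, and you recover the form $\eta\in\partial\iI(\partial_t u)$ required by Definition~\ref{D:sol} via the \cite{ae1} equivalence. This trade is sound: the needed implication only uses $\eta\le0$, $\partial_t u\ge0$, $u(\cdot,0)=u_0$, and $\eta(u-u_0)=0$, from which $\eta\partial_t u=0$ follows pointwise a.e.\ (on the set where $\eta<0$ one has $u=u_0$, and by nondecrease in $t$ with $u(\cdot,0)=u_0$ this forces $\partial_t u=0$ there), and this argument is entirely local so it transfers to the $L^2_{\rm loc}$ setting. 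What you buy is avoiding the energy-identity passage and the associated $\limsup$ bookkeeping; what you give up is a self-contained verification of the definition, replaced by an appeal to the \cite{ae1} equivalence (which the paper itself relies on in both directions, so the appeal is legitimate). One small presentational slip: you derive $\eta\le0$ "from the strong/a.e.\ limits," but $\eta_n$ converges only weakly; the correct justification is that weak limits of non-positive $L^2$ functions are non-positive, tested against non-negative $\varphi$. This does not affect the argument.
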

To prove the theorem above, we shall construct an $L^2_{\rm loc}$ solution of \eqref{irAC-dn}. Due to the non-smooth and nonlinear nature of the equation, we shall work on a frame of (generalized) gradient flows on Hilbert spaces (see, e.g.,~\cite{HB1}) instead of classical parabolic theories (e.g., Schauder theory) used for \eqref{cac}. On the other hand, such a frame is based on Hilbert spaces (or reflexive Banach spaces), although we are concerned with solutions with values in $L^2_{\rm loc}$, which is of course not even a normed space. Here we shall first construct an $L^2$ solution (as approximate solutions for compactly supported data) with the aid of a generalized gradient flow (in $L^2$) theory developed in~\cite{ae1}, and then, we shall extend it to an existence result of $L^2_{\rm loc}$ solutions by developing local estimates.

As we have seen in Theorem \ref{T:TW}, traveling wave solutions $U(x,t) = \phi_\alpha(x-x_0-c_\alpha t)$ lose $C^2$-regularity only at the free boundary point $x = x_0+c_\alpha t$ separating $\R$ into two half-lines, i.e., a half-line on which $U(x,t)\equiv \alpha$ and the other on which $U(x,t)$ is strictly increasing. On the other hand, $U(x,t)$ is continuously (partially) differentiable in time  everywhere. And so, there arises a question whether or not such a mechanism of losing classical regularity is universal for more general solutions $u(x,t)$ to \eqref{irAC}. The next theorem will guarantee $C^1$ regularity both in space and time of $u(x,t)$ on $\R \times \R_+$, and this fact will enable us to determine the  singularity  of $u(x,t)$ at the free boundary point 
\begin{equation}\label{r(t)}
r(t) := \sup\{r \in \R \colon u(x,t) = \alpha \mbox{ for all } x \leq r\}, 
\end{equation}
 provided that $r(0)$ is finite. More precisely, %the solution $u$ is of class $C^{2,1}$ in $Q_+ := \{(x,t) \in \R \times \R_+ \colon x > r(t)\}$; however, 
$u(x,t)$ loses $C^2$ regularity at $x = r(t)$ in a similar fashion to traveling waves, i.e.,
$$
\lim_{x \to r(t)-0}\partial_x^2 u(x,t) = 0 \ \mbox{ and } \ \lim_{x \to r(t)+0} \partial_x^2 u(x,t) = f(\alpha) > 0,
$$
for any $t > 0$ (see \S \ref{Ss:appl-c1} for more details). Furthermore, this fact will be also employed to prove Theorem \ref{T:Main} below.

\begin{theorem}[$C^1$ regularity]\label{T:C1}
Assume \eqref{f-2}  with $f(0)=0$,  \eqref{u0-loc-H2} and \eqref{mp}. Then \eqref{irAC} admits a unique bounded $L^2_{\rm loc}$-solution $u = u(x,t)$ which also solves \eqref{pde-obs0} and satisfies
 \begin{gather*}
%  u \in L^\infty(\R \times (0,T)), \quad 
u \in W^{1,\infty}(0,T;L^2(-R,R)) \cap W^{1,2}(0,T;H^1(-R,R)),\\
  \sqrt t \partial_t u \in L^\infty(0,T;H^1(-R,R)), \quad \sqrt t \partial_t^2 u \in L^2(0,T;L^2(-R,R))
 \end{gather*}
 for any $T>0$ and $R > 0$. In particular, $u$ belongs to $C^1(\R \times (0,+\infty))$.
\end{theorem}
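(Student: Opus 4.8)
The plan is to bootstrap the $L^2_{\rm loc}$ solution produced in Theorem \ref{T:loc-L2-ex} (which already solves the obstacle problem \eqref{pde-obs0} and is unique) to the claimed parabolic-type regularity using the gradient-flow structure of \eqref{irAC-dn}, and then invoke a Sobolev embedding in one space dimension. The central point is that \eqref{irAC-dn} is, locally in $x$, an evolution equation governed by the subdifferential of the convex functional $\mathcal F$ (plus the $\lambda$-monotone perturbation $u \mapsto \beta(u) - \lambda u$), so the classical smoothing estimates for gradient flows of convex functionals on a Hilbert space (see \cite{HB1}) apply: solutions instantaneously gain one degree of regularity and $\sqrt t\,\partial_t u$ is controlled in the energy norm.

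First I would set up the estimates on a truncated window: fix $R>0$, take a smooth cutoff $\rho$ supported in $(-2R,2R)$ with $\rho\equiv 1$ on $(-R,R)$, and multiply \eqref{irAC-dn} successively by $\rho\,\partial_t u$ and by $\rho\,\partial_t(\partial_t u)$ (the latter after differentiating in time, using $\eta\in\partial\iI(\partial_t u)$ and the monotonicity of $\partial\iI$ to discard the sign-definite term $\partial_t\eta\,\partial_t^2 u \ge 0$ in the appropriate sense). The first test yields $u\in W^{1,2}(0,T;H^1(-R,R))$ together with $t\mapsto \mathcal F_{\mathrm{loc}}(u(t))$ being absolutely continuous, after absorbing the commutator terms $\partial_x^2\rho$, $\partial_x\rho$ via the already-known $L^\infty$ bound on $u$ and the a priori control of $\eta$ from \eqref{eta-est}. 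The second test, combined with the standard trick of multiplying by $t$, produces $\sqrt t\,\partial_t u \in L^\infty(0,T;H^1(-R,R))$ and $\sqrt t\,\partial_t^2 u \in L^2(0,T;L^2(-R,R))$, hence also $\partial_t u \in W^{1,\infty}_{\mathrm{loc}}$ in the stated sense; the $\lambda$-convexity \eqref{f-lam-convex} and $f\in C^2$ (so $f'$ is locally Lipschitz, controlled on the range of $u$) are exactly what make the perturbation terms harmless by a Gronwall argument.

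From these bounds the $C^1$ conclusion follows by elliptic regularity in the space variable applied at a.e.\ fixed time together with a one-dimensional embedding: rewriting \eqref{irAC-dn} as $\partial_x^2 u = \partial_t u + \eta + f(u)$ and noting the right-hand side lies in $L^2_{\mathrm{loc}}$ uniformly on $[\tau,T]$ for any $\tau>0$, we get $u(\cdot,t)\in H^2_{\mathrm{loc}}(\R)\hookrightarrow C^1(\R)$ with estimates uniform for $t\in[\tau,T]$; meanwhile $\sqrt t\,\partial_t u\in L^\infty(0,T;H^1_{\mathrm{loc}})\hookrightarrow L^\infty(0,T;C(\R))$ gives continuity of $\partial_t u$ in $x$, and the $W^{1,2}(0,T;H^1_{\mathrm{loc}})$-plus-$\sqrt t\,\partial_t^2 u\in L^2$ information upgrades this to joint continuity of $(x,t)\mapsto \partial_x u(x,t)$ and $(x,t)\mapsto \partial_t u(x,t)$ on $\R\times(0,+\infty)$ via an Aubin--Lions type argument. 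Uniqueness of $u$ is already supplied by the obstacle-problem formulation in Theorem \ref{T:loc-L2-ex}, so nothing new is needed there.

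The main obstacle I anticipate is the lack of genuine coercivity in the $L^2_{\mathrm{loc}}$ framework: the test functions $\rho\,\partial_t u$, $\rho\,\partial_t^2 u$ are not admissible in a global energy identity, so every integration by parts generates boundary-layer commutators supported on $\{R<|x|<2R\}$ that must be reabsorbed using only local information, and the time-differentiated equation requires justifying that $t\mapsto\eta(\cdot,t)$ is regular enough (or passing through the Moreau--Yosida regularization $\partial\iI_\mu$, deriving $\mu$-uniform estimates, and letting $\mu\to 0$) for the inequality $\int \rho\,\partial_t\eta\,\partial_t^2 u\ \ge\ 0$ to make rigorous sense. Handling this cleanly — most safely by running the whole argument at the level of the Yosida-regularized problem, where everything is smooth, and only then removing the regularization — is where the real work lies; the remaining embeddings and Gronwall estimates are routine.
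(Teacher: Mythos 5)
Your overall plan — time-differentiate the equation, test by $\partial_t u$ and $\partial_t^2 u$, exploit the monotonicity of $\partial\iI$ for the sign of the coupling term, multiply by $t$ for the weighted bounds, then use a one-dimensional embedding — is the paper's strategy, and you correctly identify both the need for a Yosida-type regularization to make the time derivative of $\eta$ rigorous and the need to localize via cutoffs. However, there is a genuine gap in \emph{which} operator you propose to regularize, and it is precisely the pitfall the paper flags in Remark~5.1.

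You suggest replacing $\partial\iI$ by its Yosida approximant $(\partial\iI)_\mu$. But then, with $\eta_\mu = (\partial\iI)_\mu(\partial_t u_\mu)$, the identity $(\partial_t\eta_\mu,\partial_t u_\mu)_{L^2}=0$ — which for the un-regularized inclusion $\eta_\mu\in\partial\iI(\partial_t u_\mu)$ follows from $\iI^*(\eta_\mu)=(\eta_\mu,\partial_t u_\mu)-\iI(\partial_t u_\mu)\equiv0$ and the chain rule — is destroyed. Instead one has $(\partial_t\eta_\mu,\partial_t u_\mu)=\mu^{-1}\int_{\{\partial_t u_\mu<0\}}\partial_t^2 u_\mu\,\partial_t u_\mu\,\d x$, an indefinite term with a $\mu^{-1}$ prefactor, and there is no visible way to absorb it uniformly in $\mu$. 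The paper avoids this entirely by regularizing the \emph{other} operator: it replaces $\partial\psi$ (the part with $-\partial_x^2+\beta$) by its Yosida approximation $\partial\psi_\mu=\partial\psi\circ J_\mu$ while keeping $\partial\iI$ intact, so the constrained equation remains an exact evolution inclusion, $u_\mu\in C^{1,1}([0,T];L^2)$, and the duality identity $\eta_\mu\,\partial_t u_\mu\equiv0$ survives.

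Relatedly, you skip the step that actually produces $u\in W^{1,\infty}(0,T;L^2_{\rm loc})$: testing the \emph{time-differentiated} equation against $\partial_t u_\mu$. This is exactly where $(\partial_t\eta_\mu,\partial_t u_\mu)=0$ is used; it gives $\sup_t\|\partial_t u_\mu\|_{L^2}\le C_T$ and $\int_0^T\|\partial_t v_\mu\|_{H^1}^2\,\d t\le C_T$ (with $v_\mu=J_\mu u_\mu$), and these bounds are needed as input to close the Gronwall argument when you later test by $\partial_t^2 u_\mu$. The conclusion $W^{1,\infty}(0,T;L^2)$ does not follow ``hence also'' from the $\sqrt t$-weighted estimates, which degenerate at $t=0$. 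Finally, because the paper's $\partial\psi_\mu$ inserts the resolvent $v_\mu=J_\mu u_\mu$ into the nonlinearity, resolvent estimates (Lemma~5.3) relating $\partial_t v_\mu,\partial_t^2 v_\mu$ to $\partial_t u_\mu,\partial_t^2 u_\mu$ uniformly in $\mu$ are also required, and the argument must be run in two stages — first globally on $L^2(\R)$ for $u_0\in C^\infty_c(\R)$, then localized via time-difference quotients with cutoffs and approximation of general $u_0$ — since the Hilbert-space regularization does not act directly on $L^2_{\rm loc}$. Neither point appears in your proposal.
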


Here it is noteworthy that the solution has classical regularity with respect to the time-variable. On the other hand, it is not of class $C^2$ in the space-variable (for instance, the traveling wave solution has $W^{2,\infty}$ regularity in both variables but no $C^2$ regularity). Such a feature exhibits a clear contrast to degenerate and singular parabolic equations as well as classical Allen-Cahn equations.

The rest of this section will be devoted to discussing stability of traveling wave solutions constructed in Theorem \ref{T:TW}. Here, central questions are under which conditions for initial data the corresponding $L^2_{\rm loc}$ solution of \eqref{irAC} converges to a traveling wave and what the convergence rate is.  We first notice the following
\begin{proposition}[Instability of traveling waves in $L^\infty(\R)$]\label{P:inst-tw}
For each $\alpha \in (a_-,a_0)$, the traveling wave solution $\phi_\alpha(x-c_\alpha t)$ of \eqref{irAC} is unstable in $L^\infty(\R)$.
\end{proposition}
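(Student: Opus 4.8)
The plan is to exhibit, for an arbitrary small perturbation amplitude, an initial datum arbitrarily close to $\phi_\alpha$ in $L^\infty(\R)$ whose corresponding $L^2_{\rm loc}$ solution of \eqref{irAC} does \emph{not} stay near the traveling wave $\phi_\alpha(x-c_\alpha t)$ (up to translation) as $t\to+\infty$. The key structural obstruction to stability in the usual sense is the irreversibility constraint $\partial_t u\ge 0$ together with the degeneracy described in \eqref{phi-base}: on $(-\infty,0]$ the profile is frozen at the constant value $\alpha$, and any constant $\beta\in(a_-,a_0)$ is a steady state of \eqref{irAC} by \eqref{f-1}. So if we perturb the frozen tail upward, the solution cannot relax back down; if we perturb it downward by a small amount into $(a_-,a_0)$, the solution cannot climb back up there either, so the profile is permanently altered on a half-line. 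Either way the solution is prevented from converging to any translate of $\phi_\alpha$.

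Concretely, first I would fix $\varepsilon>0$ small and define $u_0^\varepsilon:=\phi_\alpha+\varepsilon\psi$, where $\psi\in C_c^\infty(\R)$ is a bump of height $\le 1$ supported in $(-\infty,-1)$ (i.e. deep inside the frozen region) and, say, $\psi\equiv 1$ on $[-3,-2]$; one checks $u_0^\varepsilon\in H^2_{\rm loc}(\R)\cap L^\infty(\R)$ and that \eqref{mp} holds with suitable $\gamma_\pm$ (e.g. $\gamma_-=a_-$, $\gamma_+=a_+$) provided $\alpha+\varepsilon<a_0$, so Theorem \ref{T:loc-L2-ex} furnishes a unique bounded $L^2_{\rm loc}$ solution $u^\varepsilon$. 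Then $\|u_0^\varepsilon-\phi_\alpha\|_{L^\infty(\R)}=\varepsilon$ can be made as small as we like. Second, I would track the value of $u^\varepsilon$ on the set where the bump lives. On $[-3,-2]$ we have $u_0^\varepsilon\equiv\alpha+\varepsilon$. Since $\alpha+\varepsilon$ is a constant steady state, the constant-in-$x$ function $\alpha+\varepsilon$ solves \eqref{irAC}; but more to the point, the constraint $\partial_t u^\varepsilon\ge 0$ (which is part of being an $L^2_{\rm loc}$ solution, equivalently $\eta\in\partial\iI(\partial_t u)$ in \eqref{irAC-dn}) forces $u^\varepsilon(x,t)\ge u_0^\varepsilon(x)$ for all $t>0$ and a.e.\ $x$. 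In particular $u^\varepsilon(x,t)\ge\alpha+\varepsilon$ on $[-3,-2]$ for all $t\ge0$. Third, I would compare with translates of the traveling wave: any candidate limiting translate $\phi_\alpha(\cdot-x_0-c_\alpha t)$ has the property that, on a fixed compact interval such as $[-3,-2]$, for $t$ large enough the argument $x-x_0-c_\alpha t$ lies in the frozen region $(-\infty,0]$ (recall $c_\alpha<0$ from item (ii) of Theorem \ref{T:TW}, so $-c_\alpha t\to+\infty$ and the wave moves left — one must be careful about the sign, but in any case $\phi_\alpha\equiv\alpha$ on a half-line, so for the relevant range of $x$ and large $t$ the translate equals $\alpha$ there), whence $\phi_\alpha(x-x_0-c_\alpha t)=\alpha$ on $[-3,-2]$. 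Therefore
$$
\sup_{x\in[-3,-2]}\bigl|u^\varepsilon(x,t)-\phi_\alpha(x-x_0-c_\alpha t)\bigr|\ \ge\ \varepsilon
$$
for every $x_0\in\R$ and all large $t$, which contradicts convergence to the traveling wave (modulo translation) in $L^\infty(\R)$. This establishes instability.

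The main obstacle I anticipate is pinning down the sign/geometry step precisely: one needs to argue that for the relevant spatial window and large time, \emph{every} translate of $\phi_\alpha$ is identically $\alpha$ on that window, so that no choice of $x_0=x_0(t)$ can rescue closeness. Since $c_\alpha\ne 0$ (indeed $-C_2\le c_\alpha<0$) and $\phi_\alpha\equiv\alpha$ exactly on $(-\infty,0]$, the frozen half-line of the translate is $(-\infty,x_0+c_\alpha t]$, which sweeps across any fixed compact set as $t\to+\infty$; the point is that on the complementary half-line $\phi_\alpha>\alpha$ but $\phi_\alpha\to\alpha$ only as the argument $\to-\infty$, so one has to choose the window far enough to the left (equivalently support the bump far enough to the left, adjusting $[-3,-2]$ to $[-M-1,-M]$ for large $M$ if necessary, or simply note that the translated wave's frozen region eventually engulfs any fixed interval). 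A secondary technical point is justifying $\partial_t u^\varepsilon\ge 0$ rigorously from Definition \ref{D:sol} of $L^2_{\rm loc}$ solution and from the obstacle reformulation \eqref{pde-obs0} with obstacle $u_0^\varepsilon$, which gives $u^\varepsilon\ge u_0^\varepsilon$ directly; this is immediate from the min-formulation $\min\{u-u_0,\partial_t u-\partial_x^2 u+f(u)\}=0$. Everything else is soft.
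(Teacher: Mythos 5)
The key step in your third paragraph contains a sign/geometry error that unravels the argument. You correctly note $c_\alpha < 0$, but then assert that on a fixed compact interval such as $[-3,-2]$, the translate $\phi_\alpha(x - x_0 - c_\alpha t)$ eventually equals $\alpha$. This is backwards: for fixed $x_0$, the argument $x - x_0 - c_\alpha t = x - x_0 + |c_\alpha|t \to +\infty$, so the translate tends to $a_+$ there. Geometrically, the interfacial point $x = x_0 + c_\alpha t$ of $\phi_\alpha(\cdot - x_0 - c_\alpha t)$ moves to $-\infty$ as $t \to +\infty$, so any fixed compact eventually lies to the \emph{right} of it, where $\phi_\alpha$ is near $a_+$, not $\alpha$. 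The inequality $\sup_{x \in [-3,-2]}|u^\varepsilon(x,t) - \phi_\alpha(x - x_0 - c_\alpha t)| \geq \varepsilon$ ``for every $x_0$ and all large $t$'' is therefore false; for each large $t$ one can pick $x_0$ (roughly $-c_\alpha t$) so that the translate is, on $[-3,-2]$, arbitrarily close to any value between $\alpha$ and $a_+$, including the value of $u^\varepsilon$.

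There is a second, independent problem that persists even if you fix the sign: your construction would give only an $\varepsilon$-separation, where $\varepsilon$ is also the size of the perturbation. As $\varepsilon \to 0$ the separation vanishes, so you never obtain a uniform instability threshold $\varepsilon_0 > 0$ that works for all $\delta$-small perturbations. Worse, your $u_0^\varepsilon$ still equals $\alpha$ on a half-line and lies between $\alpha$ and $a_+$, so it satisfies ${\bf (H)}_\alpha$ for the \emph{same} $\alpha$; under the hypotheses of Theorem \ref{T:Main} the corresponding solution converges exponentially to a translate of $\phi_\alpha$. In other words a compact bump placed deep in the frozen region is \emph{not} a destabilizing perturbation at all — it is asymptotically absorbed.

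The paper's proof is short and takes a genuinely different route. Instead of a compact bump, it perturbs the asymptotic floor $\alpha$ itself: take $u_0 = \phi_{\alpha_\varepsilon}$ for $\alpha_\varepsilon \neq \alpha$ close to $\alpha$. By continuity of the map $\alpha \mapsto (c_\alpha,\phi_\alpha)$ (Corollary \ref{C:conti-cphi}), $\|\phi_{\alpha_\varepsilon} - \phi_\alpha\|_{L^\infty(\R)} < \varepsilon$, but the resulting solution is the traveling wave $\phi_{\alpha_\varepsilon}(\cdot - c_{\alpha_\varepsilon} t)$, and its speed is strictly different from $c_\alpha$ by Corollary \ref{C:c-order}. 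The two waves therefore separate in $L^\infty(\R)$ to a distance at least $a_+ - a_0 > 0$, a threshold independent of $\varepsilon$. This is exactly the mechanism you gesture at for the downward perturbation (``the profile is permanently altered on a half-line''), but to use it you must perturb the entire frozen tail — thereby changing the effective $\alpha$ and the wave speed — rather than a compactly supported piece of it.
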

This proposition will be proved at the end of Section \ref{S:ob} and suggests that one needs to impose a more restrictive assumption on initial data than the classical Allen-Cahn (see~\cite{FM,XChen}) to prove convergence (see also Remark \ref{R:non-deg} below).

To give an answer to the questions, we assume instead of \eqref{alpha-hyp} that 
\begin{equation}\label{alpha-hyp-2}
 \alpha \in (a_-,a_0), \quad \int^{a_+}_{a_-} f(z) \, \d z \leq 0,
\end{equation}
which implies \eqref{alpha-hyp} by $f > 0$ on $(a_-,a_0)$. Here we remark that, under  the second condition of  \eqref{alpha-hyp-2}, the velocity of traveling wave solutions for  the classical Allen-Cahn equation  \eqref{cac} is  also  non-positive. Moreover, let us introduce the following assumptions for initial data $u_0$:
\begin{description}
 \item[${\bf (H)}_\alpha$] it holds that
\begin{equation}\label{H1}
\begin{cases}
u_0 \in H^2_{\rm loc}(\R), \quad \displaystyle \liminf_{x\to+\infty}u_0(x) > a_0, \\
a_- \leq \displaystyle \inf_{x \in \R} u_0(x) \leq \sup_{x \in \R} u_0(x) < a_+, 
\end{cases}
\end{equation}
and moreover, there exists $\xi_1 \in \R$ such that
\begin{alignat}{3}
 u_0(x) &= \alpha \ &&\mbox{ for all } x \leq \xi_1.\label{H3}
\end{alignat}
\end{description}
 Here \eqref{H3} implies that $r(0)$ is finite (see \eqref{r(t)}), i.e., the free boundary point is well defined at $t = 0$. 
Assumptions \eqref{f-1} and ${\bf (H)}_\alpha$ imply \eqref{mp}. Moreover, thanks to a comparison principle (see Lemma \ref{L:cp-obs+} in \S \ref{S:ExCP}), one can modify the function $f(\cdot)$ outside $[a_-,a_+]$ in an arbitrary way. Hence the conditions $\hat f \geq 0$ and $f' \geq -\lam$ (see \eqref{f-2}) can be always assumed under the $C^2$ regularity of $f$ without any loss of generality. %(so is $f(0)=0$ when $0 \not\in [a_-,a_+]$ \CKcomment{OK, here seems to be some form of resolution to the statement on the first page but on page 1 we never assume that zero is outside of $[a_-,a_+]$. Also, I am quite surprised that one needs a condition such as $f(0)=0$ somewhere even if $0\not\in[a_-,a_+]$ as we can modify $f$ outside of that domain anyhow as we like.}
See also Remark \ref{R:H-alpha} below for further remarks on the assumption above.

Our main result is then stated as follows:
\begin{theorem}[Exponential convergence to degenerate traveling waves]\label{T:Main}
In addition to \eqref{f-1}, \eqref{f-2} and \eqref{alpha-hyp-2}, assume that ${\bf (H)}_\alpha$ and $f'(\alpha) > 0$ hold. Let $u = u(x,t)$ be an $L^2_{\rm loc}$ solution to \eqref{irAC} in $\R \times \R_+$. Then there exist constants $K, \kappa > 0$ and $x_0 \in \R$ such that
$$
\|u(\cdot,t)-\phi_\alpha(\cdot-c_\alpha t-x_0)\|_{L^\infty(\R)} \leq K \e^{-\kappa t} \quad \mbox{ for all } \ t \geq 0.
$$
Moreover, let $r(t)$ be defined by \eqref{r(t)}. Then $r(t) - c_\alpha t$ converges to $x_0$ at the rate of $O(\e^{-\frac{\kappa t}2})$ as $t \to +\infty$. 
\end{theorem}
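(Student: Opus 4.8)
The plan is to combine a sub- and supersolution construction with a Lyapunov/energy argument adapted to the obstacle formulation \eqref{pde-obs0}, following the spirit of Fife--McLeod~\cite{FM} and Chen~\cite{XChen} but working throughout with $L^2_{\rm loc}$ solutions and carefully tracking the free boundary $r(t)$. First I would set up the reference frame: since by Theorem~\ref{T:TW} and Proposition~\ref{P:uni-det} the profile $\phi_\alpha$ with the normalization \eqref{phi-base} is available, I would look for a solution of the form $u(x,t)=\phi_\alpha(x-c_\alpha t - \zeta(t)) + w(x,t)$, with $\zeta(t)$ a phase that will be shown to converge to $x_0$ and $w$ a perturbation that decays exponentially in $L^\infty(\R)$. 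The first task is to produce sub- and supersolutions of the shape $\phi_\alpha\big(x-c_\alpha t - \zeta_\pm(t)\big) \pm q(t)$, where $\zeta_\pm$ and $q$ solve the classical system of ODEs $\dot q = -\beta q$, $\dot\zeta_\pm = \mp C q$ used in the bistable stability theory, the key input being the linear stability of the classical profile, i.e.\ that the linearized operator $-\partial_x^2 + f'(\phi_\alpha) - c_\alpha\partial_x$ has a spectral gap on $(s_*,+\infty)$ with $\phi_\alpha'$ as principal eigenfunction; here the condition $f'(\alpha)>0$ is exactly what gives the exponential approach of $\phi_\alpha$ to $\alpha$ at $-\infty$ and hence the gap. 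The essential new point, compared to the reversible case, is verifying that these barriers are admissible for the \emph{obstacle} problem: on the region where $\phi_\alpha\equiv\alpha$, the supersolution is $\alpha+q(t)>\alpha$, so the obstacle constraint $u\ge\alpha$ (respectively $u\ge u_0$, recall ${\bf (H)}_\alpha$ and \eqref{H3}) is automatically satisfied with room to spare, and on that region $\partial_t(\cdot)-\partial_x^2(\cdot)+f(\cdot) = \dot q + f(\alpha+q)\ge 0$ for $q\ge 0$ small because $f(\alpha)>0$; for the subsolution one must instead check that it stays below $u$ and respects $\partial_t - \partial_x^2 + f \le 0$ together with the obstacle inequality, which near $-\infty$ forces the subsolution to be truncated at the level $\alpha$ (or at $u_0$), i.e.\ one uses $\max\{\,\phi_\alpha(\cdot - c_\alpha t-\zeta_-(t)) - q(t),\ \alpha\,\}$ as the genuine subsolution for the minimum-formulation $\min\{u-u_0,\ \partial_t u - \partial_x^2 u + f(u)\}=0$. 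I expect the bookkeeping here — showing this truncated barrier is still a subsolution across the corner, using the one-sided second-derivative behavior recorded in the Remark after Theorem~\ref{T:TW} — to be the main obstacle.

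Next I would establish that the initial datum is trapped between a sub- and a supersolution of the above form at $t=0$. This is where ${\bf (H)}_\alpha$, \eqref{alpha-hyp-2} and Proposition~\ref{P:inst-tw} enter: because $u_0\equiv\alpha$ on $(-\infty,\xi_1]$ and $\liminf_{x\to+\infty}u_0>a_0$ with $\sup u_0<a_+$, a standard argument (sliding the classical bistable front and using that $a_+$ is asymptotically stable for the ODE $\dot s=-f(s)$ while any value in $(a_0,a_+)$ is attracted to $a_+$) shows that after a finite time $t_1$ the solution enters the interval $[\,\underline u(\cdot,t_1),\ \overline u(\cdot,t_1)\,]$ for a suitable choice of initial phases and initial amplitude $q(0)$ small; here one invokes Theorem~\ref{T:loc-L2-ex}/Theorem~\ref{T:C1} to know the solution is a genuine, unique $L^2_{\rm loc}$ (indeed $C^1$) solution to which the comparison principle Lemma~\ref{L:cp-obs+} applies, and one uses the $C^1$ regularity and the explicit description of the singularity at $r(t)$ to compare near the free boundary. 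The comparison principle then gives
\[
\underline u(x,t)\le u(x,t)\le \overline u(x,t)\qquad\text{for all }x\in\R,\ t\ge t_1,
\]
and since $\overline u-\underline u \le 2q(t) + \big|\phi_\alpha(\cdot-c_\alpha t-\zeta_+(t)) - \phi_\alpha(\cdot-c_\alpha t-\zeta_-(t))\big| \lesssim q(t_1)\e^{-\beta(t-t_1)}$ (using $|\zeta_+(t)-\zeta_-(t)|\lesssim q(t_1)$ and the Lipschitz bound $0\le\phi_\alpha'\le C_1$ from Theorem~\ref{T:TW}(ii)), both $\zeta_\pm(t)$ converge to a common limit $x_0$ and $\|u(\cdot,t)-\phi_\alpha(\cdot-c_\alpha t - x_0)\|_{L^\infty(\R)}\le K\e^{-\kappa t}$ with $\kappa=\beta$ (absorbing the finite time shift $t_1$ into the constant $K$). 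Finally, the statement about $r(t)$: since $\phi_\alpha(\cdot - c_\alpha t - x_0)\equiv\alpha$ exactly on $(-\infty,\, c_\alpha t + x_0]$ and is strictly increasing with $\phi_\alpha'(0^+)>0$, hence $\phi_\alpha(\xi)-\alpha \gtrsim \xi$ for small $\xi>0$, the $L^\infty$ bound $|u(x,t)-\phi_\alpha(x-c_\alpha t-x_0)|\le K\e^{-\kappa t}$ forces $u(\cdot,t)$ to leave the level $\alpha$ within a window of width $O(\e^{-\kappa t/2})$ to the right of $c_\alpha t + x_0$, and the one-sided trapping by $\underline u$ (which sits at level $\alpha$ up to a point at distance $O(q(t))=O(\e^{-\kappa t})$ left of $c_\alpha t+\zeta_-(t)$) bounds $r(t)-c_\alpha t - x_0$ from below; combining, $|r(t)-c_\alpha t - x_0| = O(\e^{-\kappa t/2})$, which is the claimed rate.

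A couple of technical remarks on what needs care. The comparison principle must be the one for the obstacle problem \eqref{pde-obs0} with a space-dependent obstacle $u_0$, not for \eqref{irAC} directly, so I would phrase all barriers as super/sub-solutions of $\min\{u-u_0,\ \partial_t u-\partial_x^2 u+f(u)\}=0$; the constant-$\alpha$ obstacle is the relevant one on $(-\infty,\xi_1]$ where $u_0\equiv\alpha$, and on $[\xi_1,+\infty)$ the obstacle is inactive for the solution after some time (since there $u>u_0$ once the front has moved in), so the dynamics is governed by \eqref{cac} there and the classical barriers suffice — this dichotomy is what makes the degenerate front stable ``from one side'' only, consistent with Proposition~\ref{P:inst-tw}. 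The role of $f'(\alpha)>0$ is twofold: it yields the spectral gap (hence the decay rate $\beta$), and it guarantees $\alpha$ is a nondegenerate zero so that the exponential convergence $|\phi_\alpha(\xi)-\alpha|\le C\e^{\mu\xi}$ ($\mu>0$) as $\xi\to-\infty$ holds, which is used both in constructing the barriers and in controlling the contribution of the region near $-\infty$ to the $L^\infty$ error. I do not anticipate difficulty in the ODE estimates for $q,\zeta_\pm$ or in the final extraction of rates; the genuinely delicate part remains checking that the truncated subsolution is admissible across the corner for the obstacle problem and that the $L^2_{\rm loc}$/$C^1$ comparison applies uniformly up to and across the moving free boundary $r(t)$.
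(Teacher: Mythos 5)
Your barrier construction and your reading of the obstacle formulation are on track — the sub/supersolutions $\phi_\alpha(x-c_\alpha t-\zeta_\pm(t))\pm q(t)$ with $\dot q=-\beta q$, $\dot\zeta_\pm=\mp\sigma\beta q$ are precisely the $w^\pm$ of Lemma~\ref{L:comfunc}, the need to compare the solution and the subsolution across the free boundary via a moving-domain comparison is exactly what Lemma~\ref{L:cp-sub} does, and your Taylor argument for the $O(\e^{-\kappa t/2})$ rate of $r(t)$ matches Lemma~\ref{L:r-conv}. But two of your central claims are wrong. First, the spectral-gap input you invoke is inapplicable here. The profile $\phi_\alpha$ is \emph{degenerate}: after the normalization \eqref{phi-base} it is identically equal to $\alpha$ on $(-\infty,0]$, so $\phi_\alpha'\equiv 0$ there and $\phi_\alpha\notin C^2$ at the corner. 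There is no exponential approach $|\phi_\alpha(\xi)-\alpha|\lesssim\e^{\mu\xi}$ as $\xi\to-\infty$, $\phi_\alpha'$ cannot serve as a strictly positive principal eigenfunction, and the linearized operator you write down is not the relevant object (nor is it self-adjoint in a useful weighted space near the corner). The paper explicitly sidesteps linearization for this reason and follows Chen's pure sub/supersolution route; $f'(\alpha)>0$ enters only to make $w^+$ a supersolution near the degenerate zone (compare the proof of Lemma~\ref{L:comfunc}), not to produce a gap.

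Second, and more seriously, the phase contraction you assert has no justification and the ODE system for $\zeta_\pm$ does not provide it. Integrating $\dot\zeta_\pm=\mp\sigma\beta q$ with $q(t)=q(0)\e^{-\beta t}$ gives $\zeta_+(\infty)-\zeta_-(\infty)=\zeta_+(t_1)-\zeta_-(t_1)-2\sigma q(t_1)$, which is a fixed nonzero number unless the initial phase gap is \emph{already} comparable to $q(t_1)$. Your ``standard sliding argument'' at time $t_1$ produces trapping with an $O(1)$ gap in both amplitude and phase, not the relation $|\zeta_+(t_1)-\zeta_-(t_1)|\lesssim q(t_1)$ you quietly assume. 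Something genuinely has to shrink the phase gap. The paper uses two tools for this: (i) a quasi-convergence step (Lemma~\ref{L:quasi-conv}, Section~\ref{S:Q-conv}), proved by recasting the reduced problem as a weighted gradient flow and extracting a subsequence $t_n\to\infty$ along which $u(\cdot,t_n)$ is within an arbitrarily small $\delta$ of a \emph{single} shifted profile (so $h=0$ in the enclosing notation), and (ii) the enclosing Lemma~\ref{L:enclo}, a Chen-style strong-maximum-principle estimate (the parabolic lower bound \eqref{w+-SMP}) that, over a fixed time step $\tau_0$, multiplies the pair $(\delta,h)$ by a factor $\kappa_*<1$. Iterating (ii) starting from the state supplied by (i) yields the geometric decay, and one further has to verify at each iterate that the free boundary $r(T_k)$ stays within $\rho_0$ of the barrier's base point, which is a requirement of Lemma~\ref{L:enclo} with no analogue in the classical Allen--Cahn case. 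Your proposal omits both (i) and (ii); without one of them the argument cannot close, and without (i) you have no starting configuration from which the iterated contraction can even begin.
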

 In the theorem stated above, the condition $f'(\alpha)>0$ is assumed in addition to \eqref{alpha-hyp-2}. Under \eqref{f-1}, one can always ensure that
$$
A := \left\{ \alpha \in (a_-,a_0) \colon f'(\alpha) > 0 \right\} \neq \emptyset.
$$
Indeed, %as $\int^{a_+}_\alpha f(s) \, \d s < \int^{a_+}_{a_-} f(s) \, \d s \leq 0$ for any $\alpha \in (a_-,a_0)$, 
by $f'(a_-) > 0$ along with the continuity of $f'$, the set $A$ involves any $\alpha \geq a_-$ close enough to $a_-$.

%
%Note that
%\begin{itemize}
% \item \emph{equations} (irAC), ${\rm (irAC)}_{\rm DN}$, ${\rm (irAC)}_{\rm op}$ and the equation $\min\{u-u_0,\partial_t u-\partial_x^2 u+f(u)\}=0$ are equivalent each other. Therefore solutions for each equations are same.
% \item On the other hand, \emph{inequalities} corresponding to these equations may not be equivalent. Hence notions of sub- and supersolutions may differ from each other. 
%\end{itemize}
%

\noindent 
{\bf Structure of the paper.} \ Section \ref{S:ob} is concerned with traveling waves for \eqref{irAC}. We shall construct a one-parameter family of traveling wave solutions by focusing on the alternative profile equation \eqref{phi} and also discuss uniqueness (see Proposition \ref{P:tw-unique}) and various properties of traveling waves, e.g., continuous dependence of the velocity and profile function on the parameter (see Corollary \ref{C:conti-cphi}), which implies the instability of traveling waves (see Proposition \ref{P:inst-tw}). Moreover, a couple of comparison functions are constructed in terms of profile functions and velocity constants (see \S \ref{Ss:comfunc}) and they will play crucial roles throughout the paper. In Section \ref{S:ExCP}, we shall establish an existence result for $L^2_{\rm loc}$ solutions to \eqref{irAC} (see \S \ref{Ss:loc-ex}). To treat traveling wave solutions for the fully nonlinear equation \eqref{irAC}, one may need to work in an $L^2_{\rm loc}$ setting instead of a standard one, say in $L^2$. To this end, we shall also develop a comparison principle for $L^2_{\rm loc}$ solutions to the obstacle problem \eqref{pde-obs0} (see \S \ref{Ss:cp}). Section \ref{S:OL} is devoted to give an outline of a proof for Theorem \ref{T:Main}, which consists of four phases. The first two phases will be discussed in detail there and enable us to reduce the problem into a parabolic equation with a constant obstacle after a certain time. On the other hand, it requires more technical lemma to discuss the other two phases, and hence, the details will be given in the following sections. More precisely, in Section \ref{S:reg}, the $C^1$ regularity (in space and time) of solutions to \eqref{irAC} will be proved (see Theorem \ref{T:C1}). Sections \ref{S:Q-conv} and \ref{S:exp-conv} are devoted to quasi-convergence of shifted solutions for the constant obstacle problem to a profile function and exponential convergence, respectively. 

\noindent
{\bf Notation.} We write $\R_+ := (0,+\infty)$. Moreover, $H^2_{\rm loc}(\R)$ is the linear space consisting of functions $u \in L^2_{\rm loc}(\R)$ whose first and second derivatives (in the distributional sense) lie on $L^2_{\rm loc}(\R)$.
 Here and henceforth, we use the same notation $\iI$ for the indicator function over the half-line $[0,\infty)$ as well as for that defined on the function space $L^2(I)$ for any open interval $I\subseteq\R$ over the closed convex set $K := \{u \in L^2(I) \colon u \geq 0 \ \mbox{ a.e.~in } I\}$, namely,
$$
\iI(u) = \begin{cases}
	  0 &\mbox{ if } \ u \in K,\\
	  \infty &\mbox{ otherwise }
	 \end{cases}
	 \quad \mbox{ for } \ u \in L^2(I),
	 $$
	 if no confusion may arise. Moreover, let $\partial \iI$ also denote the subdifferential operator (precisely, $\partial_{\mathbb R} \iI$) in $\mathbb R$ as well as that (precisely, $\partial_{L^2(I)} \iI$) in $L^2(I)$ defined by
	 $$
	 \partial_{L^2(I)} \iI(u) = \left\{ \eta \in L^2(I) \colon (\eta, u - v) \geq 0 \ \mbox{ for all } \ v \in K \right\} \quad \mbox{ for } \ u \in K.
	 $$
Here, we note that these two notions of subdifferentials are equivalent each other in the following sense: for $u, \eta \in L^2(I)$,
	 $$
	 \eta \in \partial_{L^2(I)} \iI(u) \quad \mbox{ if and only if } \quad \eta(\cdot) \in \partial_{\mathbb R} \iI(u(\cdot)) \ \mbox{ a.e.~in } I
	 $$
	 (see, e.g.,~\cite{HB1,HB3}). Furthermore, $\iI^*$ denotes the \emph{convex conjugate} functional (from $L^2(I)$ to $[0,+\infty]$) of $\iI$ (see, e.g.,~\cite{B}), i.e., 
$$
\iI^*(\eta) = \sup_{w \in L^2(I)} [(\eta,w)_{L^2(I)} - \iI(w)] \quad \mbox{ for } \ \eta \in L^2(I),
$$
which can be also identified (similarly to $\iI$) with the conjugate function (from $\R$ to $[0,+\infty]$) given by
$$
\iI^*(r) = \sup_{z \in \R} (rz - \iI(r)) \quad \mbox{ for } \ r \in \R.
$$
Finally, we recall that
$$
\eta \in \partial \iI(u) \quad \mbox{ if and only if } \quad u \in \partial \iI^*(\eta),
$$
and then, the \emph{Fenchel-Moreau identity}
$$
\iI^*(\eta) = (\eta,u)_{L^2(I)} - \iI(u),
$$
holds and so does a pointwise identity (see, e.g.,~\cite{B} for more details).

\section{Traveling wave solutions via an obstacle problem}\label{S:ob}

In this section, we shall construct traveling wave solutions for \eqref{irAC} through the auxiliary obstacle problem \eqref{phi}. More precisely, we shall prove the existence of $\phi$ satisfying \eqref{phi}, \eqref{phi-bc} and \eqref{phi-bc2}.

Equation \eqref{phi} can be regarded as a profile equation for traveling wave solutions to the Allen-Cahn equation with a \emph{non-smooth} potential,
$$
V(u) := \iIo(u) + W(u), \quad W(u) := \hat f(u),
$$
where $\hat f$ is a primitive function of $f$. Hence the situation seems to be close to a standard one (cf.~\cite[pp.175--180]{Evans}). However, it is still beyond the scope of classical results due to the \emph{non-smoothness} of the potential (see, e.g.,~\cite{FM, XChen}). Let us first rewrite \eqref{phi} as a system of first-order ordinary differential inclusions,
\begin{align}
 \phi' &= \psi,\label{phi-1}\\
 \psi' &\in - c \psi - \gamma(\phi),\label{phi-2}
\end{align}
where $\gamma(\cdot)$ is a multi-valued function given by
$$
\gamma(\phi) = - \partial \iIo(\phi) - f(\phi).
$$
We note that $\gamma(\phi) = -f(\phi)$, provided that $\phi > \alpha$. Not surprisingly, one can prove:
\begin{lemma}\label{L:c}
Assume \eqref{alpha-hyp} and let $\phi \in W^{2,\infty}(\R)$ be a solution to \eqref{phi}, \eqref{phi-bc} and \eqref{phi-bc2}. Then $\phi'$ belongs to $L^2(\R)$, and moreover, it holds that
 \begin{equation}\label{c}
 c = \dfrac{W(a_+)-W(\uo)}{\int_{\R} \phi'(\sigma)^2 \,\d \sigma} < 0.
 \end{equation}
\end{lemma}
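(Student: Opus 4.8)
The plan is to establish \eqref{c} by the classical traveling-wave energy identity: multiply the profile inclusion \eqref{phi} by $\phi'$, integrate over a symmetric interval $(-R,R)$, and let $R\to+\infty$. Since $\phi$ solves \eqref{phi}, there is a measurable function $\eta$ with $\eta(\sigma)\in\partial\iIo(\phi(\sigma))$ for a.e.\ $\sigma\in\R$ and
$$
-c\,\phi' - \phi'' + \eta + f(\phi) = 0 \quad \text{a.e.\ in }\R .
$$
The crucial observation is that $\eta\,\phi' = 0$ a.e.\ in $\R$. Indeed, on the open set $\{\phi > \uo\}$ one has $\eta = 0$ by \eqref{subdiff}; and, by the monotonicity and continuity of $\phi$ together with $\lim_{\xi\to-\infty}\phi(\xi)=\uo$ and $\lim_{\xi\to+\infty}\phi(\xi)=a_+>\uo$, the coincidence set $\{\phi=\uo\}$ is a half-line of the form $(-\infty,s_*]$, on which $\phi$ is constant, hence $\phi'=0$ a.e.\ there. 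Multiplying the displayed identity by $\phi'\in L^\infty(\R)$ --- recall that $\phi\in W^{2,\infty}(\R)$, so $\phi'$ is Lipschitz and $\lim_{\xi\to\pm\infty}\phi'(\xi)=0$ by \eqref{phi-bc2} --- therefore yields
$$
-c\,(\phi')^2 - \phi''\phi' + f(\phi)\phi' = 0 \quad \text{a.e.\ in }\R .
$$

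Next, I would integrate this identity over $(-R,R)$. Since $\phi'\in W^{1,\infty}(\R)$ is absolutely continuous, $\int_{-R}^R\phi''\phi'\,\d\sigma=\tfrac12\big[\phi'(R)^2-\phi'(-R)^2\big]$; and since $\hat f\in C^1(\R)$, the map $\sigma\mapsto\hat f(\phi(\sigma))=W(\phi(\sigma))$ is absolutely continuous with derivative $f(\phi)\phi'$, so $\int_{-R}^R f(\phi)\phi'\,\d\sigma=W(\phi(R))-W(\phi(-R))$. Hence
$$
-c \int_{-R}^{R} \phi'(\sigma)^2\,\d\sigma = \tfrac12\big[\phi'(R)^2 - \phi'(-R)^2\big] - \big[\,W(\phi(R)) - W(\phi(-R))\,\big].
$$
By \eqref{phi-bc} and \eqref{phi-bc2}, the right-hand side converges as $R\to+\infty$ to $W(\uo)-W(a_+)$, and this limit equals $-\int_\uo^{a_+} f(z)\,\d z$, which is strictly positive by \eqref{alpha-hyp}.

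Since $R\mapsto\int_{-R}^{R}\phi'(\sigma)^2\,\d\sigma$ is nondecreasing and nonnegative, the possibility $\int_\R\phi'(\sigma)^2\,\d\sigma=+\infty$ is excluded (the left-hand side of the last identity would then be identically $0$ if $c=0$, or diverge to $\pm\infty$ if $c\ne0$, contradicting convergence to a finite positive number); thus $\phi'\in L^2(\R)$. Moreover $\int_\R\phi'(\sigma)^2\,\d\sigma>0$ because $\phi$ is nonconstant. Passing to the limit $R\to+\infty$ and dividing then gives
$$
c = \dfrac{W(a_+)-W(\uo)}{\int_\R\phi'(\sigma)^2\,\d\sigma},
$$
which is strictly negative since its numerator $W(a_+)-W(\uo)=\int_\uo^{a_+} f(z)\,\d z$ is negative by \eqref{alpha-hyp}.

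Being a routine energy-type computation, the argument has no serious obstacle; the only point requiring care is the degeneracy, i.e.\ verifying that the Lagrange-multiplier contribution $\eta\,\phi'$ vanishes a.e.\ even though $\eta$ need not vanish on the coincidence set $\{\phi=\uo\}$ --- this is precisely where the monotonicity of $\phi$ (hence $\phi'=0$ on that set) is used. A secondary technicality is to ensure that the identities above hold in the pointwise almost-everywhere (absolutely continuous) sense rather than merely distributionally, which is guaranteed by the $W^{2,\infty}(\R)$ regularity of $\phi$.
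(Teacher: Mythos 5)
Your proof is correct and follows essentially the same route as the paper: multiply the profile inclusion by $\phi'$, observe that the subdifferential term contributes nothing, integrate, and pass to the limit. The only cosmetic difference is in justifying that the multiplier term vanishes: the paper invokes the abstract chain rule $\phi'\eta=\tfrac{\d}{\d s}\iIo(\phi)$ together with $\iIo(\phi)\equiv 0$, whereas you argue directly that $\eta=0$ on $\{\phi>\uo\}$ and $\phi'=0$ a.e.\ on the level set $\{\phi=\uo\}$; these are interchangeable, and your version has the mild advantage of not appearing to rely on the half-line structure of the coincidence set, which is only established in the lemma that follows this one (in fact you only need the general fact that a Lipschitz function has vanishing derivative a.e.\ on any of its level sets, so the reference to the half-line is dispensable). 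Your extra care in ruling out $\int_\R\phi'^2=+\infty$ is a small improvement in explicitness over the paper's more terse conclusion.
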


\begin{proof}
 Test \eqref{phi} by $\phi'$ to see that
 \begin{equation}\label{ei-W}
 \dfrac 1 2 \dfrac{\d}{\d s} \phi'(s)^2 + c \phi'(s)^2 - \dfrac{\d}{\d s} W(\phi(s)) = 0.
 \end{equation}
Here we used the chain-rule, $\phi' \eta = (\d/\d s) \iIo(\phi)$ for any $\eta \in \partial \iIo(\phi)$ (see, e.g.,~\cite{HB1}), and the fact that $\iIo(\phi) \equiv 0$ by definition. Integrating both sides over $(s,t)$ for $-\infty < s < t < +\infty$, we have
 $$
 \dfrac 1 2 \phi'(t)^2 -  \dfrac 1 2 \phi'(s)^2
 + c \int^t_s \phi'(\sigma)^2 \, \d \sigma
 = W(\phi(t)) - W(\phi(s)).
 $$
Passing to the limit as $t \to +\infty$ and $s \to -\infty$ and using \eqref{phi-bc}, \eqref{phi-bc2} and \eqref{alpha-hyp} (i.e., $W(a_+) < W(\alpha)$), one obtains
$$
c \int_{\R} \phi'(\sigma)^2 \, \d \sigma = W(a_+) - W(\uo) \in (-\infty,0),
$$
which implies $c < 0$, $\phi' \not\equiv 0$ and $\phi' \in L^2(\R)$. Thus \eqref{c} follows.
\end{proof}

\subsection{Construction of traveling wave solutions}\label{Ss:tw}

In this subsection, we shall prove Theorem \ref{T:TW}. In order to construct a function $\phi$ satisfying \eqref{phi}, \eqref{phi-bc} and \eqref{phi-bc2}, we assume \eqref{alpha-hyp}, which implies $W(\alpha) > W(a_+)$, and introduce a ``two-step approximation'' for the multi-valued function $\gamma$ (see Fig.~\ref{F:2} below). The first approximation is based on the celebrated \emph{Yosida approximation} and given by
$$
\gamma_\mu(\phi) := - (\partial \iIo)_\mu(\phi) - f(\phi), \quad \mu > 0,
$$
which is single-valued, continuous and smooth in $\R \setminus \{\uo\}$ (but not of class $C^1$ at $\uo$). Here we also note that $(\partial \iIo)_\mu$ entails a gradient structure,
$$
(\partial \iIo)_\mu = \partial (\iIo)_\mu,
$$
where $(\iIo)_\mu$ is the \emph{Moreau-Yosida regularization} of $\iIo$, that is,
$$
(\iIo)_\mu (s) = \begin{cases}
		      0 &\mbox{ if } s \geq \uo,\\
		      \frac{|s-\uo|^2}{2\mu} &\mbox{ if } s < \uo.
		     \end{cases}
$$
In particular, we observe $\gamma_\mu = -f$ on $(\uo,+\infty)$. Moreover, $\gamma_\mu$ admits a zero $\uol \in (a_-,\uo)$ (for $\mu > 0$ small enough) satisfying
$$
\uo - C \mu < \uol < \uo \quad \mbox{ for } \ \mu > 0
$$
for some constant $C > 0$. 

Now, let $\mu > 0$ be fixed. The second one is a smooth (at least of class $C^1$) approximation $g_n$ (for each $n \in \mathbb N$) of $\gamma_\mu$ satisfying
\begin{enumerate}
 \item $g_n(z) = \gamma_\mu(z) = -f(z)$ for all $z \geq \uo$,
 \item $g_n$ is increasing in $n$ and $-f \leq g_n \leq \gamma_\mu$ on $(-\infty,\uo]$,
 \item $g_n \to \gamma_\mu$ locally uniformly on $\R$ as $n \to +\infty$,
 \item for each $n \in \mathbb N$, $g_n$ has a zero $\uon$ such that $\uol - \frac C n < \uon \leq \uol$ for some $C > 0$; hence $g_n(\uon) = g_n(a_0) = g_n(a_+) = 0$,
 \item $g_n < 0$ on $(\uon,a_0)$, $g_n > 0$ on $(a_0,a_+)$,
 \item $g_n'(\uon) < 0$, $g_n'(a_+) < 0$.
\end{enumerate}
\begin{figure}[t]
\includegraphics[scale=0.37]{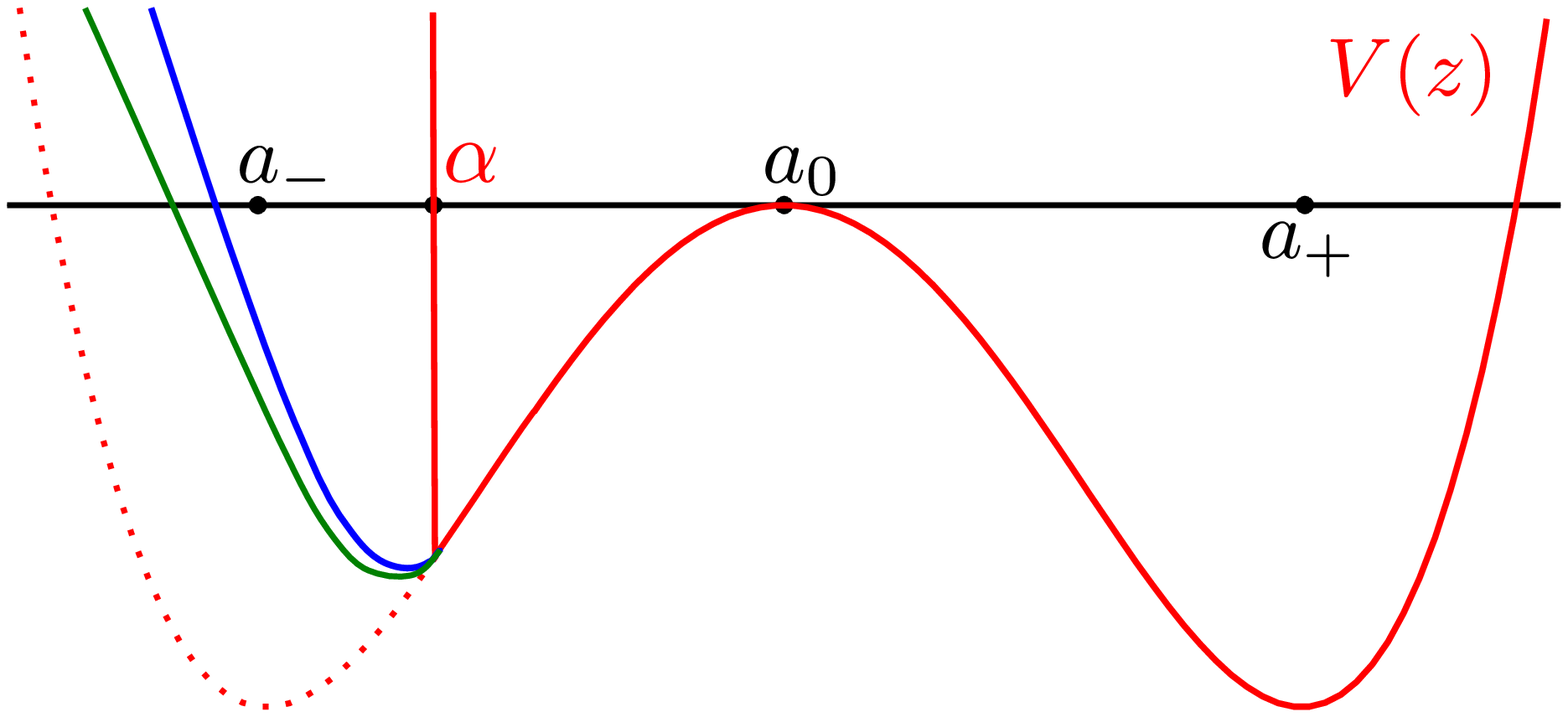}
\includegraphics[scale=0.37]{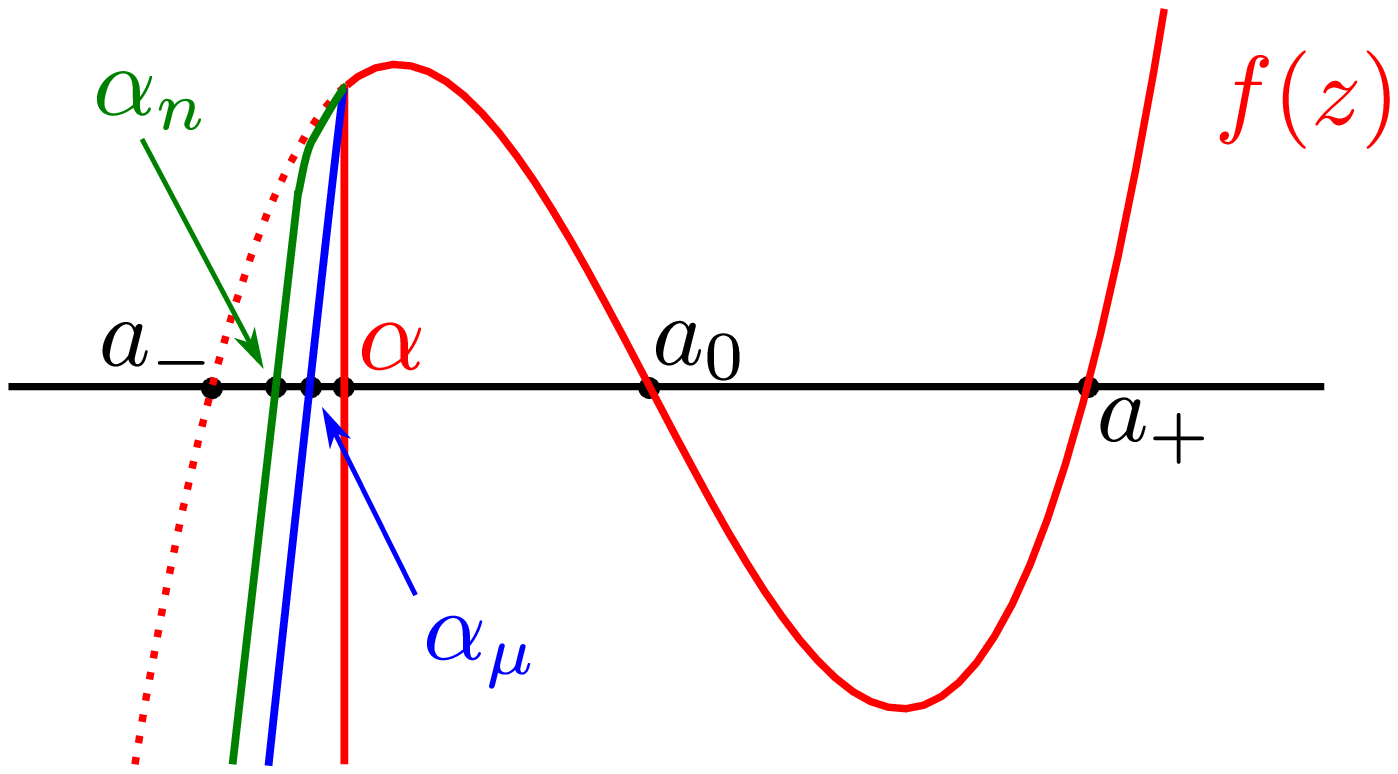}
\caption{ {\sc Cut-offed double-well potential and regularizations.} Here a balanced double-well potential $W(z)$ is considered, that is, $W(a_+)=W(a_-)$. Left: The red curve is the graph of the cut-offed potential $V(z) = W(z) + \iIIal(z)$. The dotted curve is the cut-offed part of the graph for $W(z)$. The blue one is a part of the graph for the Moreau-Yosida regularized potential $W_\mu(z)$, i.e., a primitive function of $-\gamma_\mu(z)$, and $\uo_\mu$ is the local minimizer of $W_\mu(z)$. Moreover, the green one is that for the smoothly approximated potential $W_n(z)$, i.e., a primitive function of $-g_n(z)$, and $\uo_n$ is the local minimizer of $W_n(z)$. It is noteworthy that the cut-offed potential and their regularizations are no longer balanced although the original one is balanced. Intuitively speaking, such an imbalanced situation induces a driving force of traveling waves for \eqref{irAC}. Right: The graphs of $-\gamma(z) = f(z) + \partial \iIIal(z)$ (red), $-\gamma_\mu(z)$ (blue) and $-g_n(z)$ (green).}\label{F:2}
\end{figure}
Here, to be precise, one should write $g_{\mu,n}$ and $\alpha_{\mu,n}$ instead of $g_n$ and $\uon$, respectively. However, we omit the subscript $\mu$, unless any confusion may arise. Then (i) implies
\begin{equation}\label{a}
\int^{a_+}_{\uon} g_n(z) \, \d z
= - \int^{a_+}_{\uo} f(z) \, \d z + \int^{\uo}_{\uon} g_n(z) \, \d z
> 0
\end{equation}
for $n \in \mathbb N$ large enough and $\mu > 0$ small enough, since $g_n = \gamma_\mu = -f$ on $(\uo,+\infty)$, $\int^{a_+}_{\uo} f(z) \, \d z < 0$ by \eqref{alpha-hyp} and
\begin{align*}
 \left| \int^{\uo}_{\uon} g_n(\zeta) \, \d \zeta \right|
 \leq (\uo - \uon) \sup_{s \in [a_-,a_0]}f(s) \leq \left(C \mu + \dfrac C n\right) \sup_{s \in [a_-,a_0]}f(s).
\end{align*}
Therefore by~\cite{FM,XChen} (and~\cite[\S 4.2, c]{Evans}), for each $n \in \mathbb N$ one can construct a heteroclinic orbit $(\phi_n,\psi_n)$ and a negative constant $c_n$ satisfying \eqref{phi-1} and \eqref{phi-2} with $\gamma$ and $c$ replaced by $g_n$ and $c_n$, respectively, as well as
$$
\psi_n > 0, \quad
\lim_{s \to +\infty} (\phi_n,\psi_n) = (a_+,0), \quad
\lim_{s \to -\infty} (\phi_n,\psi_n) = (\uon,0).
$$
By translation, one can assume without any loss of generality that
\begin{equation}\label{zero}
\phi_n(0) = a_0 \quad \mbox{ for any } \ n \in \mathbb{N}.
\end{equation}
Recalling \eqref{ei-W} with $W$ replaced by the corresponding potential $W_n = - \hat g_n$ and integrating both sides over $(s,0)$, we have
$$
 \dfrac 1 2 \phi_n'(0)^2 -  \dfrac 1 2 \phi_n'(s)^2
 + c_n \int^0_s \phi_n'(\sigma)^2 \, \d \sigma
 = W_n(\phi_n(0)) - W_n(\phi_n(s)).
 $$
 Pass to the limit as $s \to -\infty$. Then it yields that
 \begin{align*}
\dfrac 1 2 \phi_n'(0)^2 + c_n \int^0_{-\infty} \phi_n'(\sigma)^2 \, \d \sigma
&= W_n(a_0) - W_n(\uo_n)\\
&\geq W_n(a_0) - W_n(\uo) = W(a_0) - W(\uo),
 \end{align*}
 which together with $c_n < 0$ implies
 \begin{equation}\label{zero-deri}
  \dfrac 1 2 \phi_n'(0)^2 \geq W(a_0)-W(\uo) > 0 \quad \mbox{ for any } \ n \in \mathbb{N}.
 \end{equation}
Here we also used the fact that $W(a_0)-W(\uo) = \int^{a_0}_\uo f(z) \, \d z > 0$ by $f > 0$ in $(a_-,a_0)$ and $\alpha \in (a_-,a_0)$ (see \eqref{f-1} and \eqref{alpha-hyp}).

We move on to the limiting procedure of $(\phi_n,\psi_n)$ as $n \to +\infty$. To this end, we first establish a priori estimates. One finds immediately that
\begin{equation}\label{phi-n-bdd-01}
\uo - \frac C n - C \mu \leq \uon < \phi_n < a_+, \quad 0 < \psi_n.
\end{equation}
We further derive an upper bound for $\psi_n$. By simple calculation, the energy
$$
E_n(v,w) := \dfrac{w^2}2 + \int^v_{\uon} g_n(z) \, \d z
$$
turns out to satisfy
$$
\dfrac{\d}{\d s} E_n(\phi_n(s),\psi_n(s)) = - c_n \psi_n(s)^2 \geq 0.
$$
Hence the function $s \mapsto E_n(\phi_n(s), \psi_n(s))$ is non-decreasing. Integrating both sides over $(-\infty,s)$, we have
$$
E_n(\phi_n(s),\psi_n(s)) = E_n(\uon,0) - c_n \int^{s}_{-\infty} \psi_n(\sigma)^2 \, \d \sigma = - c_n \int^{s}_{-\infty} \psi_n(\sigma)^2 \, \d \sigma.
$$
From the relation
$$
c_n = - \dfrac{\int^{a_+}_{\uon} g_n(z) \, \d z}{\int_{\R} \psi_n(\sigma)^2 \, \d \sigma} \stackrel{\eqref{a}}< 0.
$$
it follows that
$$
E_n(\phi_n(s),\psi_n(s)) \leq \int^{a_+}_{\uon} g_n(z) \, \d z
\leq - \int^{a_+}_{\uo} f(z) \, \d z,
$$
which implies
\begin{align*}
\dfrac 1 2 \psi_n(s)^2
&\leq - \int^{a_+}_{\uo} f(z) \, \d z- \int^{\phi_n(s)}_{\uon} g_n(z) \, \d z\\
&\leq - \int^{a_+}_{\uo} f(z) \, \d z - \int^{a_0}_{\uon} g_n(z) \, \d z\\
&\leq - \int^{a_+}_{\uo} f(z) \, \d z + \left(a_0 - \uo + \frac C n + C \mu \right) \sup_{s \in [a_-,a_0]} f(s).
\end{align*}
Therefore
\begin{equation}\label{phi-n-bdd-02}
0 < \psi_n(s) \leq C_1,
\end{equation}
where $C_1 := \sqrt 2 [-\int^{a_+}_{\uo} f(z) \, \d z + \left(a_0 - \uo + 1\right) \sup_{s \in [a_-,a_0]} f(s)]^{1/2}$, for all $n \gg 1$ and $0 < \mu \ll 1$.

We next derive a uniform estimate for $c_n$, that is,
\begin{equation}\label{c-bdd}
- C_2 \leq c_n < 0 \quad \mbox{ for all } \ n \in \mathbb N,
\end{equation}
where $C_2$ is independent of $n$ as well as $\mu$. 
We start with observing by \eqref{phi-1} and \eqref{phi-2} with $\gamma$ and $c$ replaced by $g_n$ and $c_n$, respectively, that
$$
c_n = \dfrac{-\psi_n'(s) - g_n(\phi_n(s))}{\psi_n(s)} \quad \mbox{ for } \ s \in \R.
$$
From the fact that $\psi_n(s) \to 0$ as $s \to \pm\infty$ and $\psi_n > 0$ in $\R$, there exists $s_n \in \R$ such that $\psi_n(s_n) = \max_{s \in \R} \psi_n(s) > 0$, and hence, $\psi_n'(s_n) = 0$. Thus
$$
c_n = \dfrac{- g_n(\phi_n(s_n))}{\psi_n(s_n)}
\geq - \dfrac{\|g_n\|_{L^\infty(\uon,a_+)}}{\psi_n(s_n)}.
$$
Due to the energy inequality,
$$
E_n(\phi_n(s),\psi_n(s)) \geq E_n(\uon,0) = 0,
$$
which implies
$$
\dfrac 1 2 \psi_n(s)^2 + \hat g_n(\phi_n(s)) \geq 0,
$$
where $\hat g_n$ is given by $\hat g_n(z) := \int^z_{\uon} g_n(\zeta) \, \d \zeta$. Thus we obtain
$$
\psi_n(s) \geq \sqrt{-2 \hat g_n(\phi_n(s))},
$$
provided that $\hat g_n(\phi_n(s)) \leq 0$. Furthermore,
$$
\hat g_n(z) = \int^z_{\uon} g_n(\zeta) \, \d \zeta < 0 \quad \mbox{ for all } \ z \in [\uon,a_0].
$$
Moreover, recall that $\phi_n(0) = a_0$. Thus we deduce that
$$
\psi_n(0) \geq \sqrt{-2 \hat g_n(\phi_n(0))} = \sqrt{-2 \hat g_n(a_0)}
\geq \sqrt{2 \hat f(a_0)} > 0,
$$
where $\hat f(s) := \int^s_\alpha f(z) \, \d z$. Here we used the fact by $f > 0$ in $(a_-,a_0)$ that
$$
\hat g_n(a_0) = \int^{a_0}_{\uon} g_n(z) \, \d z
\leq - \int^{a_0}_{\uo} f(z) \, \d z = - \hat f(a_0) < 0.
$$
Consequently,
$$
c_n \geq - \dfrac{\|g_n\|_{L^\infty(\uon,a_+)}}{\psi_n(s_n)}
\geq - \dfrac{\|g_n\|_{L^\infty(\uon,a_+)}}{\psi_n(0)}
\geq - \dfrac{\|f\|_{L^\infty(a_-,a_+)}}{\sqrt{2 \hat f(a_0)}}
> -\infty.
$$
Here, we also used the fact $\|g_n\|_{L^\infty(\uon,a_+)} \leq \|f\|_{L^\infty(a_-,a_+)}$. Setting $C_2 := \|f\|_{L^\infty(a_-,a_+)}/\sqrt{2 \hat f(a_0)} < +\infty$, we obtain \eqref{c-bdd}.

 From \eqref{phi-1} and \eqref{phi-2} with $\gamma = \gamma_n$ along with \eqref{phi-n-bdd-01} and \eqref{phi-n-bdd-02},  we further deduce that
$$
|\phi_n'(s)| \leq C, \quad |\psi_n'(s)| \leq C \quad \mbox{ for all } s \in \R \ \mbox{ and } \ n \in \mathbb N,
$$
where $C$ is independent of $n \to +\infty$. Therefore, up to a (not relabeled) subsequence,
\begin{align*}
 \phi_n \to \phi \quad &\mbox{ weakly star in } W^{1,\infty}(\R),\\
 &\mbox{ locally uniformly on } \R,\\
 \psi_n \to \psi \quad &\mbox{ weakly star in } W^{1,\infty}(\R),\\
  &\mbox{ locally uniformly on } \R
\end{align*}
 for some $\phi, \psi \in W^{1,\infty}(\R)$.  Furthermore, since $g_n \to \gamma_\mu$ locally uniformly on $\R$, we have
\begin{align*}
|g_n(\phi_n) - \gamma_\mu(\phi)| &\leq |g_n(\phi_n) - \gamma_\mu(\phi_n)| + |\gamma_\mu(\phi_n) - \gamma_\mu(\phi)|\\
 &\leq \sup_{z \in [a_-,a_+]}|g_n(z) - \gamma_\mu(z)| + |\gamma_\mu(\phi_n) - \gamma_\mu(\phi)|\\
 &\to 0 \ \mbox{ locally uniformly on } \R \ \mbox{ as } n \to +\infty.
\end{align*}
Thus
\begin{align*}
 g_n(\phi_n) \to \gamma_\mu(\phi) \quad &\mbox{ weakly star in } L^\infty(\R),\\
 &\mbox{ locally uniformly on } \R.
\end{align*}
Consequently, the limit (denoted by $(\phi_\mu,\psi_\mu)$) of $(\phi_n,\psi_n)$ as $n \to +\infty$ solves \eqref{phi-1} and \eqref{phi-2} with $\gamma$ and $c$ replaced by $\gamma_\mu$ and some constant $c_\mu$, and moreover, it follows  from \eqref{phi-1} and \eqref{phi-2} with $\gamma = \gamma_\mu$  that
$$
\phi_\mu \in C_b^2(\R), \quad \psi_\mu \in C_b^1(\R),
$$
where $C_b^k := C^k \cap W^{k,\infty}$.  One also finds that $\uon \to \uol$, up to a subsequence, as $n \to +\infty$, by $\uol - C/n < \uon \leq \uol$.  Furthermore, $(\phi_\mu,\psi_\mu)$ satisfies the same a priori estimates obtained so far:
\begin{equation}\label{est-l}
 \uo - C \mu \leq \uol \leq \phi_\mu \leq a_+, \quad 0 \leq \psi_\mu \leq C_1, \quad - C_2 \leq c_\mu < 0.
\end{equation}
We also remark that \eqref{zero} and \eqref{zero-deri} hold true with $\phi_n$ replaced by the limit $\phi_\mu$ (in particular, $\phi_\mu$ is not constant).

Now, we are ready to discuss convergence of $(\phi_\mu,\psi_\mu)$ as $\mu \to +0$. Indeed, all bounds in the preceding a priori estimates are  bounded as $\mu \to +0$, since we find that $\min_{z \in[a_-,a_+]}(-f(z)) \leq \gamma_\mu(\phi_\mu) \leq \max_{z\in[a_-,a_+]} (-f(z))$ by $\gamma_\mu(\alpha_\mu) = 0$ and \eqref{est-l},  %\CKcomment{I am not entirely sure whether this statement holds since we also have a-priori estimates above for $\phi_n'$ and $\psi_n'$ and these could be $\mu$-dependent as far as I can see, at least for $\psi_n'$ this might happen due to the inclusion.}
and therefore, one can obtain, up to a (not relabeled) subsequence, convergences of $(\phi_\mu, \psi_\mu)$ in a similar manner,
\begin{align*}
 \phi_\mu \to \phi \quad &\mbox{ weakly star in } W^{1,\infty}(\R),\\
 &\mbox{ locally uniformly on } \R,\\
  \psi_\mu \to \psi \quad &\mbox{ weakly star in } W^{1,\infty}(\R),\\
 &\mbox{ locally uniformly on } \R,\\
\gamma_\mu(\phi_\mu) \to \xi \quad &\mbox{ weakly star in } L^\infty(\R)
\end{align*}
for some $\phi \in W^{2,\infty}(\R)$, $\psi \in W^{1,\infty}(\R)$ and $\xi \in L^\infty(\R)$. Some difference from the previous step arises in the identification of the limit of $\gamma_\mu(\phi_\mu)$ as $\mu \to +0$. By using the graph-convergence property of Yosida approximation  (or demiclosedness of maximal monotone graphs, see, e.g.,~\cite{HB1}), one can identify the limit $\xi$ as
$$
\phi \geq \uo \quad \mbox{ and } \quad \xi \in \gamma(\phi).
$$
Furthermore, $c_\mu$ also converges to a constant $c$ by extracting a (not relabeled) subsequence. Consequently, $(\phi,\psi)$ solves \eqref{phi-1} and \eqref{phi-2} with $\gamma$ and $c$ (hence $\phi$ solves \eqref{phi}), and moreover, we find that
$$
\phi \in W^{2,\infty}(\R), \quad \psi \in W^{1,\infty}(\R), \quad
\uo \leq \phi \leq a_+, \quad 0 \leq \psi \leq C_1,
$$
and furthermore, \eqref{zero} and \eqref{zero-deri} are satisfied with $\phi_n$ replaced by $\phi$ (thus, $\phi$ is not constant). 

We next check \eqref{phi-bc}. Since $\phi$ is non-decreasing and non-constant, $\phi(s)$ is convergent as $s \to \pm \infty$ such that
$$
\alpha \leq \ell_- := \lim_{s \to -\infty} \phi(s) < a_0 < \ell_+ := \lim_{s \to +\infty} \phi(s) \leq a_+.
$$
The next lemma clarifies the behavior of $\phi(s)$ as $s \to +\infty$ (and then, $\ell_-$ will be determined). Moreover, it also exhibits a significant difference between \eqref{irAC} and \eqref{cac}.

\begin{lemma}
 There exists $s_* \in \R$ such that $\phi(s) = \uo$ for all $s \leq s_*$ {\rm (}and hence, $\ell_- = \uo${\rm )} and $\phi(s) > \alpha$ for all $s > s_*$.
\end{lemma}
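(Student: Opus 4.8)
The plan is to show that the limit profile $\phi$ must actually attain the obstacle value $\uo$ on a left half-line, by first pinning down the behavior at $+\infty$ and then arguing that $\phi$ cannot stay strictly above $\uo$ everywhere. First I would identify $\ell_+$. Since $\phi$ is non-decreasing, bounded, and non-constant, $\phi(s)\to\ell_+\in(a_0,a_+]$ and $\psi(s)=\phi'(s)$ has a convergent sequence $\to 0$ along $s_k\to+\infty$; combined with $\psi'=-c\psi-\gamma(\phi)$ and $\psi$ bounded in $W^{1,\infty}$ one gets $\psi(s)\to 0$ and then $\gamma(\phi(s))\to 0$ as $s\to+\infty$. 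Because $\phi\geq\uo>a_-$ near $+\infty$ and $\ell_+>a_0$, on a neighborhood of $+\infty$ we have $\gamma(\phi)=-f(\phi)$, so $f(\ell_+)=0$, forcing $\ell_+=a_+$ by \eqref{f-1}. This also gives $\lim_{s\to+\infty}\phi'(s)=0$, i.e.\ part of \eqref{phi-bc2}.

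Next I would run the energy identity \eqref{ei-W} (valid for the limit $\phi$, which solves \eqref{phi}, using the chain rule $\phi'\eta=(\d/\d s)\iIo(\phi)$ and $\iIo(\phi)\equiv0$) integrated from $s$ to $0$, letting $s\to-\infty$ along a sequence where $\phi'\to0$ (such a sequence exists since $\phi'\in L^\infty$ and, being non-negative with $\phi$ convergent, $\phi'$ cannot be bounded away from $0$). This yields
$$
\tfrac12\phi'(0)^2 + c\int_{-\infty}^0\phi'(\sigma)^2\,\d\sigma = W(a_0)-W(\ell_-),
$$
and more generally, integrating from $s$ to $t$ and letting both ends go to $\pm\infty$ along such sequences, $c\int_\R\phi'^2 = W(a_+)-W(\ell_-)$. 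Now suppose for contradiction that $\phi(s)>\uo$ for all $s\in\R$. Then $\phi$ solves $-c\phi'-\phi''+f(\phi)=0$ on all of $\R$ (the classical profile equation \eqref{phi-cac}), and the usual phase-plane analysis for bistable reactions applies: the pair $(\phi,\psi)$ is a heteroclinic from the rest state $(\ell_-,0)$ to $(a_+,0)$ with $\psi>0$ in between. For $(\ell_-,0)$ to be an equilibrium we need $f(\ell_-)=0$, hence $\ell_-\in\{a_-,a_0\}$; since $\ell_-\geq\uo>a_-$ and $\ell_-<a_0$, both are impossible — contradiction. Therefore the set $\{s:\phi(s)=\uo\}$ is nonempty; since $\phi$ is non-decreasing and continuous, this set is a closed interval of the form $(-\infty,s_*]$, and $\phi(s)>\uo$ for $s>s_*$, which is exactly the claim (and forces $\ell_-=\uo$).

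The main obstacle is the last step: ruling out a globally strictly-monotone profile above the obstacle. One has to be careful that the "contradiction" is genuinely forced by the structure of $f$ on $(a_-,a_0)$ and not merely by balancedness of $\hat f$ — the point is that $\ell_-$ would have to be a zero of $f$ strictly between $a_-$ and $a_0$, of which there are none under \eqref{f-1}. An alternative, perhaps cleaner, route to the same contradiction is to invoke \eqref{alpha-hyp}: if $\phi>\uo$ everywhere then $\ell_-\geq\uo$ and the energy relation gives $c\int_\R\phi'^2 = W(a_+)-W(\ell_-)\leq W(a_+)-W(\uo)<0$ only if $W(\ell_-)\geq W(\uo)$, i.e.\ $\hat f(\ell_-)\geq 0$; but combined with $f(\ell_-)=0$ and $\ell_-\in(a_-,a_0)$ where $f>0$ and $\hat f$ is strictly increasing past $\uo$... — in any case one reaches the same place. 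I would present the phase-plane version as the primary argument since it is the most transparent, then note that continuity and monotonicity of $\phi$ upgrade the pointwise statement to the half-line statement \eqref{phi-deg}.
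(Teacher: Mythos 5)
Your argument takes a genuinely different route from the paper's, and the core idea does work, but one step is glossed over in a way worth flagging. The paper's proof is entirely elementary: assuming $\phi>\uo$ everywhere, it uses $\phi(0)=a_0$, $\phi'(0)>0$ and monotonicity to trap $\phi$ in some $(\uo,\vep_0)\subset(a_-,a_0)$ on a left half-line $(-\infty,s_0)$, so that $f(\phi)\ge d_0>0$ there; integrating the resulting inequality $\phi''+c\phi'\ge d_0$ from $\sigma$ to $s$ and sending $\sigma\to-\infty$ forces $\phi'(s)+c\phi(s)$ to be $+\infty$, contradicting boundedness of $\phi,\phi'$. That argument needs neither the value of $f$ at $\ell_-$ nor any asymptotics of $\phi'$ at $-\infty$. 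You instead argue that $(\ell_-,0)$ must be an equilibrium of the autonomous planar system, i.e.\ $f(\ell_-)=0$, which is impossible since $\ell_-\in[\uo,a_0)\subset(a_-,a_0)$ where $f>0$. That conclusion is correct, but the step ``$(\ell_-,0)$ is an equilibrium'' is not free: you only produced a sequence $s_k\to-\infty$ with $\phi'(s_k)\to0$. To conclude that the $\alpha$-limit of the orbit is the singleton $\{(\ell_-,0)\}$ and hence an equilibrium, you need either to upgrade to full convergence $\phi'(s)\to0$ (e.g.\ via Lipschitz continuity of $\phi'$ plus $\int_s^{s+1}\phi'\to0$, or via monotonicity of $\tfrac12\phi'^2-W(\phi)$ when $c\le0$), or to invoke the invariance of the $\alpha$-limit set (it sits in $\{\phi=\ell_-\}$, which is invariant only at $\psi=0$). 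That is standard dynamical-systems material, but it should be stated; as written there is a small gap. Your ``alternative cleaner route'' via the energy relation still implicitly requires $f(\ell_-)=0$, so it does not actually bypass this, as you seem to sense. Finally, note that identifying $\ell_+=a_+$ and $\phi'(\pm\infty)\to0$ are handled by separate lemmas in the paper, so your proposal proves more than the statement asks; that is harmless, but the overhead is precisely what the paper's more local argument avoids.
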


\begin{proof}
 We prove this by contradiction. Suppose on the contrary that $\phi > \uo$ in $\R$. Then $\phi$ also solves \eqref{phi-cac} by the fact that $\partial \iIo(\phi) = \{0\}$. Moreover, from the fact that $\phi(0) = a_0$ and $\phi'(0) > 0$, one can take constants $\vep_0 < a_0$ and $s_0 < 0$ close to $a_0$ and $0$, respectively, such that
 $$
\uo < \phi < \vep_0 \ \mbox{ in } (-\infty,s_0).
 $$
Hence we observe that
 $$
 f(\phi(s)) \geq d_0 := \inf_{z \in [\uo,\vep_0]} f(z) > 0 \quad \mbox{ for all } \ s \in (-\infty,s_0),
 $$
because $f > 0$ in $[\uo,\vep_0] \subset (a_-,a_0)$. Thus we have
 $$
 \phi'' + c\phi' \geq d_0 \ \mbox{ in } (-\infty,s_0).
 $$
 Integrating this over $(\sigma,s)$ with $-\infty < \sigma < s < s_0$, we have
 $$
 \phi'(s) + c \phi(s) \geq \phi'(\sigma) + c \phi(\sigma) + d_0 (s - \sigma).
 $$
 Letting $\sigma \to -\infty$ and employing the fact that $d_0 > 0$, we infer that
 $$
 \phi'(s) + c \phi(s) \geq \phi'(\sigma) + c \phi(\sigma) + d_0 (s - \sigma)
 \geq c \ell_- + d_0 (s - \sigma) \to + \infty
 $$
 as $\sigma \to -\infty$.
 However, this is a contradiction to the boundedness of $\phi'=\psi$ and $\phi$ mentioned above. %\CKcomment{Crucially we only use the boundedness of $\phi'$ here not of $\psi'$; see comment above.}. 
Therefore there exists $s_* \in \R$ such that $\phi = \alpha$ in $(-\infty,s_*]$ and $\phi > \alpha$ in $(s_*,+\infty)$ by monotonicity of $\phi$.
\end{proof}

Let us also check $\ell_+ = a_+$. Suppose on the contrary that $\ell_+ < a_+$. Then one can take numbers $\vep_1 > a_0$ and $s_1 > 0$ close to $a_0$ and $0$, respectively, such that
$$
\vep_1 < \phi \leq \ell_+ < a_+ \ \mbox{ in } (s_1, +\infty).
$$
Thus
$$
\phi'' + c \phi' = f(\phi) \leq d_1 := \sup_{z \in [\vep_1,\ell_+]} f(z) < 0,
$$
which implies
$$
\phi'(s) + c \phi(s) \leq \phi'(\sigma) + c\phi(\sigma) + d_1(s-\sigma)
$$
for $s_1 < \sigma < s < +\infty$. Then $\phi'(s) + c \phi(s) \to -\infty$ as $s \to +\infty$; however, it contradicts the boundedness of $\phi$ and $\phi'$. Therefore $\ell_+ = a_+$. Thus we have checked \eqref{phi-bc}.

Moreover, we shall prove
\begin{lemma}\label{L:phi'->0}
Let $\phi$ satisfy \eqref{phi} and \eqref{phi-bc} for some $\alpha \in (a_-,a_0)$. Then it holds that $\phi'(s) \to 0$ as $s \to +\infty$.  
\end{lemma}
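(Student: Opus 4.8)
The plan is to exploit the energy identity \eqref{ei-W} together with the fact that $\phi$ is monotone and bounded, so that $\phi$ has finite limits at $\pm\infty$ and (by the previous lemmas) $\phi\equiv\alpha$ on $(-\infty,s_*]$ and $\phi\to a_+$ as $s\to+\infty$. First I would recall from \eqref{ei-W}, integrated over $(s,t)$, that
$$
\frac12\phi'(t)^2-\frac12\phi'(s)^2+c\int_s^t\phi'(\sigma)^2\,\d\sigma=W(\phi(t))-W(\phi(s)).
$$
Since $\phi'$ is bounded (by $C_1$) and $W(\phi(s))\to W(a_+)$ as $s\to+\infty$, the right-hand side is bounded; and because $c<0$ (Lemma \ref{L:c}) while $\phi'\ge0$, the term $c\int_s^t\phi'(\sigma)^2\,\d\sigma$ is nonincreasing in $t$, hence bounded below only if $\int_\R\phi'(\sigma)^2\,\d\sigma<+\infty$. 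Thus $\phi'\in L^2(\R)$ (this is also part of Lemma \ref{L:c}), and in particular $\liminf_{s\to+\infty}\phi'(s)=0$.

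Next I would upgrade this $\liminf$ to a genuine limit. The cleanest route is to observe that on $(s_*,+\infty)$ the profile satisfies the smooth ODE $\phi''=c\phi'+f(\phi)$; since $\phi$ and $\phi'$ are bounded and $f$ is continuous, $\phi''$ is bounded, so $\phi'$ is uniformly continuous on $\R$. A uniformly continuous function whose square is integrable over $\R$ must tend to $0$ at $+\infty$ (the standard Barbălat-type argument: if $\phi'(s_k)\ge\delta>0$ along some $s_k\to+\infty$, uniform continuity forces $\phi'\ge\delta/2$ on intervals of fixed length around each $s_k$, contradicting $\phi'\in L^2(\R)$). This gives $\phi'(s)\to0$ as $s\to+\infty$.

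An alternative, if one prefers to avoid uniform continuity, is to pass to the limit directly in the energy identity: fix $t$ and let $s\to+\infty$ along a sequence with $\phi'(s)\to0$; this pins down $\tfrac12\phi'(t)^2+c\int_t^{\infty}\phi'(\sigma)^2\,\d\sigma=W(a_+)-W(\phi(t))$, and then letting $t\to+\infty$, using $\phi(t)\to a_+$ and $\int_t^\infty\phi'(\sigma)^2\,\d\sigma\to0$, forces $\phi'(t)^2\to0$. I expect the main (very mild) obstacle to be making the selection of the sequence $s_k$ and the uniform-continuity estimate precise near the free boundary — but since the behavior near $+\infty$ is entirely in the region $\phi>\alpha$ where $\phi\in C^2$ and the ODE is classical, there is no real difficulty there; the degeneracy at $s_*$ plays no role in this statement. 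The symmetric claim $\phi'(s)\to0$ as $s\to-\infty$ is trivial here because $\phi\equiv\alpha$ on $(-\infty,s_*]$, which is presumably why the lemma only asserts the $s\to+\infty$ half.
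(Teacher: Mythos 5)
Your route is genuinely different from the paper's, but as written it is circular at the crucial step: you appeal to Lemma \ref{L:c} for $c<0$, and your whole $L^2$ estimate on $\phi'$ (and hence the Barbalat argument) hinges on this strict negativity. However, Lemma \ref{L:c} lists \eqref{phi-bc2} among its hypotheses, and \eqref{phi-bc2} is precisely what Lemma \ref{L:phi'->0} is meant to establish (the $s\to-\infty$ half being trivial since $\phi\equiv\alpha$ there). At this point in the construction one only knows $c\leq 0$ from $c_\mu<0$ and the passage $\mu\to+0$; if $c=0$, the energy identity \eqref{ei-W} no longer bounds $\int_\R\phi'(\sigma)^2\,\d\sigma$ and your argument collapses. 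The gap is fixable: one can rule out $c=0$ directly by integrating \eqref{ei-W} over $(s_*,t)$ and using $\phi'(s_*)=0$, $\phi(s_*)=\alpha$ to get $\tfrac12\phi'(t)^2 = W(\phi(t))-W(\alpha)\to W(a_+)-W(\alpha)<0$ as $t\to+\infty$, a contradiction. But that step is missing, and it also requires the existence of $s_*$ from the preceding lemma, which the paper's proof of Lemma \ref{L:phi'->0} does not need.

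By contrast, the paper's proof makes no use of the sign of $c$, of $\phi'\in L^2(\R)$, or of $s_*$: since $\phi>a_0$ for $s$ large, $f(\phi)\leq 0$ there, so $\phi'' + c\phi' = f(\phi)\leq 0$, i.e.\ $\phi'+c\phi$ is nonincreasing; choosing a sequence $\sigma_n\to+\infty$ with $\phi'(\sigma_n)\to 0$ (available simply because $\phi$ is bounded and monotone, with no integrability needed) pins down $\lim(\phi'+c\phi)=ca_+$, and then $\phi(s_n)\to a_+$ together with $\phi'\geq 0$ forces $\phi'(s_n)\to 0$ along every sequence. If you patch the circularity as above, your Barbalat-type argument does go through, and your remark that the degeneracy at $s_*$ is irrelevant to the $s\to+\infty$ behavior is correct; the paper's monotonicity argument is simply the more economical one here.
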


\begin{proof}
Since $\phi(s) \to a_+$ as $s \to +\infty$ by \eqref{phi-bc}, there is a sequence $\sigma_n \to +\infty$ such that $\phi'(\sigma_n) \to 0$ (indeed, if not, $\phi(s)$ must be divergent). Now, let $s_n \to +\infty$ be an arbitrarily sequence. One can take a subsequence $(\tilde\sigma_n)$ of $(\sigma_n)$ such that $\tilde\sigma_n \leq s_n$ for each $n \in \mathbb N$. On the other hand, note by \eqref{phi-bc} that $\phi > a_0 > \alpha$ in $(s_1,+\infty)$ for some $s_1 > 0$. Hence \eqref{phi} yields
$$
\phi'' + c \phi' = f(\phi) \leq 0 \ \mbox{ in } (s_1, +\infty).
$$
Integrating both side over $(\tilde\sigma_n,s_n)$ for $n$ large enough, we deduce that
$$
\phi'(s_n) + c \phi(s_n) \leq \phi'(\tilde\sigma_n) + c \phi(\tilde\sigma_n) \to c a_+.
$$
Since $\phi(s_n) \to a_+$ and $\phi'$ is non-negative, we conclude that
 $$
 \phi'(s_n) \to 0,
 $$
which completes the proof.
\end{proof}

Combining the above with the fact that $\phi' \equiv 0$ on $(-\infty,s_*)$, we also obtain \eqref{phi-bc2}. Moreover, as in the classical Allen-Cahn equation, one can also check that
\begin{lemma}\label{L:phi<a+}
Let $\phi$ satisfy \eqref{phi} and \eqref{phi-bc} for some $\alpha \in (a_-,a_0)$. Then it holds that 
$$
\phi < a_+ \ \mbox{ in } \R.
$$
\end{lemma}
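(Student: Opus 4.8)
The plan is to argue by contradiction, combining the monotonicity built into \eqref{phi-bc} with uniqueness for the ordinary differential equation that $\phi$ satisfies on the region where the obstacle is inactive. First I would record the structural facts already at hand: by \eqref{phi-bc} the profile $\phi$ is non-decreasing with $\lim_{s\to+\infty}\phi(s)=a_+$, hence $\phi\le a_+$ in $\R$; and arguing exactly as in the degeneracy lemma above, there is a point $s_*\in\R$ with $\phi\equiv\uo$ on $(-\infty,s_*]$ and $\phi>\uo$ on $(s_*,+\infty)$. On the latter interval $\partial\iIo(\phi)=\{0\}$, so \eqref{phi} reduces to $-c\phi'-\phi''+f(\phi)=0$, i.e. \eqref{phi-cac}; since the right-hand side $-c\phi'+f(\phi)$ is continuous, a bootstrap gives $\phi\in C^2((s_*,+\infty))$. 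Equivalently, $(\phi,\phi')$ solves on $(s_*,+\infty)$ the first-order system $\phi'=\psi$, $\psi'=-c\psi+f(\phi)$, whose right-hand side is $C^1$, hence locally Lipschitz, in $(\phi,\psi)$ because $f\in C^1(\R)$.

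Now suppose, for contradiction, that $\phi(s_0)=a_+$ at some $s_0\in\R$. Since $\phi\equiv\uo<a_+$ on $(-\infty,s_*]$, necessarily $s_0\in(s_*,+\infty)$; and since $\phi$ is non-decreasing with $\phi\le a_+$, it follows that $\phi\equiv a_+$ on $[s_0,+\infty)$, so in particular $\phi'(s_0)=0$ (as $\phi\in C^1$). Thus $(\phi,\phi')$ and the constant $(a_+,0)$ — which is a genuine solution of the above system since $f(a_+)=0$ — take the same value at $s=s_0$. Let $J:=\{\, s\in(s_*,+\infty) : \phi(s)=a_+,\ \phi'(s)=0 \,\}$. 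Then $s_0\in J$, the set $J$ is relatively closed in $(s_*,+\infty)$ by continuity, and it is relatively open because near any of its points both $\phi$ and the constant $a_+$ solve the same initial-value problem, whose solution is unique. Since $(s_*,+\infty)$ is connected, $J=(s_*,+\infty)$, so $\phi\equiv a_+$ on $(s_*,+\infty)$. But $\phi$ is continuous with $\phi(s_*)=\uo\ne a_+$, a contradiction. Hence $\phi<a_+$ in $\R$.

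I expect the argument to be essentially routine; the only points requiring care are (i) justifying that on the open set where the obstacle constraint is inactive $\phi$ is a genuine classical solution of \eqref{phi-cac} — this is the bootstrap noted above, using only the continuity of $f$ — and (ii) the short open–closed (connectedness) bookkeeping needed to upgrade the local ODE uniqueness at $s_0$ to the global conclusion $\phi\equiv a_+$ on all of $(s_*,+\infty)$, which then collides with the boundary value $\phi(s_*)=\uo$. Alternatively, step (ii) can be phrased as the strong maximum principle applied to $w:=a_+-\phi\ge0$, which solves a linear equation $w''-cw'-q(s)w=0$ with $q$ bounded (coming from $f(a_+-w)/w$ near $w=0$) and attains an interior zero at $s_0$; this is logically the same statement.
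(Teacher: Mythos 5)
Your proof is correct and uses essentially the same contradiction argument as the paper: if $\phi(s_0)=a_+$ for some $s_0$, then $\phi'(s_0)=0$ (by monotonicity and $\phi\le a_+$), and since $\phi$ solves the unconstrained profile ODE \eqref{phi-cac} wherever $\phi>\alpha$, ODE uniqueness against the constant solution $\overline\phi\equiv a_+$ forces $\phi\equiv a_+$, which collides with \eqref{phi-bc}. The paper's version is terser: it asserts directly that local uniqueness yields $\phi\equiv a_+$ on all of $\R$ (implicitly using that the leftward propagation of $\phi\equiv a_+>\alpha$ never re-activates the obstacle) and contradicts $\phi(-\infty)=\alpha$. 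You instead pin down the degeneracy point $s_*$, run the open--closed connectedness argument explicitly on $(s_*,+\infty)$, and land the contradiction at the finite point $\phi(s_*)=\alpha$. Both bookkeeping choices are sound; yours spells out where the ODE actually holds (and offers the strong-maximum-principle paraphrase), at the small cost of invoking the preceding degeneracy lemma, which the paper's proof of this particular statement does not need.
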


\begin{proof}
Indeed, if $\phi$ could attain $a_+ > \uo$ at some $s_0 \in \R$, then $\phi$ solves \eqref{phi-cac} in a neighbourhood of $s_0$, and hence, $\phi'(s_0) = 0$ (see \eqref{phi} and \eqref{phi-bc}). On the other hand, the constant function $\overline\phi \equiv a_+$ is a solution of \eqref{phi-cac}. Hence due to the uniqueness of solutions to ordinary differential equations (ODEs) with locally Lipschitz nonlinearity, we conclude that $\phi \equiv a_+$ on $\R$. However, this is a contradiction to \eqref{phi-bc}. 
\end{proof}

One may also prove the following

\begin{lemma}\label{L:phi'>0} Under \eqref{f-1}, let $\phi$ satisfy \eqref{phi} and \eqref{phi-bc} for some $\alpha \in (a_-,a_0)$. Then it holds that
$$
\phi'(s) > 0 \ \mbox{ if } \ \phi(s) > \uo.
$$
\end{lemma}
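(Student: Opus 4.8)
The plan is to restrict attention to the set where $\phi>\alpha$, which by the previous lemma is exactly the half-line $(s_*,+\infty)$. There $\partial\iIo(\phi)=\{0\}$, so \eqref{phi} reduces to the classical profile equation \eqref{phi-cac}, equivalently to the first-order system
\begin{align*}
\phi'&=\psi,\\
\psi'&=-c\psi+f(\phi),
\end{align*}
and a standard bootstrap (using $f\in C^1$ and $\phi\in W^{2,\infty}$) gives $\phi\in C^2((s_*,+\infty))$, so the system holds classically. I would then argue by contradiction: suppose there is $s_0$ with $\phi(s_0)>\alpha$ and $\phi'(s_0)=\psi(s_0)=0$. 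Since $\phi(s_0)>\alpha$ forces $s_0>s_*$, a full neighbourhood of $s_0$ lies in $(s_*,+\infty)$, so in particular $\psi'(s_0)=-c\,\psi(s_0)+f(\phi(s_0))=f(\phi(s_0))$.

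Next I would split according to the value $b:=\phi(s_0)$. By Lemma \ref{L:phi<a+} together with \eqref{phi-bc} we have $\alpha<b<a_+$, so only three possibilities remain. If $b\in(\alpha,a_0)$, then $f(b)>0$ by \eqref{f-1}, hence $\psi'(s_0)>0$, so $\psi(s)<\psi(s_0)=0$ for $s$ slightly below $s_0$, contradicting $\psi=\phi'\geq0$. If $b\in(a_0,a_+)$, then $f(b)<0$, hence $\psi'(s_0)<0$, so $\psi(s)<0$ for $s$ slightly above $s_0$, again contradicting $\psi\geq0$. The remaining case is $b=a_0$: here $f(a_0)=0$, so $(\phi(s_0),\psi(s_0))=(a_0,0)$ is an equilibrium of the first-order system; since $f\in C^1$ the vector field is locally Lipschitz, and uniqueness for ODEs forces the solution through that point to be the constant one, i.e.\ $\phi\equiv a_0$ on $(s_*,+\infty)$, contradicting $\phi(s)\to a_+>a_0$ as $s\to+\infty$ from \eqref{phi-bc}.

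I expect the only genuinely delicate point to be this borderline case $b=a_0$, where the first-derivative test on $\psi$ gives no information and one must instead invoke ODE uniqueness together with the prescribed limits in \eqref{phi-bc}; the other two cases are immediate sign arguments. A minor bookkeeping point is that in the two non-degenerate cases one uses the one-sided neighbourhood of $s_0$ on the appropriate side, which is harmless because $s_0>s_*$. Once a contradiction is reached in every admissible case, we conclude $\phi'(s)>0$ whenever $\phi(s)>\alpha$, as claimed.
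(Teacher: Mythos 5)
Your proof is correct and follows essentially the same strategy as the paper's: argue by contradiction at a point $s_0$ with $\phi(s_0)>\alpha$, $\phi'(s_0)=0$, split on whether $\phi(s_0)=a_0$, use the sign of $f(\phi(s_0))$ to contradict monotonicity in the non-degenerate case, and invoke ODE uniqueness in the degenerate case $\phi(s_0)=a_0$. Your explicit two-sided case analysis of $\phi(s_0)\in(\alpha,a_0)$ versus $(a_0,a_+)$ is simply a more detailed rendering of the paper's one-line observation that $\phi''(s_0)=f(\phi(s_0))\neq0$ together with $\phi'(s_0)=0$ forces a strict local extremum.
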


\begin{proof}
Suppose on the contrary that $\phi(s_0) > \uo$ and $\phi'(s_0)=0$ for some $s_0 \in \R$. By \eqref{phi-bc} and Lemma \ref{L:phi<a+}, we have already known that $a_- < \alpha \leq \phi < a_+$ in $\R$. In case $\phi(s_0) \neq a_0$, we observe by \eqref{f-1} and \eqref{phi} that
$$
\phi''(s_0)=f(\phi(s_0))\neq 0.
$$
Hence $\phi$ attains strict minimum or maximum at $s_0$; however, it is a contradiction to the monotonicity of $\phi$ by \eqref{phi-bc}. In case $\phi(s_0) = a_0$, the uniqueness theorem for ODEs ensures $\phi \equiv a_0$, which also yields a contradiction to \eqref{phi-bc}. Thus we obtain $\phi' > 0$ if $\phi(s) >\alpha$.
\end{proof}

Next, we prove that $\phi$ also solves \eqref{phi2}.

\begin{lemma}
The function $\phi$ obtained above solves \eqref{phi2}.  Hence $u(x,t)=\phi(x-ct)$ is a solution of \eqref{irAC} in $\R \times \R_+$.
\end{lemma}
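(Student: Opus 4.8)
The plan is to read \eqref{phi2} off directly from what has already been established, splitting $\R$ according to the dichotomy \eqref{phi-deg} into the two open half-lines $(-\infty,s_*)$ and $(s_*,+\infty)$ and treating the single junction point $s_*$ as negligible, since $\phi\in W^{2,\infty}(\R)$ by Theorem \ref{T:TW} so that $\phi''$ is only defined a.e.\ anyway.

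On $(s_*,+\infty)$ one has $\phi>\alpha$, hence $\partial\iIo(\phi)=\{0\}$ and $\phi$ solves the classical profile equation \eqref{phi-cac}, i.e.\ $\phi''-f(\phi)=-c\phi'$ there. By Lemma \ref{L:c} the velocity satisfies $c<0$, and by Lemma \ref{L:phi'>0} we have $\phi'>0$ on $(s_*,+\infty)$; therefore $\phi''-f(\phi)=-c\phi'>0$, and taking positive parts gives $\bigl(\phi''-f(\phi)\bigr)_+=-c\phi'$, which is exactly \eqref{phi2} on this interval.

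On $(-\infty,s_*)$ we have $\phi\equiv\alpha$ by \eqref{phi-deg}, so $\phi'\equiv\phi''\equiv 0$ and the left-hand side of \eqref{phi2} vanishes; on the other hand $\bigl(\phi''-f(\phi)\bigr)_+=\bigl(-f(\alpha)\bigr)_+=0$, because $\alpha\in(a_-,a_0)$ forces $f(\alpha)>0$ by \eqref{f-1}. Hence \eqref{phi2} holds on $(-\infty,s_*)$ as well, and combining the two cases \eqref{phi2} holds for a.e.\ $\xi\in\R$, the missing point $s_*$ being a null set.

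It then remains to transfer this to $u(x,t)=\phi(x-ct)$: one computes $\partial_t u=-c\phi'(x-ct)$, $\partial_x^2 u=\phi''(x-ct)$ for a.e.\ $(x,t)$, and $f(u)=f(\phi(x-ct))$, so the a.e.\ identity just proved yields $\partial_t u=\bigl(\partial_x^2 u-f(u)\bigr)_+$ a.e.\ in $\R\times\R_+$; together with $u|_{t=0}=\phi$ and the fact that $u$ and its derivatives $\partial_t u,\partial_x u,\partial_x^2 u$ lie in $L^\infty_{\rm loc}(\R\times\R_+)\subset L^2_{\rm loc}$ (inherited from $\phi\in W^{2,\infty}(\R)$), this exhibits $u$ as an $L^2_{\rm loc}$ solution of \eqref{irAC}. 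There is no genuinely hard step here; the only subtlety is the loss of $C^2$ regularity at $s_*$, which is dealt with by working in the a.e.\ formulation, and one may additionally record that the one-sided limits $\phi''(s_*-)=0$ and $\phi''(s_*+)=f(\alpha)>0$ are consistent with the corresponding one-sided values of $-c\phi'$ at $s_*$, so that no information is in fact lost at the junction.
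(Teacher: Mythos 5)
Your proof is correct and follows essentially the same case split as the paper's (the region where $\phi>\alpha$ versus the region where $\phi=\alpha$). The only minor variation is on the coincidence set, where you evaluate $\phi''-f(\phi)=-f(\alpha)<0$ directly from \eqref{f-1}, whereas the paper instead reads $\phi''-f(\phi)\le 0$ off the obstacle inclusion \eqref{phi} together with $\eta\le 0$; both are immediate.
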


 \begin{proof}
 Let $s \in \R$. In case $\phi(s) > \uo$, noting the fact that $\partial \iIo(\phi(s)) = \{0\}$, $c \leq 0$ and $\phi' \geq 0$, we have
 $$
 0 \leq -c \phi'(s) \stackrel{\eqref{phi}}= \phi''(s) - f(\phi(s))
 = \left( \phi''(s) - f(\phi(s)) \right)_+.
  $$
  In case $\phi(s) = \uo$, it holds that $\phi'(s) = 0$, and hence,
  $$
  0 = -c \phi'(s) \stackrel{\eqref{phi}}\geq \phi''(s) - f(\phi(s)),
  $$
  which implies $(\phi''(s) - f(\phi(s)) )_+ = 0$. Thus $\phi$ solves \eqref{phi2} in $\R$.
 \end{proof}

Moreover, $u(x,t) = \phi(x - ct)$ solves a parabolic obstacle problem,
\begin{equation*}
 \partial_t u - \partial_x^2 u + \partial \iIo(u) + f(u) \ni 0 \ \mbox{ in } \R \times \R_+, \quad u|_{t = 0} = u_0 \ \mbox{ in } \R
\end{equation*}
with $u_0(x) := \phi(x)$. Here we further claim that
$$
\partial \iIo(u) \subset \partial \iII(u).
$$
Indeed, in case $\phi(x-ct) > \phi(x) = u_0(x)$, since $\phi(x) \geq \uo$, we see that $\phi(x-ct) > \uo$, and hence, $\partial \iIo(\phi(x-ct)) = \{0\} = \partial \iII(\phi(x-ct))$. In case $\phi(x-ct) = \phi(x) = u_0(x)$, it follows that $\partial \iII(\phi(x-ct)) = (-\infty,0]$, and therefore, $\partial \iIo(\phi(x-ct)) \subset \partial \iII(\phi(x-ct))$. Thus $u(x,t) = \phi(x -ct)$ turns out to be a solution of
\begin{equation*}
\partial_t u - \partial_x^2 u + \partial \iII(u) + f(u) \ni 0 \ \mbox{ in } \R \times \R_+, \quad u|_{t = 0} = u_0 \ \mbox{ in } \R
\end{equation*}
with $u_0(x) = \phi(x)$. 

%\begin{remark}{\rm In this approach, we constructed some of the traveling wave solutions (namely, some of solutions to \eqref{phi2}). On the other hand, it seems unclear whether they cover all solutions to \eqref{phi2} or not. Indeed, \eqref{phi} is a sufficient condition for solutions to \eqref{phi2}. On the other hand, if we want to catch all solutions to \eqref{phi2}, one should directly solve \eqref{phi2}.}\end{remark}

\begin{remark}[Phase plane analysis]
{\rm
 The traveling wave profile $\phi_{ac}$ for the classical Allen-Cahn equation \eqref{cac} can be constructed based on a phase plane analysis for the profile equation \eqref{phi-cac}. Indeed, the profile $\phi_{ac}$ is a heteroclinic orbit connecting $(a_-,0)$ and $(a_+,0)$ (see the blue curve on Fig.~\ref{F:pp}). On the other hand, for each $\alpha \in (a_-,a_0)$, the profile function $\phi_\alpha$ corresponds to a curve emanating from $(\alpha,0)$ and converging to $(a_+,0)$ (see the red curve on Fig.~\ref{F:pp}). Then $\phi_\alpha$ is not selected as a traveling wave profile for \eqref{cac}, since it is not an entire solution of \eqref{phi-cac}. In contrast to the present paper, one can also construct traveling waves for \eqref{irAC} and investigate their properties by performing a phase plane analysis on solutions for the Cauchy problem of the second order ODE \eqref{phi-cac} in $(0,+\infty)$.  
\begin{figure}[t]
\includegraphics[scale=0.27]{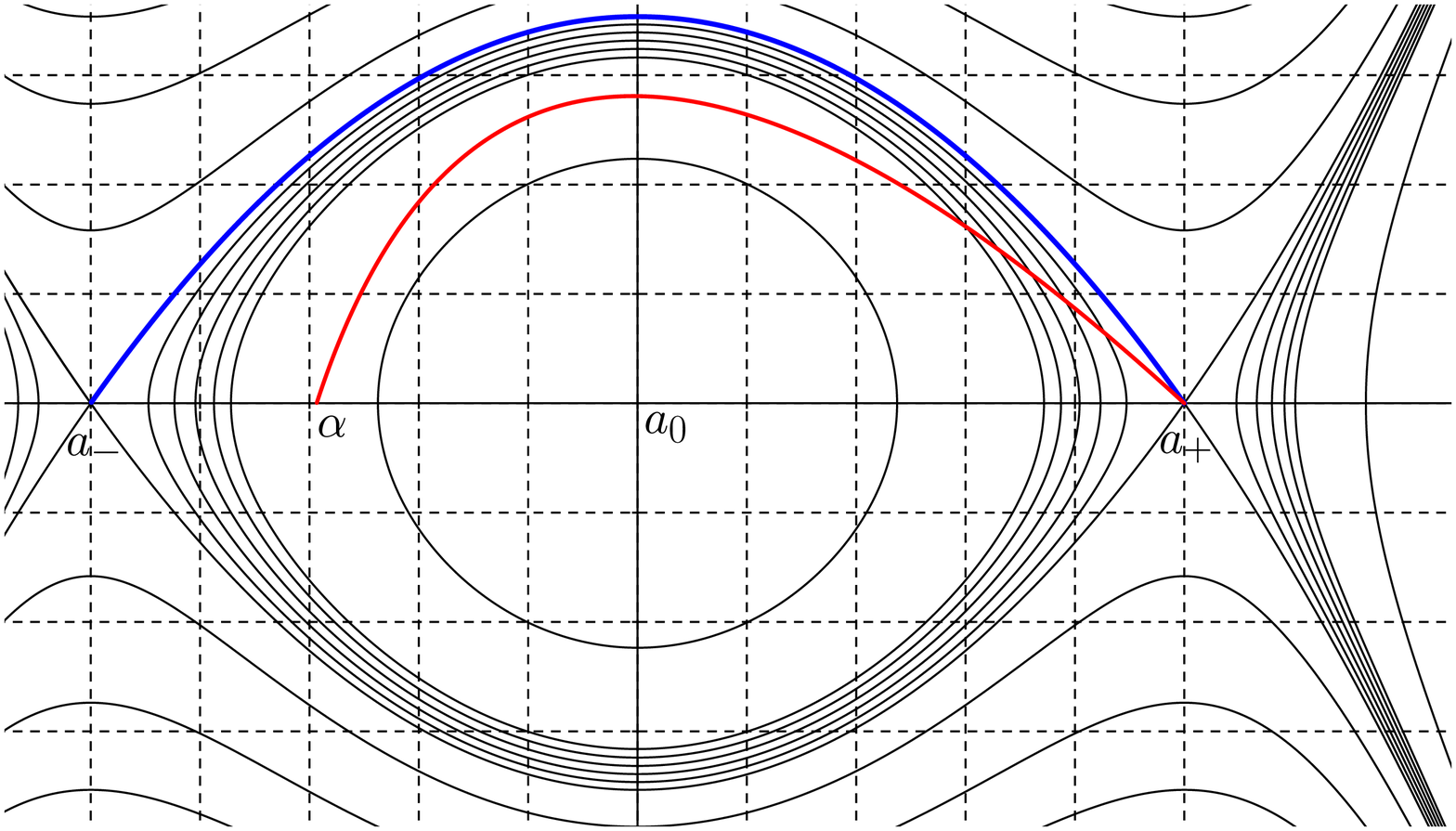} 
\caption{ {\sc Phase plane.} The blue and red curves correspond to traveling wave profiles for \eqref{cac} and \eqref{irAC}, respectively. Here the red one is not an entire solution for \eqref{phi-cac}, but it is for \eqref{phi2}.}\label{F:pp}
\end{figure}

}
\end{remark}

\subsection{Unique determination of velocity constant and profile function}\label{Ss:uni-tw}

As in the case of classical Allen-Cahn equations, for each $\alpha \in (a_-,a_0)$, one can prove the uniqueness of the velocity constant and the profile function for (degenerate) traveling wave solutions to \eqref{irAC} (cf.~Proposition \ref{P:uni-det}).

\begin{proposition}[Uniqueness of velocity constant and profile function]\label{P:tw-unique}
Under the same assumptions as in Theorem \ref{T:TW}, for each $\alpha \in (a_-, a_0)$, the velocity constant $c$ and the profile function $\phi$ satisfying \eqref{phi}, \eqref{phi-bc}, \eqref{phi-bc2} and \eqref{phi-deg} are uniquely determined for the profile function.
\end{proposition}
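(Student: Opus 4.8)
The plan is to exploit the rigid structure forced by the degeneracy condition \eqref{phi-deg}. Any admissible pair $(c,\phi)$ satisfies $\phi \equiv \alpha$ on $(-\infty, s_*]$ and $\phi > \alpha$ on $(s_*,+\infty)$, and on the latter interval $\phi$ solves the classical profile ODE \eqref{phi-cac}. After normalizing $s_* = 0$ by translation (as in \eqref{phi-base}), the decisive observation is that the free boundary at $0$ prescribes a \emph{one-sided Cauchy datum}: we must have $\phi(0) = \alpha$ and $\phi'(0) = 0$ (by $C^1$ regularity, Theorem \ref{T:C1}, or directly since $\phi' \in W^{1,\infty}$ vanishes on $(-\infty,0)$ and is continuous). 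Thus $\phi$ restricted to $[0,+\infty)$ is the solution of \eqref{phi-cac} issuing from the \emph{equilibrium} $(\phi,\phi')=(\alpha,0)$ of the planar system \eqref{phi-1}--\eqref{phi-2} (with $\gamma = -f$, since $\phi \geq \alpha$), and converging to $(a_+,0)$. In the phase plane this is precisely an orbit leaving the rest point $(\alpha,0)$ into the region $\psi > 0$ and limiting onto $(a_+,0)$.

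First I would set up the energy function $E_c(v,w) := \tfrac12 w^2 - \hat f(v) + \hat f(\alpha)$ (primitive normalized so $E_c(\alpha,0)=0$) and recall from \eqref{ei-W}-type computations that along a trajectory $\tfrac{\d}{\d s}E_c(\phi,\phi') = -c\,\phi'(s)^2$. Integrating from $0$ to $+\infty$ and using the boundary behavior $(\phi,\phi')\to(a_+,0)$ together with $\phi'\in L^2$ (Lemma \ref{L:c}) gives the identity $\hat f(\alpha) - \hat f(a_+) = -c\int_0^\infty \phi'(\sigma)^2\,\d\sigma$, i.e. exactly the formula \eqref{c} for $c$ in terms of $\phi$. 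To turn this into uniqueness of $c$ I would argue by monotone dependence: suppose $(c_1,\phi_1)$ and $(c_2,\phi_2)$ are two admissible pairs with, say, $c_1 < c_2 \le 0$. Comparing the two planar trajectories emanating from the common rest point $(\alpha,0)$, a standard phase-plane comparison (the vector field for the larger $c$ pushes $\psi$ down more for $\psi>0$) shows the orbit for $c_2$ lies strictly below that for $c_1$ in the strip $\alpha < v < a_+$; feeding this back into $\hat f(\alpha)-\hat f(a_+) = -c_i\int \phi_i'^2$ and using that $\int\phi_i'^2 = \int \psi_i \,\d v$ along the respective orbits produces a contradiction with $c_1<c_2$ unless the orbits — hence the velocities — coincide. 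Once $c=c_\alpha$ is fixed, uniqueness of $\phi$ on $[0,+\infty)$ is immediate: $\phi$ solves the second-order ODE \eqref{phi-cac} with locally Lipschitz right-hand side and prescribed datum $(\phi(0),\phi'(0)) = (\alpha,0)$ at the free boundary (the degeneracy \eqref{phi-deg} pins the datum down exactly, so there is no loss of a shift parameter in the $\phi>\alpha$ region), and $\phi \equiv \alpha$ on $(-\infty,0]$; hence $\phi = \phi_\alpha$ up to the translation already normalized away.

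The main obstacle is the uniqueness of the velocity $c$: unlike the classical bistable case, the trajectory starts at the \emph{non-hyperbolic} equilibrium $(\alpha,0)$ — $(\alpha,0)$ is a rest point of \eqref{phi-1}--\eqref{phi-2} precisely because $f(\alpha)\neq 0$ is killed by the obstacle, so the linearization there is degenerate and the classical "unique heteroclinic connection $\Rightarrow$ unique speed" argument of \cite{FM} does not transfer verbatim. I expect the clean route is the global energy identity \eqref{c} combined with a strict monotonicity of $\Phi(c) := -c\int_0^\infty \phi_c'(\sigma)^2\,\d\sigma$ in $c$ over the range $c\le 0$, where $\phi_c$ is the (unique, by ODE theory) solution of \eqref{phi-cac} on $[0,\infty)$ with $\phi_c(0)=\alpha$, $\phi_c'(0^+)$ determined by requiring the orbit to stay in $\psi>0$ and reach $(a_+,0)$ — and then showing $\Phi$ hits the fixed value $\hat f(\alpha)-\hat f(a_+)>0$ at exactly one $c$. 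Establishing that strict monotonicity rigorously (continuity and monotone dependence of $\phi_c$ on $c$, plus that the target value is attained only once) is where the real work lies; the rest is bookkeeping with the comparison principle and ODE uniqueness already available in the paper.
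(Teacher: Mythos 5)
Your proposal correctly isolates the two issues (uniqueness of $c$; uniqueness of $\phi$ given $c$), and your observation that $\phi$ is pinned down by the one-sided Cauchy datum $(\phi(s_*),\phi'(s_*))=(\alpha,0)$ is both correct and slightly more direct than the paper's treatment of that half. However, the step you flag as ``where the real work lies'' --- that the orbit for the larger $c$ lies strictly below the other in the $(\phi,\psi)$-plane --- is precisely the content of the proof, and you leave it unestablished. Note also that the phrase ``the vector field for the larger $c$ pushes $\psi$ down more'' does not give the comparison for free here, because both orbits emanate from the \emph{same} boundary point $(\alpha,0)$ where $\psi=0$ and $\mathrm{d}\psi/\mathrm{d}\phi=f(\phi)/\psi-c$ is singular; a naive vector-field comparison breaks down exactly there.

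The paper closes the gap by writing $\psi_j=\psi_j(\phi)$ on $(\alpha,a_+)$, subtracting the two Riccati-type equations, and introducing the integrating factor $\exp\bigl(\int_{a_0}^\phi h\bigr)$ with $h=f/(\psi_1\psi_2)$. This gives
$\frac{\d}{\d\phi}\bigl[(\psi_1-\psi_2)\e^{\int_{a_0}^\phi h}\bigr]=-(c_1-c_2)\e^{\int_{a_0}^\phi h}$,
which is sign-definite if $c_1\neq c_2$, so the bracketed quantity is strictly monotone; on the other hand it tends to $0$ at \emph{both} endpoints $\phi\to\alpha^+$ and $\phi\to a_+^-$ (since $\psi_1-\psi_2\to0$ and $\e^{\int h}\le1$), a contradiction. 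This single argument delivers $c_1=c_2$ and $\psi_1\equiv\psi_2$ simultaneously, so the energy identity $\hat f(\alpha)-\hat f(a_+)=-c\int\phi'^2$ you invoke (which is Lemma~\ref{L:c}) is not actually needed for uniqueness; it is superfluous once the integrating-factor monotonicity is in hand, and without that monotonicity the energy identity alone does not determine $c$. If you wanted to pursue the ``orbit below orbit'' comparison literally, the rigorous version is again the integrating-factor inequality: it shows $\psi_1>\psi_2$ on $(\alpha,a_+)$ when $c_1<c_2$, after which either the endpoint limit or your energy identity finishes the argument --- but you cannot avoid the integrating factor (or an equivalent careful local analysis near the degenerate corner $(\alpha,0)$).
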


For the convenience of the reader, we give a proof, although it is similar to the classical case.

\begin{proof}
Let $\phi$ and $c$ be a profile function and a velocity constant of a traveling wave solution to \eqref{irAC} such that \eqref{phi}, \eqref{phi-bc}, \eqref{phi-bc2} and \eqref{phi-deg} are satisfied. In particular, $\phi$ solves
$$
- \phi'' + f(\phi) = c \phi' \ \mbox{ in } (s_*, +\infty),
$$
where $s_*$ is the number that appeared in \eqref{phi-deg}. Setting $\psi := \phi'$, one can rewrite the relation above as
$$
\psi' = f(\phi) - c \psi, \quad \phi' = \psi \ \mbox{ in } (s_*, +\infty).
$$
Let us regard $\psi$ as a function of $\phi$, i.e., $\psi = \psi(\phi)$ (indeed, this is possible, since $\phi$ is strictly increasing (i.e., $\psi > 0$) in $(s_*,+\infty)$ by Lemma \ref{L:phi'>0}). Then one can write
\begin{equation}\label{psi-phi}
\dfrac{\d \psi}{\d \phi}(\phi) = \frac{f(\phi)}{\psi(\phi)} - c \ \mbox{ for } \ \phi \in (\alpha, a_+). 
\end{equation}
Now, for each $j = 1,2$, let $(\phi_j, c_j)$ be a pair of profile function and velocity constant satisfying \eqref{phi}, \eqref{phi-bc}, \eqref{phi-bc2} and \eqref{phi-deg} and let $\psi_j = \phi_j'$. Moreover, write $\psi_j = \psi_j(\phi)$ as a function of $\phi \in (\alpha,a_+)$. Then by \eqref{psi-phi} with $\psi = \psi_j$ and $c = c_j$, one has
$$
\dfrac{\d}{\d \phi} \left[\psi_1(\phi) - \psi_2(\phi) \right]
= - (c_1-c_2) + \frac{f(\phi)}{\psi_1(\phi)\psi_2(\phi)} \left[\psi_2(\phi) - \psi_1(\phi) \right],
$$
whence it follows that
\begin{equation}\label{phi1-phi2}
\dfrac{\d}{\d \phi} \left[ (\psi_1- \psi_2) \exp \left( \int^\phi_{a_0} h(z) \, \d z \right) \right] = - (c_1-c_2) \exp \left( \int^\phi_{a_0} h(z) \, \d z \right), 
\end{equation}
where $h(z) := f(z)/(\psi_1(z) \psi_2(z))$. Note that by \eqref{f-1} and $\psi_j(\phi) > 0$, we have
$$
- \infty < \int^\phi_{a_0} h(z) \, \d z \leq 0 \quad \mbox{ for all } \ \phi \in (\alpha,a_+).
% \int^\phi_{a_0} h(\phi) \, \d \phi \ \mbox{ is finite for all } \ \phi \in (\alpha,a_+).
$$
Hence, if $c_1 \neq c_2$, then the function
$$
(\psi_1- \psi_2) \exp \left( \int^\phi_{a_0} h(z) \, \d z \right)
$$
is strictly monotone. On the other hand, it converges to $0$ as $\phi \to \alpha$ or $\phi \to a_+$ due to \eqref{phi-bc2}. This is a contradiction, and therefore, $c_1 = c_2$. Moreover, one can also derive $\psi_1 \equiv \psi_2$ in $(\alpha,a_+)$ by \eqref{phi1-phi2} along with $c_1 = c_2$ and by the fact that $\psi_j(\alpha) = 0$. Thus $\phi_1$ coincides with $\phi_2$.
\end{proof}

As a corollary, we have
\begin{corollary}[Strict decrease of the velocity in $\alpha$]\label{C:c-order}
Under the same assumptions as in Theorem \ref{T:TW}, the velocity $c_\alpha$ is strictly decreasing in $\alpha \in (a_-,a_0)$.
\end{corollary}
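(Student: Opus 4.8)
The plan is to adapt the phase-plane computation from the proof of Proposition~\ref{P:tw-unique}, now comparing two profiles attached to \emph{different} obstacle levels. Fix $a_-<\alpha_1<\alpha_2<a_0$, write $(\phi_j,c_j):=(\phi_{\alpha_j},c_{\alpha_j})$ and $\psi_j:=\phi_j'$. By Lemma~\ref{L:phi'>0} together with \eqref{phi-base}, each $\phi_j$ restricts to a strictly increasing bijection from $(0,+\infty)$ onto $(\alpha_j,a_+)$, so, as in \eqref{psi-phi}, we may view $\psi_j$ as a function $\psi_j=\psi_j(\phi)$ of $\phi\in(\alpha_j,a_+)$ solving $\d\psi_j/\d\phi=f(\phi)/\psi_j(\phi)-c_j$. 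On the common interval $(\alpha_2,a_+)$ both $\psi_1$ and $\psi_2$ are strictly positive, so subtracting the two equations and arguing exactly as for \eqref{phi1-phi2} gives
\begin{equation*}
\frac{\d}{\d\phi}\left[(\psi_1-\psi_2)\exp\left(\int_{a_0}^{\phi} h(z)\,\d z\right)\right] = -(c_1-c_2)\exp\left(\int_{a_0}^{\phi} h(z)\,\d z\right), \qquad h(z):=\frac{f(z)}{\psi_1(z)\psi_2(z)},
\end{equation*}
valid for all $\phi\in(\alpha_2,a_+)$. Write $P(\phi)$ for the bracketed function; note that the exponential factor is strictly positive throughout.

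The decisive new ingredient, absent from the uniqueness proof, is the behaviour of $P$ at the \emph{left} endpoint $\phi=\alpha_2$. Since $\alpha_2\in(\alpha_1,a_+)$ lies in the non-degenerate range of $\phi_1$, Lemma~\ref{L:phi'>0} yields $\psi_1(\alpha_2)>0$, whereas $\psi_2(\alpha_2)=0$ by \eqref{phi-bc2}. The energy identity underlying \eqref{zero-deri} gives $\psi_2(\phi)^2\ge 2\int_{\alpha_2}^{\phi}f(z)\,\d z\ge 2(\phi-\alpha_2)\inf_{[\alpha_2,a_0]}f>0$ for $\phi$ close to $\alpha_2$, so $h(z)=O\big((z-\alpha_2)^{-1/2}\big)$ near $\alpha_2$ and $\int_{a_0}^{\phi}h$ converges as $\phi\to\alpha_2^+$. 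Hence $P(\phi)\to\psi_1(\alpha_2)\exp\big(-\int_{\alpha_2}^{a_0}h\big)>0$ as $\phi\to\alpha_2^+$. At the right endpoint, $\psi_1(\phi)-\psi_2(\phi)\to0$ as $\phi\to a_+^-$ (both profiles satisfy \eqref{phi-bc2}), while $f<0$ on $(a_0,a_+)$ forces $h<0$ near $a_+$, so $\exp(\int_{a_0}^{\phi}h)$ stays bounded and $P(\phi)\to0$ as $\phi\to a_+^-$.

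It then remains to argue by contradiction. If $c_1=c_2$, the right-hand side of the displayed identity vanishes, so $P$ is constant on $(\alpha_2,a_+)$, incompatible with $P(\alpha_2^+)>0=P(a_+^-)$. If $c_1<c_2$, then $-(c_1-c_2)>0$ and, the exponential factor being positive, $P$ is strictly increasing on $(\alpha_2,a_+)$, which likewise contradicts $P(\alpha_2^+)>0=P(a_+^-)$. Therefore $c_1>c_2$, i.e.\ $c_\alpha$ is strictly decreasing in $\alpha\in(a_-,a_0)$. The only mildly technical point is the left-endpoint analysis — checking integrability of $h$ near $\alpha_2$ and the strict positivity of the limit of $P$ there — which is exactly where the quadratic energy lower bound on $\psi_2$ and the bound $\psi_1(\alpha_2)>0$ enter; everything else is the integrating-factor identity already established in Proposition~\ref{P:tw-unique}.
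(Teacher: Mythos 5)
Your argument follows the same route as the paper's: both hinge on the integrating--factor identity \eqref{phi1-phi2} on the common range $(\alpha_2,a_+)$, the observation that $\psi_1(\alpha_2)>0=\psi_2(\alpha_2)$, and a contradiction from monotonicity of $P$ together with its decay at $a_+$. The only real difference is at the left endpoint: you pass to the limit $\phi\to\alpha_2^+$ and must therefore verify integrability of $h$ near $\alpha_2$, whereas the paper sidesteps this entirely by fixing an interior point $\phi_0>\alpha_2$ (close to $\alpha_2$, so that $\psi_1(\phi_0)-\psi_2(\phi_0)>\tfrac12\psi_1(\phi_0)>0$) and using monotonicity of $P$ only on $[\phi_0,a_+)$; the value $\exp\bigl(\int_{a_0}^{\phi_0}h\bigr)$ is then automatically finite and positive. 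The paper's variant is slightly more economical because it never needs the behaviour of $h$ at the singular endpoint.

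One small slip in your left--endpoint estimate: you bound $\int_{\alpha_2}^{\phi}f(z)\,\d z\ge (\phi-\alpha_2)\inf_{[\alpha_2,a_0]}f$ and assert $\inf_{[\alpha_2,a_0]}f>0$. But $f(a_0)=0$ by \eqref{f-1}, so $\inf_{[\alpha_2,a_0]}f=0$ and the displayed chain collapses to the trivial bound $\psi_2(\phi)^2\ge 0$. Since $\alpha_2\in(a_-,a_0)$ gives $f(\alpha_2)>0$, the intended estimate is recovered by restricting to a small interval: for $\phi\in(\alpha_2,\alpha_2+\vep]$ one has $\int_{\alpha_2}^{\phi}f\ge(\phi-\alpha_2)\inf_{[\alpha_2,\alpha_2+\vep]}f$, and the latter infimum is strictly positive for $\vep>0$ small. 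With that fix your bound $\psi_2(\phi)\gtrsim(\phi-\alpha_2)^{1/2}$ near $\alpha_2$, hence $h(z)=O((z-\alpha_2)^{-1/2})$, hence convergence of $\int_{a_0}^{\alpha_2}h$, all go through, and your argument is correct.
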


\begin{proof}
Let $a_- < \alpha_1 < \alpha_2 < a_0$. For $j=1,2$, let $\phi_j$ be the traveling wave profile for $\alpha = \alpha_j$ and set $\psi_j = \phi_j'$. Repeat the same argument as in the proof of Proposition \ref{P:tw-unique} with $\alpha = \alpha_2$. Here we observe that $\psi_1(\alpha_2)$ is still positive due to $\alpha_1 < \alpha_2$. Suppose on the contrary that
$$
c_1 \leq c_2.
$$
Then the right-hand side of \eqref{phi1-phi2} is non-negative for $\phi \in (\alpha_2,a_+)$, whence it follows that the function
$$
\phi \mapsto \left(\psi_1(\phi)-\psi_2(\phi)\right) \exp \left( \int^\phi_{a_0} h(z) \, \d z \right)
$$
is non-decreasing in $(\alpha_2,a_+)$. Recalling that $\psi_1(\alpha_2) > 0$ and $\psi_2(\alpha_2) = 0$, one can take $\phi_0 > \alpha_2$ close enough to $\alpha_2$ such that $\psi_1(\phi_0) - \psi_2(\phi_0) > (1/2) \psi_1(\phi_0) > 0$. Therefore
\begin{align*}
\left(\psi_1(\phi)-\psi_2(\phi)\right) \exp \left( \int^\phi_{a_0} h(z) \, \d z \right)
&\geq \frac 1 2 \psi_1(\phi_0) \exp \left( \int^{\phi_0}_{a_0} h(z) \, \d z \right) > 0
\end{align*}
for $\phi \in (\phi_0,a_+)$. On the other hand, by the passage to the limit as $\phi \to a_+$, the left-hand side converges to zero, and hence, it yields a contradiction.
\end{proof}

We can also verify that

\begin{corollary}[Continuity of velocity and profile]\label{C:conti-cphi}
Let $\alpha \in (a_-,a_0)$ and let $(\alpha_n)$ be a sequence in $(a_-,a_0)$ such that $\alpha_n \to \alpha$. Then 
$$
c_{\alpha_n} \to c_\alpha \quad \mbox{ and } \quad \phi_{\alpha_n} \to \phi_\alpha \ \mbox{ uniformly on } \R.
$$
\end{corollary}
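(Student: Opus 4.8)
The plan is to exploit the uniform a priori bounds already established in the construction (Theorem \ref{T:TW} and its proof) together with the uniqueness result of Proposition \ref{P:tw-unique}, via a standard ``compactness + identification of the limit'' argument. First I would fix a sequence $\alpha_n \to \alpha$ in $(a_-,a_0)$ and recall that, by Theorem \ref{T:TW}(ii), the corresponding profiles satisfy $\alpha_n \le \phi_{\alpha_n} < a_+$ and $0 \le \phi_{\alpha_n}' \le C_1$ on $\R$ with $-C_2 \le c_{\alpha_n} < 0$, where $C_1, C_2$ depend only on $f$ and a compact neighbourhood of $\alpha$ (so they can be taken uniform in $n$). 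Using the profile equation \eqref{phi-1}--\eqref{phi-2} with $\gamma = \gamma_{\alpha_n}$, one also gets a uniform bound on $\psi_{\alpha_n}' = \phi_{\alpha_n}''$. Hence, up to a (not relabelled) subsequence, $c_{\alpha_n} \to \tilde c$, and $\phi_{\alpha_n} \to \tilde\phi$, $\psi_{\alpha_n} \to \tilde\psi$ weakly-star in $W^{1,\infty}(\R)$ and locally uniformly on $\R$, for some $\tilde\phi \in W^{2,\infty}(\R)$, $\tilde\psi \in W^{1,\infty}(\R)$, and moreover $\gamma_{\alpha_n}(\phi_{\alpha_n}) \to \xi$ weakly-star in $L^\infty(\R)$. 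As in the proof of Theorem \ref{T:TW}, the graph-convergence property of the obstacle subdifferential $\partial\iIIal$ (demiclosedness of maximal monotone graphs) — here combined with $\alpha_n \to \alpha$ — lets one identify $\tilde\phi \ge \alpha$ and $\xi \in -\partial I_{[\alpha,+\infty)}(\tilde\phi) - f(\tilde\phi)$, so that $(\tilde\phi,\tilde\psi)$ solves \eqref{phi-1}--\eqref{phi-2} with $\gamma = \gamma_\alpha$ and $c = \tilde c$; in particular $\tilde\phi$ solves \eqref{phi}.

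The key point is then to verify that $\tilde\phi$ satisfies the asymptotic conditions \eqref{phi-bc} and \eqref{phi-bc2}, i.e. that the limit is again a genuine traveling wave profile and not, say, a constant. Monotonicity ($\tilde\psi = \tilde\phi' \ge 0$) and boundedness give that $\tilde\phi$ has limits $\ell_\pm$ at $\pm\infty$; one must exclude degenerate cases. For this I would transfer the normalisation and the energy lower bound from the approximating profiles: by \eqref{zero} and \eqref{zero-deri} we may assume $\phi_{\alpha_n}(0) = a_0$ and $\tfrac12 \phi_{\alpha_n}'(0)^2 \ge W_{\alpha_n}(a_0) - W_{\alpha_n}(\alpha_n) = \int_{\alpha_n}^{a_0} f$, where $W_\beta(z) = \int_\beta^z \hat f$-type primitive; passing to the limit and using continuity of $\int_{(\cdot)}^{a_0} f$ in the lower endpoint gives $\tilde\phi(0) = a_0$ and $\tilde\phi'(0)^2 \ge 2\int_\alpha^{a_0} f(z)\,\d z > 0$, so $\tilde\phi$ is non-constant with $\ell_- < a_0 < \ell_+$. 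Then the arguments of the Lemmas following \eqref{est-l} in Section \ref{S:ob} apply verbatim to $\tilde\phi$: the contradiction-by-integration argument forces $\ell_- = \alpha$ and there is $s_* \in \R$ with $\tilde\phi \equiv \alpha$ on $(-\infty,s_*]$, the analogous argument forces $\ell_+ = a_+$, and Lemma \ref{L:phi'->0} gives $\tilde\phi'(s) \to 0$ as $s \to +\infty$ (together with $\tilde\phi' \equiv 0$ on $(-\infty,s_*)$ this yields \eqref{phi-bc2}). Thus $(\tilde\phi,\tilde c)$ solves \eqref{phi}, \eqref{phi-bc}, \eqref{phi-bc2}; after fixing the normalisation $s_* = 0$ as in \eqref{phi-base} (a uniform translation of the whole sequence, legitimate since the approximating profiles were normalised consistently), Proposition \ref{P:tw-unique} identifies $\tilde c = c_\alpha$ and $\tilde\phi = \phi_\alpha$.

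Since the limit is uniquely determined independently of the subsequence, the full sequences converge: $c_{\alpha_n} \to c_\alpha$ and $\phi_{\alpha_n} \to \phi_\alpha$ locally uniformly on $\R$. Finally I would upgrade local uniform convergence to uniform convergence on all of $\R$ using the common asymptotics: given $\vep > 0$, pick $M > 0$ so that $\phi_\alpha < a_- + \vep$ is automatic (in fact $\phi_{\alpha_n} \equiv \alpha_n$ and $\phi_\alpha \equiv \alpha$ for arguments $\le 0$ after normalisation, handling $(-\infty,0]$ outright up to $|\alpha_n - \alpha|$), and $\phi_\alpha(\xi), \phi_{\alpha_n}(\xi)$ are both within $\vep$ of $a_+$ for $\xi \ge M$ — the latter uniformly in $n$, which follows from the uniform energy bound \eqref{phi-n-bdd-02}-type estimate controlling how fast the profile climbs to $a_+$, i.e. an equicontinuity/tightness estimate on $[M,\infty)$ that is uniform in $n$. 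On the compact window $[0,M]$ local uniform convergence finishes the job. The main obstacle I anticipate is precisely this last uniform-tail control: making the ``$\phi_{\alpha_n}$ is $\vep$-close to $a_+$ on $[M,+\infty)$ for all large $n$'' statement rigorous requires a bound, uniform in $n$, on the decay of $a_+ - \phi_{\alpha_n}$ at $+\infty$ (equivalently of $\psi_{\alpha_n}$), which one obtains from the linearisation near $(a_+,0)$ using $f'(a_+) > 0$ and the uniform bound $-C_2 \le c_{\alpha_n} < 0$, so that the stable-manifold rate is uniform; alternatively one argues by contradiction combining a diagonal subsequence with the already-proven local uniform convergence and the characterisation of the limit.
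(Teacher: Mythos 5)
Your overall strategy — extract a subsequential limit from the uniform a priori bounds, identify it as a solution of \eqref{phi}, \eqref{phi-bc}, \eqref{phi-bc2}, then invoke Proposition \ref{P:tw-unique} — matches the paper's. However, your choice of normalization creates a gap. You fix $\phi_{\alpha_n}(0)=a_0$ (the normalization \eqref{zero} from the \emph{construction}), obtain a limit $\tilde\phi$ with $\tilde\phi(0)=a_0$, identify $\tilde\phi=\phi_\alpha(\cdot-\tilde s_*)$ for some $\tilde s_*$, and then say you can ``fix $s_*=0$ by a uniform translation of the whole sequence.'' But translating the whole sequence by the \emph{fixed} amount $\tilde s_*$ does not recover the $s_*=0$-normalized profiles $\phi_{\alpha_n}$ that Corollary \ref{C:conti-cphi} refers to (cf.\ \eqref{phi-base}); the interfacial point of $\phi_{\alpha_n}(\cdot+\tilde s_*)$ is $s_{*,n}-\tilde s_*$, not $0$. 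What you actually need is $s_{*,n}\to \tilde s_*$, i.e., convergence of the interfacial points of the $a_0$-normalized profiles — a true extra step, provable from the $C^1_{\rm loc}$-convergence plus the non-degeneracy $\lim_{s\searrow s_{*,n}}\phi_{\alpha_n}''(s)=f(\alpha_n)>0$, but not supplied. The paper sidesteps this entirely by working in the $s_*=0$ normalization from the start: then $\phi_{\alpha_n}\equiv\alpha_n$ on $(-\infty,0]$ passes directly to $\phi\equiv\alpha$ on $(-\infty,0]$, and the jump $\lim_{s\searrow 0}\phi''(s)=f(\alpha)>0$ gives non-constancy without appealing to \eqref{zero-deri}. (A minor point: the paper also starts by invoking the monotonicity of $\alpha\mapsto c_\alpha$ from Corollary \ref{C:c-order} to squeeze the velocities away from $0$, whereas your a priori bound $-C_2\le c_{\alpha_n}<0$ alone leaves $c=0$ open in the limit and must be ruled out post hoc via Lemma \ref{L:c}; both routes work.)

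On the upgrade from local to global uniform convergence, you anticipate as ``the main obstacle'' the need for a uniform-in-$n$ decay rate of $a_+-\phi_{\alpha_n}$ at $+\infty$ via linearization at $(a_+,0)$. This is much harder than necessary. The paper uses a one-line monotonicity squeeze: since both $\phi$ and $\phi_{\alpha_n}$ are non-decreasing and bounded above by $a_+$,
\begin{align*}
\sup_{s\ge R}|\phi(s)-\phi_{\alpha_n}(s)| &\le (a_+-\phi(R))+(a_+-\phi_{\alpha_n}(R))\\
&\le 2\,(a_+-\phi(R)) + |\phi(R)-\phi_{\alpha_n}(R)|,
\end{align*}
so one first picks $R$ so the tail of $\phi$ alone is small, and then finishes with local uniform convergence on $[0,R]$ — no spectral/stable-manifold analysis of the profile equation near $a_+$ is needed. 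The $(-\infty,0]$ part is $|\alpha_n-\alpha|$, as you observe.
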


\begin{proof}
By the monotonicity of $c_\alpha$ in $\alpha$, it follows that $c_{\alpha_n}$ converges to a limit $c \neq 0$ as $n \to +\infty$. Recalling Lemma \ref{L:c} along with the boundedness of $c_{\alpha_n}$, one derives the boundedness of $(\phi'_{\alpha_n})$ in $L^2(\R)$. Therefore by virtue of Ascoli-Arzela's theorem, we can prove, up to a (not relabeled) subsequence, that
\begin{alignat}{4}
\phi_{\alpha_n} &\to \phi \quad &&\mbox{ locally uniformly on } \R,\label{c:phian}\\
\phi_{\alpha_n}' &\to \phi' &&\mbox{ weakly in } L^2(\R)\nonumber
\end{alignat}
for some $\phi \in C(\R)$ satisfying $\phi' \in L^2(\R)$ and $\phi' \geq 0$ in $\R$ (and hence, up to a subsequence, $\phi_{\alpha_n}(s) \to \phi(s)$ for each $s \in \R$ by a diagonal argument). Since $\phi_{\alpha_n} \equiv \alpha_n$ on $(-\infty,s_*]$ and $\phi_{\alpha_n}$ solves \eqref{phi-cac} on $(s_*,+\infty)$ (see Theorem \ref{T:TW}), one can verify that $\phi \in H^2_{loc}(\R) \subset C^1(\R)$ and
$$
\phi \equiv \uo \ \mbox{ on } (-\infty,s_*], \quad -\phi'' + f(\phi) = c \phi' \ \mbox{ a.e.~in } (s_*,+\infty).
$$
In particular, we see that
$$
\phi \in C^2((s_*,+\infty)) \quad \mbox{ and } \quad \lim_{s \searrow s_*} \phi''(s) = f(\uo) > 0.
$$
Thus \eqref{phi-deg} follows, and moreover, we find that $\phi'' = f(\phi) - c \phi' > -c \phi'$, whenever $\phi$ lies in $(\uo,a_0)$. Hence $\phi$  attains $a_0$ at some $s_0 \in \R$ with $\phi'(s_0) > 0$. Since $\phi$ is non-decreasing in $\R$ and bounded from above by $a_+$, there exists a limit $\phi_\infty \in (a_0,a_+]$ such that $\phi(s) \to \phi_\infty$ as $s \to +\infty$. Suppose on the contrary that $\phi_\infty < a_+$. Then by $f(\phi_\infty) < 0$ and \eqref{phi-cac} on $(s_*,+\infty)$, we find that $c\phi(s)+\phi'(s)$ diverges to $-\infty$ as $s \to +\infty$. However, it is a contradiction. Thus we obtain $\phi_\infty = a_+$, and hence, we have \eqref{phi-bc}. Furthermore, \eqref{phi-bc2} also follows from Lemma \ref{L:phi'->0}. Accordingly, by Proposition \ref{P:tw-unique} we conclude that $c = c_\alpha$ and $\phi = \phi_\alpha$.

By virtue of the increase of $\phi_{\alpha_n}$ and $\phi$, we observe that
\begin{align*}
\sup_{s \geq R} |\phi(s)-\phi_{\alpha_n}(s)|
&\leq \sup_{s \geq R} \left( |\phi(s)-a_+|+|a_+-\phi_{\alpha_n}(s)| \right)\\
&\leq a_+ - \phi(R) + a_+ - \phi_{\alpha_n}(R)\\
&\leq 2(a_+-\phi(R)) + \phi(R)-\phi_{\alpha_n}(R).
\end{align*}
Hence, for any $\vep > 0$, one can choose $R_\vep > 0$ and $N_\vep \in \mathbb N$ large enough that
$$
\sup_{s \geq R_\vep} |\phi(s)-\phi_{\alpha_n}(s)| < \vep \quad \mbox{ for } \ n \geq N_\vep.
$$
Combining this fact with \eqref{c:phian}, we obtain
$$
\phi_{\alpha_n} \to \phi \quad \mbox{ uniformly on } \R,
$$
which completes the proof.
\end{proof}

Now, we are ready to prove Proposition \ref{P:inst-tw}.

\begin{proof}[Proof of Proposition \ref{P:inst-tw}]
Fix $\alpha \in (a_-,a_0)$. Then by Corollary \ref{C:conti-cphi}, for any $\vep > 0$ one can take $\alpha_\vep \neq \alpha$ such that
$$
\|\phi_\alpha - \phi_{\alpha_\vep}\|_{L^\infty(\R)} < \vep.
$$
Moreover, it also follows that
$$
\lim_{t \to +\infty}\|\phi_\alpha(\cdot-c_\alpha t) - \phi_{\alpha_\vep}(\cdot-c_{\alpha_\vep} t)\|_{L^\infty(\R)} \geq a_+-a_0 > 0
$$
by the fact $c_{\alpha_\vep} \neq c_{\alpha}$. %Furthermore, any $L^p$ norm of the difference diverges to $+\infty$.
\end{proof}

\subsection{Comparison functions}\label{Ss:comfunc}

In this subsection, based on degenerate traveling wave solutions constructed so far, we shall develop comparison functions for the classical Allen-Cahn equation,
\begin{equation}\label{ac-w}
\partial_t w = \partial_x^2 w - f(w)
\end{equation}
for later use. They will be finally used for proving exponential convergence of solutions for \eqref{irAC} to degenerate traveling wave solutions as $t \to +\infty$  under certain assumptions on initial data  in Section \ref{S:exp-conv}. In what follows, we shall always assume the normalization \eqref{phi-base}, i.e., $\phi(s) > 0$ if and only if $s > 0$.

Let $\alpha$ be such that \eqref{alpha-hyp} and $f'(\alpha) > 0$ are satisfied and set
\begin{equation}\label{wpm}
w^\pm(x,t) := \phi_\alpha(x - x_0 - c_\alpha t \pm \sigma \delta (1-\e^{-\beta t}) ) \pm \delta \e^{-\beta t}
\end{equation}
for $\delta,\sigma,\beta>0$ and $x_0 \in \R$.

\begin{lemma}\label{L:comfunc}
Assume that $f'(\alpha) > 0$. Then there exist positive constants $\beta_0, \delta_0$ depending only on $f$, $\alpha$ and $\phi_\alpha$ such that $w^+$ {\rm (}respectively, $w^-${\rm )} is a classical supersolution {\rm (}respectively, a subsolution{\rm )} of the classical Allen-Cahn equation \eqref{ac-w} in $\R \times \R_+$ {\rm (}respectively, in ${Q_-}:=\{(x,t) \in \R \times \R_+ \colon x > x_0 + c_\alpha t + \sigma\delta(1-\e^{-\beta t})\}${\rm )} for any $x_0 \in \R$, $\beta \in (0,\beta_0)$, $\delta \in (0,\delta_0)$ and $\sigma > \sigma_\beta$ and some $\sigma_\beta > 0$ depending only on $f$, $\alpha$, $\phi_\alpha$ and $\beta$. Moreover, $\sigma_\beta$ is decreasing in $\beta$ and divergent to $+\infty$ as $\beta \to +0$. 

Furthermore, $w^+$ and $w^-$ are a super- and a subsolution, respectively, of the obstacle problem
\begin{equation}\label{obs}
\min \{u-\alpha, 
\partial_t u - \partial_x^2 u + f(u)
\} 
= 0 \ \mbox{ in } \R \times (0,+\infty).
\end{equation}
\end{lemma}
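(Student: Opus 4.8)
The plan is to verify directly that $w^+$ satisfies the differential inequality $\partial_t w^+ - \partial_x^2 w^+ + f(w^+) \geq 0$ pointwise (and the reverse for $w^-$ on $Q_-$), using the profile equation $-\phi_\alpha'' + f(\phi_\alpha) = c_\alpha \phi_\alpha'$ valid on $(0,+\infty)$ together with the degeneracy $\phi_\alpha \equiv \alpha$ on $(-\infty,0]$. First I would compute the relevant derivatives. Writing $\xi = x - x_0 - c_\alpha t + \sigma\delta(1-\e^{-\beta t})$ for $w^+$, one has $\partial_x w^+ = \phi_\alpha'(\xi)$, $\partial_x^2 w^+ = \phi_\alpha''(\xi)$ and $\partial_t w^+ = \phi_\alpha'(\xi)\bigl(-c_\alpha + \sigma\delta\beta\e^{-\beta t}\bigr) - \delta\beta\e^{-\beta t}$. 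Substituting and using $-\phi_\alpha'' + f(\phi_\alpha) = c_\alpha\phi_\alpha'$ on the region $\xi > 0$, the expression $\partial_t w^+ - \partial_x^2 w^+ + f(w^+)$ reduces to
$$
\sigma\delta\beta\e^{-\beta t}\,\phi_\alpha'(\xi) - \delta\beta\e^{-\beta t} + \bigl[f(\phi_\alpha(\xi) + \delta\e^{-\beta t}) - f(\phi_\alpha(\xi))\bigr].
$$
On the degenerate region $\xi \leq 0$ (where $\phi_\alpha(\xi) = \alpha$, $\phi_\alpha'(\xi)=\phi_\alpha''(\xi)=0$) this is $-\delta\beta\e^{-\beta t} + f(\alpha + \delta\e^{-\beta t}) - f(\alpha)$, and since $f(\alpha)=0$ is not used but rather $f'(\alpha)>0$, a Taylor expansion gives $f(\alpha + \delta\e^{-\beta t}) \geq f(\alpha) + \tfrac{1}{2}f'(\alpha)\delta\e^{-\beta t}$ for $\delta$ small; hence this term is $\geq \delta\e^{-\beta t}\bigl(\tfrac12 f'(\alpha) - \beta\bigr) \geq 0$ once $\beta < \beta_0 := \tfrac12 f'(\alpha)$. (Wait — $f(\alpha)$ need not vanish; but the two $f(\alpha)$ terms cancel, so only $f'(\alpha)>0$ matters here, plus boundedness of $f''$ near $\alpha$ to absorb the quadratic remainder.)

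The remaining work is the region $\xi > 0$. Here I would split into two subregions. On a set where $\phi_\alpha'(\xi)$ is bounded below — say $\xi \in [\rho, +\infty)$ for fixed small $\rho>0$, on which $\inf \phi_\alpha' =: m_\rho > 0$ by Lemma~\ref{L:phi'>0} together with $\phi_\alpha'(\xi)\to 0$ only as $\xi\to+\infty$ where $\phi_\alpha$ is near $a_+$ and $f'(a_+)>0$ — actually $\phi_\alpha'$ does vanish at $+\infty$, so one cannot bound it below uniformly on all of $(\rho,\infty)$. Instead the standard trick (as in \cite{FM,XChen}) is: near the two ends $\xi$ small and $\xi$ large, $\phi_\alpha(\xi)$ is close to $\alpha$ resp. $a_+$, where $f' > 0$, so the term $f(\phi_\alpha + \delta\e^{-\beta t}) - f(\phi_\alpha) \geq \tfrac12 f'(\alpha)\delta\e^{-\beta t}$ (resp.\ with $f'(a_+)$) dominates the $-\delta\beta\e^{-\beta t}$ for $\beta < \beta_0$, making the whole quantity nonnegative without needing $\sigma$. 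On the complementary compact $\xi$-set (where $\phi_\alpha$ stays in a neighborhood of $a_0$), $\phi_\alpha'$ is bounded below by some $m>0$, so the $\sigma\delta\beta\e^{-\beta t}\phi_\alpha'(\xi) \geq \sigma\delta\beta\e^{-\beta t} m$ term dominates everything else (the $f$-difference is $\geq -L\delta\e^{-\beta t}$ with $L = \sup|f'|$) provided $\sigma m > 1 + L/\beta$, i.e.\ $\sigma > \sigma_\beta := (1 + L/\beta)/m$. This $\sigma_\beta$ is manifestly decreasing in $\beta$ and blows up as $\beta\to 0$, as claimed. Choosing $\delta_0$ small enough to make all the Taylor remainders valid uniformly completes the supersolution verification for $w^+$. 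The argument for $w^-$ is symmetric, the only difference being that its shifted argument $x - x_0 - c_\alpha t - \sigma\delta(1-\e^{-\beta t})$ must stay in $(0,\infty)$ for the profile equation to apply — which is exactly the restriction to $Q_-$; there is no degenerate-region contribution to check for $w^-$.

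Finally, for the obstacle problem \eqref{obs}: $w^+$ is a supersolution because $w^+ \geq \phi_\alpha(\cdot) \geq \alpha$ pointwise (so $w^+ - \alpha \geq 0$) and $\partial_t w^+ - \partial_x^2 w^+ + f(w^+) \geq 0$ was just shown, hence $\min\{w^+ - \alpha, \partial_t w^+ - \partial_x^2 w^+ + f(w^+)\} \geq 0$. For $w^-$ as a subsolution one needs $\min\{w^- - \alpha, \partial_t w^- - \partial_x^2 w^- + f(w^-)\} \leq 0$ on $\R\times\R_+$: on $Q_-$ the second entry is $\leq 0$ by the subsolution property, while on the complement $\R\times\R_+ \setminus Q_-$ the shifted argument is $\leq 0$, so $\phi_\alpha$ there equals $\alpha$ and $w^- = \alpha + (\text{something} \leq \delta\e^{-\beta t})$; more precisely there $w^- - \alpha = \phi_\alpha(\text{neg arg}) - \alpha - \delta\e^{-\beta t} + \delta\e^{-\beta t}\cdot 0$... one checks $w^- - \alpha = -\delta\e^{-\beta t} \leq 0$, so the first entry of the $\min$ is nonpositive. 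Hence $w^-$ is a subsolution of \eqref{obs} everywhere. The main obstacle is organizing the case split on $\xi>0$ so that the sign of the $f$-difference term (controlled by $f' > 0$ near the ends) and the positivity of the drift term $\sigma\phi_\alpha'$ (useful only on the compact middle region) together cover all cases uniformly in $t$; everything else is a routine Taylor estimate.
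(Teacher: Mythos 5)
Your proposal is correct and follows essentially the same route as the paper's proof: the same three-region split in the $z$-variable, the same use of $f'(\alpha),f'(a_+) > 0$ near the two ends to pick $\beta_0$, the same $\sigma_\beta \sim m^{-1}\bigl(1 + \|f'\|_{L^\infty}/\beta\bigr)$ on the compact middle interval, and the same verification of the obstacle inequality. One small bookkeeping remark: on the degenerate set $\xi \le 0$ your reduced expression carries a spurious $-f(\alpha)$ (the residual $-c_\alpha\phi_\alpha' - \phi_\alpha'' + f(\phi_\alpha)$ equals $f(\alpha) > 0$ there, not $0$), but since this only makes the true quantity larger the conclusion stands; the paper sidesteps this by noting the residual is $\ge 0$ on all of $\R$ and then checking the bracketed term uniformly.
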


\begin{proof}
By translation, one can assume $x_0 = 0$ without any loss of generality. In what follows, we write $z^\pm := x - c_\alpha t \pm \sigma \delta (1-\e^{-\beta t})$. By simple calculation,
$$
\partial_t w^\pm(x,t) = \phi_\alpha'(z^\pm) \left[ - c_\alpha \pm \sigma \delta \beta \e^{-\beta t}\right] \mp \delta \beta \e^{-\beta t}.
$$
Moreover,
\begin{align*}
\lefteqn{
- \partial_x^2 w^\pm(x,t) + f(w^\pm(x,t)) 
}\\
&= - \phi_\alpha''(z^\pm) + f(\phi_\alpha(z^\pm) \pm \delta \e^{-\beta t})\\
&= - \phi_\alpha''(z^\pm) + f(\phi_\alpha(z^\pm)) \pm \int^1_0 f'(\phi_\alpha(z^\pm) \pm \delta \e^{-\beta t} s) \delta \e^{-\beta t}\, \d s.
\end{align*}
Then we observe that
\begin{align*}
 \lefteqn{
 \partial_t w^\pm(x,t) - \partial_x^2 w^\pm(x,t) + f(w^\pm(x,t))
 }\\
&= - c_\alpha \phi_\alpha'(z^\pm) - \phi_\alpha''(z^\pm) + f(\phi_\alpha(z^\pm))\\
&\quad \pm \delta \e^{-\beta t} \left[ \sigma \beta \phi_\alpha'(z^\pm) - \beta
+ \int^1_0 f'(\phi_\alpha(z^\pm) \pm \delta \e^{-\beta t} s) \, \d s \right].
\end{align*}

We first prove the assertion for $w^+$. In what follows, we simply write $z$ instead of $z^+$. Note that $w^+ > \alpha$ by definition. Moreover, $- c_\alpha \phi_\alpha' - \phi_\alpha'' + f(\phi_\alpha) \geq 0$ in $\R$, and therefore, it suffices to check
\begin{equation}\label{cl1}
\sigma \beta \phi_\alpha'(z) - \beta
+ \int^1_0 f'(\phi_\alpha(z) + \delta \e^{-\beta t} s) \, \d s  \geq 0
\end{equation}
under certain choice of $\beta$, $\delta$ and $\sigma$. 

Let $\delta_0 = \delta_0(f,\alpha,\phi_\alpha) > 0$ and $r = r(f,\alpha,\phi_\alpha) > 0$ be small enough and let $R = R(f,\alpha,\phi_\alpha)> 0$ be large enough that
$$
\inf_{s \in [0,1]}f'(\underbrace{\phi_\alpha(y)+s \delta_0}_{\ \simeq \ \uo}) > \dfrac{f'(\alpha)}2 > 0 \quad \mbox{ for all } \ y \in (-\infty,r]
$$
and
$$
c_R := \inf_{s \in [0,1]} f'(\underbrace{\phi_\alpha(y)+s \delta_0}_{\ \simeq \ a_+}) > 0 \quad \mbox{ for all } \ y \in [R,+\infty).
$$
Indeed, the former one is possible, because $f'(\alpha) > 0$ by assumption, $\phi_\alpha = \alpha$ on $(-\infty,0]$ and $f'$ and $\phi_\alpha$ are continuous. Moreover, so is the latter one, since $f'(a_+) > 0$, $f'$ is continuous at $a_+$ and $\phi_\alpha(+\infty) = a_+$.

\underline{For $z \in (R,+\infty)$}, it follows that
\begin{align*}
\sigma \beta \phi_\alpha'(z) - \beta
+ \int^1_0 f'(\phi_\alpha(z) + \delta \e^{-\beta t} s) \, \d s
> - \beta + c_R > 0,
\end{align*}
whenever $0 < \delta < \delta_0$ and $0 < \beta < c_R$. \underline{For $z \in (-\infty,r)$}, one observes that
\begin{align*}
\sigma \beta \phi_\alpha'(z) - \beta
+ \int^1_0 f'(\phi_\alpha(z) + \delta \e^{-\beta t} s) \, \d s
> - \beta + \dfrac{f'(\alpha)}2 > 0,
\end{align*}
whenever $0 < \delta < \delta_0$ and $0 < \beta < \frac{f'(\alpha)}2$. We set
$$
\beta_0 := \min \left\{ c_R, \dfrac{f'(\alpha)}2 \right\} > 0.
$$

Set
$$
0 < m := \min_{z \in [r,R]} \phi_\alpha'(z) 
< +\infty.
$$
\underline{For $z \in [r,R]$}, we see that
\begin{align*}
\lefteqn{
\sigma \beta \phi_\alpha'(z) - \beta
+ \int^1_0 f'(\phi_\alpha(z) + \delta \e^{-\beta t} s) \, \d s
}\\
&> (\sigma m - 1)\beta - \|f'\|_{L^\infty(a_-,a_++\delta_0)} > 0,
\end{align*}
provided that
$$
\sigma > \sigma_\beta := m^{-1} \left( \dfrac{\|f'\|_{L^\infty(a_-,a_++\delta_0)}}\beta + 1 \right).
$$
Therefore $w^+$ turns out to be a supersolution of \eqref{ac-w} (and hence, of \eqref{obs}) in $\R \times \R_+$ for any $\delta \in (0,\delta_0)$, $\beta \in (0,\beta_0)$ and $\sigma > \sigma_\beta$. Moreover, we remark that the choice of $\delta_0$ and $\beta_0$ depend only on $f$, $\alpha$ and $\phi_\alpha$; on the other hand, $\sigma_\beta$ depends on $\beta$ as well as $f$, $\alpha$ and $\phi_\alpha$ (more precisely, $\sigma_\beta$ is decreasing in $\beta$ and divergent to $+\infty$ as $\beta \to +0$).

As for the assertion for $w^-$, we first note that $- c_\alpha \phi_\alpha' - \phi_\alpha'' + f(\phi_\alpha) = 0$ in $\R_+$. Hence as in the assertion for $w^+$, one can prove in a similar way that $w^-$ is a subsolution of the Allen-Cahn equation \eqref{ac-w}, whenever $z^- > 0$. Moreover, it follows immediately that $w^- < \alpha$ when $z^- \leq 0$. Therefore $w^-$ turns out to be a subsolution of \eqref{obs} on $\R \times \R_+$.
\end{proof}

\section{Existence and comparison principle of $L^2_{\rm loc}$ solutions}\label{S:ExCP}

Throughout this section, we assume \eqref{f-2}  with $f(0)=0$.  We shall construct $L^2_{\rm loc}$ solutions to \eqref{irAC} under \eqref{u0-loc-H2} and \eqref{mp}. One may be able to weaken the assumption \eqref{u0-loc-H2} (see~\cite{ae1}). However, we shall here restrict ourselves to the case \eqref{u0-loc-H2} for simplicity. Here and henceforth, we are concerned with \emph{$L^2_{\rm loc}$ solutions} to \eqref{irAC} defined by
\begin{definition}[$L^2_{\rm loc}$ solution]\label{D:sol}
Let $T > 0$ be fixed. A measurable function $u : \R \times (0,T) \to \R$ is called an $L^2_{\rm loc}$ solution to \eqref{irAC} on $[0,T]$ if the following conditions are all satisfied\/{\rm :}
\begin{enumerate}
 \item[(i)] For any bounded interval $I$ over $\R$, it holds that
$$
u \in L^2(0,T;H^2(I)) \cap C_w([0,T];H^1(I)) \cap W^{1,2}(0,T;L^2(I))\/{\rm ;}
$$ 
 \item[(ii)] there exists $\eta \in L^2_{\rm loc}(\R \times [0,T])$ such that
\begin{equation}\label{eq:ac+}
\partial_t u + \eta - \partial_x^2 u + f(u) = 0, \quad \eta \in \partial \iI(\partial_t u) \ \mbox{ a.e.~in } \R \times (0,T)\/{\rm ;}
\end{equation}
 \item[(iii)] for any bounded interval $I$ over $\R$, $u(t) \to u_0$ strongly in $L^2(I)$ as $t \to +0$.
\end{enumerate}
Moreover, a function $u : \R \times (0,T) \to \R$ is called an \emph{$L^2_{\rm loc}$ subsolution} {\rm (}\emph{supersolution}, respectively{\rm )} to \eqref{irAC} on $[0,T]$, if all the conditions above hold with the equation in \eqref{eq:ac+} replaced by the inequality $\partial_t u + \eta - \partial_x^2 u + f(u) \leq 0$ {\rm (}respectively, $\partial_t u + \eta - \partial_x^2 u + f(u) \geq 0${\rm )}.
\end{definition}

One can also define the notion of $L^2_{\rm loc}$ solution (super- and subsolutions) for \eqref{pde-obs0} by replacing the inclusion $\eta \in \partial \iI(\partial_t u)$ of \eqref{eq:ac+} with $\eta \in \partial \iII(u)$.

\subsection{Comparison principle for $L^2_{\rm loc}$ solutions to obstacle problems}\label{Ss:cp}

Let us start with proving a comparison principle for $L^2_{\rm loc}$ solutions to obstacle problems.
\begin{lemma}[Comparison principle for $L^2_{\rm loc}$ solutions to obstacle problems\label{L:cp-obs+}]
 Assume that \eqref{f-2} is satisfied  with $f(0)=0$.  Let $I$ be a {\rm (}possibly unbounded{\rm )} interval in $\R$ and let $\phi, \psi : I \to \R$ be measurable functions. Let $u$ and $v$ be an $L^2_{\rm loc}$ sub- and an $L^2_{\rm loc}$ supersolution for \eqref{pde-obs0} with the obstacle function $u_0$ replaced by $\phi$ and $\psi$, respectively, in $Q_T = I \times (0,T)$ for some $T > 0$ such that $u-v$ is bounded from above over $Q_T$. Suppose that $u \leq v$ a.e.~on the parabolic boundary $\partial_p Q_T = (I \times \{0\}) \cup (\partial I \times [0,T))$ and $\phi \leq \psi$ a.e.~in $I$. Then it holds that
 $$
 u \leq v \ \mbox{ a.e.~in } Q_T.
 $$
\end{lemma}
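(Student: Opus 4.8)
The plan is to run a one-sided energy (Gronwall) estimate on the difference $w := u-v$, localized in space so as to cope with the possibly unbounded interval $I$, and to conclude that the positive part $w_+ := (u-v)_+$ vanishes a.e.\ in $Q_T$. Subtracting the inequality defining the $L^2_{\rm loc}$ supersolution $v$ from that defining the $L^2_{\rm loc}$ subsolution $u$, I obtain multipliers $\eta_u \in \partial I_{[\phi(x),+\infty)}(u)$ and $\eta_v \in \partial I_{[\psi(x),+\infty)}(v)$ with
$$
\partial_t w - \partial_x^2 w + (\eta_u - \eta_v) + f(u) - f(v) \leq 0 \quad \text{a.e.\ in } Q_T .
$$
Since $I$ need not be bounded, I would fix cut-off functions $\zeta_R \in C_c^\infty(\R)$, $0 \leq \zeta_R \leq 1$, with $\zeta_R \equiv 1$ on $[-R,R]$, $\operatorname{supp} \zeta_R \subset [-2R,2R]$ and $|\zeta_R'| \leq C/R$, multiply the inequality by $w_+ \zeta_R^2 \geq 0$ and integrate over $I$. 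By Definition \ref{D:sol} the functions $u,v$ lie in $L^2(0,T;H^2(J)) \cap W^{1,2}(0,T;L^2(J))$ on every bounded $J$, which makes all the integrals and the integration by parts legitimate; moreover $w_+$ vanishes on the finite part of $\partial I$ because $u \leq v$ there, so no lateral boundary term survives.

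The key structural facts are two. First, the obstacle term has a favourable sign: on $\{w > 0\} = \{u > v\}$ one has $u > v \geq \psi \geq \phi$ (this is the only place the hypothesis $\phi \leq \psi$ is used), hence $u > \phi$ and $\eta_u = 0$ there, while $\eta_v \leq 0$ everywhere, so $(\eta_u - \eta_v) w_+ = -\eta_v w_+ \geq 0$ a.e. Second, the reaction term is handled by the $\lambda$-monotone decomposition \eqref{f-lam-convex} available under \eqref{f-2}: writing $f = \beta - \lambda\,\mathrm{id}$ with $\beta$ nondecreasing, $(f(u)-f(v))w_+ = (\beta(u)-\beta(v))w_+ - \lambda w w_+ \geq -\lambda w_+^2$. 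The parabolic term gives $\tfrac12 \frac{\d}{\d t}\int_I w_+^2 \zeta_R^2\,\d x$, and integrating the diffusion term by parts produces $\int_I |\partial_x w_+|^2 \zeta_R^2\,\d x$ together with a cross term $2\int_I \partial_x w_+\, w_+\, \zeta_R\zeta_R'\,\d x$, which Young's inequality absorbs into the gradient term up to a remainder controlled by $C\int_{R \le |x| \le 2R} w_+^2 |\zeta_R'|^2\,\d x \leq C M^2/R$, where $M := \sup_{Q_T} (u-v)_+ < \infty$ by the boundedness-from-above hypothesis.

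Putting the four terms together, for each $R$ one arrives at
$$
\frac{\d}{\d t} \int_I w_+^2 \zeta_R^2\,\d x \leq 2\lambda \int_I w_+^2 \zeta_R^2\,\d x + \frac{C M^2}{R},
$$
and since $u \leq v$ on $I \times \{0\}$ the left-hand quantity tends to $0$ as $t \to +0$; Gronwall's lemma then yields $\int_I w_+^2 \zeta_R^2\,\d x \leq C M^2 (\e^{2\lambda T} - 1)/(2\lambda R)$ on $[0,T]$. Fixing an arbitrary bounded subinterval $J \subset I$ and letting $R \to +\infty$ forces $\int_J w_+^2\,\d x \equiv 0$, i.e.\ $u \leq v$ a.e.\ in $Q_T$. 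I expect the main obstacle to be purely technical: making this energy argument rigorous over the unbounded domain and up to the finite lateral boundary, which is exactly why the localization $\zeta_R$ and the one-sided bound on $u-v$ are needed; the sign of the multiplier term, though a one-line check, is the other indispensable ingredient and is the only step using $\phi \leq \psi$.
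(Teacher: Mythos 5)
Your proposal is correct and follows essentially the same route as the paper's proof: subtract the two obstacle inequalities, test against $w_+\zeta_R^2$, use the $\lambda$-monotone splitting $f=\beta-\lambda\,\mathrm{id}$ for the reaction term, show the obstacle contribution $(\eta_u-\eta_v)w_+$ has a favourable sign (the paper argues via $\nu\le 0$ and the coincidence set $\{u=\phi\}\cap\{u\ge v\}$ being null, which is the same observation as your ``$\eta_u=0$ on $\{w>0\}$''), and close with a Gronwall estimate and $R\to+\infty$; the paper multiplies by $\e^{-2\lambda t}$ instead of invoking Gronwall's lemma by name, but that is the identical device.
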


\begin{proof}
 Set $I_r := (-r,r) \cap I$ for $r > 0$. Let $R > 0$ and let $\zeta_R : \R \to [0,+\infty)$ be a smooth cut-off function satisfying
$$
\zeta_R \equiv 1 \mbox{ on } (-R,R), \quad \zeta_R \equiv 0 \mbox{ on } \R \setminus (-2R,2R), \quad \|\zeta_R'\|_{L^\infty(\R)} \leq \dfrac{2}R.
$$
Subtracting inequalities and setting $w := u - v$, we see that
$$
 \partial_t w  - \partial_x^2 w + \beta(u) - \beta(v) \leq \lam w + \nu - \mu \ \mbox{ in } Q_T,
$$
where $\mu$ and $\nu$  belong to  $\partial I_{[\phi(x),+\infty)}(u)$ and $\partial I_{[\psi(x),+\infty)}(v)$, respectively, and $\beta(u) := f(u) + \lambda u$. Test both sides by $w_+ \zeta_R^2$,  where $w_+$ is the positive-part of $w$, i.e., $w_+ := \max\{w,0\}$. Then noting that $(\beta(u)-\beta(v))w_+ \geq 0$ by the monotonicity of $\beta$,  we have
\begin{align*}
\lefteqn{
\dfrac 1 2 \dfrac{\d}{\d t} \int_{I}w_+^2 \zeta_R^2 \, \d x 
+ \int_I |\partial_x w_+|^2 \zeta_R^2 \, \d x
+  2\int_I w_+ \partial_x w \zeta_R \zeta_R' \, \d x
}\\
& \leq \lam \int_{I} w_+^2 \zeta_R^2 \, \d x 
 + \int_I (\nu - \mu) w_+ \zeta_R^2 \, \d x \quad \mbox{ for a.e. } 0 < t < T.
\end{align*}
Here we observe by $\nu \leq 0$ and $\mu \in \partial I_{[\phi(x),+\infty)}(u)$ that
\begin{align*}
 \int_I (\nu - \mu)w_+ \zeta_R^2\, \d x
 &= \int_I \nu w_+  \zeta_R^2\, \d x - \int_I \mu w_+  \zeta_R^2\, \d x\\
 &\leq - \int_{\{u = \phi\} \cap \{u \geq v\}} \mu w  \zeta_R^2\, \d x.
 \end{align*}
Due to the fact that $v \geq \psi \geq \phi$ a.e.~in $I$,  either (i) or (ii)  holds: (i) the set $\{u = \phi\} \cap \{u \geq v\}$ has Lebesgue measure zero; (ii) $w = 0$ for a.e.~$x \in \{u = \phi\} \cap \{u \geq v\}$. Hence it follows that
 $$
  \int_{\{u = \phi\} \cap \{u \geq v\}} \mu w \zeta_R^2\, \d x
 = 0.
  $$
Combining all these facts, we deduce that
$$
 \int_I (\nu - \mu)w_+ \zeta_R^2 \, \d x \leq 0.
 $$
 Therefore multiplying both sides by $\e^{-2\lambda t}$, one has
\begin{align*}
\lefteqn{
\dfrac 1 2 \dfrac{\d}{\d t} \left( \e^{-2\lambda t} \int_{I}w_+^2 \zeta_R^2 \, \d x \right)
+ \e^{-2\lambda t} \int_{I} |\partial_x w_+|^2 \zeta_R^2 \, \d x
}\\
& \leq 2 \e^{-2\lambda t} \int_I |w_+||\partial_x w_+|\zeta_R|\zeta_R'| \, \d x\\
& \leq \e^{-2\lambda t} \frac{4} R \|w_+\|_{L^\infty(Q_T)} \left( \int_{I} |\partial_x w_+|^2 \zeta_R^2 \, \d x \right)^{1/2} \sqrt{4R}\\
& \leq \frac12 \e^{-2\lambda t} \int_{I} |\partial_x w_+|^2 \zeta_R^2 \, \d x + \dfrac{32}{R} \e^{-2\lambda t} \|w_+\|_{L^\infty(Q_T)}^2.
\end{align*}
Indeed,  by assumption,  $w = u-v$ is bounded from above over $Q_T$. Integrate both sides over $(0,t)$ and employ $w_+(\cdot,0) \equiv 0$ by assumption. Then one obtains
\begin{align}
\dfrac 1 2 \e^{-2\lambda t} \int_I w_+(\cdot,t)^2 \zeta_R^2 \, \d x + \frac12 \int^t_0 \e^{-2\lambda \tau} \left(\int_{I} |\partial_x w_+|^2 \zeta_R^2 \, \d x \right) \d \tau
\label{picopico}\\
\leq \dfrac{16}{\lambda R} \|w_+\|_{L^\infty(Q_T)}^2
\nonumber
\end{align}
for any $R > 0$ and $t > 0$. Thus passing to the limit as $R \to +\infty$, we conclude that
$$
\e^{-2\lambda t} \int_I w_+(\cdot,t)^2 \, \d x
+ \int^t_0 \e^{-2\lambda \tau} \left(\int_I |\partial_x w_+|^2 \, \d x \right) \d \tau = 0,
$$
whence follows that $w_+ \equiv 0$ (i.e., $u \leq v$) { a.e.~}in $Q_T$.
\end{proof}

\begin{remark}\label{R:equivalence}
{\rm
It is noteworthy that \eqref{pde-obs0} with $u_0 \equiv \alpha$ is equivalent to
\begin{equation}\label{obs-alpha}
\min\{u-\alpha,\partial_t u-\partial_x^2 u+f(u)\} = 0 \ \mbox{ in } Q_T. 
\end{equation}
More precisely, for functions $u$ satisfying (i) of Definition \ref{D:sol}, it holds that $u$ is a \emph{solution} of \eqref{pde-obs0} if and only if $u$ is a \emph{solution} of \eqref{obs-alpha}. Such an equivalence remains true for \emph{supersolutions}. Hence, Lemma \ref{L:cp-obs+} is  valid for comparing  an $L^2_{\rm loc}$ solution $u(x,t)$ and the supersolution $w^+(x,t)$ of \eqref{obs-alpha} (when $f'(\alpha) > 0$). On the other hand,  the equivalence  is no longer true for \emph{subsolutions}. Indeed, subsolutions to \eqref{pde-obs0} with $u_0 \equiv \alpha$ are still constrained to be greater than or equal to $\alpha$. However,  subsolutions of \eqref{obs-alpha} can violate the constraint.  

In latter sections, we shall often compare solutions of \eqref{obs-alpha} with the sub- and supersolutions $w^\pm$ defined by \eqref{wpm}. However, Lemma \ref{L:cp-obs+} is not valid for the subsolution $w^-$. Instead, the following lemma is useful to compare solutions of \eqref{irAC} or \eqref{pde-obs0}, which are supersolutions of \eqref{cac}, with $w^-$, which is a subsolution of \eqref{cac} on a positive half-line and is constant on the complementary negative half-line.
}
\end{remark}

\begin{lemma}[Comparison principle for the Allen-Cahn equation]\label{L:cp-sub}
Assume \eqref{f-2}  with $f(0)=0$.  Let $u^- = u^-(x,t)$ and $u^+ = u^+(x,t)$ be an $L^2_{\rm loc}$ sub- and supersolution to the classical Allen-Cahn equation \eqref{cac} on $Q^-_T := \{(x,t) \in \R \times (0,T) \colon x > d(t) \}$ for some $d(\cdot) \in C^1((0,T)) \cap C([0,T])$ for some $T > 0$ such that $u^--u^+$ is bounded from above in $Q^-_T$ and
$$
u^-(d(t),t) \leq u^+(d(t),t) \ \mbox{ for all } \ t \in [0,T].
$$
If $u^-(x,0) \leq u^+(x,0)$ for a.e.~$x \geq d(0)$, then $u^-(x,t)\leq u^+(x,t)$ for all~$x \geq d(t)$ and all $t \in [0,T]$.
\end{lemma}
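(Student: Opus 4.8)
\emph{Proof sketch.} The plan is to run essentially the same weighted energy estimate as in the proof of Lemma~\ref{L:cp-obs+}, now on the moving domain $Q_T^-$; the only new feature is the pair of boundary contributions generated by the curve $\{x=d(t)\}$, and the point will be that both of them vanish by the boundary hypothesis. Set $w:=u^--u^+$. Subtracting the sub- and supersolution inequalities for \eqref{cac} and writing $f(u)=\beta(u)-\lambda u$ with $\beta$ monotone (cf.~\eqref{f-lam-convex}), one gets
\[
\partial_t w-\partial_x^2 w+\beta(u^-)-\beta(u^+)\le\lambda w\quad\text{in }Q_T^-.
\]
For each $t$ we have $u^\pm(\cdot,t)\in H^1_{\rm loc}(\R)\hookrightarrow C(\R)$ by the one-dimensional Sobolev embedding and the $C_w([0,T];H^1(I))$-regularity of Definition~\ref{D:sol}, and $d$ is continuous; hence the trace $w(d(t),t)$ is well defined, $w(d(t),t)\le0$, and therefore $w_+(d(t),t)=0$ for every $t\in[0,T]$, where $w_+:=\max\{w,0\}$. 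Moreover $w_+$ is bounded on $Q_T^-$ since $u^--u^+$ is bounded from above there.

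First I would fix $R>0$, take the spatial cut-off $\zeta_R$ exactly as in the proof of Lemma~\ref{L:cp-obs+} (so $\zeta_R\equiv1$ on $(-R,R)$, $\zeta_R\equiv0$ off $(-2R,2R)$, $\|\zeta_R'\|_{L^\infty(\R)}\le2/R$), multiply the inequality above by $w_+\zeta_R^2\e^{-2\lambda t}$, and integrate over $\{x>d(t)\}$. By the Leibniz/transport rule, $\frac{\d}{\d t}\bigl(\tfrac12\e^{-2\lambda t}\int_{d(t)}^\infty w_+^2\zeta_R^2\,\d x\bigr)$ equals the interior contribution plus a term proportional to $d'(t)\,w_+(d(t),t)^2\zeta_R(d(t))^2=0$; likewise integrating $-\int\partial_x^2w\cdot w_+\zeta_R^2$ by parts produces a boundary term proportional to $w_+(d(t),t)=0$ (the contribution at $+\infty$ vanishing by the compact support of $\zeta_R$), together with $\int|\partial_x w_+|^2\zeta_R^2+2\int w_+\zeta_R\zeta_R'\,\partial_x w_+$. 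The essential observation is that \emph{every} term coming from the moving boundary $x=d(t)$ carries a power of $w_+(d(t),t)=0$; in particular $d'(t)$ never survives, so no integrability hypothesis on $d'$ beyond $d\in C^1((0,T))$ is needed.

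From here the argument is identical to that of Lemma~\ref{L:cp-obs+}: the term $\bigl(\beta(u^-)-\beta(u^+)\bigr)w_+\ge0$ pointwise by monotonicity of $\beta$ and is discarded; Young's inequality gives $\bigl|2\int w_+\zeta_R\zeta_R'\,\partial_x w_+\bigr|\le\tfrac12\int|\partial_x w_+|^2\zeta_R^2+\tfrac{C}{R}\|w_+\|_{L^\infty(Q_T^-)}^2$, using $\|\zeta_R'\|_\infty\le2/R$, $|\operatorname{supp}\zeta_R'|\le CR$ and the boundedness of $w_+$; the factor $\e^{-2\lambda t}$ absorbs the $\lambda w_+^2$ term; and integrating in $t$ with $w_+(\cdot,0)=0$ a.e.\ on $\{x>d(0)\}$ yields
\[
\tfrac12\e^{-2\lambda t}\!\int_{d(t)}^\infty\! w_+(\cdot,t)^2\zeta_R^2\,\d x+\tfrac12\!\int_0^t\!\e^{-2\lambda\tau}\!\int_{d(\tau)}^\infty\!|\partial_x w_+|^2\zeta_R^2\,\d x\,\d\tau\le\tfrac{Ct}{R}\|w_+\|_{L^\infty(Q_T^-)}^2,
\]
the analogue of \eqref{picopico}. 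Letting $R\to+\infty$ forces $w_+\equiv0$ on $Q_T^-$, i.e.\ $u^-\le u^+$, as claimed. The only genuinely new difficulty relative to Lemma~\ref{L:cp-obs+} is the bookkeeping of the moving boundary --- making the transport-theorem computation rigorous for an $L^2_{\rm loc}$ (sub/super)solution with merely $W^{1,2}(0,T;L^2_{\rm loc})$ time regularity on a time-dependent domain, and controlling traces on the curve $x=d(t)$; but once one observes that all such boundary terms are multiplied by $w_+(d(t),t)=0$, the estimate collapses to the fixed-domain case already treated.
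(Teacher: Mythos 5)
Your proposal is correct and takes essentially the same route as the paper's own proof in Appendix \ref{Apdx:S:cp}: test by $w_+\zeta_R^2$, observe that both the transport term $\frac{d'(t)}{2}w_+(d(t),t)^2\zeta_R^2(d(t))$ and the integration-by-parts contribution at $x=d(t)$ vanish because $w_+(d(t),t)=0$, discard $(\beta(u^-)-\beta(u^+))w_+\geq0$, absorb $\lambda w_+^2$ with the weight $\e^{-2\lambda t}$, and send $R\to+\infty$. The only cosmetic difference is that the paper uses a one-sided cut-off ($\zeta_R\equiv1$ on $(-\infty,R]$, supported in $(-\infty,2R)$) rather than recycling the symmetric $\zeta_R$ of Lemma~\ref{L:cp-obs+}; since $d$ is continuous on $[0,T]$ and hence bounded, this makes no material difference.
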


The lemma above can be proved in a more or less standard way (see \S \ref{Apdx:S:cp} in the appendix). We emphasize that it is necessary for us to establish a comparison principle for $L^2_{\rm loc}$ solutions to \eqref{cac}, since solutions of \eqref{irAC} or \eqref{pde-obs0} are not always of class $C^2$ in space (see Theorem \ref{T:TW}).

\begin{remark}[Applications of comparison theorems]\label{R:comparison}
{\rm
\begin{enumerate}
 \item[(i)] Let $u$ be an $L^2_{\rm loc}$ solution to \eqref{pde-obs0}. Then $u(x,t)$ and $\phi_\alpha(x-c_\alpha t - x_0)$ are comparable, provided that they are ordered over $\R$ at $t = 0$. Indeed, one can check that $\phi_\alpha(x-c_\alpha t - x_0)$ is an $L^2_{\rm loc}$ solution to \eqref{pde-obs0} with $u_0 = \phi_\alpha(x-x_0)$, and therefore, Lemma \ref{L:cp-obs+} is applicable.
\item[(ii)] One can apply Lemma \ref{L:cp-sub} to an $L^2_{\rm loc}$ solution $u(x,t)$ and the subsolution $w^-(x,t)$ (defined by \eqref{wpm}) for \eqref{obs-alpha} in $Q := \R \times \R_+$ when $f'(\alpha)>0$. Indeed, $u(x,t)$ is a supersolution to \eqref{cac} on $Q$. Moreover, $w^-(x,t)$ is a subsolution of \eqref{cac} in $Q_- := \{(x,t) \in \R \times \R_+ \colon x > d(t) \}$ with $d(t) = c_\alpha t + x_0 + \sigma \delta(1-\e^{-\beta t})$. Hence thanks to Lemma \ref{L:cp-sub}, $w^-(x,t) \leq u(x,t)$ on $Q_-$,  provided that $w^-(\cdot,0) \leq u(\cdot,0)$ a.e.~in $\R$,  and moreover, the inequality also holds true on $Q \setminus Q_-$, where $w^-(x,t) \leq \alpha \leq u(x,t)$. Thus we obtain $w^- \leq u$ on $Q$.
\end{enumerate}
}
\end{remark}

\subsection{Proof of Theorem \ref{T:loc-L2-ex}}\label{Ss:loc-ex}

We are now in position to prove Theorem \ref{T:loc-L2-ex}. One can construct a sequence $(u_{0,n})$ in $C^\infty_c(\R)$ by setting $u_{0,n} := (\rho_n *  u_0) \zeta_n$ with a mollifier $\rho_n$ and a smooth cut-off $\zeta_n : \R \to [0,1]$ such that
$$
u_{0,n} \to u_0 \quad \mbox{ strongly in } H^2(I)
$$
for any bounded interval $I$ over $\R$. Then we observe that
\begin{equation}\label{u0n-snd}
M_- := \min\{\gamma_-,0\} \leq u_{0,n} \leq M_+ := \max\{\gamma_+,0\} \ \mbox{ a.e.~in } \R.
\end{equation}
As in~\cite{ae1}, we can prove:
\begin{enumerate}
 \item[(i)] there exists an $L^2$ solution $u_n$ to \eqref{irAC} with $u_0$ replaced by $u_{0,n}$ on $[0,T]$, i.e., 
$$
u_n \in L^2(0,T;H^2(\R)) \cap C_w([0,T];H^1(\R)) \cap W^{1,2}(0,T;L^2(\R)),
$$
 \item[(ii)] for any non-negative $\rho \in C^\infty_c(\R)$, it holds that
\begin{equation}\label{etan-bdd}
\esssup_{t \in (0,T)} \int_\R |\eta_n(\cdot,t)|^2 \rho \, \d x
\leq \int_\R | ( \partial_x^2 u_{0,n} - f(u_{0,n}) )_- |^2 \rho \, \d x,
\end{equation}
where $\eta_n$ is a section of $\partial \iI(\partial_t u_n)$ as in \eqref{eq:ac+},
 \item[(iii)] $u_n$ also solves \eqref{pde-obs0} with $u_0$ replaced by $u_{0,n}$ in $\R \times (0,T)$
\end{enumerate}
(see Appendix \S \ref{S:A0}). Furthermore, by \eqref{f-2}  with $f(0)=0$  and \eqref{mp}, $u^+ \equiv M_+$ and $u^-\equiv M_-$ are a super- and a subsolution,  respectively,  of \eqref{pde-obs0} with $u_0 := u(\cdot,0)$. Hence by maximum principle for (still possibly unbounded) $L^2$ solutions of \eqref{pde-obs0}, we can deduce that
\begin{equation}\label{loc:u-bdd}
M_- \leq u_n \leq M_+ \ \mbox{ a.e.~in } \R \times (0,T)
\end{equation}
(see Appendix \S \ref{S:A1}).

From now on, we shall establish local energy estimates. We omit the subscript $n$ unless any confusion may arise.  Set $I_r := (-r,r)$ for $r > 0$ and recall  $\zeta_R$ defined in the proof of Lemma \ref{L:cp-obs+}. Test  \eqref{irAC-dn}  by $u \zeta_R^2$. Then it follows that
\begin{align*}
\lefteqn{
 \dfrac 1 2 \dfrac \d {\d t} \int_\R |u|^2 \zeta_R^2 \, \d x
 + \int_\R |\partial_x u|^2 \zeta_R^2 \, \d x + \int_\R f(u)u \zeta_R^2 \, \d x
}\\
&= -  2 \int_\R \partial_x u \zeta_R \zeta_R' u \, \d x - \int_\R \eta u \zeta_R^2 \, \d x\\
&\leq \frac12 \int_\R |\partial_x u|^2 \zeta_R^2 \, \d x + \dfrac{32}{R}  \|u\|_{L^\infty(Q_T)}^2 \\
&\quad + \dfrac 1 2 \int_\R |u|^2 \zeta_R^2 \, \d x 
+ \dfrac 1 2 \int_\R |(\partial_x^2 u_{0} - f(u_{0}))_-|^2 \zeta_R^2 \, \d x.
\end{align*}
 Here we also used \eqref{etan-bdd}.  By assumption \eqref{f-2}  with $f(0)=0$,  we have
$$
f(u)u \geq - \lambda u^2.
$$
Set $\lambda' := \lambda + 1/2$.  It follows that
\begin{align*}
\lefteqn{
 \dfrac 1 2 \dfrac \d {\d t} \left( \e^{-2\lambda' t} \int_\R |u|^2 \zeta_R^2 \, \d x \right)
 + \dfrac12 \e^{-2\lambda' t} \int_\R |\partial_x u|^2 \zeta_R^2 \, \d x
}\\
&\leq \frac{32}{R} \e^{-2\lambda' t} \|u\|_{L^\infty(Q_T)}^2
+ \dfrac 1 2 \e^{-2\lambda' t} \int_\R |(\partial_x^2 u_{0} - f(u_{0}))_-|^2 \zeta_R^2 \, \d x.
\end{align*}
Integrating both sides over $(0,t)$, one derives from \eqref{loc:u-bdd} that
\begin{align}
 \lefteqn{
 \frac 12 \e^{-2\lambda' t} \|u(t)\|_{L^2(I_R)}^2 + \frac 12 \int^t_0 \e^{-2\lambda'\tau} \|\partial_x u\|_{L^2(I_R)}^2 \, \d \tau
}\label{loc:ux-L2}\\
&\leq \dfrac 1 2 \|u_0\|_{L^2(I_{2R})}^2
+ \dfrac{16}{\lam'R} \left(\max \{|M_+|,|M_-|\}\right)^2\nonumber\\
&\quad + \dfrac 1 {4\lambda'} \|(\partial_x^2 u_{0} - f(u_{0}))_-\|_{L^2(I_{2R})}^2
\nonumber
\end{align}
for any $t \in [0,T]$ and $R > 0$.

Moreover, test  \eqref{irAC-dn}  by $\partial_t u \zeta_R^2$. Since the product between $\partial_t u$ and $\eta$ vanishes, one finds that
\begin{align*}
\lefteqn{
 \int_\R |\partial_t u|^2 \zeta_R^2 \, \d x
 + \dfrac{\d}{\d t} \int_\R \left[ \frac 1 2 |\partial_x u|^2 + \hat f(u) \right] \zeta_R^2 \, \d x
}
\\
& = - 2\int_\R \partial_x u \zeta_R\zeta'_R \partial_t u \, \d x\\
& \leq \dfrac{4}R \|\partial_t u \zeta_R\|_{L^2(\R)} \|\partial_x u\|_{L^2(I_{2R})}\\
& \leq \frac12 \int_\R |\partial_t u|^2 \zeta_R^2 \, \d x + \dfrac{8}{R^2} \|\partial_x u\|_{L^2(I_{2R})}^2,
\end{align*}
where $\hat f$ is the primitive function of $f$ in \eqref{f-2} such that $\hat f \geq 0$. Since $\zeta_R \equiv 1$ on $I_R$, by integration in time, we infer that
\begin{align}
 \lefteqn{
 \frac12\int^t_0 \|\partial_t u\|_{L^2(I_R)}^2 \, \d \tau
 + \frac 1 2 \|\partial_x u(t)\|_{L^2(I_R)}^2 + \int^R_{-R} \hat f(u(x,t)) \, \d x 
}\label{loc:ut-L2}\\
&\leq \dfrac12\|\partial_xu_0\|_{L^2(I_{2R})}^2+ \int^{2R}_{-2R} \hat f(u_0(x)) \, \d x%\nonumber\\&\quad 
+ \dfrac{8}{R^2} \int^t_0 \|\partial_x u\|_{L^2(I_{2R})}^2 \, \d \tau
\nonumber
\end{align}
for any $t \in [0,T]$ and $R > 0$.

Consequently, for any $R > 0$, thanks to \eqref{loc:ux-L2} and \eqref{loc:ut-L2}, there exists a constant $C_{T,R} > 0$  (also depending on $T$)  such that
\begin{align}\label{AA}
 \int^T_0 \|\partial_t u_n\|_{L^2(I_R)}^2 \, \d t
 + \sup_{t \in (0,T)} \|\partial_x u_n(t)\|_{L^2(I_R)}^2 \leq C_{ T,R}
\end{align}
for all $n \in \mathbb{N}$. Moreover, \eqref{etan-bdd} asserts that
$$
 \esssup_{t \in (0,T)}  \|\eta_n(t)\|_{L^2(I_R)} \leq C_R.
$$
Furthermore, by  \eqref{irAC-dn}  and a maximal regularity estimate, it follows that
$$
\int^T_0 \|\partial_x^2 u_n\|_{L^2(I_R)}^2 \, \d t
+ \int^T_0 \|\beta(u_n)\|_{L^2(I_R)}^2 \, \d t \leq C_{ T,R},
$$
where $\beta$ is a monotone function such that $f(u_n) = \beta(u_n) - \lambda u_n$, for any $n \in \mathbb N$.  By these estimates along with \eqref{loc:u-bdd},  there exists $u \in L^\infty(\R \times (0,T))$ such that, up to a (not relabeled) subsequence,
\begin{alignat*}{3}
u_n &\to u \quad && \mbox{ weakly star in } L^\infty(\R \times (0,T))
\end{alignat*}
and, for any $R > 0$,
\begin{alignat*}{3}
u_n &\to u \quad &&\mbox{ weakly in } W^{1,2}(0,T;L^2(I_R)) \cap L^2(0,T;H^2(I_R)),\\
& &&\mbox{ weakly star in } L^\infty(0,T;H^1(I_R)),\\
& &&\mbox{ strongly in } C(\overline{I_R} \times [0,T]) \cap L^2(0,T;C^1(\overline{I_R})),\\
\eta_n &\to \eta \quad &&\mbox{ weakly star in } L^\infty(0,T;L^2(I_R)),\\
f(u_n) &\to f(u) \quad &&\mbox{ strongly in } C(\overline{I_R} \times [0,T]),
\end{alignat*}
where $I_R := (-R,R)$. In particular, we remark that $\partial_t u \geq 0$ a.e.~in $Q_T$, and moreover, \eqref{eta-est} follows.

One finds that, for a.e.~$t \in (0,T)$, 
\begin{align*}
 \int_\R \partial_x u \partial_x \varphi \, \d x
 &= - \int_\R \left( \partial_t u + f(u) + \eta \right) \varphi \, \d x
\quad \mbox{ for any } \ \varphi \in C^\infty_c(\R),
\end{align*}
which implies $\partial_x u \in H^1_{\rm loc}(\R)$ (i.e., $u \in H^2_{\rm loc}$) by $\partial_t u + f(u) + \eta \in L^2_{\rm loc}(\R)$. Finally, we claim that
$$
\eta \in \partial \iI(\partial_t u) \ \mbox{ a.e.~in } Q_T. 
$$
To see this, for any $\varphi \in C^\infty_c(\R)$ and $v \in L^2_{\rm loc}(Q_T)$ satisfying $\varphi \geq 0$ and $v \geq 0$, we have
\begin{align*}
\int^T_0 \int_\R (v - \partial_t u_n) \eta_n \varphi \, \d x \d t \leq 0
\end{align*}
from the relation $\varphi \eta_n \in \partial \iI(\partial_t u_n)$. Passing to the limit as $n \to +\infty$, we observe that
\begin{align*}
0 &\geq \liminf_{n \to +\infty} \int^T_0 \int_\R (v - \partial_t u_n) \eta_n \varphi \, \d x \d t
\\
&\geq \int^T_0 \int_\R v \eta \varphi \, \d x \d t
- \limsup_{n \to +\infty} \int^T_0 \int_\R \partial_t u_n \eta_n \varphi \, \d x \d t.
\end{align*}
Here,  the multiplication of \eqref{irAC-dn} and $\partial_t u_n \varphi$ yields
\begin{align*}
\lefteqn{
 \int^T_0 \int_\R \partial_t u_n \eta_n \varphi \, \d x \d t
}\\
&= - \int^T_0 \int_\R |\partial_t u_n|^2 \varphi \, \d x \d t
 - \dfrac 1 2 \int_\R |\partial_x u_n(\cdot,T)|^2 \varphi \, \d x\\
&\quad - \int_\R \hat f(u_n(\cdot,T)) \varphi \, \d x
+ \dfrac 1 2 \int_\R |\partial_x u_{0,n}|^2 \varphi \, \d x
+ \int_\R \hat f(u_{0,n}) \varphi \, \d x\\
&\quad - \int^T_0 \int_\R \partial_t u_n \partial_x u_n \varphi' \, \d x \d t.
\end{align*}
Passing to the limit as $n \to +\infty$, we obtain
\begin{align*}
\lefteqn{
 \limsup_{n \to +\infty}\int^T_0 \int_\R \partial_t u_n \eta_n \varphi \, \d x \d t
}\\
&\leq - \int^T_0 \int_\R |\partial_t u|^2 \varphi \, \d x \d t
 - \dfrac 1 2 \int_\R |\partial_x u(\cdot,T)|^2 \varphi \, \d x\\
&\quad - \int_\R \hat f(u(\cdot,T)) \varphi \, \d x
+ \dfrac 1 2 \int_\R |\partial_x u_0|^2 \varphi \, \d x
+ \int_\R \hat f(u_0) \varphi \, \d x\\
&\quad - \int^T_0 \int_\R \partial_t u \partial_x u \varphi' \, \d x \d t\\
&= - \int^T_0 \int_\R  \partial_t u \left( \partial_t u - \partial_x^2 u + f(u) \right) \varphi \, \d x \d t
= \int^T_0 \int_\R \partial_t u \eta \varphi \, \d x \d t.
\end{align*}
Thus we obtain
$$
\int^T_0 \int_\R (v - \partial_t u) \eta \varphi \, \d x \d t \leq 0
$$
for any $v \in L^2_{\rm loc}(Q_T)$ satisfying $v \geq 0$ a.e.~in $Q_T$. Therefore $\eta\varphi$ belongs to the set $\partial \iI(\partial_t u)$ a.e.~in $\mathrm{supp} \,\varphi \times (0,T)$. Hence $\eta \in \partial \iI(\partial_t u)$ a.e.~in $Q_T$ by the arbitrariness of $\varphi \geq 0$. 

Finally, recalling that $\eta_n$ lies on $\partial I_{[\,\cdot\, \geq u_{0,n}]}(u_n) = \partial \iI(u_n-u_{0,n})$, we observe by the demiclosedness of maximal monotone graphs on $L^2(I_R)$ that $\eta \in \partial \iI(u-u_0) = \partial \iII(u)$ for a.e.~$(x,t) \in Q_T$.
This completes the proof. \qed

\section{Stability of traveling waves -- Outline --}\label{S:OL}

This section is devoted to an outline of the proof for Theorem \ref{T:Main}. Throughout this section, in addition to \eqref{f-1} and \eqref{f-2}, we always assume that $f'(\alpha)>0$, ${\rm (H)}_\alpha$ and \eqref{alpha-hyp-2} are all fulfilled. Before going into the detail, we first give a remark on assumptions for initial data. 

\begin{remark}[Hypotheses]\label{R:H-alpha}
{\rm We first give a remark on assumptions for initial data. As for \eqref{H1}, the second condition and the first inequality of the third condition are also assumed in the studies on classical Allen-Cahn equations. The condition $u_0 \in H^2_{\rm loc}(\R)$ can be relaxed as in~\cite{ae1}, where a \emph{partial smoothing effect} is discussed for \eqref{irAC}. The last inequality of the third condition of \eqref{H1} and \eqref{H3} enable us to avoid some technical difficulties; indeed, if they are not assumed, one may not be able to reduce \eqref{irAC} into a simple obstacle problem such as \eqref{pde-alpha} below, where the obstacle function is constant, and the coincidence set $[u=u_0]$ may also include a neighbourhood of $x=+\infty$.
}
\end{remark}

\subsection{Initial phase}

One readily observes by ${\bf (H)}_\alpha$ that there exists $\xi_0 \in \R$ such that $u_0(x) \leq \phi_\alpha(x-\xi_0)$ for $x \in \R$, and therefore, Lemma \ref{L:cp-obs+} (see also Remark \ref{R:comparison}) yields
\begin{equation}\label{ini-ph:order}
u_0(x) \leq u(x,t) \leq \phi_\alpha(x-c_\alpha t - \xi_0) \quad \mbox{ for all } \ x \in \R, \ t \geq 0.
\end{equation}
Hence one can set
$$
r(t) := \sup \left\{ r \in \R \colon u(x,t) = \alpha \ \mbox{ for all } \ x \leq r \right\}
$$
for $t \geq 0$. Then $r(t) \in [\xi_0 + c_\alpha t ,\xi_1]$ and it is non-increasing. Furthermore, it follows that
$$
u(\cdot,t) \equiv \alpha \ \mbox{ on } (-\infty,r(t)] \ \mbox{ for all } \ t \geq 0.
$$

We claim that 
\begin{lemma}[The infimum of $u(\cdot,t)$ is eventually greater than $a_-$]\label{Cl:Ph1}
There is a number $t_1 > 0$ large enough so that
 \begin{equation}\label{sol-above}
\inf_{x \in \R} u(x,t) \geq \inf_{x \in \R} u(x,t_1) \in (a_-,\alpha] \ \mbox{ for all } \ t \geq t_1.
 \end{equation}
\end{lemma}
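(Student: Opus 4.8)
The plan is as follows. The monotonicity assertion $\inf_{x}u(x,t)\ge\inf_{x}u(x,t_1)$ for $t\ge t_1$ is immediate from $\partial_t u\ge0$, and $\inf_{x}u(\cdot,t_1)\le\alpha$ holds because $u(\cdot,t_1)\equiv\alpha$ on the nonempty half-line $(-\infty,r(t_1)]$. Hence the whole content of the lemma is to exhibit some $t_1>0$ with $\inf_{x}u(\cdot,t_1)>a_-$, and I would in fact prove the stronger fact that $\inf_{x\in\R}u(x,t)>a_-$ for \emph{every} $t>0$; any such $t_1>0$ then works.

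The first step is to localize the region where $u$ can be close to $a_-$. Since $\partial_t u\ge0$ and $u(\cdot,t)\to u_0$ in $L^2_{\rm loc}(\R)$ as $t\to+0$ (Definition~\ref{D:sol}(iii)), one gets $u(x,t)\ge u_0(x)$ for all $x\in\R$ and $t\ge0$, using also the $C^1$-regularity of Theorem~\ref{T:C1} and $u_0\in H^2_{\rm loc}(\R)\subset C^1(\R)$. By ${\bf (H)}_\alpha$ we have $u_0\equiv\alpha$ on $(-\infty,\xi_1]$, while $\liminf_{x\to+\infty}u_0(x)>a_0$ provides $M>\xi_1$ and $\vep_1>0$ with $u_0(x)\ge a_0+\vep_1$ for all $x\ge M$. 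Consequently, for each $t>0$,
\begin{equation*}
\inf_{x\in\R}u(x,t)=\min\Bigl\{\alpha,\ \min_{x\in[\xi_1,M]}u(x,t)\Bigr\},
\end{equation*}
since $u(\cdot,t)\ge\alpha$ on $(-\infty,\xi_1]$ (with value $\alpha$ attained on $(-\infty,r(t)]$), $u(\cdot,t)\ge a_0+\vep_1>\alpha$ on $[M,+\infty)$, and the inner minimum is attained by compactness of $[\xi_1,M]$ and continuity of $u(\cdot,t)$.

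The key step is a strong maximum principle. Comparing $u$ with the constant subsolution $a_-$ of the obstacle problem \eqref{pde-obs0} with obstacle $\equiv a_-\le u_0$ via Lemma~\ref{L:cp-obs+} yields $u\ge a_-$; moreover \eqref{eq:ac+} and $\eta\le0$ give $\partial_t u-\partial_x^2u+f(u)=-\eta\ge0$ a.e. Setting $v:=u-a_-\ge0$ and $c(x,t):=\int_0^1 f'\bigl(a_-+s\,v(x,t)\bigr)\,\d s$, which is bounded because $a_-\le u<a_+$ (using $\sup u_0<a_+$), the inequality becomes
\begin{equation*}
\partial_t v-\partial_x^2 v+c(x,t)\,v\ \ge\ 0\quad\text{a.e.\ in }\R\times(0,+\infty).
\end{equation*}
Since $v$ is continuous, non-negative, and $v(\cdot,0)=u_0-a_-\not\equiv0$ (indeed $v\equiv\alpha-a_->0$ on $(-\infty,\xi_1]$), the strong maximum principle (strong minimum principle for supersolutions) forces $v>0$ throughout $\R\times(0,+\infty)$: if $v(x_0,t_0)=0$ for some $t_0>0$, then $v\equiv0$ on $\R\times(0,t_0]$, whence $u_0\equiv a_-$ after letting $t\to+0$ — a contradiction. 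Therefore $u(x,t)>a_-$ for all $x\in\R$ and $t>0$, and combined with the displayed formula for the infimum (a minimum of $\alpha>a_-$ with a minimum over a compact set of values all exceeding $a_-$) we conclude $\inf_{x\in\R}u(x,t)\in(a_-,\alpha]$ for every $t>0$. Picking any $t_1>0$ and invoking $\partial_t u\ge0$ for $t\ge t_1$ completes the proof.

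The main obstacle is to justify the strong maximum principle at the regularity actually available: $u$ is only a strong ($L^2_{\rm loc}$, a.e.) supersolution of \eqref{cac}, not a classical one, so one should use the version of the strong maximum principle (equivalently, the weak parabolic Harnack inequality) valid for non-negative supersolutions of uniformly parabolic equations with bounded measurable zero-order coefficient, and then transport the conclusion down to $t=0$ via the initial-trace property in Definition~\ref{D:sol}(iii). A route that avoids the low regularity entirely is to compare $u$ from below, on all of $\R$, with the smooth solution $\underline u$ of \eqref{cac} satisfying $\underline u(\cdot,0)=u_0$ (using a comparison principle for \eqref{cac} on $\R$, in the spirit of Lemma~\ref{L:cp-sub}), and then apply the \emph{classical} strong maximum principle to $\underline u-a_-$.
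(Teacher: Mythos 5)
Your proof is correct, and it takes a genuinely different (and in fact shorter) route than the paper's. You make two observations the paper does not exploit at this point: (a) since $\partial_t u\ge0$ gives $u(\cdot,t)\ge u_0$ pointwise and $\liminf_{x\to+\infty}u_0>a_0$, the infimum of $u(\cdot,t)$ is already controlled outside a fixed, $t$-independent compact interval $[\xi_1,M]$; (b) on that compact interval, a strong-minimum-principle/weak-Harnack argument for the supersolution $v=u-a_-$ of a linear parabolic inequality forces $u>a_-$ strictly for every $t>0$. Consequently, every $t_1>0$ works. The paper instead compares $u$ from below with the classical Allen–Cahn solution $u_{ac}$, invokes the Fife–McLeod theorem to conclude $u_{ac}(\cdot,t_1)>a_0$ on a right half-line for $t_1$ large, and then treats the remaining compact interval $[\xi_1,\xi_2]$ via an explicit heat-kernel lower bound applied to $w$ solving $\partial_t w=\partial_x^2 w-Mw$. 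What the paper's route buys is the estimate $u\ge\phi_{ac}(\cdot-ct)-K_0\e^{-\kappa_0 t}$, which is reused immediately in the proof of Lemma \ref{Cl:Ph2}; your route buys a cleaner and self-contained proof of the present lemma with a stronger conclusion ($t_1$ arbitrary). Your worry about applying the strong maximum principle at the available ($H^2_{\rm loc}$-in-space, $W^{1,2}$-in-time) regularity is well placed, and both of your proposed fixes are sound; in fact your second fix (pass to $\underline u$ solving \eqref{cac} and apply the classical strong minimum principle, or a heat-kernel bound, to $\underline u-a_-$) is exactly the mechanism the paper uses via $u_{ac}$ and the kernel of $\partial_t-\partial_x^2+M$. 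One small redundancy: the appeal to Lemma \ref{L:cp-obs+} with the constant subsolution $a_-$ is unnecessary, since $u\ge u_0\ge a_-$ already gives the nonnegativity of $v$.
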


\begin{proof}
We find by ${\bf (H)}_\alpha$ that
 \begin{equation}\label{rg1:u0}
 u(x,t) \geq u_0(x) = \alpha \in (a_-,a_0)
 \ \mbox{ for any } \ x \leq \xi_1 \ \mbox{ and } \ t \geq 0.
 \end{equation}
Let $u_{ac}$ be the solution of the classical Allen-Cahn equation \eqref{cac} with the same initial data $u_0$. Noting that
$$
\partial_t u = \big( \partial_x^2 u - f(u) \big)_+ \geq \partial_x^2 u - f(u) \ \mbox{ a.e.~in } \R \times \R_+
$$
(i.e., $u$ is a supersolution to \eqref{cac}) and employing a (classical) comparison principle, one has
 \begin{equation*}
 u_{ac}(x,t) \leq u(x,t) \ \mbox{ for all } \ (x,t) \in \R \times \R^+.  
 \end{equation*}
 Then thanks to~\cite{FM}, it follows by ${\bf (H)}_\alpha$ that
 $$
 \|u_{ac}(\cdot,t) - \phi_{ac}(\cdot -c t)\|_{L^\infty(\R)} \leq K_0 \e^{-\kappa_0 t} \ \mbox{ for all } \ t \geq 0
 $$
 for some constants $K_0, \kappa_0 > 0$, a velocity constant $c \in \R$ and a strictly increasing profile function $\phi_{ac}(x)$ satisfying
 $$
 - \phi''_{ac} + f(\phi_{ac}) = c \phi'_{ac} \ \mbox{ in } \R \ \  \mbox{ and } \ \lim_{x \to \pm\infty} \phi_{ac}(x) = a_\pm.
 $$
 Hence
 \begin{equation}\label{est-bel-fund}
  u(x,t) \geq u_{ac}(x,t) \geq \phi_{ac}(x-ct) - K_0 \e^{-\kappa_0 t} \ \mbox{ for all } \ t \geq 0.
 \end{equation}
 We can choose $t_1 > 0$ so large that
$$
K_0 \e^{-\kappa_0 t_1} < \frac{a_+ - a_0}2,
$$
 and moreover, there exists $\bar\xi_2 \in \R$ such that
$$
\phi_{ac}(\xi) > \frac{a_+ + a_0}2 \ \mbox{ for } \ \xi \geq \bar\xi_2. 
$$
Thereby
 $$
 u(x,t_1) \geq \phi_{ac}(x-ct_1) - K_0 \e^{-\kappa_0 t_1} > a_0
 $$
 for all $x \geq \xi_2 := \bar\xi_2+ct_1$. Thus combining this fact with \eqref{rg1:u0} and using the non-decrease of $u(x,t)$ in time, we obtain
 $$
 u(x,t) \geq \min \left\{ \alpha , \bar m \right\} > a_-
 \ \mbox{ for all } \ x \in \R \setminus (\xi_1, \xi_2) \ \mbox{ and } \ t \geq t_1,
 $$
where $\bar m := \phi_{ac}(\bar\xi_2)-K_0\e^{-\kappa_0t_1} > a_-$.

On the other hand, $v_{ac} := u_{ac} - a_-$ solves
$$
\partial_t v_{ac} = \partial_x^2 v_{ac} - f(u_{ac}) + f(a_-)
\geq \partial_x^2 v_{ac} - M v_{ac} \ \mbox{ in } \R \times \R_+,
$$
where $M := \sup_{s \in [a_- , a_+]} |f'(s)|$, since $u_{ac}$ lies on $[a_-,a_+]$. Let $w$ be the solution to 
$$
\partial_t w = \partial_x^2 w - M w \ \mbox{ in } \R \times \R_+, \quad w|_{t = 0} = u_0 - a_- \geq 0\ \mbox{ in } \R.
$$
Then $w$ satisfies
\begin{align*}
 w(x,t) &= \frac {\e^{-Mt}} {\sqrt{4\pi t}} \int_{\R} \e^{-\frac{(x-y)^2}{4t}} \left(u_0(y)-a_-\right) \, \d y\\
&\geq \frac {\e^{-Mt}} {\sqrt{4\pi t}} \left(\alpha - a_- \right) \int^{\xi_1}_{-\infty} \e^{-\frac{(x-y)^2}{4t}} \, \d y > 0
\end{align*}
for all $t > 0$ and $x \in \R$. Hence, by comparison principle,
$$
u_{ac}(x,t) \geq a_- + w(x,t) \ \mbox{ for all } \ t > 0 \ \mbox{ and } \ x \in \R,
$$
which in particular implies
$$
\inf_{x \in [\xi_1,\xi_2]} u(x,t) \geq \inf_{x \in [\xi_1,\xi_2]} u_{ac}(x,t) > a_- \ \mbox{ for } \ t > 0.
$$
Combining all these facts obtained so far with the non-decrease of $u(x,t)$ in time and \eqref{ini-ph:order}, we obtain \eqref{sol-above}.
\end{proof}

\subsection{Second phase}

In the second phase, based on Lemma \ref{Cl:Ph1}, we shall prove:

\begin{lemma}[The coincidence set is eventually a negative half-line]\label{Cl:Ph2}
There exists $t_2 \geq t_1$ such that
\begin{enumerate}
 \item[(i)] $u(x,t) \geq \alpha$ for $x \in \R$ and $t \geq t_2$,
 \item[(ii)] $u(x,t) > \max\{\sup_{x \in \R} u_0(x), a_+ - \frac{\delta_0}2\}$ for $x \geq \xi_1$ and $t \geq t_2$,
\end{enumerate}
where $\delta_0 > 0$ is  the  constant appeared in Lemma \ref{L:comfunc}. Hence, it holds that
\begin{equation}\label{u=u0-equi}
 u(x,t) = u_0(x) \quad \mbox{ if and only if } \quad u(x,t) = \alpha
\end{equation}
for all $x \in \R$ and $t \geq t_2$. Moreover, the coincidence set $[u(\cdot,t)=u_0(\cdot)]$ can be written as
\begin{equation}\label{coin-set}
[u(\cdot,t)=\alpha] = (-\infty,r(t)] \quad \mbox{ for } \ t \geq t_2.
\end{equation}
\end{lemma}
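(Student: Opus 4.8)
We establish (i) and (ii) and then read off \eqref{u=u0-equi} and \eqref{coin-set}.

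\emph{Proof of (i).} By the proof of Lemma \ref{Cl:Ph1}, $u(\cdot,t)\ge u_0(\cdot)=\alpha$ on $(-\infty,\xi_1]$ for every $t\ge 0$, while $t_1$ may be taken so large that $u(x,t)\ge\phi_{ac}(x-ct_1)-K_0\e^{-\kappa_0 t_1}>a_0$ for $x\ge\xi_2$ and $t\ge t_1$; hence $u\ge\alpha$ can fail only on the \emph{bounded} interval $(\xi_1,\xi_2)$, and since $\partial_t u\ge0$ it suffices to produce one time $t_2'\ge t_1$ with $u(\cdot,t_2')\ge\alpha$ there. To this end I would compare $u$, a supersolution of \eqref{cac}, from below with the subsolution $w^-$ of Lemma \ref{L:comfunc} restarted at $t_1$ (or at a later time) and translated by a parameter $x_0$ chosen, together with $\delta\in(0,\delta_0)$, $\beta\in(0,\beta_0)$, $\sigma>\sigma_\beta$, in terms of the lower bounds for $u(\cdot,t_1)$ available from the initial phase (in particular \eqref{est-bel-fund}, $u(\cdot,t_1)\ge a_-$, and $u(\cdot,t_1)>a_0$ on $[\xi_2,\infty)$) and of the localization $r(t)\in[\xi_0+c_\alpha t,\xi_1]$ of the free boundary. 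With such a choice one aims to verify the two hypotheses of the Allen--Cahn comparison Lemma \ref{L:cp-sub} on $Q_-=\{x>d(t)\}$, $d(t)$ being the abscissa $z^-=0$: the lateral inequality on $x=d(t)$ from keeping $d(t)$ to the left of $r(t)$ or of $\xi_1$ (where $u\equiv\alpha$, resp.\ $u\ge u_0=\alpha$, is $>w^-$), and the inequality at the restart time from the above bounds. Then $w^-\le u$ on $Q_-$, and since $c_\alpha<0$ by Lemma \ref{L:c} we have $w^-(x,t)\to a_+$ locally uniformly in $x$ as $t\to\infty$, so $u(x,t)>\alpha$ on the compact set $[\xi_1,\xi_2]$ once $t$ is large, giving (i). (When the classical wave speed $c$ is negative, the cruder bound $u\ge u_{ac}\to a_+$ locally uniformly suffices; the degenerate left-moving profile $\phi_\alpha$, with its always negative speed, is needed precisely in the balanced case $c=0$.)

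\emph{Proof of (ii).} Once $u\ge\alpha$ everywhere for $t\ge t_2'$, the same comparison $w^-\le u$ on $Q_-$ yields more: as $\inf_{x\ge\xi_1}w^-(x,t)=w^-(\xi_1,t)\to a_+$ and $d(t)\to-\infty$, for every level $\ell<a_+$ there is a time past which $u(x,t)\ge w^-(x,t)>\ell$ for all $x\ge\xi_1$. Applying this with $\ell=\max\{\sup_x u_0(x),\,a_+-\delta_0/2\}$, which is finite and $<a_+$ by ${\bf (H)}_\alpha$ and $\delta_0>0$, and enlarging $t_2\ge t_2'$ accordingly proves (ii).

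\emph{The identities.} For $x\le\xi_1$ one has $u_0(x)=\alpha$ by \eqref{H3}, so $u(x,t)=u_0(x)\iff u(x,t)=\alpha$ trivially; for $x>\xi_1$ and $t\ge t_2$, (ii) gives $u(x,t)>\sup_x u_0\ge u_0(x)$ and $u(x,t)>\sup_x u_0>\alpha$ (the last since $\liminf_{x\to+\infty}u_0>a_0>\alpha$), so neither equality can hold; this is \eqref{u=u0-equi}. The inclusion $(-\infty,r(t)]\subseteq[u(\cdot,t)=\alpha]$ is immediate from the definition of $r(t)$ and the continuity of $u$. Conversely, fix $t_0\ge t_2$ and suppose $u(x_0,t_0)=\alpha$ for some $x_0\in(r(t_0),\xi_1)$; by (ii) no such point lies in $[\xi_1,\infty)$, so this is the only case to rule out. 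By definition of $r(t_0)$ there is $y<x_0$ with $u(y,t_0)>\alpha$, hence the component $(a',x_0)$ of $\{x<x_0:u(x,t_0)>\alpha\}$ adjacent to $x_0$ is nonempty with $a'\ge r(t_0)>-\infty$ and $u(a',t_0)=u(x_0,t_0)=\alpha$. Since $u$ solves \eqref{obs-alpha} on $(-\infty,\xi_1]$ (the obstacle being $u_0\equiv\alpha$ there, cf.\ Remark \ref{R:equivalence}), on $\{u>\alpha\}$ it satisfies \eqref{cac}, so $-\partial_x^2 u+f(u)=-\partial_t u\le0$ on $(a',x_0)\times\{t_0\}$; thus $v:=u(\cdot,t_0)-\alpha>0$ on $(a',x_0)$ obeys $-v''+c(x)v\le-f(\alpha)<0$ with $c(x):=\int_0^1 f'(\alpha+sv)\,\d s$ bounded and $v(x_0)=0$, and the Hopf boundary-point lemma forces $v'(x_0)<0$. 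But $u\in C^1(\R\times\R_+)$ by Theorem \ref{T:C1} and $u(\cdot,t_0)\ge\alpha$ attains its minimum at $x_0$, so $v'(x_0)=\partial_x u(x_0,t_0)=0$, a contradiction. Hence $[u(\cdot,t)=\alpha]=(-\infty,r(t)]$, which is \eqref{coin-set}.

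\emph{Main obstacle.} The heart of the matter is the comparison in the proof of (i): one must slide the degenerate subsolution $w^-$ — flat at the level $\alpha-\delta\e^{-\beta t}$ to the left of $z^-=0$ — underneath $u$ across the free boundary $r(t)$, knowing only that $r(t)$ stays in the possibly broad band $[\xi_0+c_\alpha t,\xi_1]$ and that $u$ is only weakly bounded below on $(\xi_1,\xi_2)$ at the restart time; the balancing of $x_0$, $\delta$, $\sigma$ and $\beta$ against these bounds, together with the balanced-potential case in which \eqref{cac} no longer transports the state $a_+$, is where the real work concentrates.
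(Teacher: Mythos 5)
The critical step in your proof of (i) does not go through as written. You propose to slide the subsolution $w^-$ of Lemma~\ref{L:comfunc} below $u$ at time $t_1$, but $w^-$ is built from the level-$\alpha$ profile $\phi_\alpha$: its values at the restart time are $w^-(\cdot,0)=\phi_\alpha(\cdot-x_0)-\delta\in[\alpha-\delta,a_+-\delta)$, and on the lateral curve $x=d(t)$ it equals the plateau value $\alpha-\delta\e^{-\beta t}$. The only lower bound Lemma~\ref{Cl:Ph1} gives you for $u(\cdot,t_1)$ on the middle strip $(\xi_1,\xi_2)$ is $\inf_x u(x,t_1)\in(a_-,\alpha]$, and this infimum can lie anywhere in $(a_-,\alpha]$; since $\delta<\delta_0$ is forced by the super/subsolution structure, there is no choice of $x_0,\delta,\sigma,\beta$ that makes either the initial ordering $w^-(\cdot,0)\le u(\cdot,t_1)$ for $x\ge d(0)$ or the lateral inequality $w^-(d(t),t)\le u(d(t),t_1+t)$ hold once $d(t)$ runs through $(\xi_1,\xi_2)$. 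The paper fixes this not by translating $w^-$ but by \emph{lowering the base level of the degenerate wave}: using the strict bound of Lemma~\ref{Cl:Ph1}, it picks $m\in(a_-,a_0)$ with $f'(m)>0$ and $m<\inf_x u(x,t_1)$, checks $\int_m^{a_+}f<0$ so that Theorem~\ref{T:TW} and Lemma~\ref{L:comfunc} produce $\phi_m$, $c_m<0$, and the subsolution $U(x,t)=\phi_m(x-\xi_3-c_m t-\sigma(1-\e^{-\beta t}))-\delta\e^{-\beta t}$ whose plateau $m-\delta<m\le u$ and whose values past $\xi_3$ stay below $a_+-\delta<u(\cdot,\tau_1)$ by the choice of $\tau_1,\xi_3$. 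Then (ii) falls out from $c_m<0$, and (i) follows trivially by combining with $u\ge u_0=\alpha$ on $(-\infty,\xi_1]$ — note the paper's logical order is (ii) first, then (i), the reverse of yours. Your parenthetical about the balanced case $c=0$ is on target, but the parameter that must be pushed down is the wave's base level $m$, not the spatial shift $x_0$; $\alpha$ itself is too high.

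For \eqref{coin-set} you use a Hopf-lemma argument, genuinely different from the paper's, which instead restricts $u$ to $I\times(0,t_*)$ (noting $u(\cdot,t)\equiv\alpha$ on $\partial I$ for $0\le t\le t_*$ by $\partial_t u\ge0$ and $u_0\equiv\alpha$) and invokes uniqueness of the Cauchy--Dirichlet problem to force $u\equiv\alpha$ on $I$, a contradiction. Your route is workable but needs two repairs: the connected component of $\{u(\cdot,t_0)>\alpha\}$ abutting $x_0$ from the left may be empty, so apply Hopf instead at the right endpoint $b'\le x_0$ of the component containing some $y\in(r(t_0),x_0)$ with $u(y,t_0)>\alpha$ (still an interior minimum point of $u(\cdot,t_0)$); and the zeroth-order coefficient in $v''-c(x)v\ge f(\alpha)>0$ need not have the favorable sign globally, so one should first shrink to a left neighborhood of the contact point, where $v$ is small and $c(x)\simeq f'(\alpha)>0$, before invoking Hopf and contradicting $\partial_x u=0$ from Theorem~\ref{T:C1}.
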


\begin{proof}
By \eqref{H1} and \eqref{sol-above}, one can take $\vep_0 \in (0,\delta_0/2)$ small enough that $u_0(x) \leq a_+ - \vep_0$ for all $x \in \R$ and $a_+ - \vep_0 > a_0$. Then there are $\tau_1 \geq 0$ and $\bar\xi_3 \in \R$ such that
$$
K_0 \e^{-\kappa_0 \tau_1} < \frac{\vep_0} 3, \quad
\phi_{ac}(\xi) > a_+ - \frac{\vep_0} 3 \ \mbox{ for } \ \xi \geq \bar\xi_3,
$$
which implies
\begin{align*}
u(x,\tau_1) \geq \phi_{ac}(x-c\tau_1) - K_0 \e^{-\kappa_0 \tau_1} 
\geq a_+ - \frac 2 3 \vep_0
\end{align*}
for all $x \geq \bar\xi_3 + c\tau_1$ and $t \geq \tau_1$. Here one can take $\tau_1 > t_1$ without any loss of generality. In particular, by the definition of $\vep_0$, we obtain $u(x,\tau_1) \geq a_+ - (2/3) \vep_0 > u_0(x)$ for all $x \geq \xi_3 := \bar\xi_3 + c\tau_1$, and therefore, the non-decrease of $u(x,t)$ in time yields that $u(x,t) \geq a_+ - (2/3) \vep_0 >  u_0(x)$ for $x \geq \xi_3$ and $t \geq \tau_1$. 

Let $m \in (a_-,a_0)$ be a constant close enough to $a_-$ such that $f'(m) > 0$ (see~\eqref{f-1}) and $m < \inf_{x \in \R} u(x,t_1) \in (a_-,\alpha]$ (see \eqref{sol-above}). By Theorem \ref{T:TW}, it then follows that \eqref{irAC} possesses a traveling wave $\phi_m$ with a negative velocity $c_m$ from the inequality,
$$
\int^{a_+}_m f(z) \, \d z \stackrel{\eqref{f-1}}< \int^{a_+}_{a_-} f(z) \, \d z \stackrel{\eqref{alpha-hyp-2}}\leq 0.
$$
Set $\delta = \vep_0 \in [0,\delta_0)$. Then due to Lemma \ref{L:comfunc} along with $f'(m) > 0$ the function $U(x,t) := \phi_m(x-\xi_3-c_m t - \sigma (1-\e^{-\beta t})) - \delta \e^{-\beta t}$ is a subsolution to \eqref{ac-w} in ${Q_-} := \{(x,t) \in \R \times \R_+ \colon x > \xi_3 + c_m t + \sigma (1-\e^{-\beta t})\}$, and moreover, $u(\cdot, \tau_1) \geq U(\cdot,0)$ in $\R$, since $u(\cdot,\tau_1) \geq a_+ - (2/3)\vep_0$ on $(\xi_3,+\infty)$ and $u(\cdot,\tau_1) \geq m$ in $\R$. Hence thanks to Lemma \ref{L:cp-sub} and $u \geq m$ in $\R \times (t_1,+\infty)$, we obtain
$$
u(x,\tau_1+t) \geq U(x,t) \quad \mbox{ for all } \ x \in \R \ \mbox{ and } \ t \geq 0.
$$
By $c_m < 0$, one can take $\tau_2 \geq 0$ such that
$$
U(x,t) > a_+ - \vep_0 \quad \mbox{ for all } \ x \geq \xi_1 \ \mbox{ and } \ t \geq \tau_2,
$$
which in particular yields 
$$
u(x,\tau_1+t) \geq U(x,t) > \max \left\{ \sup_{x \in \R}u_0(x), a_+ - \tfrac{\delta_0}2 \right\} \ \mbox{ for all } \ x \geq \xi_1 \ \mbox{ and } \ t \geq \tau_2.
$$
Noting that $u_0 \equiv \alpha$ on $(-\infty,\xi_1]$ and setting $t_2 := \tau_1 + \tau_2 > t_1$, we have obtained (i) and (ii). Hence, one can verify \eqref{u=u0-equi} immediately.

It still remains to prove \eqref{coin-set}. By definition of $r(t)$, we claim that
$$
u(x,t) > \alpha \ \mbox{ for all } x > r(t) \ \mbox{ and } \ t \geq t_2.
$$
Suppose on the contrary that there exist $t_* \geq t_2$ and $x_1 \in \R$ such that $x_1 > r(t_*)$, $u(x_1,t_*) = \alpha$ and $u(\cdot,t_*) > \alpha$ in $I := (r(t_*),x_1)$. Due to the preceding argument with $c_m < 0$, one can take $x_1 < \xi_1$. Let us consider the Cauchy-Dirichlet problem in $I \times (0,t_*)$ for \eqref{irAC} with the Dirichlet boundary condition $u(r(t_*),t) = u(x_1,t) = \alpha$. Since $u(\cdot,0) \equiv \alpha$ on $I$ by ${\rm (H)}_\alpha$, the unique solution of the Cauchy-Dirichlet problem is constant, i.e., $u \equiv \alpha$. However, it is a contradiction, since the solution $u(x,t)$ of \eqref{irAC} in $\R \times \R_+$ must coincide with that of the Cauchy-Dirichlet problem in $I \times (0,t_*)$ and $u$ is now supposed to be greater than $\alpha$ in $I$ at $t = t_*$. Therefore we conclude that the level set $[u(\cdot,t)=\alpha]$ coincides with the half-line $(-\infty,r(t)]$ for $t \geq t_2$.
\end{proof}

Now, we note by \eqref{u=u0-equi} that \eqref{pde-obs0} is equivalent to the constant obstacle problem,
\begin{equation}\label{pde-alpha}
 \partial_t u - \partial_x^2 u + f(u) + \eta = 0, \quad \eta \in \partial \iIIal(u)
\end{equation}
in $\R \times (t_2 , +\infty)$. Furthermore, $u$ solves the classical Allen-Cahn equation on $(r(t),+\infty)$ for any $t \geq t_2$. Indeed, we observe that $\eta(x,t) = 0$ for a.e.~$x > r(t)$ and all $t \geq t_2$.

\subsection{Third phase}\label{Ss:ph3}

This subsection is devoted to proving quasi-convergence of each solution for \eqref{irAC} to a traveling wave. More precisely, we shall prove:
\begin{lemma}[Quasi-convergence to a traveling wave]\label{L:quasi-conv}
There exist a sequence $t_n \to +\infty$ and $h_0 \in \R$ such that
$$
u(\cdot,t_n) - \phi_\alpha( \cdot -c_\alpha t_n -h_0) \to 0 \ \mbox{ uniformly on } \R.
 $$
Hence for any $\delta > 0$ there exists $n_\delta \in \mathbb N$ {\rm (}large enough{\rm )} such that
\begin{equation}\label{quasi-conv}
\phi_\alpha(x-c_\alpha t_{n_\delta}-h_0) - \delta \leq u(x,t_{n_\delta}) \leq \phi_\alpha(x-c_\alpha t_{n_\delta}-h_0) + \delta
\end{equation}
for all $x \in \R$.
\end{lemma}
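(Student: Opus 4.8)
\emph{Plan.} The idea is to pass to the travelling-wave frame $\xi=x-c_\alpha t$, confine the orbit between the comparison functions of Lemma~\ref{L:comfunc} (available since $f'(\alpha)>0$), analyse the $\omega$-limit set of the resulting constrained gradient flow, and identify its elements with translates of $\phi_\alpha$ via the uniqueness Proposition~\ref{P:tw-unique}. By Lemma~\ref{Cl:Ph2}, for $t\ge t_2$ the solution $u$ solves the constant-obstacle problem \eqref{pde-alpha}, its coincidence set equals $(-\infty,r(t)]$ (see \eqref{coin-set}), and $u$ solves the classical Allen--Cahn equation on $(r(t),+\infty)$. Lemma~\ref{L:comfunc} supplies a supersolution $w^+$ of \eqref{obs-alpha} on $\R\times\R_+$ and a subsolution $w^-$ of the type \eqref{wpm}; using Lemma~\ref{Cl:Ph2}(i)--(ii), \eqref{coin-set} and \eqref{ini-ph:order}, one chooses the shifts and a fixed amplitude $\delta\in[\delta_0/2,\delta_0)$ so that $w^-\le u\le w^+$ at $t=t_2$, and by the comparison Lemmas~\ref{L:cp-obs+} and~\ref{L:cp-sub} (see Remark~\ref{R:comparison}) this sandwich persists on $\R\times(t_2,+\infty)$. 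In particular $r(t)-c_\alpha t$ is bounded, and the moving-frame orbit $v(\xi,t):=u(\xi+c_\alpha t,t)$ satisfies $v(\xi,t)\to\alpha$ (resp.\ $a_+$) as $\xi\to-\infty$ (resp.\ $+\infty$), uniformly in $t\ge t_2$. Interior parabolic estimates on $(r(t)+\varepsilon,+\infty)$, together with Theorem~\ref{T:C1} and the one-sided behaviour of $\partial_x^2u$ at $r(t)$, yield uniform bounds on $u$ and $\partial_x u$ and a uniform modulus of continuity in $t$ on $\xi$-compacta; hence $\{v(\cdot,t)\}_{t\ge t_2}$ is relatively compact in $C_{\mathrm{loc}}(\R)$, which the uniform tails from $w^\pm$ upgrade to relative compactness in $C(\R)$.

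\emph{The $\omega$-limit consists of profiles.} Fix $t_n\to+\infty$ and, along a subsequence, let $v(\cdot,t_n)\to v_\infty$ uniformly on $\R$. By the estimates just obtained the time-shifted solutions $(\xi,s)\mapsto v(\xi,t_n+s)$ converge, in $C^{1,2}_{\mathrm{loc}}$ on the classical region and $C^1_{\mathrm{loc}}$ across the free boundary, to an entire solution $V$ of the moving-frame constant-obstacle problem $\partial_sV=\partial_\xi^2V+c_\alpha\partial_\xi V-f(V)-\eta$, $\eta\in\partial\iIIal(V)$, with $V(\cdot,0)=v_\infty$, $V\ge\alpha$, and the same uniform limits at $\xi\to\mp\infty$. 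The key point is that this flow is a gradient flow for a weighted energy of the form $\int e^{c_\alpha\xi}\bigl(\tfrac12|\partial_\xi u|^2+\hat f(u)\bigr)\,\d\xi$, whose dissipation rate is $\int e^{c_\alpha\xi}|\partial_s u|^2\,\d\xi$ (the obstacle term contributes nothing, since $\partial_s u=0$ wherever $u=\alpha$). As the energy is non-increasing and bounded below along the orbit, its dissipation is integrable in time, which forces $\partial_s V\equiv0$ on the $\omega$-limit; thus $v_\infty$ is a moving-frame equilibrium, i.e.\ a travelling-wave profile satisfying \eqref{phi}.

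\emph{Identification and conclusion.} Re-running for $v_\infty$ the structural arguments of Section~\ref{S:ob}---its coincidence set is a half-line, it is $C^1$ across the free boundary with vanishing derivative there, it is strictly increasing where $v_\infty>\alpha$ (Lemma~\ref{L:phi'>0}), and $v_\infty\to\alpha,a_+$ at $\mp\infty$---shows that $v_\infty$ satisfies \eqref{phi-bc}, \eqref{phi-bc2} and \eqref{phi-deg}. Hence, by Proposition~\ref{P:tw-unique} and the normalization \eqref{phi-base}, $v_\infty=\phi_\alpha(\cdot-h_0)$ for some $h_0\in\R$. Translating back, $u(\cdot,t_n)-\phi_\alpha(\cdot-c_\alpha t_n-h_0)=v(\cdot,t_n)-\phi_\alpha(\cdot-h_0)\to0$ uniformly on $\R$, and \eqref{quasi-conv} follows by choosing, for the prescribed $\delta>0$, an index $n_\delta$ with $\|v(\cdot,t_{n_\delta})-\phi_\alpha(\cdot-h_0)\|_{L^\infty(\R)}\le\delta$.

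\emph{Main obstacle.} The delicate step is the energy argument: the weight $e^{c_\alpha\xi}$ grows exponentially as $\xi\to-\infty$ (since $c_\alpha<0$) over the frozen region $\{u=\alpha\}$, so the energy as written is $+\infty$. One must instead work with the energy \emph{relative to the obstacle} and restricted to the moving half-line $(r(t)-c_\alpha t,+\infty)$ carrying the genuine dynamics, and check that the contributions at the moving free boundary cancel, which they do because $u=\alpha$ and $\partial_x u=0$ there (Theorem~\ref{T:C1}), while $r(t)-c_\alpha t$ is tame enough (bounded, controlled limit points) to pass to the limit. Secondary technicalities are the passage to the limit of $\partial_x u$ across the non-smooth free boundary---handled via Theorem~\ref{T:C1} and the one-sided limits of $\partial_x^2u$ at $r(t)$---and the fact that a moving-frame equilibrium with the prescribed end states is necessarily a translate of $\phi_\alpha$, which rests on the phase-plane uniqueness underlying Proposition~\ref{P:tw-unique}.
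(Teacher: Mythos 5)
Your overall strategy---pass to the moving frame, sandwich the orbit between $w^\pm$, exploit a weighted gradient-flow structure to produce a sequence of times at which the orbit is nearly stationary, identify the limit as a translate of $\phi_\alpha$ via ODE uniqueness, and close uniformity with the $w^-$ tail---is the approach of Section~\ref{S:Q-conv}. But the step you correctly flag as the main obstacle is resolved in the paper by a different and cleaner device than the one you propose, and the difference matters. You suggest integrating the weighted energy over the \emph{moving} half-line $(r(t)-c_\alpha t,+\infty)$ and checking that boundary contributions cancel. The paper instead normalizes (via Lemma~\ref{L:cp-obs+} and a spatial translation, i.e.\ \eqref{ini-ph:order}) so that $\alpha\le u(x,t)\le\phi_\alpha(x-c_\alpha t)$ for all $t\ge0$, which forces $u(\cdot,t)\equiv\alpha$ on $(-\infty,c_\alpha t]$; hence, after setting $v(y,t):=u(y+c_\alpha t,t)-a_+$ \emph{for $y\ge0$ only}, the coincidence set always contains $(-\infty,0]$ and the dynamics live on the \emph{fixed} half-line $\R_+$ with fixed boundary data $v(0,t)=\alpha-a_+$ and $\partial_y v(0,t)=0$ (the latter by Theorem~\ref{T:C1}). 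The weight $\e^{c_\alpha y}$ decays on $\R_+$, so every energy integral is finite and the boundary term in the integration by parts vanishes without any free-boundary bookkeeping. This is not merely cosmetic: your version needs some regularity of $t\mapsto r(t)$ to make sense of differentiating an integral over the moving domain, and the paper explicitly remarks (end of \S\ref{Sss:BFB}) that even continuity of $r(t)$ has not been established, so that route is genuinely blocked at this point of the development.

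A secondary stylistic difference: rather than running an $\omega$-limit/entire-solution argument, the paper reaches the stationary profile more directly by choosing $t_n$ so that $\partial_t v(\cdot,t_n)\to0$ in $L^2(\R_+;\e^{c_\alpha y}\,\d y)$ (from integrability of the dissipation) and passing to the limit in \eqref{wgf}; the identification with $\phi_\alpha(\cdot-h_1)$ and the uniform-tail upgrade via $w^-$ you describe then match the paper. So: same blueprint, same intermediate lemmas invoked, but the key technical reduction is different, and the one you sketch does not close without additional (currently unavailable) regularity of the free boundary.
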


 To prove this lemma, setting
$$
v(y,t) := u(y+c_\alpha t,t)-a_+ \ \mbox{ for } \ y,t \geq 0,
$$
we shall reduce the problem \eqref{pde-alpha} to a \emph{weighted} gradient system,
$$
\e^{c_\alpha y}\partial_t v - \partial_y (\e^{c_\alpha y} \partial_y v) + \e^{c_\alpha y} f(v+a_+) + \partial \iIIal(v+a_+) \ni 0,
$$
which enables us to establish global (in space) energy estimates for $v(\cdot,t)$ with exponentially decaying weights by $c_\alpha < 0$. Using uniform (in time) estimates for $v(\cdot,t)$, we shall verify the precompactness of the orbit $\{v(\cdot,t)+a_+ \colon t > 0\}$ locally in space, and moreover, we finally complete the proof by identifying accumulation points of the orbit as the (degenerate) traveling profile $\phi_\alpha$ with certain shifts and by improving the convergence to be uniform in space with the aid of the subsolution $w^-$. See \S \ref{S:Q-conv} for the details.

\begin{remark}[Convergence along the whole sequence]
{\rm
We are already in position to prove \emph{convergence} of $u(\cdot,t) - \phi_\alpha(\cdot - c_\alpha t - h_0)$ without any rate of convergence as $t \to +\infty$. Indeed, employing the sub- and supersolution $w^\pm$ (see Lemma \ref{L:comfunc}), one can verify that the limit of $v(\cdot,t)$ is unique (see~\cite[Lemmas 4.1 and 4.2]{FM}). 
}
\end{remark}

\subsection{Final phase}

In this phase, modifying a sub- and supersolution method developed in~\cite{XChen}, we shall verify
\begin{lemma}[Exponential convergence to traveling waves]\label{L:exp-stbl}
%In addition to \eqref{f-1}, \eqref{f-2} and \eqref{alpha-hyp-2}, assume that $f'(\alpha) > 0$. Let $u(x,t)$ be a non-decreasing solution to \eqref{obac} in $\R \times (\tau,+\infty)$ for some $\tau > 0$. Then
There exist constants $K, \kappa > 0$ and $x_0 \in \R$ such that
$$
\|u(\cdot,t)-\phi_\alpha(\cdot-c_\alpha t-x_0)\|_{L^\infty(\R)} \leq K \e^{-\kappa t} \quad \mbox{ for all } \ t \geq 0.
$$
Moreover, $r(t) - c_\alpha t$ converges to $x_0$ at the rate of $O(\e^{-\frac{\kappa t}2})$ as $t \to +\infty$. 
\end{lemma}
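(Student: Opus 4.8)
The plan is to prove Lemma~\ref{L:exp-stbl} by combining the quasi-convergence of Lemma~\ref{L:quasi-conv}, the comparison functions $w^{\pm}$ of Lemma~\ref{L:comfunc}, and an exponential squeezing argument adapted from~\cite{XChen}. By Lemma~\ref{Cl:Ph2} we may restrict to $t\ge t_2$, so that $u$ solves the constant-obstacle problem~\eqref{pde-alpha} with $[u(\cdot,t)=\alpha]=(-\infty,r(t)]$, satisfies $u\ge\alpha$, and solves the classical Allen--Cahn equation~\eqref{cac} on the moving half-line $(r(t),+\infty)$; the normalization~\eqref{phi-base} is in force.

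\textbf{Step 1 (entering a small band).} Fix a small $\vep\in(0,\delta_0)$, with $\delta_0,\beta_0,\sigma_\beta$ the constants of Lemma~\ref{L:comfunc}, and fix $\beta\in(0,\beta_0)$, $\sigma>\sigma_\beta$. By Lemma~\ref{L:quasi-conv} there are $T_0\ge t_2$ and $h_0\in\R$ with
\[
\phi_\alpha(x-c_\alpha T_0-h_0)-\vep\le u(x,T_0)\le\phi_\alpha(x-c_\alpha T_0-h_0)+\vep\qquad(x\in\R).
\]
Using the functions $w^{\pm}$ of~\eqref{wpm} with base point $h_0$, parameter $\vep$ and time origin $T_0$, Lemma~\ref{L:comfunc} and the comparison principles (Lemma~\ref{L:cp-obs+} with Remark~\ref{R:equivalence} for $w^{+}$, and Lemma~\ref{L:cp-sub} with Remark~\ref{R:comparison}(ii) for $w^{-}$, legitimate since $u\ge\alpha$ for $t\ge t_2$) yield $w^{-}(x,t)\le u(x,t)\le w^{+}(x,t)$ for $x\in\R$ and $t\ge T_0$. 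Consequently $\|u(\cdot,t)-\phi_\alpha(\cdot-c_\alpha t-h_0)\|_{L^\infty(\R)}\le C_1\vep$ for every $t\ge T_0$, where $C_1:=\sigma\|\phi_\alpha'\|_{L^\infty(\R)}+1$; moreover $w^{\pm}(\cdot,t)$ converge exponentially (rate $\beta$) to the translates $\phi_\alpha(\cdot-c_\alpha t-h_0\mp\sigma\vep)$, which, together with the uniqueness of the limit noted after Lemma~\ref{L:quasi-conv}, provides $x_0\in[h_0-\sigma\vep,h_0+\sigma\vep]$ with $u(\cdot,t)-\phi_\alpha(\cdot-c_\alpha t-x_0)\to0$ uniformly on $\R$ as $t\to+\infty$.

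\textbf{Step 2 (exponential rate).} The bound of Step~1 does not itself decay, because the admissible drift $\sigma\vep$ of $w^{\pm}$ is of the same order as $\vep$. To upgrade it I would establish, by adapting the squeezing method of~\cite{XChen}, a lemma of the type: \emph{there are universal constants $\mu,C>0$ and $\ell_0>0$ such that, whenever at a time $T\ge t_2$ the solution is squeezed up to an $L^\infty$ error $\le\ell_0$ between the translates $\phi_\alpha(\cdot-c_\alpha T-\xi^{+})$ and $\phi_\alpha(\cdot-c_\alpha T-\xi^{-})$ with $0\le\xi^{+}-\xi^{-}\le\ell_0$, there is $\hat\xi\in[\xi^{-},\xi^{+}]$ with $\|u(\cdot,t)-\phi_\alpha(\cdot-c_\alpha t-\hat\xi)\|_{L^\infty(\R)}\le C\,(\xi^{+}-\xi^{-}+\mathrm{error})\,\e^{-\mu(t-T)}$ for all $t\ge T$} --- the essential point being that the rate $\mu$ does not depend on the width $\xi^{+}-\xi^{-}$. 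Granting this, Step~1 shows that at a time $T_1\ge T_0$ with $\vep\e^{-\beta(T_1-T_0)}\le\ell_0$ (and $2\sigma\vep\le\ell_0$ after shrinking $\vep$) the solution is squeezed between $\phi_\alpha(\cdot-c_\alpha T_1-h_0\mp\sigma\vep(1-\e^{-\beta(T_1-T_0)}))$ up to error $\vep\e^{-\beta(T_1-T_0)}$; applying the lemma gives $\|u(\cdot,t)-\phi_\alpha(\cdot-c_\alpha t-\hat\xi)\|_{L^\infty(\R)}\lesssim\vep\,\e^{-\mu(t-T_1)}$ for $t\ge T_1$, necessarily with $\hat\xi=x_0$, and enlarging the constant over $t\in[0,T_1]$ yields the asserted bound with $\kappa:=\mu$. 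The comparison functions realizing the squeezing lemma must be built on the moving half-line $\{x>r(t)\}$, where $u$ solves~\eqref{cac}, and patched with the constant $\alpha$ on $\{x\le r(t)\}$; the $C^1$ regularity of $u$ from Theorem~\ref{T:C1} and the behaviour $\lim_{s\searrow r(t)}\partial_x^2 u(s,t)=f(\alpha)>0$ at the free boundary (cf.\ the Remark after Theorem~\ref{T:TW}) are needed to control $u$ near $x=r(t)$ and to verify that the patched functions are genuine super-/subsolutions. This construction --- in particular reconciling the drift that eliminates the neutral translation mode of the linearization with the behaviour of $u$ at the moving point $x=r(t)$ --- is the main obstacle.

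\textbf{Step 3 (rate for $r(t)$).} By the Remark after Theorem~\ref{T:TW}, $\phi_\alpha(s)-\alpha=\tfrac12 f(\alpha)s^2+o(s^2)$ as $s\searrow0$, hence $\phi_\alpha(s)-\alpha\ge c_0 s^2$ for $0\le s\le s_0$ and some $c_0>0$. If $r(t)\ge c_\alpha t+x_0$, then, since $u(r(t),t)=\alpha$, evaluating the $L^\infty$ estimate at $x=r(t)$ gives $c_0\,(r(t)-c_\alpha t-x_0)^2\le\phi_\alpha(r(t)-c_\alpha t-x_0)-\alpha\le K\e^{-\kappa t}$; if $r(t)<c_\alpha t+x_0$, one argues symmetrically at $x=c_\alpha t+x_0$, using $u\ge w^{-}$ to bound $u(c_\alpha t+x_0,t)-\alpha$ from below by $c_0'(c_\alpha t+x_0-r(t))^2-O(\e^{-\kappa t})$. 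In both cases $|r(t)-c_\alpha t-x_0|=O(\e^{-\kappa t/2})$, which completes the plan.
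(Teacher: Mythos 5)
Your overall skeleton — quasi-convergence into a small band (Lemma~\ref{L:quasi-conv}), the comparison functions $w^{\pm}$ of Lemma~\ref{L:comfunc}, and an exponential squeezing argument in the spirit of~\cite{XChen} — is the same as the paper's. Step~1 is fine. But Step~2 is the actual content of the result, and you have not supplied it: you describe the kind of squeezing lemma you would need, correctly identify the obstacle (reconciling the eliminating drift with the moving free boundary $x=r(t)$), and then stop. The paper's proof resolves exactly this point in Lemma~\ref{L:enclo}. Its key ingredients are: an extra hypothesis on the position of the initial free boundary, $r_0\le\rho_0$ with $\rho_0$ small enough that $\phi_\alpha'<\tfrac{1}{2\sigma}$ on $(-\infty,4\rho_0]$; a decomposition of $\R$ into a left interval near $r(t)$, a middle interval on which $\phi_\alpha'$ is bounded away from $0$, and a right interval on which $\phi_\alpha'$ is small; and a strong-maximum-principle (heat-kernel) lower bound for $W^\pm=\pm(w^\pm-u)$ on the middle interval, which is what produces a contraction factor $\kappa_*<1$ for both the error $\delta$ and the shift gap $h$, independent of their current size. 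Crucially, one must also check at every iteration that the new free boundary $r(T_{k})$, once re-centered, still satisfies the $\rho_0$-constraint; this is precisely the ``reconciliation'' you flag and cannot be skipped.

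Step~3 also has a gap. The claim that $u\ge w^-$ alone gives $u(c_\alpha t+x_0,t)-\alpha\ge c_0'(c_\alpha t+x_0-r(t))^2-O(\e^{-\kappa t})$ is not correct: $w^-$ bounds $u$ from below by a shifted $\phi_\alpha$ minus a decaying error, and at $x=c_\alpha t+x_0$ the argument of $\phi_\alpha$ can be negative, so no quadratic information results. What is actually needed is a Taylor expansion of $u$ about $x=r(t)$, which uses the $C^1$ regularity from Theorem~\ref{T:C1}, the vanishing $\partial_x u(r(t),t)=0$, and the one-sided inequality $\partial_x^2 u=\partial_t u+f(u)\ge f(u)$ coming from $\partial_t u\ge 0$; and this argument is a bootstrap. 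One must first prove $r(t)-c_\alpha t+x_0\to 0$ (the paper does this by contradiction: otherwise $u(r(t_n)+\delta_0,t_n)-\alpha\ge\tfrac{1}{4}f(\alpha)\delta_0^2$ would contradict the $L^\infty$ estimate), and only then does $f(u)$ near the free boundary admit a uniform positive lower bound, which feeds into Taylor's theorem at $x=c_\alpha t-x_0$ to give the $O(\e^{-\kappa t/2})$ rate.
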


 The proof of the lemma mentioned above consists of three steps. First, we shall set up ``enclosing'' lemma (see Lemma \ref{L:enclo} below), which will be used to enclose the behavior of solutions and whose proof basically relies on~\cite{XChen} but turns out to be much more delicate due to the degeneracy of the traveling wave profile and sub- and supersolutions $w^\pm$. Indeed, there arises an additional requirement for the initial free boundary point $r_0$, i.e.,
$$
u(r_0,0) = \alpha, \quad  u(x,0) > \alpha \ \mbox{ for all } x > r_0.
$$
Here one may also find a reason why we need to consider the third phase before applying the enclosing lemma (cf.~Chen~\cite{XChen} used only sub- and supersolution methods for the classical (but possibly nonlocal) Allen-Cahn equation). Next, we shall incrementally apply the enclosing lemma to prove the \emph{exponential convergence of the solution} for \eqref{pde-alpha} to a traveling wave solution for \eqref{irAC} as in~\cite{XChen}. Here we also need to pay careful attention to the verification of the requirement for $r_0$ at each step. Finally, we shall prove the \emph{exponential convergence of the free boundary point} $r(t)$ of $u(\cdot,t)$ to that of the traveling wave. It is a novel aspect of the issue. Indeed, no such free boundary point appears in the study of the classical Allen-Cahn equation. Moreover, the $C^1$ regularity of $u$ will be also used to apply Taylor's theorem here (see also \S \ref{Ss:appl-c1} below). We refer the reader to \S \ref{S:exp-conv} for the details.

\begin{remark}[Stefan problem for the Allen-Cahn equation]\label{R:StAC}
{\rm
Stefan problems for reaction-diffusion equations posed on the half-line $(r(t),+\infty)$ are also studied in view of mathematical biology in~\cite{DuLin}, where motion equations of the free boundary $r(t)$ are prescribed of the form
$$
\dfrac{\d r}{\d t}(t) = - \mu \partial_x u(r(t),t) \quad \mbox{ for } \ t > 0
$$
for a positive constant $\mu$ and a solution $u(x,t)$. Hence the derivative $\partial_x u$ never vanishes on the free boundary. We also refer the reader to~\cite{KaYa11,DuGu11,DuGu12,DuLoZh15,DuMaZh15,KaMa15,KaMa18,KaYa18} and references therein. On the other hand, in the present issue, each solution $u(x,t)$ solves the equation \eqref{irAC} on the whole line $\R$ and the motion equation of the free boundary $r(t)$ defined by \eqref{r(t)} is not prescribed (cf.~\S \ref{Sss:moeq} below). Moreover, $\partial_x u(x,t)$ always vanishes at $x = r(t)$.
}
\end{remark}

\begin{remark}[Non-degenerate data]\label{R:non-deg}
{\rm
Exponential convergence never holds for initial data slowly converging to $\alpha$ at $x = -\infty$. Indeed, suppose that $u_0$ satisfies
$$
\lim_{x \to -\infty} u_0(x) = \alpha, \quad u_0' \geq 0 \ \mbox{ in } (-\infty,\bar x)
$$
for some $\bar x \in \R$. Then $u(\cdot,t) \geq u_0 \geq \alpha$ in $(-\infty,\bar x)$. It follows that
\begin{align*}
 \|u(\cdot,t)-\phi_\alpha(\cdot-c_\alpha t-x_0)\|_{L^\infty(\R)}
 &\geq \sup_{x \leq c_\alpha t + x_0} \left( u(x,t)-\uo \right)\\
 &\geq \sup_{x \leq c_\alpha t + x_0} \left( u_0(x) - \uo \right)
 = u_0(c_\alpha t + x_0) - \uo
\end{align*}
for $t$ large enough. Hence the decay rate of the difference $u(\cdot,t)-\phi_\alpha(\cdot-c_\alpha t - x_0)$ is bounded from below by that of $u_0(c_\alpha t + x_0)-\alpha$.
} 
\end{remark}

\section{$C^1$ regularity of solutions}\label{S:reg}

In this section, we shall give a proof for Theorem \ref{T:C1}. Throughout this section, we assume (only) \eqref{f-2} with $f(0)=0$, \eqref{u0-loc-H2} and \eqref{mp}.

\subsection{Formal argument}\label{Ss:he}

Before moving on to a proof, let us exhibit a formal argument to derive $C^1$ regularity of $u$, in particular, continuity of $\partial_t u$. In what follows, we often write $L^2$ and $H^1$ instead of $L^2(\R)$ and $H^1(\R)$, respectively. (Formally), differentiate \eqref{irAC} in time. Then
$$
\partial_t^2 u + \partial_t \eta + (1-\partial_x^2) \partial_t u + \beta'(u)  \partial_t u = \bar\lam \partial_t u
$$
with ${\bar\lam} := 1 + \lam$. Test it by $\partial_t^2 u$. It then follows that
\begin{align*}
 \lefteqn{
 \|\partial_t^2 u\|_{L^2}^2 + \underbrace{\left( \partial_t \eta, \partial_t^2 u\right)_{L^2}}_{\geq 0} + \dfrac 1 2 \dfrac \d {\d t} \|\partial_t u\|_{H^1}^2
 }\\
 &=  \dfrac {\bar\lam} 2 \dfrac{\d}{\d t} \|\partial_t u\|_{L^2}^2 - \int_\R \beta'(u) \partial_t u \partial_t^2 u \, \d x \\
 &\leq  \dfrac {\bar\lam} 2 \dfrac{\d}{\d t} \|\partial_t u\|_{L^2}^2 + \dfrac 1  2 \|\partial_t^2 u\|_{L^2}^2 + \frac 1 2 \|\beta'(u)\|_{L^\infty}^2 \|\partial_t u\|_{L^2}^2.
\end{align*}
Hence multiplying both sides by $t$ and integrating it over $(0,t)$, we observe that
\begin{align*}
 \lefteqn{
 \dfrac 1 2 \int^t_0 \tau\|\partial_t^2 u\|_{L^2}^2 \, \d \tau + \dfrac t 2 \|\partial_t u(t)\|_{H^1}^2
 }\\
 &\leq \dfrac {\bar\lam} 2 t\|\partial_t u(t)\|_{L^2}^2 + \frac 1 2 \|\beta'(u)\|_{L^\infty(Q)}^2 \int^t_0 \tau \|\partial_\tau u\|_{L^2}^2 \, \d \tau\\
 &\quad + \dfrac 1 2 \int^t_0 \|\partial_\tau u\|_{H^1}^2 \, \d \tau,
\end{align*}
where the right-hand side can be proved to be bounded for $t \in [0,T]$ (see \S 4 and \S 5 of~\cite{ae1}). Therefore $\partial_t u$ belongs to $W^{1,2}(\delta,T;L^2(I)) \cap L^\infty(\delta,T;H^1(I))$, which is compactly embedded in $C^\gamma([\delta,T];C^\gamma(\overline I))$ (see~\cite{Simon}), for any $\delta > 0$, bounded interval $I \subset \R$ and any $\gamma \in [0,1/2)$. However, the argument above is just formal and not rigorous. Indeed, $\eta$ is discontinuous on the interfacial curve over the $(x,t)$-plane.\footnote{For instance, $\eta$ corresponds to the traveling wave solution $u(x,t) = \phi_\alpha(x-c_\alpha t)$ is given by $\eta(x,t) = -f(\alpha)\chi_{(-\infty,c_\alpha t]}(x)$, where $\chi_{(-\infty,c_\alpha t]}$ stands for the characteristics function supported over $(-\infty,c_\alpha t]$.} Therefore some regularization is needed to justify the argument.

\subsection{Proof of Theorem \ref{T:C1} for compactly supported data}\label{Ss:scs}

We first restrict ourselves to the case $u_0 \in C^\infty_c(\R)$. Let us recall an approximation of \eqref{irAC} used in~\cite{ae1} and~\cite{Arai, Barbu75} (with appropriate modifications). Define a functional $\psi : L^2(\R) \to [0,+\infty]$ by
 \begin{equation}\label{varphi}
 \psi(w) := \begin{cases}
	     \frac 1 2 \|w\|_{H^1(\R)}^2 + \int_\R \hat\beta(w(x)) \, \d x &\mbox{ if } \ w \in H^1(\R), \ \hat \beta(w(\cdot)) \in L^1(\R),\\
	     +\infty &\mbox{ otherwise},
	    \end{cases}
 \end{equation}
where $\hat \beta$ is a primitive function of $\beta$ such that $\inf_{s \in \R} \hat \beta(s) = \hat \beta(0) = 0$.
 Then $\partial \psi(w)$ coincides with $w -\partial_x^2 w + \beta(w)$ in $L^2(\R)$ and $D(\partial \psi) = \{ w \in H^2(\R) \colon \beta(w) \in L^2(\R)\}$. Moreover, for each $\mu \in (0,1)$, there exists a unique strong solution $u_\mu \in C^{1,1}([0,T];L^2(\R))$ of the following approximate problem:
\begin{align}\label{aprx}
 \partial_t u_\mu + \eta_\mu + \partial \psi_\mu(u_\mu) &= {\bar\lam} u_\mu, \quad  \eta_\mu \in \partial \iI(\partial_t u_\mu), \quad 0 < t < T, \\
 u_\mu(0) &= u_0 \in C^\infty_c(\R) \subset D(\partial \psi),\label{ic-aprx}
\end{align}
 where $\partial \psi_\mu$ is the subdifferential operator of the \emph{Moreau-Yosida regularization} $\psi_\mu$ of $\psi$ (equivalently, the \emph{Yosida approximation} of $\partial \psi$), such that $\eta_\mu \in C^{0,1}([0,T];L^2(\R))$  (see, e.g.,~\cite{HB1}). Indeed, it is solvable in $L^2(\R)$ globally in time, since $\eqref{aprx}$ is rewritten as an evolution equation with Lipschitz nonlinearity (see \S 5 of~\cite{ae1}). Here, one can write
  \begin{equation}\label{Jlam}
  \partial \psi_\mu(w) = \partial \psi(J_\mu w) = (1- \partial_x^2) (J_\mu w) + \beta(J_\mu w) \quad \mbox{ for all } \ w \in L^2(\R), 
  \end{equation}
  where $J_\mu : L^2(\R) \to D(\partial \psi)$ stands for the \emph{resolvent} of $\partial \psi$, i.e., $J_\mu := (I + \mu \partial \psi)^{-1}$. Moreover, \eqref{aprx} is rewritten as
\begin{equation}\label{ap+}
 \partial_t u_\mu = \left( {\bar\lam} u_\mu - \partial \psi_\mu (u_\mu) \right)_+, \quad 0 < t < T.
\end{equation}

  As in~\cite{ae1}, testing \eqref{aprx} by $\partial_t u_\mu$ and using the relation $\eta_\mu \partial_t u_\mu \equiv 0$, we have
  $$
  \|\partial_t u_\mu\|_{L^2}^2 + \dfrac \d {\d t} \psi_\mu(u_\mu) = \dfrac {\bar\lam} 2 \dfrac \d {\d t} \|u_\mu\|_{L^2}^2.
  $$
  Integrating both sides over $(0,t)$, we find that
  \begin{align}\label{e0}
  \int^t_0 \|\partial_\tau u_\mu\|_{L^2}^2 \, \d \tau + \psi_\mu(u_\mu(t)) - \dfrac {\bar\lam} 2 \|u_\mu(t)\|_{L^2}^2 \leq E(u_0),
\end{align}
where $E(w) := \psi(w) - \frac {\bar\lam} 2 \|w\|_{L^2}^2$. Noting that
  $$
  \dfrac 1 2 \dfrac \d {\d t} \|u_\mu\|_{L^2}^2 = \left( u_\mu, \partial_t u_\mu \right)_{L^2}
  \leq \dfrac \vep 2 \|\partial_t u_\mu\|_{L^2}^2 + \dfrac 1 {2\vep} \|u_\mu\|_{L^2}^2
  $$
  for any $\vep > 0$ and putting $\vep = 2/({\bar\lam}+1)$, we infer that
  $$
  \dfrac 1 2 \dfrac \d {\d t} \|u_\mu\|_{L^2}^2 + \dfrac \d {\d t} \psi_\mu(u_\mu) \leq \dfrac 1 4 ({\bar\lam} + 1)^2 \|u_\mu\|_{L^2}^2.
  $$  
  Thus by Gronwall's inequality,
  $$
  \|u_\mu(t)\|_{L^2}^2 \leq \left[\|u_0\|_{L^2}^2+2\psi(u_0)\right] \e^{\frac 1 2({\bar\lam}+1)^2 t} \quad \mbox{ for } \ t \geq 0.
  $$
  Hence, by \eqref{e0},
   \begin{align}\label{e:psi}
    \lefteqn{
    \int^t_0 \|\partial_\tau u_\mu\|_{L^2}^2 \, \d \tau + \psi_\mu(u_\mu(t))
    }\\
    &\leq \dfrac {\bar\lam} 2 \left[\|u_0\|_{L^2}^2+2\psi(u_0)\right] \e^{\frac 1 2 ({\bar\lam}+1)^2 t} + E(u_0) \ \mbox{ for } \ t \geq 0.\nonumber
   \end{align}

  We next establish higher energy estimates for approximate solutions $u_\mu$. Differentiate \eqref{aprx} in $t$ (now, it is possible due to the regularization). We have
  \begin{equation}\label{Dut}
  \partial_t^2 u_\mu + \partial_t \eta_\mu + \dfrac{\d}{\d t} \partial \psi_\mu(u_\mu) = {\bar\lam} \partial_t u_\mu, \quad 0 < t < T.
  \end{equation}
  Multiply this by $\partial_t u_\mu$ to get
  \begin{align*}
   \dfrac 1 2 \dfrac \d {\d t} \|\partial_t u_\mu\|_{L^2}^2 + \underbrace{\left( \partial_t \eta_\mu, \partial_t u_\mu \right)_{L^2}}_{=0}
   + \left( \dfrac{\d}{\d t} \partial \psi_\mu(u_\mu) , \partial_t u_\mu \right)_{L^2}
   = {\bar\lam} \|\partial_t u_\mu\|_{L^2}^2.
  \end{align*}
  Here we used the fact that
  $$
  \left( \partial_t \eta_\mu, \partial_t u_\mu \right)_{L^2} = \dfrac \d {\d t} \iI^*(\eta_\mu) = 0
  $$
by $\partial_t u_\mu \in \partial \iI^*(\eta_\mu)$ and $\iI^*(\eta_\mu) = (\eta_\mu,\partial_t u_\mu)_{L^2} - \iI(\partial_t u_\mu) \equiv 0$ (see Notation). Moreover, noting that
  $$
  \partial_t v_\mu + \mu \left[(1-\partial_x^2)(\partial_t v_\mu) + \beta'(v_\mu) \partial_t v_\mu \right] = \partial_t u_\mu, 
  $$
  where $v_\mu := J_\mu u_\mu$ satisfies
  $v_\mu + \mu \left[ (1-\partial_x^2) v_\mu + \beta(v_\mu) \right] = u_\mu$ (see Lemma \ref{L:Jlam} below), we observe that
  \begin{align*}
   \lefteqn{
   \left( \dfrac{\d}{\d t} \partial \psi_\mu(u_\mu) , \partial_t u_\mu \right)_{L^2}
   }\\
   &= \left( (1-\partial_x^2)(\partial_t v_\mu), \partial_t v_\mu + \mu \left[(1-\partial_x^2)(\partial_t v_\mu) + \beta'(v_\mu) \partial_t v_\mu \right]\right)_{L^2}\\
   &\quad + \left( \beta'(v_\mu) \partial_t v_\mu, \partial_t u_\mu \right)_{L^2} =: I_1 + I_2.
  \end{align*}
  Then
  \begin{align*}
   I_1 &= \|\partial_t v_\mu\|_{H^1}^2 + \mu \left\|(1-\partial_x^2)(\partial_t v_\mu)\right\|_{L^2}^2
   + \mu \left((1-\partial_x^2)(\partial_t v_\mu), \beta'(v_\mu) \partial_t v_\mu \right)_{L^2}\\
   &\geq \|\partial_t v_\mu\|_{H^1}^2 + \mu \left\|(1-\partial_x^2)(\partial_t v_\mu)\right\|_{L^2}^2\\
   &\quad - \mu \|(1-\partial_x^2) (\partial_t v_\mu)\|_{L^2} \|\beta'(v_\mu)\|_{L^\infty} \|\partial_t v_\mu\|_{L^2}\\
   &\geq \|\partial_t v_\mu\|_{H^1}^2 + \dfrac \mu 2 \left\|(1-\partial_x^2)(\partial_t v_\mu)\right\|_{L^2}^2
   - \frac \mu 2 \|\beta'(v_\mu)\|_{L^\infty}^2 \|\partial_t v_\mu\|_{L^2}^2.
  \end{align*}
    Let us also employ the fact that
   \begin{align}
    \|v_\mu(t)\|_{L^\infty} &\leq C \|v_\mu(t)\|_{H^1}\label{e:v-inf}\\
    &\leq C \psi(v_\mu(t))^{1/2} \nonumber\\
    &\leq C \psi_\mu(u_\mu(t))^{1/2} \stackrel{\eqref{e:psi}}\leq C_T
    \quad \mbox{ for } \ t \in [0,T].\nonumber
   \end{align}
   Here and henceforth, $C_T$ denotes a constant independent of $\mu$ (but may depend on $T$) and may vary from line to line. From the non-expansivity of $J_\mu$ in $L^2$, one has $\|\partial_t v_\mu(t)\|_{L^2} \leq \|\partial_t u_\mu(t)\|_{L^2}$, and thus, we deduce that
  \begin{align*}
   I_1 &\geq \|\partial_t v_\mu\|_{H^1}^2 + \dfrac \mu 2 \left\|(1-\partial_x^2)(\partial_t v_\mu)\right\|_{L^2}^2
   - C_T \mu \|\partial_t u_\mu\|_{L^2}^2.
  \end{align*}
  On the other hand, it follows by \eqref{e:v-inf} that
  \begin{align*}
   I_2 &\geq - \|\beta'(v_\mu)\|_{L^\infty} \|\partial_t v_\mu\|_{L^2} \|\partial_t u_\mu\|_{L^2} \geq - C_T \|\partial_t u_\mu\|_{L^2}^2.
  \end{align*}
  Combining all these facts, we obtain
   \begin{align*}
    \dfrac 1 2 \dfrac \d {\d t} \|\partial_t u_\mu\|_{L^2}^2 + \|\partial_t v_\mu\|_{H^1}^2 + \dfrac \mu 2 \left\|(1-\partial_x^2)(\partial_t v_\mu)\right\|_{L^2}^2 \quad
    \\
    \leq \left[ {\bar\lam} + C_T (\mu+1) \right] \|\partial_t u_\mu\|_{L^2}^2.
   \end{align*}
Here we note that $\|\partial_t u_\mu(0)\|_{L^2}^2 = \|( {\bar\lam} u_0 - \partial \psi_\mu (u_0))_+\|_{L^2}^2$, which is bounded for $\mu$. Furthermore, by Gronwall's inequality, we obtain
\begin{align}
\sup_{t \in [0,T]} \|\partial_t u_\mu(t)\|_{L^2}^2 \leq C_T,
\label{he0}
\end{align}
and moreover,
\begin{align}
\int^T_0 \|\partial_t v_\mu\|_{H^1}^2 \, \d t + \dfrac \mu 2 \int^T_0 \left\|(1-\partial_x^2)(\partial_t v_\mu)\right\|_{L^2}^2 \, \d t \leq C_T. \label{he1}
\end{align}

Test \eqref{Dut} by $\partial_t^2 u_\mu \in L^\infty(0,T;L^2(\R))$. Then by the monotonicity of $\partial \iI$,
  \begin{align*}
   \|\partial_t^2 u_\mu\|_{L^2}^2 + \underbrace{\left(\partial_t \eta_\mu, \partial_t^2 u_\mu\right)_{L^2}}_{\geq 0}
   + \left( \dfrac{\d}{\d t} \partial \psi_\mu(u_\mu) 
   , \partial_t^2 u_\mu \right)_{L^2}
   = \dfrac {\bar\lam} 2 \dfrac{\d}{\d t} \|\partial_t u_\mu\|_{L^2}^2.
  \end{align*}
  We note that
   \begin{align*}
    \lefteqn{
   \left( \dfrac{\d}{\d t} \partial \psi_\mu(u_\mu), \partial_t^2 u_\mu \right)_{L^2}
    }\\
    &= \left( (1- \partial_x^2) (\partial_t v_\mu) , \partial_t^2 v_\mu + \mu \left[ (1-\partial_x^2)(\partial_t^2 v_\mu) + \beta''(v_\mu) (\partial_t v_\mu)^2 + \beta'(v_\mu) \partial_t^2 v_\mu \right] \right)_{L^2}\\
   &\quad + \left( \beta'(v_\mu) \partial_t v_\mu, \partial_t^2 u_\mu \right)_{L^2} =: J_1 + J_2.
   \end{align*}
   Here we used the fact that $v_\mu \in C^{1,1}([0,T];H^2(\R))$ satisfies
   \begin{equation}
    \partial_t^2 u_\mu = \partial_t^2 v_\mu + \mu \left[ (1-\partial_x^2) (\partial_t^2 v_\mu) + \beta''(v_\mu) (\partial_t v_\mu)^2 + \beta'(v_\mu) \partial_t^2 v_\mu \right]
   \end{equation}
   (see Lemma \ref{L:Jlam} below). Then, by simple calculation and \eqref{e:v-inf},
   \begin{align*}
    J_1 &= \dfrac 1 2 \dfrac \d {\d t} \|\partial_t v_\mu\|_{H^1}^2 + \dfrac \mu 2 \dfrac \d {\d t} \| (1 - \partial_x^2) (\partial_t v_\mu)\|_{L^2}^2 
\\
    &\quad + \mu \left( (1 - \partial_x^2) (\partial_t v_\mu) , \beta''(v_\mu) (\partial_t v_\mu)^2 \right)_{L^2}
    + \mu \left( (1 - \partial_x^2) (\partial_t v_\mu), \beta'(v_\mu) \partial_t^2 v_\mu \right)_{L^2}\\
    &\geq \dfrac 1 2 \dfrac \d {\d t} \|\partial_t v_\mu\|_{H^1}^2 + \dfrac \mu 2 \dfrac \d {\d t} \|(1-\partial_x^2) (\partial_t v_\mu)\|_{L^2}^2 \\
    &\quad - \mu \|(1-\partial_x^2)(\partial_t v_\mu)\|_{L^2} \|\beta''(v_\mu)\|_{L^\infty}\|\partial_t v_\mu\|_{L^4}^2\\
    &\quad - \mu \|(1-\partial_x^2)(\partial_t v_\mu)\|_{L^2} \|\beta'(v_\mu)\|_{L^\infty} \|\partial_t^2 v_\mu\|_{L^2}\\
    &\geq \dfrac 1 2 \dfrac \d {\d t} \|\partial_t v_\mu\|_{H^1}^2 + \dfrac \mu 2 \dfrac \d {\d t} \|(1-\partial_x^2) (\partial_t v_\mu)\|_{L^2}^2\\
    &\quad - C_T \mu \|(1-\partial_x^2) (\partial_t v_\mu)\|_{L^2} \|\partial_t v_\mu\|_{L^2} \|\partial_t v_\mu\|_{H^1}\\
    &\quad - C_T \mu \|(1-\partial_x^2) (\partial_t v_\mu)\|_{L^2} \|\partial_t^2 v_\mu\|_{L^2}.
   \end{align*}
   By Young's inequality,
   \begin{align*}
    J_1 &\geq \dfrac 1 2 \dfrac \d {\d t} \|\partial_t v_\mu\|_{H^1}^2 + \dfrac \mu 2 \dfrac \d {\d t} \|(1-\partial_x^2) (\partial_t v_\mu)\|_{L^2}^2\\
    &\quad - \mu \|(1-\partial_x^2) (\partial_t v_\mu)\|_{L^2}^2
    -  C_T \mu\left( \|\partial_t v_\mu\|_{H^1}^2 \|\partial_t u_\mu\|_{L^2}^2 + \|\partial_t^2 v_\mu\|_{L^2}^2 \right).
   \end{align*}
   Moreover, we observe that
   \begin{align*}
    J_2 \geq - \|\beta'(v_\mu)\|_{L^\infty} \|\partial_t v_\mu\|_{L^2} \|\partial_t^2 u_\mu \|_{L^2}
    \geq - C_T \|\partial_t u_\mu\|_{L^2}^2 - \dfrac 1 2 \|\partial_t^2 u_\mu \|_{L^2}^2.
\end{align*}
Combining all these facts, we deduce that
\begin{align}
    \lefteqn{
    \dfrac 1 2 \|\partial_t^2 u_\mu \|_{L^2}^2 + \dfrac 1 2 \dfrac \d {\d t} \|\partial_t v_\mu\|_{H^1}^2 + \dfrac \mu 2 \dfrac \d {\d t} \|(1-\partial_x^2) (\partial_t v_\mu)\|_{L^2}^2
    }\label{high-en-02}\\
    &\leq \dfrac {\bar\lam} 2 \dfrac \d {\d t} \|\partial_t u_\mu\|_{L^2}^2
    + \mu \|(1-\partial_x^2) (\partial_t v_\mu)\|_{L^2}^2 \nonumber\\
    &\quad + C_T \mu \left( \|\partial_t v_\mu\|_{H^1}^2 \|\partial_t u_\mu\|_{L^2}^2 + \|\partial_t^2 v_\mu\|_{L^2}^2 \right) + C_T \|\partial_t u_\mu\|_{L^2}^2.\nonumber
\end{align}
By Lemma \ref{L:Jlam} below, we find that
\begin{align*}
\|\partial_t^2 v_\mu\|_{L^2}^2 \leq C\|\partial_t^2 u_\mu\|_{L^2}^2 + C\mu \|\beta''(v_\mu)\|_{L^\infty}^2\|\partial_t v_\mu\|_{L^2}^2 \|\partial_t v_\mu\|_{H^1}^2.
\end{align*}
For any $\mu > 0$ small enough, one can obtain
   \begin{align}
    \lefteqn{
    \dfrac 1 4 \|\partial_t^2 u_\mu \|_{L^2}^2 + \dfrac 1 2 \dfrac \d {\d t} \|\partial_t v_\mu\|_{H^1}^2 + \dfrac \mu 2 \dfrac \d {\d t} \|(1-\partial_x^2) (\partial_t v_\mu)\|_{L^2}^2
    }\label{star1}\\
    &\leq \dfrac {\bar\lam} 2 \dfrac \d {\d t} \|\partial_t u_\mu\|_{L^2}^2
    + \mu \|(1-\partial_x^2)(\partial_t v_\mu)\|_{L^2}^2\nonumber\\
    &\quad  + C_T \mu \|\partial_t v_\mu\|_{H^1}^2 \|\partial_t u_\mu\|_{L^2}^2 + C_T \|\partial_t u_\mu\|_{L^2}^2.\nonumber
   \end{align}

\begin{remark}[Regularization and error terms]
{\rm Thanks to a good regularization, one can overcome the lack of differentiability of $\eta$, but still keep the convenient relation $\eta_\mu \partial_t u_\mu \equiv 0$, which may be lost by other regularizations. On the other hand, some singularities arise as the term $\mu \|(1-\partial_x^2) (\partial_t v_\mu)\|_{L^2}^2$ and the difference between $u_\mu$ and $v_\mu$; however, they can be controlled by using additional energy estimates as well as resolvent estimates (see Lemma \ref{L:Jlam} below).}
\end{remark}
   
Multiply both sides of \eqref{star1} by $t$. Then
   \begin{align*}
    \lefteqn{
    \dfrac t 4 \|\partial_t^2 u_\mu \|_{L^2}^2 + \dfrac 1 2 \dfrac \d {\d t} \left( t \|\partial_t v_\mu\|_{H^1}^2 \right) + \dfrac \mu 2 \dfrac \d {\d t} \left(t\|(1-\partial_x^2) (\partial_t v_\mu)\|_{L^2}^2\right) + \dfrac {\bar\lam} 2 \|\partial_t u_\mu\|_{L^2}^2
    }\\
    &\leq \dfrac {\bar\lam} 2 \dfrac \d {\d t} \left(t\|\partial_t u_\mu\|_{L^2}^2\right)
    + \mu t\|(1-\partial_x^2) (\partial_t v_\mu)\|_{L^2}^2 + C_T \mu t \|\partial_t v_\mu\|_{H^1}^2 \|\partial_t u_\mu\|_{L^2}^2\\
    &\quad + C_T t \|\partial_t u_\mu\|_{L^2}^2 + \dfrac 1 2 \|\partial_t v_\mu\|_{H^1}^2 + \dfrac \mu 2 \|(1-\partial_x^2) (\partial_t v_\mu)\|_{L^2}^2.
   \end{align*}
 Integrating both sides over $(0,t)$, we deduce by \eqref{he0} and \eqref{he1} that
   \begin{align*}
    \lefteqn{
    \int^t_0 \dfrac \tau 4 \|\partial_t^2 u_\mu \|_{L^2}^2 \, \d \tau + \dfrac t 2 \|\partial_t v_\mu(t)\|_{H^1}^2 + \dfrac \mu 2 t\|(1-\partial_x^2) (\partial_t v_\mu)(t)\|_{L^2}^2 }\\
    &\leq \dfrac {\bar\lam} 2 t\|\partial_t u_\mu(t)\|_{L^2}^2
    + \mu \int^T_0 \tau \|(1-\partial_x^2) (\partial_t v_\mu)\|_{L^2}^2 \, \d \tau \\
    &\quad + C_T \mu \left(\sup_{\tau \in [0,T]} \|\partial_t u_\mu(\tau)\|_{L^2}^2\right) T \int^T_0 \|\partial_t v_\mu\|_{H^1}^2 \, \d \tau\\
    &\quad + C_T \int^t_0 \tau \|\partial_t u_\mu\|_{L^2}^2 \, \d \tau
    + \dfrac 1 2 \int^T_0 \|\partial_t v_\mu\|_{H^1}^2 \, \d t\\
    &\quad + \dfrac \mu 2 \int^T_0 \left\|(1-\partial_x^2)(\partial_t v_\mu)\right\|_{L^2}^2 \, \d t
    \stackrel{\eqref{he0},\eqref{he1}}\leq C_T.
   \end{align*}
 Therefore we infer that
   \begin{alignat*}{4}
    \sqrt{t} \partial_t^2 u_\mu &\to \sqrt{t} \partial_t^2 u \quad &&\mbox{ weakly in } L^2(0,T;L^2(\R)),\\
    \sqrt{t} \partial_t v_\mu &\to \sqrt{t} \partial_t u \quad &&\mbox{ weakly star in } L^\infty(0,T;H^1(\R)),
   \end{alignat*}
   which implies $\partial_t u \in L^\infty(\delta,T;H^1(\R)) \cap W^{1,2}(\delta,T;L^2(\R))$ for any $\delta > 0$. Thus by embeddings (see~\cite{Simon}), we conclude that
   $$
   \partial_t u \in C^\gamma(\R \times (0,T]) \ \mbox{ for any } \gamma \in [0,1/2).
   $$
   On the other hand, as in~\cite[\S 3-5]{ae1} one can verify that
   $$
   u \in L^\infty(0,T;H^2(\R)) \cap W^{1,2}(0,T;H^1(\R)),
   $$
   which implies
   $$
   \partial_x u \in C^\gamma(\R\times[0,T]) \ \mbox{ for any } \gamma \in [0,1/2).
$$
Thus $u \in C^{1,\gamma}(\R \times (0,T])$ for $\gamma \in (0,1/2)$. %Moreover, for $(u_\mu)$ is bounded in $L^\infty(0,T;H^1(\R)) \cap W^{1,2}(0,T;L^2(\R))$, it follows that
%$$
%u_\mu \to u \quad \mbox{ strongly in } C([0,T];C^{\gamma}(\overline I))
%$$
%for any bounded interval $I \subset \R$ and any $\gamma \in [0,1/2)$.
This ensures the assertion of Theorem \ref{T:C1} for compactly supported data. 

To close this subsection, let us prove the following lemma:
   \begin{lemma}[Resolvent estimates]\label{L:Jlam}
    Let $u \in C^{1,1}([0,T];L^2(\R))$ be fixed. Then $v_\mu := J_\mu u(\cdot)$ belongs to $C^{1,1}([0,T];H^2(\R))$, and moreover, it holds that $\beta(v_\mu) \in C^{1,1}([0,T];L^2(\R))$ and
    \begin{align*}
     \partial_t v_\mu + \mu \left[ (1-\partial_x^2)(\partial_t v_\mu) + \beta'(v_\mu) \partial_t v_\mu \right] &= \partial_t u,\\
     \partial_t^2 v_\mu + \mu \left[ (1-\partial_x^2)(\partial_t^2 v_\mu) + \beta''(v_\mu) (\partial_t v_\mu)^2 + \beta'(v_\mu) \partial_t^2 v_\mu \right] &= \partial_t^2 u.
    \end{align*}
    Moreover, it holds that
    \begin{equation}
     \|\partial_t^2 v_\mu\|_{L^2}^2 \leq C\|\partial_t^2 u\|_{L^2}^2 + C\mu \|\beta''(v_\mu)\|_{L^\infty}^2\|\partial_t v_\mu\|_{L^2}^2 \|\partial_t v_\mu\|_{H^1}^2
    \end{equation}
    for some $C$ independent of $\mu > 0$ small enough.
    \end{lemma}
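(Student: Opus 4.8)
The plan is to differentiate the defining relation of the resolvent twice in time. Writing out $v_\mu = J_\mu u$ amounts to
$$
v_\mu + \mu \left[ (1-\partial_x^2) v_\mu + \beta(v_\mu) \right] = u \qquad \mbox{in } \R \times (0,T).
$$
As a preliminary step one records that $J_\mu$ is non-expansive on $L^2(\R)$, so $v_\mu \in C^{0,1}([0,T];L^2(\R))$ with Lipschitz constant at most that of $u$; reading the displayed identity as a linear elliptic equation in $x$ for each frozen $t$ (the operator $1+\mu(1-\partial_x^2)+\mu\beta'(v_\mu)$ being coercive on $H^1(\R)$ since $\beta'\geq 0$), one upgrades this to $v_\mu \in C^{0,1}([0,T];H^2(\R))$ together with a bound $\sup_{t\in[0,T]}\|v_\mu(t)\|_{H^1(\R)} \leq C$ independent of $\mu\in(0,1)$. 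Since $\beta\in C^2$ and $H^1(\R)\hookrightarrow L^\infty(\R)$, this makes $\|\beta'(v_\mu)\|_{L^\infty(\R\times(0,T))}$ and $\|\beta''(v_\mu)\|_{L^\infty(\R\times(0,T))}$ bounded uniformly in $\mu$.

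For the first identity I would introduce the difference quotient $w_h(t):=h^{-1}(v_\mu(t+h)-v_\mu(t))$, which solves
$$
w_h + \mu(1-\partial_x^2)w_h + \mu b_h = \frac{u(\cdot+h)-u(\cdot)}{h}, \qquad b_h := \frac{\beta(v_\mu(\cdot+h))-\beta(v_\mu(\cdot))}{h}.
$$
Pointwise in $x$ one has $b_h = \big( \int_0^1 \beta'(v_\mu(t)+s(v_\mu(t+h)-v_\mu(t)))\,\d s \big)\,w_h$, a nonnegative multiple of $w_h$; testing by $w_h$ and discarding the corresponding nonnegative term gives $\|w_h\|_{L^2(\R)}^2 + \mu\|w_h\|_{H^1(\R)}^2 \leq \|h^{-1}(u(\cdot+h)-u(\cdot))\|_{L^2(\R)}\|w_h\|_{L^2(\R)}$, so $w_h$ is bounded in $H^1(\R)$ uniformly in $h$. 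Since $v_\mu$ is Lipschitz into the Hilbert space $L^2(\R)$, it is differentiable there for a.e.\ $t$; letting $h\to 0$ along a suitable sequence (weakly in $H^1(\R)$, while the averaged $\beta'$ above converges to $\beta'(v_\mu(t))$ in $L^\infty(\R)$ and $w_h\rightharpoonup\partial_t v_\mu(t)$ in $L^2(\R)$, which suffices to pass to the limit in the product) yields
$$
\partial_t v_\mu + \mu \left[ (1-\partial_x^2)\partial_t v_\mu + \beta'(v_\mu)\partial_t v_\mu \right] = \partial_t u \qquad \mbox{for a.e. } t.
$$
For each $t$ this is again a coercive linear elliptic problem, so $\partial_t v_\mu(t)\in H^2(\R)$; Lipschitz dependence on the data $\partial_t u$ and $\beta'(v_\mu)$ (both Lipschitz in $t$, the former because $u\in C^{1,1}$) promotes this to $v_\mu\in C^{1,1}([0,T];H^2(\R))$. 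Then $\partial_t\beta(v_\mu)=\beta'(v_\mu)\partial_t v_\mu$ is a product of two factors bounded in $L^\infty(\R\times(0,T))$ and Lipschitz in $t$, whence $\beta(v_\mu)\in C^{1,1}([0,T];L^2(\R))$.

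Differentiating once more uses the same recipe applied to the first identity: one forms difference quotients of $\partial_t v_\mu$ and splits $\beta'(v_\mu(\cdot+h))\partial_t v_\mu(\cdot+h)-\beta'(v_\mu)\partial_t v_\mu = \beta'(v_\mu(\cdot+h))\big(\partial_t v_\mu(\cdot+h)-\partial_t v_\mu\big) + \big(\beta'(v_\mu(\cdot+h))-\beta'(v_\mu)\big)\partial_t v_\mu$, the last quotient converging to $\beta''(v_\mu)(\partial_t v_\mu)^2$ in $L^2(\R)$ because $v_\mu$ is $C^1$ into $H^1(\R)\hookrightarrow L^\infty(\R)$ and $\beta\in C^2$; monotonicity of $\beta$ again kills the unfavourable sign. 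This produces the second identity and, via the elliptic estimate, $\partial_t^2 v_\mu\in L^\infty(0,T;H^2(\R))$. Finally, for the quantitative bound I would test the second identity by $\partial_t^2 v_\mu$, discard the nonnegative term $\mu\int_\R\beta'(v_\mu)(\partial_t^2 v_\mu)^2\,\d x$, and estimate the forcing by
$$
\left\| \mu\beta''(v_\mu)(\partial_t v_\mu)^2 \right\|_{L^2(\R)} \leq \mu\|\beta''(v_\mu)\|_{L^\infty(\R)}\|\partial_t v_\mu\|_{L^4(\R)}^2 \leq C\mu\|\beta''(v_\mu)\|_{L^\infty(\R)}\|\partial_t v_\mu\|_{L^2(\R)}\|\partial_t v_\mu\|_{H^1(\R)},
$$
using the one-dimensional Gagliardo--Nirenberg inequality $\|g\|_{L^4(\R)}^2\leq C\|g\|_{L^2(\R)}^{3/2}\|g\|_{H^1(\R)}^{1/2}\leq C\|g\|_{L^2(\R)}\|g\|_{H^1(\R)}$; dividing out $\|\partial_t^2 v_\mu\|_{L^2(\R)}$, squaring, and using $\mu^2\leq\mu$ for $\mu\in(0,1)$ yields the claimed estimate. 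The main obstacle is exactly this justification of the two differentiated identities under the mere $C^{1,1}$ time-regularity of $u$: the difference-quotient bookkeeping, the limit passage in the Nemytskii terms $\beta'(v_\mu)\partial_t v_\mu$ and $\beta''(v_\mu)(\partial_t v_\mu)^2$, and the upgrade of the a.e.-in-$t$ identities to $C^{1,1}$-in-$t$ statements through the coercivity of $1+\mu(1-\partial_x^2)+\mu\beta'(v_\mu)$. (Alternatively one could note that the Nemytskii map $w\mapsto\beta(w)$ is $C^1$ from $H^2(\R)$ into $L^2(\R)$ and invoke the implicit function theorem to differentiate $u\mapsto J_\mu u$, but the difference-quotient route is more self-contained.)
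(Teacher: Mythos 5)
Your proposal is correct and follows essentially the same overall plan as the paper's proof: establish $C^{0,1}$ time-regularity with $H^2$ spatial values, differentiate the resolvent identity twice, and then test the second differentiated identity by $\partial_t^2 v_\mu$, discarding the sign-definite $\mu\int_\R\beta'(v_\mu)(\partial_t^2 v_\mu)^2\,\d x$ term and bounding the $\beta''$-term via the $L^4$ interpolation $\|\partial_t v_\mu\|_{L^4}^2\leq C\|\partial_t v_\mu\|_{L^2}\|\partial_t v_\mu\|_{H^1}$. The final energy estimate in your proposal is identical to the paper's.

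Where you differ is in bookkeeping. The paper does not run a difference-quotient argument for the first differentiation; it shows term by term that each piece of the defining relation is Lipschitz in $t$ (non-expansivity for $v_\mu$, a mean-value bound $\|\beta(v_\mu(t))-\beta(v_\mu(s))\|_{L^2}\leq\sup_{|z|\leq C_T}|\beta'(z)|\,\|v_\mu(t)-v_\mu(s)\|_{L^2}$ for the Nemytskii term, and the algebra $(1-\partial_x^2)v_\mu = \mu^{-1}(u-v_\mu)-\beta(v_\mu)$ for the Laplacian piece), and then simply differentiates. For the $C^{0,1}([0,T];H^2)$ upgrade of $\partial_t v_\mu$ it again tests the equation at two times $s<t$ by the difference $\partial_t v_\mu(t)-\partial_t v_\mu(s)$ and estimates each resulting term. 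Your route replaces this by systematic difference quotients $w_h$ together with the observation that the frozen-time operator $1+\mu(1-\partial_x^2)+\mu\beta'(v_\mu)$ is coercive, so that elliptic regularity yields the $H^2$-valued regularity directly. Your approach makes the passage ``differentiate both sides in $t$'' more explicit (you pass to limits in the averaged Nemytskii coefficient), and the elliptic-coercivity packaging is cleaner than the paper's term-by-term Lipschitz tally; the paper's version, on the other hand, gives the quantitative Lipschitz constants needed elsewhere in Section \ref{S:reg} with a bit more transparency. Both buy you the same conclusion, and both rely on the same two structural facts: non-expansivity of $J_\mu$ and the monotonicity (hence nonnegativity of $\beta'$) that lets you discard the coupling term in the final estimate.
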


   \begin{proof}
    Since $J_\mu$ is non-expansive in $L^2(\R)$, it holds that $v_\mu \in C^{0,1}([0,T];L^2(\R))$ and $\|\partial_t v_\mu(t)\|_{L^2} \leq \|\partial_t u(t)\|_{L^2}$. By the definition of $J_\mu$, one has
   $$
   v_\mu(t) + \mu \partial \psi(v_\mu(t)) = u(t).
    $$
    Test it by $v_\mu(t)$ to find that $v_\mu \in L^\infty(0,T;H^1(\R)) \subset L^\infty(\R\times (0,T))$. Moreover, as both $u$ and $v_\mu$ are Lipschitz continuous on $[0,T]$ with values in $L^2(\R)$, so is $\partial \psi_\mu(u(\cdot))$. Moreover, we find that
   \begin{align*}
    \|\beta(v_\mu(\cdot,t))-\beta(v_\mu(\cdot,s))\|_{L^2}^2
    &\leq \sup_{|z|\leq C_T} |\beta'(z)|^2 \|v_\mu(t)-v_\mu(s)\|_{L^2}^2,
   \end{align*}
    where $C_T$ is a constant greater than $\|v_\mu\|_{L^\infty(Q_T)}$ for any $\mu > 0$ and which yields that $\beta(v_\mu) \in C^{0,1}([0,T];L^2(\R))$, and hence, so is $(1-\partial_x^2) v_\mu$. Thus $v_\mu$ belongs to $C^{0.1}([0,T];H^2(\R))$.

    Differentiating both sides in $t$, one has
   \begin{align}\label{vlam}
    \partial_t v_\mu + \mu \left[ (1-\partial_x^2)(\partial_t v_\mu) + \beta'(v_\mu) \partial_t v_\mu \right] = \partial_t u.
   \end{align}
    Multiplying both sides by $\partial_t v_\mu$, we infer that $\partial_t v_\mu \in L^\infty(0,T;H^1(\R)) \subset L^\infty(\R \times (0,T))$.
    By \eqref{vlam}, one finds that
    \begin{align*}
     \lefteqn{
   \partial_t v_\mu(t) - \partial_t v_\mu(s) + \mu (1-\partial_x^2) \left[ \partial_t v_\mu(t) - \partial_t v_\mu(s) \right]}\\
     &+ \mu \left[ \beta'(v_\mu(t)) \partial_t v_\mu(t) - \beta'(v_\mu(s)) \partial_t v_\mu(s) \right]
   =\partial_t u(t) - \partial_t u(s).
    \end{align*}
   Test it by the difference $\partial_t v_\mu(t) - \partial_t v_\mu(s)$. It then follows that
   \begin{align*}
    \lefteqn{
    \left\|\partial_t v_\mu(t) - \partial_t v_\mu(s)\right\|_{L^2}^2 + \mu\left\|\partial_t v_\mu(t) - \partial_t v_\mu(s)\right\|_{H^1}^2
    }\\
    &\quad  + \mu \int_\R \beta'(v_\mu(x,t)) \left| \partial_t v_\mu(x,t) - \partial_t v_\mu(x,s) \right|^2 \, \d x\\
    &\leq \left\| \partial_t u(t) - \partial_t u(s) \right\|_{L^2} \left\| \partial_t v_\mu(t) - \partial_t v_\mu(s) \right\|_{L^2}\\
    &\quad + \mu \|\beta''(v_\mu(t))\|_{L^\infty} \|v_\mu(t)-v_\mu(s)\|_{L^2} \|\partial_t v_\mu(s)\|_{L^\infty} \left\| \partial_t v_\mu(t) - \partial_t v_\mu(s) \right\|_{L^2},
   \end{align*}
   which leads us to obtain $\partial_t v_\mu \in C^{0,1}([0,T];H^1(\R))$. Hence $(1-\partial_x^2) (\partial_t v_\mu) + \beta'(v_\mu) \partial_t v_\mu = - \mu^{-1}( \partial_t v_\mu - \partial_t u ) \in C^{0,1}([0,T];L^2(\R))$. Furthermore, let us observe that
    \begin{align*}
     \lefteqn{
    \left\| \beta'(v_\mu(t)) \partial_t v_\mu(t) - \beta'(v_\mu(s)) \partial_t v_\mu(s) \right\|_{L^2}
     }\\
     &\leq \sup_{|z|\leq C_T} |\beta''(z)| \left\| v_\mu(t)-v_\mu(s)\right\|_{L^2} \|\partial_t v_\mu(t)\|_{L^\infty}\\
     &\quad + \sup_{|z| \leq C_T}|\beta'(z)| \left\| \partial_t v_\mu(t)-\partial_t v_\mu(s)\right\|_{L^2},
    \end{align*}
    whence follows $\beta'(v_\mu) \partial_t v_\mu \in C^{0,1}([0,T];L^2(\R))$. So is $(1-\partial_x^2) (\partial_t v_\mu)$, and therefore, $\partial_t v_\mu \in C^{0,1}([0,T];H^2(\R))$. Hence we can differentiate both sides of \eqref{vlam} in $t$. Then
\begin{equation}\label{vlam2}
\partial_t^2 v_\mu + \mu \left[ (1-\partial_x^2) (\partial_t^2 v_\mu) + \beta''(v_\mu) (\partial_t v_\mu)^2 + \beta'(v_\mu) \partial_t^2 v_\mu \right] = \partial_t^2 u.
\end{equation}
Multiply it by $\partial_t^2 v_\mu$. We have
\begin{align*}
\|\partial_t^2 v_\mu\|_{L^2}^2 + \mu \|\partial_t^2 v_\mu\|_{H^1}^2 + \mu \int_\R \beta''(v_\mu) (\partial_t v_\mu)^2 \partial_t^2 v_\mu \, \d x + \mu \int_\R \beta'(v_\mu) (\partial_t^2 v_\mu)^2 \, \d x
\\
\leq \frac 1 2 \|\partial_t^2 u\|_{L^2}^2 + \frac 1 2 \|\partial_t^2 v_\mu\|_{L^2}^2.
\end{align*}
Then
\begin{align*}
    \left|\int_\R \beta''(v_\mu) (\partial_t v_\mu)^2 \partial_t^2 v_\mu \, \d x\right|
    &\leq \|\beta''(v_\mu)\|_{L^\infty} \|\partial_t v_\mu\|_{L^4}^2 \|\partial_t^2 v_\mu\|_{L^2}\\
    &\leq C \|\beta''(v_\mu)\|_{L^\infty} \|\partial_t v_\mu\|_{H^1} \|\partial_t v_\mu\|_{L^2} \|\partial_t^2 v_\mu\|_{L^2}\\
    &\leq \|\partial_t^2 v_\mu\|_{L^2}^2 + C \|\beta''(v_\mu)\|_{L^\infty}^2\|\partial_t v_\mu\|_{L^2}^2 \|\partial_t v_\mu\|_{H^1}^2.
   \end{align*}
   Therefore
   $$
   \|\partial_t^2 v_\mu\|_{L^2}^2 \leq C \|\partial_t^2 u\|_{L^2}^2 + C\mu \|\beta''(v_\mu)\|_{L^\infty}^2\|\partial_t v_\mu\|_{L^2}^2 \|\partial_t v_\mu\|_{H^1}^2
    $$
    for $\mu > 0$ small enough. This completes the proof.
   \end{proof}

\subsection{Proof of Theorem \ref{T:C1} for general data}\label{Ss:C1gd}

We first derive a couple of energy inequalities with weights for later use. Let $u=u(x,t)$ be an $L^2$ solution to \eqref{irAC} and \eqref{pde-obs0} for $u_0 \in C^\infty_c(\R)$. By subtraction, we find that
\begin{align}
 \partial_t \left[u(t+h)-u(t)\right] + \eta(t+h)-\eta(t) - \partial_x^2 \left[u(t+h)-u(t)\right]\qquad
\label{subt-eq}\\
 + \beta(u(t+h))-\beta(u(t)) = \lam \left[u(t+h)-u(t)\right].\nonumber
\end{align}
Test it by $\partial_t u(t+h) \zeta^2$ with a non-negative smooth cut-off function $\zeta = \zeta(x)$. Then
\begin{align*}
\lefteqn{
\left( \partial_t \left[u(t+h)-u(t)\right], \partial_t u(t+h) \right)_{L^2_\zeta} + \left( \eta(t+h)-\eta(t), \partial_t u(t+h)\right)_{L^2_\zeta}
}\\
&\quad + \int_\R \partial_x \left[u(t+h)-u(t)\right] \partial_x \partial_t u(t+h) \zeta^2 \, \d x \\
&\quad + \left( \beta(u(t+h))-\beta(u(t)), \partial_t u(t+h) \right)_{L^2_\zeta}\\
&= \lam \left( u(t+h)-u(t) , \partial_t u(t+h) \right)_{L^2_\zeta}
- 2\int_\R \partial_x \left[u(t+h)-u(t)\right] \partial_t u(t+h) \zeta\zeta' \, \d x, 
\end{align*}
 where $(v,w)_{L^2_\zeta} := \int_\R v(x)w(x) \zeta(x)^2 \, \d x$ for $v,w \in L^2_{\rm loc}(\R)$.  Due to the relation $\partial_t u \in \partial \iI^*(\eta)$ (equivalently, $\eta \in \partial \iI(\partial_t u)$) and $\iI^*(\eta) = (\eta,\partial_t u)_{L^2}-\iI(\partial_tu)=0$, one has $\left( \eta(t+h)-\eta(t), \partial_t u(t+h)\right)_{L^2_\zeta} \geq 0$. Integrate both sides over $(s,t)$ for any $0 < s < t < T$. Then it follows that
\begin{align*}
\lefteqn{
\int^t_s \left( \partial_t \left[u(\tau+h)-u(\tau)\right], \partial_t u(\tau+h) \right)_{L^2_\zeta} \, \d \tau
}\\
&\quad  + \int^t_s \int_\R \partial_x \left[u(\tau+h)-u(\tau)\right] \partial_x \partial_t u(\tau+h) \zeta^2 \, \d x \d \tau\\
&\quad + \int^t_s \left( \beta(u(\tau+h))-\beta(u(\tau)), \partial_t u(\tau+h) \right)_{L^2_\zeta} \, \d \tau\\
&\leq \lam \int^t_s \left( u(\tau+h)-u(\tau), \partial_t u(\tau+h) \right)_{L^2_\zeta} \, \d \tau\\
&\quad - 2\int^t_s \int_\R \partial_x \left[u(\tau+h)-u(\tau)\right] \partial_t u(\tau+h) \zeta\zeta' \, \d x \d \tau.
\end{align*}
Furthermore, dividing both sides by $h>0$ and passing to the limit as $h \to +0$, one can obtain
\begin{align*}
\int^t_s \left( \partial_t^2 u , \partial_t u \right)_{L^2_\zeta} \, \d \tau + \int^t_s \|\partial_x \partial_t u\|^2_{L^2_\zeta} \, \d \tau
+ \int^t_s \int_\R \beta'(u) |\partial_t u|^2 \zeta^2 \, \d x \d \tau
\quad
\\
\leq \lam \int^t_s \|\partial_t u\|^2_{L^2_\zeta} \, \d \tau
- 2\int^t_s \int_\R \partial_x \partial_t u \partial_t u \zeta\zeta' \, \d x \d \tau, 
\nonumber
\end{align*}
 where $\|w\|_{L^2_\zeta} := (\int_\R |w(x)|^2\zeta(x)^2 \, \d x)^{1/2}$ for $w \in L^2_{\rm loc}(\R)$,  from the fact by Theorem \ref{T:C1} for $u_0 \in C^\infty_c(\R)$ that
\begin{alignat*}{3}
\frac{u(\cdot+h)-u(\cdot)}{h} &\to \partial_t u \quad &&\mbox{ strongly in } L^2(s,t;H^1(\R)),\\
\frac{\partial_t u(\cdot+h)-\partial_t u(\cdot)}{h} &\to \partial_t^2 u \quad &&\mbox{ strongly in } L^2(s,t;L^2(\R)),\\
\frac{\beta(u(\tau+h))-\beta(u(\tau))}{h} &\to \beta'(u)\partial_t u \quad &&\mbox{ strongly in } L^2(s,t;L^2(\R)),\\
\partial_t u(\cdot+h)&\to \partial_t u \quad &&\mbox{ strongly in } L^2(s,t;H^1(\R)).
\end{alignat*}
With the aid of Lebesgue's differentiation theorem, one can obtain the following:
\begin{align}\label{we-1}
\dfrac 1 2 \dfrac \d {\d t} \|\partial_t u\|_{L^2_\zeta}^2 + \|\partial_x \partial_t u\|^2_{L^2_\zeta} + \int_\R \beta'(u) |\partial_t u|^2 \zeta^2 \, \d x \quad
\\
\leq \lam \|\partial_t u\|^2_{L^2_\zeta} - 2 \int_\R \partial_x \partial_t u \partial_t u \zeta\zeta' \, \d x
\nonumber
\end{align}
for a.e.~$t \in (0,T)$.

We next test \eqref{subt-eq} by $\partial_t [u(t+h)-u(t)]\zeta^2$. Then
\begin{align*}
\lefteqn{
\left\| \partial_t \left[u(t+h)-u(t)\right] \right\|_{L^2_\zeta}^2 + \left( \eta(t+h)-\eta(t), \partial_t [u(t+h)-u(t)] \right)_{L^2_\zeta}
}\\
&\quad + \frac 1 2 \frac{\d}{\d t} \left\| \partial_x \left[u(t+h)-u(t)\right] \right\|_{L^2_\zeta}^2 + \left( \beta(u(t+h))-\beta(u(t)), \partial_t [u(t+h)-u(t)] \right)_{L^2_\zeta}\\
&= \dfrac{\lam} 2 \dfrac \d {\d t} \left\| u(t+h)-u(t) \right\|_{L^2_\zeta}^2 \\
&\quad - 2\int_\R \partial_x \left[u(t+h)-u(t)\right] \partial_t \left[u(t+h)-u(t)\right] \zeta\zeta' \, \d x. 
\end{align*}
By the monotonicity of $\partial \iI$, the second term of the left-hand side is non-negative. Integrating both sides over $(s,t)$ for $0 < s < t < T$, we have
\begin{align*}
\lefteqn{
\int^t_s \left\| \partial_t \left[u(\tau+h)-u(\tau)\right] \right\|_{L^2_\zeta}^2 \, \d \tau + \frac 1 2 \left\| \partial_x \left[u(t+h)-u(t)\right] \right\|_{L^2_\zeta}^2
}\\
&\quad - \frac{\lam}2 \left\| u(t+h)-u(t) \right\|_{L^2_\zeta}^2 \\
&\quad + \int^t_s \left( \beta(u(\tau+h))-\beta(u(\tau)), \partial_t [u(\tau+h)-u(\tau)] \right)_{L^2_\zeta} \, \d \tau\\
& \leq \frac 1 2 \left\| \partial_x \left[u(s+h)-u(s)\right] \right\|_{L^2_\zeta}^2 - \dfrac{\lam}2 \left\| u(s+h)-u(s) \right\|_{L^2_\zeta}^2 \\
&\quad - 2 \int^t_s \int_\R \partial_x \left[u(\tau+h)-u(\tau)\right] \partial_t \left[u(\tau+h)-u(\tau)\right] \zeta \zeta' \, \d x \d \tau. 
\end{align*}
Divide both sides by $h>0$ and pass to the limit as $h \to +0$. It then follows by Theorem \ref{T:C1} for $u_0 \in C^\infty_c(\R)$ that
\begin{align*}
\lefteqn{
\int^t_s \left\| \partial_t^2 u \right\|_{L^2_\zeta}^2 \, \d \tau + \frac 1 2 \left\| \partial_x \partial_t u \right(t)\|_{L^2_\zeta}^2 
- \frac{\lam}2 \left\| \partial_t u(t) \right\|_{L^2_\zeta}^2
+ \int^t_s \int_\R \beta'(u) \partial_t u \partial_t^2 u \zeta^2 \, \d x \d \tau
}\\
&\quad \leq \frac 1 2 \left\| \partial_x \partial_t u(s) \right\|_{L^2_\zeta}^2 - \dfrac{\lam}2 \left\| \partial_t u(s) \right\|_{L^2_\zeta}^2 - 2\int^t_s \int_\R \partial_x \partial_t u(\tau) \partial_t^2 u(\tau) \zeta\zeta' \, \d x \d \tau 
\end{align*}
for any $0 < s < t < T$. Therefore, thanks to Lebesgue's differentiation theorem, we obtain
\begin{align}\label{we-2}
\left\| \partial_t^2 u \right\|_{L^2_\zeta}^2 + \frac 1 2 \dfrac \d {\d t} \left\| \partial_x \partial_t u \right(t)\|_{L^2_\zeta}^2 
+ \int_\R \beta'(u) \partial_t u \partial_t^2 u \zeta^2 \, \d x
\quad \\
\leq \frac{\lam}2 \dfrac \d {\d t} \left\| \partial_t u(t) \right\|_{L^2_\zeta}^2 -2 \int_\R \partial_x \partial_t u \partial_t^2 u \zeta\zeta' \, \d x\nonumber
\end{align}
a.e.~in $(0,T)$.

We can then obtain
\begin{lemma}
Assume that $u_0 \in H^2_{\rm loc}(\R) \cap L^\infty(\R)$. Then the unique $L^2_{\rm loc}$ solution of \eqref{irAC} as well as \eqref{pde-obs0} belongs to $C^{1,\gamma}(\R \times \R_+)$ for $\gamma \in [0,1/2)$.
\end{lemma}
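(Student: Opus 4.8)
The plan is to deduce the general-data case from the compactly supported case treated in \S\ref{Ss:scs} by an approximation argument, using the weighted energy inequalities \eqref{we-1} and \eqref{we-2} to produce bounds that are uniform along the approximating sequence. Following the construction in the proof of Theorem \ref{T:loc-L2-ex}, I would take $u_{0,n}:=(\rho_n*u_0)\zeta_n\in C^\infty_c(\R)$ with $u_{0,n}\to u_0$ strongly in $H^2(I)$ for every bounded interval $I$ and $M_-\le u_{0,n}\le M_+$ a.e., and let $u_n$ be the $L^2$ solution of \eqref{irAC} (equivalently \eqref{pde-obs0}) with datum $u_{0,n}$. By \S\ref{Ss:scs} each $u_n$ lies in $C^{1,\gamma}(\R\times(0,T])$, and the proof of Theorem \ref{T:loc-L2-ex} already supplies the $n$-uniform localized bounds on $\|u_n\|_{L^\infty(\R\times(0,T))}$, on $\int_0^T\|\partial_t u_n\|_{L^2(I_R)}^2\,\d t$, on $\sup_{t}\|\partial_x u_n(t)\|_{L^2(I_R)}^2$, on $\int_0^T\|\partial_x^2 u_n\|_{L^2(I_R)}^2\,\d t$ and on $\esssup_{t}\|\eta_n(t)\|_{L^2(I_R)}$, as well as the $n$-uniform initial-trace bound $\|\partial_t u_n(0)\|_{L^2_{\zeta_R}}^2=\|(\partial_x^2 u_{0,n}-f(u_{0,n}))_+\|_{L^2(I_{2R})}^2\le C\|u_{0,n}\|_{H^2(I_{2R})}^2+C_R\le C_R$, where uniformity in $n$ comes precisely from the strong convergence of $u_{0,n}$ in $H^2(I_{2R})$.

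Next I would feed these bounds into the weighted inequalities. Applying \eqref{we-1} to $u_n$ with $\zeta=\zeta_R$, discarding the nonnegative term $\int_\R\beta'(u_n)|\partial_t u_n|^2\zeta_R^2\,\d x$ and absorbing the boundary term by Young's inequality (using $|\zeta_R'|\le 2/R$), then multiplying by $\e^{-2\lam t}$ and integrating in time, I would obtain
\[
\sup_{t\in[0,T]}\|\partial_t u_n(t)\|_{L^2(I_R)}^2+\int_0^T\|\partial_x\partial_t u_n\|_{L^2(I_R)}^2\,\d t\le C_{T,R}
\]
uniformly in $n$, using the initial-trace bound and $\int_0^T\|\partial_t u_n\|_{L^2(I_{2R})}^2\,\d t\le C_{T,2R}$. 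Plugging this into \eqref{we-2} with $\zeta=\zeta_R$ --- estimating the indefinite terms $\int_\R\beta'(u_n)\partial_t u_n\,\partial_t^2 u_n\,\zeta_R^2\,\d x$ (with $\|\beta'(u_n)\|_{L^\infty}$ controlled by the uniform $L^\infty$ bound on $u_n$) and $\int_\R\partial_x\partial_t u_n\,\partial_t^2 u_n\,\zeta_R\zeta_R'\,\d x$ by a small multiple of $\|\partial_t^2 u_n\|_{L^2_{\zeta_R}}^2$ plus quantities already controlled --- then multiplying by $t$ and integrating, I would arrive at
\[
\sup_{t\in[0,T]}t\,\|\partial_x\partial_t u_n(t)\|_{L^2(I_R)}^2+\int_0^T t\,\|\partial_t^2 u_n\|_{L^2(I_R)}^2\,\d t\le C_{T,R},
\]
again uniformly in $n$, so that $\sqrt t\,\partial_t u_n$ is bounded in $L^\infty(0,T;H^1(I_R))$ and $\sqrt t\,\partial_t^2 u_n$ in $L^2(0,T;L^2(I_R))$.

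Finally I would pass to the limit. The proof of Theorem \ref{T:loc-L2-ex} already yields, along a subsequence, $u_n\to u$ strongly in $C(\overline{I_R}\times[0,T])$ and weakly in $W^{1,2}(0,T;L^2(I_R))\cap L^2(0,T;H^2(I_R))$, and $\eta_n\to\eta$ weakly star in $L^\infty(0,T;L^2(I_R))$, where $u$ is the unique bounded $L^2_{\rm loc}$ solution of \eqref{irAC}--\eqref{pde-obs0}. By weak and weak-star lower semicontinuity the bounds above pass to the limit, giving $\sqrt t\,\partial_t u\in L^\infty(0,T;H^1(I_R))$ and $\sqrt t\,\partial_t^2 u\in L^2(0,T;L^2(I_R))$ for all $R,T>0$; moreover, the identity $\partial_x^2 u=\partial_t u+\eta+f(u)$ from \eqref{irAC-dn} together with the first of the two displays above gives $u\in L^\infty(0,T;H^2(I_R))\cap W^{1,2}(0,T;H^1(I_R))$. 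The compact embeddings of \cite{Simon}, applied exactly as in \S\ref{Ss:scs}, then give $\partial_x u\in C^\gamma(\overline{I_R}\times[0,T])$ and, for each $\delta>0$, $\partial_t u\in C^\gamma(\overline{I_R}\times[\delta,T])$ for every $\gamma\in[0,1/2)$; since $R,T,\delta$ are arbitrary, $u\in C^{1,\gamma}(\R\times(0,+\infty))$, and the uniqueness is the one already recorded in Theorem \ref{T:loc-L2-ex}. I expect the main difficulty to be organizational rather than conceptual: one must run the two Gronwall/absorption steps along the \emph{nested} cut-offs $\zeta_R,\zeta_{2R},\dots$ so that every term on the right of \eqref{we-1} and \eqref{we-2} is dominated by an estimate already established on a slightly larger interval, and one must verify that all constants are genuinely independent of $n$ --- in particular that the initial traces $\|\partial_t u_n(0)\|_{L^2_{\zeta_R}}$ stay bounded, which is exactly where the strong $H^2_{\rm loc}$ convergence of the mollified data enters.
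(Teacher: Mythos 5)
Your proposal follows the paper's approach very closely: same approximating sequence $u_{0,n}\in C^\infty_c(\R)$, same weighted inequalities \eqref{we-1}--\eqref{we-2} applied with the cut-offs $\zeta_R$, same weak/weak-$*$ passage to the limit and the same appeal to \cite{Simon}. The one place you diverge is in handling the first inequality: you propose to bound the initial trace $\|\partial_t u_n(0)\|_{L^2(I_{2R})}=\|(\partial_x^2 u_{0,n}-f(u_{0,n}))_+\|_{L^2(I_{2R})}$ uniformly in $n$ (valid, since $u_{0,n}\to u_0$ in $H^2(I_{2R})$ and $u_{0,n}$ is uniformly bounded) and then integrate \eqref{we-1} from $t=0$, concluding $\sup_{[0,T]}\|\partial_t u_n\|_{L^2(I_R)}^2\le C_{T,R}$. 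The paper instead multiplies \eqref{we-1} by $t$ and \eqref{we-2} by $(t-t_0)$ \emph{before} integrating, so that the initial terms vanish identically and no initial-trace control is ever needed; the price is the weaker conclusion $\sqrt t\,\partial_t u\in L^\infty(0,T;L^2_{\rm loc})$, which is all that is needed. Your variant is morally sound and, if justified, gives the slightly stronger $\partial_t u\in L^\infty_{\rm loc}([0,T]\times\R)$; however it contains a small gap you should be aware of: \eqref{we-1} for the limit $u_n$ was derived only as a differential inequality holding for a.e.\ $t\in(0,T)$ (from difference quotients on $(s,t)$ with $0<s<t<T$), and integrating it over $(0,t)$ presupposes the right-continuity of $s\mapsto\|\partial_t u_n(s)\|_{L^2_\zeta}$ at $s=0$ with the limit equal to $\|(\partial_x^2 u_{0,n}-f(u_{0,n}))_+\|_{L^2_\zeta}$. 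To close this you would have to go back to the $\mu$-regularized solutions $u_\mu$ of \eqref{aprx}, for which the initial energy identity is exact, and pass $\mu\to0$; the paper's $t$-weighted formulation simply makes this issue disappear. Aside from that bridgeable technicality, the argument and every estimate you invoke line up with the paper's proof.
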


\begin{proof}
Let $(u_{0,n})$ be a sequence in $C^\infty_c(\R)$ such that $u_{0,n} \to u_0$ strongly in $H^2(I)$ for any bounded interval $I \subset \R$ and \eqref{u0n-snd} holds (see \S \ref{Ss:loc-ex}). Denote by $u_n$ the unique $L^2$ solution to \eqref{irAC} with $u_0$ replaced by $u_{0,n}$. According to the proof of Theorem \ref{T:loc-L2-ex}, $u_n$ converges to an $L^2_{\rm loc}$ solution to \eqref{irAC} (as well as \eqref{pde-obs0}) on $[0,T]$ (see also a priori estimates for $u_n$ established in \S \ref{Ss:loc-ex}). Moreover, \eqref{we-1} and \eqref{we-2} are satisfied for $u = u_n$. Set $\zeta = \zeta_R$ defined as in the proof of Lemma \ref{L:cp-obs+}. Multiply both sides of \eqref{we-1} by $t$. Then
\begin{align*}
\lefteqn{
 \dfrac 1 2 \dfrac \d {\d t} \left( t\|\partial_t u_n\|_{L^2_\zeta}^2 \right) + t \|\partial_x \partial_t u_n\|^2_{L^2_\zeta} + t \int_\R \beta'(u_n) |\partial_t u_n|^2 \zeta_R^2 \, \d x \quad
}\\
&\leq \lam t \|\partial_t u_n\|^2_{L^2_\zeta} + \frac4R t\|\partial_x \partial_t u_n\|_{L^2_\zeta} \|\partial_t u_n\|_{L^2(I_{2R})} + \frac 1 2 \|\partial_t u_n\|_{L^2_\zeta}^2.
\end{align*}
Integrate both sides over $(0,t)$. We find that
\begin{align*}
\lefteqn{
 \dfrac t 2 \|\partial_t u_n(t)\|_{L^2_\zeta}^2 + \int^t_0 \tau\|\partial_x \partial_t u_n\|^2_{L^2_\zeta} \, \d \tau 
}\\
&\leq \lam \int^t_0 \tau \|\partial_t u_n\|^2_{L^2_\zeta} \, \d \tau 
+ \frac 1 2 \int^t_0 \|\partial_t u_n\|_{L^2_\zeta}^2 \, \d \tau\\
&\quad + \frac4R \int^t_0 \tau \|\partial_x \partial_t u_n\|_{L^2_\zeta} \|\partial_t u_n\|_{L^2(I_{2R})} \, \d \tau,
\end{align*}
which  along with \eqref{AA}  implies
$$
\sup_{t \in [0,T]} t \|\partial_t u_n(t)\|_{L^2(I_R)}^2 + \int^T_0 \tau \|\partial_x \partial_t u_n\|^2_{L^2(I_R)} \, \d \tau \leq C_{T,R}.
$$
Here and henceforth, we write $I_R := (-R,R)$ for any $R > 0$.  For any $t_0 \in (0,T)$, one observes that
\begin{equation}\label{BB}
\sup_{t \in [t_0,T]}\|\partial_t u_n(t)\|_{L^2(I_R)}^2 + \int^T_{t_0}\|\partial_x \partial_t u_n(\tau)\|^2_{L^2(I_R)} \, \d \tau \leq \frac{C_{T,R}}{t_0}.
\end{equation}

We next recall \eqref{we-2} for $u = u_n$, multiply it by $(t-t_0)$ and integrate it over $(t_0,t)$. It then follows that
\begin{align*}
\lefteqn{
\frac 1 2 \int^t_{t_0} (\tau-t_0) \left\| \partial_t^2 u_n \right\|_{L^2_\zeta}^2 \, \d \tau + \frac 1 2 (t-t_0) \left\| \partial_x \partial_t u_n \right(t)\|_{L^2_\zeta}^2 
}\\
&\leq \frac{\lam}2 (t-t_0)\left\| \partial_t u_n(t) \right\|_{L^2_\zeta}^2
+ \frac 1 2 \int^t_{t_0} \left\| \partial_x \partial_t u_n \right\|_{L^2_\zeta}^2 \, \d \tau\\
&\quad + \frac 1 2 \|\beta'(u_n)\|_{L^\infty}^2 \int^t_{t_0} (\tau-t_0)\|\partial_t u_n\|_{L^2_\zeta}^2 \, \d \tau\\ 
&\quad +\dfrac4R\int^t_{t_0} (\tau-t_0) \|\partial_x \partial_t u_n\|_{L^2(I_{2R})} \|\partial_t^2 u_n\|_{L^2_\zeta} \, \d \tau,
\end{align*}
which  along with \eqref{AA} and \eqref{BB}  yields
\begin{equation}\label{CC}
\int^T_{t_0} (\tau-t_0) \|\partial_t^2 u_n\|^2_{L^2(I_R)} \, \d \tau + \sup_{t \in [0,T]} (t-t_0) \|\partial_x \partial_t u_n(t)\|_{L^2(I_R)}^2 \leq \frac{C_{T,R}}{t_0}.
\end{equation}
Combining all these facts and repeating the argument for compactly supported data, one can derive 
$$
\partial_t u \in W^{1,2}(2t_0,T;L^2(I_R)) \cap L^\infty(2t_0,T;H^1(I_R)) \subset C^\gamma(\overline{I_R}\times[2t_0,T])
$$
for $\gamma \in [0,1/2)$. Moreover, by  \eqref{CC} and  the equation $\partial_x^2 u = \partial_t u + \eta + f(u)$ along with \eqref{eta-est}, we can also verify that
$$
\partial_x u \in W^{1,2}(2t_0,T;L^2(I_R)) \cap L^\infty(2t_0,T;H^1(I_R)) \subset C^\gamma(\overline{I_R}\times[2t_0,T])
$$
for $\gamma \in [0,1/2)$. From the arbitrariness of $t_0$, $T$ and $R$, we complete the proof. 
\end{proof}

\subsection{Corollaries of $C^1$ regularity}\label{Ss:appl-c1}

This subsection concerns a couple of corollaries of Theorem \ref{T:C1}, which may be used in later sections.

\subsubsection{Classical regularity on the non-coincidence set}\label{Sss:CR}
Set
$$
Q_T^+ := \{(x,t) \in Q_T \colon u(x,t) > u_0(x) \},
$$
which is the complement of the coincidence set. Then $\eta \equiv 0$ on $Q_T^+$. Hence by Theorem \ref{T:C1},
\begin{equation}\label{EQonQ+}
\partial_x^2 u = \partial_t u - f(u) \in C(\R \times \R_+) \ \mbox{ in } Q^+_T,
\end{equation}
which ensures $u \in C^{2,1}(Q^+_T)$. Therefore, $u=u(x,t)$ solves \eqref{cac} over $Q_T^+$ in the classical sense.

\subsubsection{Behavior of solutions near the free boundary}\label{Sss:BFB}

In the rest of this section, we address ourselves to the behavior of solutions to \eqref{irAC} in the third phase (see \S \ref{Ss:ph3}), i.e., $u = u(x,t)$ also solves \eqref{pde-alpha} and $u(x,t) > \alpha$ if and only if $x > r(t)$. For simplicity, we set $t_2 = 0$ by translation. Thanks to the $C^1$-regularity of $u(x,t)$ in space, we readily observe that
$$
\partial_x u(r(t),t) = 0 \quad \mbox{ for all } \ t \geq 0,
$$
 where $r(t)$ is the free boundary defined by \eqref{r(t)}. Furthermore, it also follows that
\begin{equation}\label{ut=0}
\partial_t u(r(t),t) = 0 \quad \mbox{ for all } \ t \geq 0.
\end{equation}
Indeed, for any $h < 0$, we find that
$$
\dfrac{u(r(t),t+h)-u(r(t),t)}h = 0
$$
by $u(r(t),t+h)=u(r(t),t) = \alpha$. Hence letting $h \to -0$ and using Theorem \ref{T:C1}, one obtains $\partial_t u(r(t),t) = 0$. We further  find by \eqref{EQonQ+}  that
$$
\partial_x^2 u(r(t)+0,t) = f(\alpha) \quad \mbox{ for all } \ t > 0.
$$
Then we also deduce that
\begin{alignat}{4}
\partial_t u = \partial_x^2 u - f(u), \quad r(t) < x < +\infty, \quad t > 0,\label{pde-u}\\
 u(r(t),t) = \alpha, \quad \partial_x u (r(t),t) = 0, \quad t > 0,\label{bc-u-1}\\
\partial_x^2 u(r(t)+0,t) = f(\alpha), \quad t > 0,\label{bc-u-2}
\end{alignat}
 whence follows that $u$ loses $C^2$ regularity only on the free boundary point $r(t)$ similarly to traveling waves $\phi_\alpha$.

%\begin{comment}
We finally remark that even continuity of $r(t)$ has not yet been ensured.

\subsubsection{Motion equation of the free boundary for regular solutions}\label{Sss:moeq}

 Let us finally derive a motion equation of the free boundary $r(t)$ for \emph{regular} solutions. For each $t \geq 0$, we have obtained
\begin{equation}\label{me:pde}
\partial_x^2 u(\cdot,t) = \partial_t u(\cdot,t) + f(u(\cdot,t)) \ \mbox{ in } \ (r(t),+\infty)
\end{equation}
and the right-hand side is continuous in $\R \times \R_+$. Hence $\partial_x^2 u$ can be regarded as a continuous function on $[r(t),+\infty) \times [0,+\infty)$. In what follows, we further assume that 
\begin{equation}\label{regu-hyp}
r(\cdot) \in C^1([0,+\infty)) \ \mbox{ and } \ u(\cdot, t) \in C^3([r(t),+\infty)) \ \mbox{ for } t > 0,
\end{equation}
which may not be at all trivial and can be however checked for traveling wave solutions. Moreover, \eqref{me:pde} and \eqref{regu-hyp} ensure that $\partial_t u(\cdot,t) \in C^1([r(t),+\infty))$ for $t > 0$. Set
$$
y := x - r(t), \quad v(y,t) := u(x,t) - \uo.
$$
Then we have
$$
\partial_t u(x,t) = \partial_t v(y,t) - \dot r(t) \partial_y v(y,t)
%\ \mbox{ and } \ v(\cdot,t) \in C^{2+\gamma'}(0,+\infty),
\ \mbox{ for all } \ t > 0,
$$
where $\dot{r} := \d r/\d t$. Thus \eqref{pde-u}--\eqref{bc-u-2} are rewritten as
\begin{alignat}{4}
 \partial_t v = \partial^2_y v - f(v+\uo) + \dot r \partial_y v , \quad & y \in \R_+, \ t > 0,\label{pde-v}\\
 v(0,t) = 0, \quad \partial_y v(0,t) = 0, \quad \partial_y^2 v(0,t) = f(\alpha), \quad & t > 0,\label{bc-v}\\
 v(y,0) = v_0(y) := u(r_0+y,0) - \uo, \quad & y \in \R_+,\label{ic-v}
\end{alignat}
where $r_0 := r(0)$. Differentiating both sides of \eqref{pde-v} in $y$ and setting $z(y,t) := \partial_y v(y,t)$, we get
\begin{alignat}{4}
 \partial_t z = \partial_y^2 z - f'(v+\uo) z + \dot r \partial_y z, \quad &y \in \R_+, \ t > 0,\label{pde-z}\\
 z(0,t) = 0, \quad \partial_y z(0,t) = f(\alpha), \quad &t > 0,\label{bc-z}\\
 z(y,0) = v_0'(y), \quad &y \in \R_+.\label{ic-z}
\end{alignat}
Moreover, \eqref{pde-z} holds at $y = 0$; indeed, by assumption, $\partial_t z (\cdot,t)= \partial_t \partial_x u(\cdot+r(t),t) +\dot{r}(t) \partial^2_y v (\cdot,t)$ and $\partial_y^2 z(\cdot,t) = \partial_x^3 u(\cdot+r(t),t)$ are continuous on $[0,+\infty)$ (in particular, at the origin) for $t > 0$. Since $z(0,t) = 0$ for $t > 0$, we observe that 
$$
\partial_t z(+0,t) = 0.
$$
Hence substituting \eqref{bc-z} to \eqref{pde-z} at $y = +0$ and recalling $\partial^2_y z(+0,t) = \partial^3_x u(r(t)+0,t)$, one can derive a motion equation of the free boundary $r(t)$ as follows (cf.~Remark \ref{R:StAC}):
$$
\dfrac{\d r}{\d t}(t) = - \dfrac{\partial^3_x u(r(t),t)}{f(\alpha)} \quad \mbox{ for } \ t > 0.
$$

\section{Quasi-convergence to traveling waves}\label{S:Q-conv}

In this section, we shall prove Lemma \ref{L:quasi-conv}. To this end, suppose that the solution $u$ of \eqref{irAC} solves
\begin{align}\label{obac}
\partial_t u - \partial_x^2 u + f(u) + \partial \iIIal(u) \ni 0 \quad \mbox{ in } \R \times (0,+\infty)
\end{align}
and satisfies
\begin{gather*}
u(\cdot,0) \in H^2_{\rm loc}(\R) \cap L^\infty(\R), \quad 
u(x,0) = \alpha \ \mbox{ for } \ x \in (-\infty,r_0], 
\\
\alpha \leq u(x,t) \leq \phi_\alpha(x-c_\alpha t)
\quad \mbox{ for } \ x \in \R, \quad t \geq 0,\\
u(x,t) \geq a_+ - \tfrac{\delta_0}2 \quad \mbox{ for } \ x \in [r_1, + \infty), \quad t \geq 0,
\end{gather*}
 where $\delta_0$ is the constant appeared in Lemma \ref{L:comfunc},  for some $r_0, r_1 \in \R$ satisfying $r_0 < r_1$. Indeed, the setting above does not lose any generality by virtue of the first and second phases discussed in \S \ref{S:OL} and by suitable translation in space and time. It then follows that $u(\cdot,t) \equiv \alpha$ on $(-\infty,c_\alpha t]$ for any $t \geq 0$.

Set
$$
v(y,t) := u(y + c_\alpha t,t) - a_+ \quad \mbox{ for } \ y, t \in [0,+\infty).
$$
Then noting that $u(x,t) = v(x-c_\alpha t,t) + a_+$ and $\partial_t u = \partial_t v - c_\alpha \partial_y v$, we find that $v$ satisfies
\begin{align}
 \partial_t v - \partial_y^2 v + f(v+a_+) + \partial \iIIal(v+a_+) \ni c_\alpha \partial_y v \ & \mbox{ in } \R_+ \times \R_+,\label{v-obs}\\
 v(0,t) = \alpha-a_+, \quad \partial_y v(0,t) = 0 \ &\mbox{ for } t \geq 0,\label{v-obs-bc}\\
 v(y,0) = v_0(y) := u(y,0) - a_+ \ &\mbox{ for } y \geq 0,\label{v-obs-ic}\\
\alpha - a_+\leq v(y,t) \leq \phi_\alpha(y)-a_+\ &\mbox{ for } y \geq 0.\label{v-obs-snd}
\end{align}
Indeed, we used the fact that $\partial_y v(0,t) = \partial_x u(c_\alpha t,t)=0$, since $u$ is of class $C^1$ in $\R \times \R_+$ (see Theorem \ref{T:C1}) and $u(\cdot,t) = \alpha$ in $(-\infty, c_\alpha t]$. Hence $v(y,t) \in [\alpha-a_+,0]$. In what follows, we shall rewrite \eqref{v-obs} into a gradient flow with an exponential weight, and then, $v$ is obviously (square-)integrable over $\R_+$ with the weight and the integral is uniformly bounded for $t \geq 0$. Furthermore, a weighted energy turns out to be bounded in time.

\subsection{Weighted gradient flow}

Equation \eqref{v-obs} can be written as
\begin{equation}\label{wgf}
\e^{c_\alpha y}\partial_t v - \partial_y \left( \e^{c_\alpha y} \partial_y v \right) + \e^{c_\alpha y} f(v+a_+) + \partial \iIIal(v+a_+) \ni 0.
\end{equation}
Here we used the fact that  the set  $\e^{c_\alpha y}\partial \iIIal(v+a_+)$  coincides with  $\partial \iIIal(v+a_+)$. Let us first multiply both sides by $v \zeta_R^2$  with a smooth cut-off function  $\zeta_R \in C^1_c([0,+\infty))$ satisfying 
$$
\zeta_R \equiv 1 \ \mbox{ on } [0,R], \quad \zeta_R \equiv 0 \ \mbox{ on } [2R,+\infty), \quad \|\zeta_R'\|_{L^\infty(\R_+)} \leq \dfrac2R
$$
for $R>0$ and integrate it over $\R_+$. It then follows that
\begin{align*}
\lefteqn{
 \frac 1 2 \frac \d {\d t} \int^{+\infty}_0 \e^{c_\alpha y} v^2 \zeta_R^2 \, \d y
 - \underbrace{\left[\e^{c_\alpha y} \partial_y v \zeta_R^2 v \right]^{+\infty}_0}_{=0}
 + \int^{+\infty}_0 \e^{c_\alpha y} |\partial_y v|^2 \zeta_R^2 \, \d y
}\\
 &&+ \int^{+\infty}_0 \e^{c_\alpha y} f(v+a_+)v \zeta_R^2 \, \d y
 + \int^{+\infty}_0 \e^{c_\alpha y} \eta v \zeta_R^2 \, \d y
\\
&&\quad = -  2  \int^{+\infty}_0 \e^{c_\alpha y} \partial_y v  \zeta_R\zeta_R'  v\, \d y,
\end{align*}
where $\eta$ is a section of $\partial \iIIal(v+a_+)$. Here we note  by \eqref{f-1} and \eqref{f-2}  that
$$
f(v+a_+)v = f(a_+)v + f'(\theta_v v + a_+)v^2 \geq -\lam v^2 \quad \mbox{ for any } \ v \in [\alpha-a_+,0]
$$
for some $\theta_v \in (0,1)$ and
$$
v \leq 0 \quad \mbox{ and } \quad \eta \leq 0 \quad \mbox{ for all } \ \eta \in \partial \iIIal(v+a_+).
$$
Hence
\begin{align*}
 \frac 1 2 \frac \d {\d t} \int^{+\infty}_0 \e^{c_\alpha y} v^2  \zeta_R^2  \, \d y
 + \int^{+\infty}_0 \e^{c_\alpha y} |\partial_y v|^2  \zeta_R^2  \, \d y
 -\lam \int^{+\infty}_0 \e^{c_\alpha y} v^2  \zeta_R^2  \, \d y
\\
\leq -  2  \int^{+\infty}_0 \e^{c_\alpha y} \partial_y v  \zeta_R \zeta_R'  v \, \d y,
\end{align*}
which also implies
\begin{align*}
 \frac 1 2 \frac \d {\d t} \left(
 \e^{-2\lam t} \int^{+\infty}_0 \e^{c_\alpha y} v^2 \zeta_R^2 \, \d y
\right)
 + \e^{-2\lam t} \int^{+\infty}_0 \e^{c_\alpha y} |\partial_y v|^2 \zeta_R^2 \, \d y
\\
\leq - 2 \e^{-2\lam t} \int^{+\infty}_0 \e^{c_\alpha y} \partial_y v  \zeta_R\zeta_R' v \, \d y.
\end{align*}
Integrating both sides over $(0,t)$, one has
\begin{align*}
\lefteqn{
\frac 1 2 \e^{-2\lam t} \int^{+\infty}_0 \e^{c_\alpha y} v(\cdot,t)^2 \zeta_R^2 \, \d y
 + \int^t_0 \e^{-2\lam\tau} \left( \int^{+\infty}_0 \e^{c_\alpha y} |\partial_y v|^2 \zeta_R^2 \, \d y \right) \d \tau
}\\
&\leq \frac 1 2 \int^{+\infty}_0 \e^{c_\alpha y} v(\cdot,0)^2 \zeta_R^2 \, \d y
- 2\int^t_0 \e^{-2\lam\tau} \left( \int^{+\infty}_0 \e^{c_\alpha y} \partial_y v \zeta_R\zeta_R' v \, \d y \right) \d \tau\\
&\leq \frac{1}{2|c_\alpha|} \|v(\cdot,0)\|_{L^\infty(\R_+)}^2 +  \frac12 \int^t_0 \e^{-2\lam\tau} \left( \int_0^{+\infty} \e^{c_\alpha y} |\partial_y v|^2  \zeta_R^2  \, \d y \right) \d \tau\\
&\quad + \frac{4}{\lam|c_\alpha|R^2} \|v\|_{L^\infty(Q_T)}^2.
\end{align*}
Therefore it follows that
\begin{align*}
\frac 1 2 \e^{-2\lam t} \int^{+\infty}_0 \e^{c_\alpha y} v(\cdot,t)^2 \zeta_R^2 \, \d y + \frac12\int^t_0 \e^{-2\lam\tau} \left( \int_0^{+\infty} \e^{c_\alpha y} |\partial_y v|^2 \zeta_R^2\, \d y \right) \d \tau
\\
 \leq \frac{1}{2|c_\alpha|} \|v(\cdot,0)\|_{L^\infty(\R_+)}^2
+ \frac{4}{\lam|c_\alpha|R^2} \|v\|_{L^\infty(Q_T)}^2.
\end{align*}
Moreover, passing to the limit as $R \to +\infty$, we obtain %\CKcomment{Somehow the use of $m$ as an index seems slightly strange here to me as it disappears without any reasonable use?! The limit $R\rightarrow +\infty$ can be taken but this then means that the smoothing function is everywhere flat so this only really makes sense if we let somehow $R_m$ tend to infinity? I am slightly confused by this step... but maybe there is a simple explanation that could be added here to make it clearer.}
\begin{equation}\label{we:vx}
\int^t_0 \e^{-2\lam\tau} \left( \int^{+\infty}_0 \e^{c_\alpha y} |\partial_y v|^2 \, \d y \right) \d \tau
\leq C \|v(\cdot,0)\|_{L^\infty(\R_+)}^2,
\end{equation}
whence follows that there exists $\tau_0 \in (0,1)$ such that
\begin{equation}\label{ainotane}
\int^{+\infty}_0 \e^{c_\alpha y} |\partial_y v(y,\tau_0)|^2 \, \d y \leq C \e^{2 \lam \tau_0} \|v(\cdot,0)\|_{L^\infty(\R_+)}^2.
\end{equation}

Test \eqref{wgf} by $\partial_t v$  (see Remark \ref{R:v-rigo} below).  We then observe that
\begin{align}\label{wgf:xdv}
 \int^{+\infty}_0 \e^{c_\alpha y} |\partial_t v|^2 \, \d y + \dfrac \d {\d t} \left( \dfrac 1 2 \int^{+\infty}_0 \e^{c_\alpha y} |\partial_y v|^2 \, \d y + \int^{+\infty}_0 \e^{c_\alpha y} h(v) \, \d y \right) = 0.
\end{align}
where $h(v)$ is a non-negative primitive function of $v \mapsto f(v+a_+)$ given by
$$
h(v) := \hat f(v+a_+) \geq 0
$$
(see \eqref{f-2}). Here we also used the fact that
$$
\int^{+\infty}_0 \eta \partial_t v \, \d y = \dfrac{\d}{\d t} \iIIal(v+a_+) \equiv 0
\quad \mbox{ for any } \ \eta \in \partial \iIIal(v+a_+).
$$
Therefore integrating both sides of \eqref{wgf:xdv} over $(\tau_0,t)$, we obtain
\begin{equation}\label{v-est-1}
 \int^t_{\tau_0} \int^{+\infty}_0 \e^{c_\alpha y} |\partial_t v(y,\tau)|^2 \, \d y \d \tau
+ E(v(\cdot,t)) \leq E(v(\cdot,\tau_0))
\ \mbox{ for all } \ t \geq \tau_0,
\end{equation}
where $E$ is given by
$$
E(w) := \dfrac 1 2 \int^{+\infty}_0 \e^{c_\alpha y} |\partial_y w(y)|^2 \, \d y + \int^{+\infty}_0 \e^{c_\alpha y} h(w(y)) \, \d y \geq 0.
$$
Here we also note by \eqref{ainotane} that
$$
\dfrac 1 2 \int^{+\infty}_0 \e^{c_\alpha y} |\partial_y v(y,\tau_0)|^2 \, \d y < +\infty,
$$
and moreover,
$$
\int^{+\infty}_0 \e^{c_\alpha y} h(v(y,\tau_0)) \, \d y
\leq \dfrac 1 {|c_\alpha|}  \sup_{s \in [\alpha-a_+,0]} h(s) < +\infty.
$$
Consequently, we conclude that
$$
\int^{+\infty}_{\tau_0}\int^{+\infty}_0 \e^{c_\alpha y} |\partial_t v(y,t)|^2 \, \d y \d t + \sup_{t \geq \tau_0} \left( \int^{+\infty}_0 \e^{c_\alpha y} |\partial_y v(y,t)|^2 \, \d y \right) < +\infty.
$$

\begin{remark}\label{R:v-rigo}
{\rm
The second estimate was derived by a formal argument. To be precise, one should test \eqref{wgf} by $\partial_t v \zeta_R^2$, where $\zeta_R$ is the cut-off function defined at the beginning of this subsection, and repeat the same argument as in the first estimate, i.e., integration by parts and passage to the limit as $R \to +\infty$, with the aid of the fact $\eta\zeta_R^2 \in \partial \iIIal(v+a_+)$ and estimate \eqref{we:vx}.
}
\end{remark}

\subsection{Quasi-convergence}

Therefore we can take a sequence $t_n \to +\infty$ such that
\begin{alignat*}{2}
\partial_t v(\cdot,t_n) \to 0 \quad \mbox{ strongly in } L^2(\R_+;\e^{c_\alpha y}\,\d y),
\end{alignat*}
which also implies that
$$
\partial_t v(\cdot,t_n) \to 0 \quad \mbox{ strongly in } L^2(0,R)
$$
for any $R > 0$. Moreover, we deduce by \eqref{eta-est} that
$$
\eta(\cdot,t_n) \to \eta_\infty \quad \mbox{ weakly in } L^2(0,R)
$$
for some $\eta_\infty \in L^2_{\rm loc}(\R)$. Furthermore, there exists $\psi \in H^2_{\rm loc}(\R_+) \subset C^1(\R_+)$ such that, for any $R > 0$, up to a (not relabeled) subsequence,
\begin{alignat}{2}
v(\cdot,t_n) &\to \psi \quad &&\mbox{ weakly in } H^2(0,R),\nonumber\\
& &&\mbox{ strongly in } C^1([0,R]),\label{quasi-conv-0}\\
& &&\mbox{ weakly star in } L^\infty(\R),\nonumber
\end{alignat}
which yields $\psi \geq \alpha - a_+$ in $\R_+$, $\psi(0) = \alpha - a_+$ and $\psi'(0) = 0$. Applying a diagonal argument, one can take a (not relabeled) subsequence of $(n)$ such that $v(\cdot,t_n)$ converges to $\psi$ pointwisely on $\R$ and uniformly on each bounded interval. By the demiclosedness of maximal monotone operators, one can also verify that $\eta_\infty \in \partial \iIIal(\psi+a_+)$ a.e.~in $\R_+$. Therefore the limit $\psi$ solves
$$
- \psi'' + f(\psi + a_+) + \partial \iIIal(\psi+a_+) \ni c_\alpha \psi' \ \mbox{ in } \R_+.
$$
Set $\phi(x) := \psi(x) + a_+$ for $x \geq 0$ and $\phi(x) \equiv \alpha$ for $x < 0$. Then it follows that $\phi \geq \alpha$ in $\R$, $\phi(0) = \alpha$, $\phi'(0) = 0$, $\phi \in H^2_{\rm loc}(\R) \subset C^1(\R)$ and
$$
- \phi'' + f(\phi) + \partial \iIIal(\phi) \ni c_\alpha \phi' \ \mbox{ in } \R.
$$

We next claim that there exists $h_1 \geq 0$ such that
\begin{equation}\label{cl1-qc}
\phi(x) = \phi_\alpha(x-h_1) \ \mbox{ for all } x \in \R.
\end{equation}
Indeed, recall that $u \geq \alpha$ in $\R \times \R_+$ and $u(x,t) \geq a_+ - \delta_0/2$ for $x \geq r_1$ and $t \geq 0$. Hence thanks to Lemma \ref{L:comfunc} one can verify that
$$
v(y,t) = u(y + c_\alpha t, t) - a_+
\geq w^-(y + c_\alpha t,t) - a_+
\ \mbox{ for all } y \in \R_+, \ t \geq 0,
$$
where $w^-(x,t) := \phi_\alpha(x - c_\alpha t - x_2 - \sigma \delta (1-\e^{-\beta t}) ) - \delta \e^{-\beta t}$ with some $x_2 \in \R$ and positive constants $\beta$, $\delta$, $\sigma$ satisfying all the requirements of Lemma \ref{L:comfunc}. It turns out that $\psi \not\equiv \alpha - a_+$ (i.e., $\phi\not\equiv \alpha$) by a passage to the limit as $t = t_n \to +\infty$. Therefore one can take $h_1 \geq 0$ such that $\phi \equiv \alpha$ in $(-\infty,h_1]$ and $\phi > \alpha$ in $(h_1, h_2)$ for some $h_2 \in (h_1,+\infty]$. Since $\phi$ is of class $C^1$ over $\R$, we find that $\phi'(h_1) = 0$. Hence, it follows that
$$
- \phi'' + f(\phi) = c_\alpha \phi' \ \mbox{ in } (h_1,h_2), \quad \phi(h_1) = \alpha, \quad \phi'(h_1) = 0.
$$
By virtue of a standard uniqueness theorem for ODEs, we conclude that $\phi$ coincides with $\phi_\alpha(\cdot-h_1)$ on $[h_1,h_2)$. Since $\phi_\alpha$ is strictly increasing in $\R_+$, one observes that $h_2 = +\infty$. Thus the assertion \eqref{cl1-qc} follows.

Recalling $w^-$ defined above, we can deduce that
$$
0 \leq a_+ - u(x,t) \leq a_+ - w^-(x,t) \ \mbox{ for all } \ x \in \R.
$$
For any $\vep > 0$, one can take $t_\vep > 0$ such that $\delta \e^{-\beta t_\vep} < \vep$. Moreover, by the definition of $\phi_\alpha$, there exists $L_\vep > 0$ such that
$$
a_+ - \phi_\alpha(\xi) < \vep \ \mbox{ for all } \ \xi \geq L_\vep.
$$
Hence, it follows that
\begin{equation}\label{far-control}
0 \leq a_+ - u(x,t) \leq a_+ - w^-(x,t) < 2\vep, 
\end{equation}
provided that $t \geq t_\vep$ and $x - c_\alpha t \geq x_2 + \sigma \delta + L_\vep$. We derive from \eqref{far-control} that
$$
|u(y+c_\alpha t, t)-\phi_\alpha(y-h_1)|
\leq |u(y+c_\alpha t, t)-a_+| + |a_+-\phi_\alpha(y-h_1)| < 3\vep
$$
whenever $t \geq t_\vep$ and $y \geq R_\vep := \max\{x_2+\sigma\delta, h_1\} + L_\vep$. By virtue of \eqref{quasi-conv-0} and the fact that $u(\cdot+c_\alpha t_n,t_n) = v(\cdot,t_n) + a_+ = \alpha = \phi_\alpha(\cdot-h_1)$ on $(-\infty,0]$, one finds that
$$
\|u(\cdot+c_\alpha t_n,t_n)-\phi_\alpha(\cdot - h_1)\|_{L^\infty(-\infty,R_\vep)} < 3 \vep
$$
for $n \in \mathbb N$ large enough. Hence combing all these facts, we conclude that, there exists $N_\vep \in \mathbb N$ such that
$$
\sup_{x \in \R}|u(x,t_n) - \phi_\alpha(x-c_\alpha t_n-h_1)| 
= \sup_{y \in \R}|u(y+c_\alpha t_n,t_n) - \phi_\alpha(y-h_1)| 
< 3\vep,
$$
provided that $n \geq N_\vep$. Here $(t_n)$ denotes the subsequence which we have extracted so far. This completes the proof of Lemma \ref{L:quasi-conv}. \qed

\section{Exponential convergence to traveling waves}\label{S:exp-conv}

In this section, we shall prove Lemma \ref{L:exp-stbl}.

\subsection{Enclosing lemma}

In what follows, we fix $\beta \in (0,\beta_0)$ and $\sigma \in (0,\sigma_\beta)$ as in Lemma \ref{L:comfunc}. Here we recall that $\phi_\alpha$ always satisfies \eqref{phi-base} (i.e., the origin is the interfacial point of $\phi_\alpha$).

\begin{lemma}\label{L:enclo}
Let $\delta_0 > 0$ be the number appeared in Lemma \ref{L:comfunc} and choose $\beta$ and $\sigma$ as in Lemma \ref{L:comfunc}. Let $\rho_0 \in (0,1/4)$ be small enough that
\begin{equation}\label{rho0}
\sup_{r \leq 4\rho_0} \phi_\alpha'(r) 
< \frac 1 {2\sigma}.
\end{equation}
Let $u = u(x,t)$ be a non-decreasing {\rm (}in time{\rm )} $L^2_{\rm loc}$ solution to \eqref{obs} {\rm(}equivalently, \eqref{pde-alpha}{\rm)} in $\R \times \R_+$ such that there exists $r_0 \in (-\infty,\rho_0]$ satisfying 
\begin{equation}\label{hypo-r0}
u(\cdot,0) = \alpha \ \mbox{ in } \ (-\infty,r_0], \quad u(\cdot,0) > \alpha \ \mbox{ in } \ (r_0,+\infty).
\end{equation}
Let $h_0 > 0$ be fixed. Then there exist constants $\tau_0 > 0$ {\rm (}large enough{\rm )} and $\vep_0 \in (0,\sigma^{-1})$  {\rm (}small enough{\rm )} such that the following holds\/{\rm :} if there exist $\delta \in [0,\frac{\delta_0}2)$ and $h \in [0,h_0)$ such that 
\begin{equation}\label{hyp-ini}
\phi_\alpha(\cdot-h) - \delta \leq u(\cdot,0) \leq \phi_\alpha(\cdot) + \delta \ \mbox{ in } \R,
\end{equation}
then, for each $t \geq \tau_0$,
\begin{equation}\label{u-xt}
\phi_\alpha(\cdot-c_\alpha t + \xi_t-h_t)-\delta_t
\leq u(\cdot,t) \leq \phi_\alpha(\cdot-c_\alpha t + \xi_t) + \delta_t \ \mbox{ in } \R
\end{equation}
for some constants $\xi_t$, $\delta_t$ and $h_t$ satisfying
\begin{gather*}
\mbox{either } \ \xi_t = \sigma \delta - \sigma \vep_0 \bar h \ \mbox{ or } \ \xi_t = \sigma \delta,\\
\mbox{and } \ 0 \leq \delta_t \leq (\delta + \vep_0 \bar h) \e^{-\beta (t - \tau_0)},\quad
0 \leq h_t \leq h - \sigma (\vep_0 \bar h - 2 \delta), 
\end{gather*}
where $\bar h := h \wedge 1$. Here we note that $\delta_t < \delta$ and $h_t < h$, provided that $t$ is large enough and $\delta$ is small enough, respectively, unless either $\delta$ or $h$ is zero.
\end{lemma}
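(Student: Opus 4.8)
The plan is to carry out the ``squeezing'' argument of Fife--McLeod and Chen~\cite{FM,XChen}, but with the degenerate comparison functions $w^\pm$ of \eqref{wpm} in place of the smooth ones and with continual bookkeeping near the free boundary. Fix $\beta$ and $\sigma$ as in Lemma~\ref{L:comfunc} so that $w^\pm$ are sub-/supersolutions, and recall the normalization \eqref{phi-base}. \emph{Step 1 (rough enclosure).} From the right inequality in \eqref{hyp-ini}, $w^+_0(x,t):=\phi_\alpha(x-c_\alpha t+\sigma\delta(1-\e^{-\beta t}))+\delta\e^{-\beta t}$ is a supersolution of \eqref{obs} with $w^+_0(\cdot,0)\ge u(\cdot,0)$, so Lemma~\ref{L:cp-obs+} (via Remark~\ref{R:equivalence}) gives $u\le w^+_0$ on $\R\times\R_+$. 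From the left inequality, $w^-_{h,\delta}(x,t):=\phi_\alpha(x-h-c_\alpha t-\sigma\delta(1-\e^{-\beta t}))-\delta\e^{-\beta t}$ is a subsolution of \eqref{cac} on the moving half-line where its argument is positive, stays below $\alpha\le u$ off that half-line, and satisfies $w^-_{h,\delta}(\cdot,0)\le u(\cdot,0)$; since $u$ is a supersolution of \eqref{cac}, Lemma~\ref{L:cp-sub} and Remark~\ref{R:comparison}(ii) give $w^-_{h,\delta}\le u$ on $\R\times\R_+$. Thus $w^-_{h,\delta}(\cdot,t)\le u(\cdot,t)\le w^+_0(\cdot,t)$ for all $t\ge0$; moreover $r(t)$ is non-increasing with $r(t)\le r_0\le\rho_0$, and the rough enclosure bounds the moving-frame interface $r(t)-c_\alpha t$ by a constant depending only on the data and $h_0$.

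\emph{Step 2 (one-step contraction at a fixed time --- the crux).} The key claim is that at a fixed time $\tau_1>0$, depending only on $f,\alpha,\phi_\alpha,\beta,h_0$, the solution $u(\cdot,\tau_1)$ is enclosed between two translates of $\phi_\alpha$ whose shift-gap is a definite fraction smaller than at $t=0$ (up to the $\delta$-terms), with their common shift moved either not at all or by $-\sigma\vep_0\bar h$. I would obtain this by applying the strong maximum principle and an interior Harnack inequality to the nonnegative gap between $u$ and the relevant sub-/supersolution, measured at a reference point lying a controlled distance to the right of the interface in a window on which $\phi_\alpha'$ is bounded below --- such a window exists since $\phi_\alpha'>0$ on $(0,+\infty)$, and by \eqref{rho0}, \eqref{hypo-r0} and $r_0\le\rho_0$ the interfaces of $u$, $w^-$ and $w^+$ stay ordered and close, so the relevant constants are uniform in $(\delta,h)$: on the moving half-line $\{x>r(t)\}$ both $u$ and the shifted profile solve \eqref{cac} classically, by \eqref{pde-u}--\eqref{bc-u-2} and, crucially, the $C^1$ regularity of Theorem~\ref{T:C1}, so that gap solves there a uniformly parabolic linear inequality and a spreading-bump/Harnack comparison propagates a pointwise lower bound to a long interval. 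The dichotomy records whether, at the reference point, $u(\cdot,\tau_1)$ sits below or above the midpoint of the Step-1 enclosure: in the former case the \emph{upper} translate is improved (which accounts for the alternative $\xi_t=\sigma\delta-\sigma\vep_0\bar h$), in the latter the \emph{lower} one is (accounting for $\xi_t=\sigma\delta$). The cap $\bar h=h\wedge1$ is forced by the quadratic degeneracy $\phi_\alpha(y)-\alpha\sim\tfrac{f(\alpha)}2y^2$ at the interface, which makes ``value gap'' and ``shift gap'' comparable only for bounded shifts; choosing $\vep_0\in(0,\sigma^{-1})$ equal to the resulting contraction rate divided by $\sigma$ gives, at $t=\tau_1$, an enclosure $\phi_\alpha(\cdot-c_\alpha\tau_1+\xi^{(1)}-h^{(1)})-\delta^{(1)}\le u(\cdot,\tau_1)\le\phi_\alpha(\cdot-c_\alpha\tau_1+\xi^{(1)})+\delta^{(1)}$ with $\xi^{(1)}\in\{\sigma\delta,\ \sigma\delta-\sigma\vep_0\bar h\}$, $h^{(1)}$ equal to $h$ decreased by a multiple of $\sigma\vep_0\bar h$ up to an $O(\sigma\delta)$ correction, and $\delta^{(1)}$ strictly below $\tfrac12(\delta+\vep_0\bar h)$.

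\emph{Step 3 (propagation).} Finally I would re-launch $w^\pm$ at time $\tau_1$ from the Step-2 enclosure and apply the comparison principles of Step~1 once more: the common shift $\xi^{(1)}$ is preserved up to a drift $\pm\sigma\delta^{(1)}(1-\e^{-\beta(t-\tau_1)})$, which is absorbed into $\xi_t,h_t$ using \eqref{rho0}, $\delta^{(1)}$ is multiplied by $\e^{-\beta(t-\tau_1)}$, and at most $2\sigma\delta^{(1)}$ is added to $h^{(1)}$. Setting $\tau_0:=\tau_1+T_*$ with $\e^{-\beta T_*}$ small enough and relabelling yields \eqref{u-xt} with $\xi_t\in\{\sigma\delta,\sigma\delta-\sigma\vep_0\bar h\}$, $0\le\delta_t\le(\delta+\vep_0\bar h)\e^{-\beta(t-\tau_0)}$ and $0\le h_t\le h-\sigma(\vep_0\bar h-2\delta)$; since $\tau_1$, the Harnack constants and $T_*$ depend only on the data and on $h_0$, so do $\tau_0$ and $\vep_0$.

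The main obstacle is Step~2: unlike the classical case of~\cite{XChen}, the profile $\phi_\alpha$ and the functions $w^\pm$ carry a free boundary at which $\phi_\alpha'$ vanishes and $u$ is merely $C^1$, so the Hopf-type and interior-Harnack estimates must be run on the moving half-line $\{x>r(t)\}$ with the reference point kept a uniform distance from the interface, and the quadratic flatness of both $u$ and $\phi_\alpha$ there is what forces the $\bar h=h\wedge1$ truncation. Making the contraction constant $\sigma\vep_0$, the threshold $\rho_0$ and the waiting times $\tau_1,\tau_0$ all independent of $(\delta,h)$ is exactly where the hypotheses $r_0\le\rho_0$, \eqref{rho0} and \eqref{hypo-r0} are used.
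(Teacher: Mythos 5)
Your proposal is correct and follows essentially the same three-step route as the paper: a rough $w^\pm$-enclosure via the comparison principles, a one-step contraction of the shift-gap obtained by a positivity-spreading (Harnack/strong-maximum) estimate on the gap $\pm(w^\pm-u)$ run as a linear parabolic inequality on a half-line to the right of the free boundary, and a re-launch of $w^\pm$ to propagate forward in time. The paper implements the spreading estimate with the explicit Dirichlet heat kernel on the fixed half-lines $(r_0,+\infty)$ and $(\rho_0+h,+\infty)$ (after a short-time reduction so the moving interface stays to their left), and records the dichotomy as an integral alternative over the fixed window $[2+h_0,3+h_0]$ at $t=0$ rather than a pointwise midpoint alternative at $t=\tau_1$, with $\bar h=h\wedge 1$ chosen so this window stays where $\phi_\alpha'$ has a uniform positive lower bound — but these are implementation-level variations of the argument you outline.
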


\begin{remark}
{\rm 
\begin{enumerate}
 \item[(i)] Even though either $\delta$ or $h$ is zero, both $\delta_t$ and $h_t$ can be positive. Indeed, upper bounds for $\delta_t$ and $h_t$ are linear combinations of $\delta$ and $h$.
 \item[(ii)] One can assume $\delta < 1$ without any loss of generality by choosing $\delta_0$ less than $2$ in Lemma \ref{L:comfunc}.
\end{enumerate}
}
\end{remark}

\begin{proof}
By Lemma \ref{L:comfunc} and comparison principle with \eqref{hyp-ini} (see Lemmas \ref{L:cp-obs+} and \ref{L:cp-sub} for $w^+$ and $w^-$, respectively, and see also Remarks \ref{R:equivalence} and \ref{R:comparison}), it holds that
\begin{equation}\label{w-uw+}
 w^-(x,t) \leq u(x,t) \leq w^+(x,t) \quad \mbox{ for } \ (x,t) \in \R \times (0,+\infty),
\end{equation}
where
\begin{align*}
 w^+(x,t) &:= \phi_\alpha(x - c_\alpha t + \sigma \delta (1-\e^{-\beta t})) + \delta \e^{-\beta t},\\
 w^-(x,t) &:= \phi_\alpha(x - c_\alpha t - \sigma \delta (1-\e^{-\beta t}) - h) - \delta \e^{-\beta t}
\end{align*}
for $\delta \in [0,\delta_0/2)$. Set
$$
W^\pm(x,t) := \pm \left[ w^\pm(x,t) - u(x,t) \right].
$$
Then it follows that $W^\pm \geq 0$ by \eqref{w-uw+}. Moreover, put $\bar h = h \wedge 1$ and note that
$$
\int^{3+h_0}_{2+h_0} \left[ \phi_\alpha(y) - \phi_\alpha(y-\bar h) \right] \, \d y
= \bar h \int^{3+h_0}_{2+h_0} \int^1_0 \phi_\alpha'(y-\theta \bar h) \, \d \theta \d y
\geq 2k \bar h,
$$
where $\theta = \theta(y) \in (0,1)$ and $k = k(\phi_\alpha,h_0)$ is given by
$$
k := \frac 1 2 \inf_{s \in [1+h_0,3+h_0]} \phi_\alpha'(s) > 0.
$$
Then one of the following is satisfied:
\begin{equation}\label{a2}
\int^{3+h_0}_{2+h_0} \left[ \phi_\alpha(y) - u(y,0) \right] \, \d y \geq k \bar h
\end{equation}
or
\begin{equation}\label{a1}
\int^{3+h_0}_{2+h_0} \left[ u(y,0) - \phi_\alpha(y-\bar h) \right] \, \d y \geq k \bar h.
\end{equation}
Let us first treat the case \eqref{a2}. To prove the assertion of the lemma, we set
$$
\xi_t^+ := c_\alpha t - \sigma \delta (1 - \e^{-\beta t}) \leq 0 \quad \mbox{ for } \ t > 0
$$ 
and divide $\R$ into three intervals $(-\infty, r_0+\rho_0]$, $(r_0+\rho_0,r)$ and $[r,+\infty)$ for some $r$ large enough.

\subsubsection{\rm Behavior on the right-interval for the case \eqref{a2}}

Since $W^+ \geq 0$, we find that
$$
u(x,t) \leq \phi_\alpha(x -\xi_t^+) + \delta \e^{-\beta t}
$$
for any $(x,t) \in \R \times \R^+$. We also note that
\begin{align*}
 \phi_\alpha(x-\xi_t^+) - \phi_\alpha(x-\xi_t^+ - 2\sigma \vep \bar h)
\leq 2 \sigma \vep \bar h \sup_{\theta \in (0,1)} \phi_\alpha'(x-\xi_t^+ -2\sigma\vep\theta\bar h).
\end{align*}
Here choose $r = r(\phi_\alpha,\sigma) > 0$ large enough that
\begin{equation}\label{r:+}
\sup_{y \geq r - 1} \phi_\alpha'(y) < \dfrac 1 {2\sigma}.
\end{equation}
If $\vep \in (0,\frac 1 {2\sigma}]$, noting that $x - \xi_t^+ - 2\sigma \vep \theta \bar h \geq x - 2 \sigma \vep \geq x - 1$, we then infer that
$$
\sup_{x \geq r}\sup_{\theta \in (0,1)} \phi_\alpha'(x-\xi_t^+ - 2\sigma\vep\theta\bar h) < \dfrac 1 {2\sigma},
$$
which implies
$$
\phi_\alpha(x-\xi_t^+) - \phi_\alpha(x-\xi_t^+- 2\sigma \vep \bar h) < \vep \bar h
\quad \mbox{ if } \ x \geq r.
$$
Thus
$$
u(x,t) \leq \phi_\alpha(x-\xi_t^+-2\sigma\vep\bar h) + \delta \e^{-\beta t} + \vep \bar h
$$
for all $x \geq r$, $t > 0$ and $\vep \in (0,\frac 1 {2\sigma}]$. 

\subsubsection{Behavior on the left-interval for the case \eqref{a2}}

Here we shall estimate the behavior of $u(x,t)$ for $x \leq r_0+\rho_0$. We have already known that
$$
u(x,t) \leq \phi_\alpha(x -\xi_t^+) + \delta \e^{-\beta t}.
$$
Note that
\begin{align*}
 \phi_\alpha(x-\xi_t^+) - \phi_\alpha(x-\xi_t^+ - 2\sigma \vep \bar h)
\leq 2 \sigma \vep \bar h \sup_{\theta \in (0,1)} \phi_\alpha'(x-\xi_t^+-2\sigma\vep\theta\bar h)
\end{align*}
for any $\vep > 0$. Choose $\tau_+ = \tau_+(\alpha,\sigma,\delta_0,\beta,\rho_0)> 0$ small enough that 
\begin{equation}\label{tau:+}
0 \leq - \xi_{\tau_+}^+ \leq \rho_0 \quad \mbox{ for all } \ \delta \in (0,\delta_0)
\end{equation}
(i.e., $x-\xi_{\tau_+}^+-2\sigma\vep\theta\bar h \leq r_0 + 2\rho_0 \leq 3 \rho_0$ if $x \leq r_0 + \rho_0$), and hence,
$$
\sup_{\theta \in (0,1)} \phi_\alpha'(x-\xi_{\tau_+}^+ - 2\sigma \vep \theta \bar h) < \dfrac 1 {2\sigma}
$$
by assumption \eqref{rho0}. Thus we obtain
$$
\phi_\alpha(x-\xi_{\tau_+}^+) - \phi_\alpha(x-\xi_{\tau_+}^+ - 2\sigma \vep \bar h) < \vep \bar h
$$
for any $x \leq r_0 + \rho_0$ and $\vep > 0$. Thus
$$
u(x,{\tau_+}) \leq \phi_\alpha(x-\xi_{\tau_+}^+-2\sigma\vep\bar h) + \delta \e^{-\beta {\tau_+}} + \vep \bar h
$$
for all $x \leq r_0+\rho_0$ and $\vep > 0$.

\subsubsection{Behavior in the middle-interval for the case \eqref{a2}}\label{Sss:+:mid}

We shall estimate the behavior of $u(x,t)$ for $x \in (r_0+\rho_0,r)$. Since $u$ solves $\partial_t u = \partial_x^2 u - f(u)$ for all $x > r_0$ and $t > 0$, it follows that
\begin{align*}
 \partial_t W^+ &\geq 
\partial_x^2 W^+ - f(w^+) + f(u)\\
&\geq \partial_x^2 W^+ - M W^+ \ \mbox{ for all } \ x > r_0, \ t > 0,
\end{align*}
where $M := \sup_{s\in[a_--1,a_++1]} |f'(s)|$. Hence $W^+$ is a supersolution to the Cauchy-Dirichlet problem,
\begin{gather}
 \partial_t w = \partial_x^2 w - Mw \ \mbox{ on } (r_0,+\infty) \times \R_+,\label{w1}\\
w(r_0,\cdot) = 0 \ \mbox{ in } \R_+, \quad w(\cdot,0) = W^+(\cdot,0) \ \mbox{ in } (r_0,+\infty),\label{w2}
\end{gather}
whose solution is represented by
\begin{align}
 w(x,t) &= \dfrac{\e^{-Mt}}{\sqrt{4\pi t}} \int^{+\infty}_0 \left[ \e^{- \frac{(x - r_0 - y)^2}{4t}}-\e^{- \frac{(x - r_0 + y)^2}{4t}} \right] W^+(r_0+y,0) \, \d y\nonumber\\
&= \dfrac{\e^{-Mt}}{\sqrt{4\pi t}} \int^{+\infty}_0 \e^{- \frac{(x - r_0 - y)^2}{4t}} \left[ 1 -\e^{- \frac{(x - r_0)y}{t}} \right] W^+(r_0+y,0) \, \d y\nonumber\\
&\geq \dfrac{\e^{-Mt}}{\sqrt{4\pi t}} \int^{3+h_0-r_0}_{2+h_0-r_0} \e^{- \frac{(x - r_0 - y)^2}{4t}} \left[ 1 -\e^{- \frac{(x - r_0)y}{t}} \right] W^+(r_0+y,0) \, \d y\nonumber\\
&\geq \dfrac{\e^{-Mt}}{\sqrt{4\pi t}}  \left[ 1 -\e^{- \frac{(x - r_0)(2+h_0-r_0)}{t}} \right] \int^{3+h_0}_{2+h_0} \e^{- \frac{(x - y)^2}{4t}} W^+(y,0) \, \d y\nonumber\\
&\geq \dfrac{\e^{-Mt}}{\sqrt{4\pi t}} \left[ 1 -\e^{- \frac{(x - r_0)(2+h_0-r_0)}{t}} \right] \e^{- \frac{x^2 + (3+h_0)^2 }{2t}} \int^{3+h_0}_{2+h_0} W^+(y,0) \, \d y\label{w+-SMP}
\end{align}
for $(x,t) \in (r_0,+\infty) \times (0,+\infty)$. Put $t = {\tau_+}$ defined by \eqref{tau:+}. We then see that, for any $x \in (r_0+\rho_0,r)$,
\begin{align*}
 W^+(x,{\tau_+}) &\geq \dfrac{\e^{-M{\tau_+}}}{\sqrt{4\pi {\tau_+}}} \left[ 1 -\e^{- \frac{\rho_0(2+h_0-\rho_0)}{{\tau_+}}} \right] \e^{- \frac{r^2 + (3+h_0)^2 }{2{\tau_+}}} \int^{3+h_0}_{2+h_0}  W^+(y,0) \, \d y\\
&=: C_+(M,{\tau_+},r,\rho_0,h_0) \int^{3+h_0}_{2+h_0} W^+(y,0) \, \d y.
\end{align*}
Hence
\begin{align*}
 W^+(x,{\tau_+}) &\geq C_+(M,{\tau_+},r,\rho_0,h_0) \int^{3+h_0}_{2+h_0} \left[ \phi_\alpha(y) + \delta - u(y,0) \right] \, \d y\\
&\stackrel{\eqref{a2}}> C_+(M,{\tau_+},r,\rho_0,h_0) k \bar h > 0
\quad \mbox{ for all } \ x \in [r_0+\rho_0,r].
\end{align*}
On the other hand, we have
\begin{align*}
 W^+(x,{\tau_+}) &= \phi_\alpha(x-\xi_{\tau_+}^+) + \delta \e^{-\beta {\tau_+}} - u(x,{\tau_+})\\
&\leq \phi_\alpha(x - \xi_{\tau_+}^+ - 2 \sigma \vep \bar h) + 2 \sigma \vep \bar h \sup_{\theta \in (0,1)} \phi_\alpha'(x - \xi_{\tau_+}^+ - 2 \sigma \vep \theta \bar h) \\
&\quad + \delta \e^{-\beta {\tau_+}} - u(x,{\tau_+})
\end{align*}
for $\vep > 0$. Set $\vep_+ = \vep_+(\phi_\alpha,\alpha,\sigma,\delta_0,\beta,\rho_0,M,h_0)$ by
\begin{equation}\label{vep:+}
 \vep_+ := \min \left\{ \frac 1 {2\sigma}, \frac{C_+(M,{\tau_+},r,\rho_0,h_0) k}{2\sigma \sup_{s\in \R} \phi_\alpha'(s)},\frac{\delta_0}2
\right\} > 0
\end{equation}
(it is enough to choose $\vep_+$ not greater than the minimum above). Then
$$
u(x,{\tau_+}) \leq \phi_\alpha(x - \xi_{\tau_+}^+ - 2 \sigma \vep_+ \bar h) + \delta \e^{-\beta \tau_+}
$$
for $x \in (r_0+\rho_0,r)$. 

\subsubsection{Conclusion for the case \eqref{a2}}\label{sss:conc+}

Combining all these facts, we have
\begin{equation}\label{case+}
u(x,\tau_+) \leq \phi_\alpha(x-\xi_{\tau_+}^+-2\sigma \vep_+\bar h) + \delta \e^{-\beta \tau_+} + \vep_+ \bar h 
\end{equation}
for all $x \in \R$. Recall that $\tau_+$ and $\vep_+$ are chosen by \eqref{tau:+} and \eqref{vep:+}, respectively, and they depend only on $\phi_\alpha,\alpha,\sigma,\delta_0,\beta,\rho_0,M,h_0$. Moreover, for any $s > 0$, we deduce that
\begin{align*}
\lefteqn{
u(x,{\tau_+}+s)
}\\
 &\leq \phi_\alpha(x - c_\alpha ({\tau_+} + s) + \sigma (\delta \e^{-\beta {\tau_+}} + \vep_+ \bar h ) (1- \e^{-\beta s}) + \sigma\delta(1-\e^{-\beta {\tau_+}})-2\sigma \vep_+\bar h)\\
&\quad  + ( \delta \e^{-\beta {\tau_+}} + \vep_+ \bar h ) \e^{-\beta s} 
\end{align*}
by Lemma \ref{L:comfunc}. Here we used the fact that $\delta \e^{-\beta {\tau_+}} + \vep_+ \bar h < \frac{\delta_0}2 + \frac{\delta_0}2 < \delta_0$ for any $\delta \in [0,\delta_0/2)$ by \eqref{vep:+} and $\bar h \leq 1$. Set $t = {\tau_+}+s$. Then, for $t \geq {\tau_+}$,
\begin{align*}
u(x,t) &\leq \phi_\alpha(x - c_\alpha t + \sigma (\delta \e^{-\beta {\tau_+}} + \vep_+ \bar h ) (1- \e^{-\beta s}) +\sigma \delta (1-\e^{-\beta {\tau_+}}) -2\sigma \vep_+\bar h)\\
&\quad  + ( \delta \e^{-\beta {\tau_+}} + \vep_+ \bar h ) \e^{-\beta s}. 
\end{align*}
Here we set
\begin{align*}
 \delta(t) &:= ( \delta \e^{-\beta {\tau_+}} + \vep_+ \bar h ) \e^{-\beta s}\\
&= \delta \e^{-\beta t} + \vep_+ \bar h \e^{-\beta(t-{\tau_+})}
\leq \left( \delta + \vep_+ \min\{h,1\} \right) \e^{-\beta(t-{\tau_+})}
\end{align*}
(then $\delta (t) \geq \delta \e^{-\beta t}$). Moreover, one finds that
\begin{align*}
\lefteqn{
\sigma (\delta \e^{-\beta {\tau_+}} + \vep_+ \bar h ) (1- \e^{-\beta s}) + \sigma \delta (1-\e^{-\beta {\tau_+}}) -2\sigma \vep_+\bar h
}\\
&\leq \sigma (\delta \e^{-\beta {\tau_+}} + \vep_+ \bar h ) \cdot 1 + \sigma \delta (1-\e^{-\beta {\tau_+}}) -2\sigma \vep_+\bar h\\
&=\sigma (\delta + \vep_+ \bar h ) - 2\sigma \vep_+\bar h = \sigma \delta - \sigma \vep_+\bar h =: \xi(t)
\end{align*}
and $\xi(t) \geq - \bar h/2$. Thus
$$
u(x,t) \leq \phi_\alpha(x - c_\alpha t + \xi(t)) + \delta(t)
\quad \mbox{ for all } \ x \in \R, \ t > \tau_+.
$$
On the other hand, recalling that
\begin{align*}
u(x,t) \geq w^-(x,t) = \phi_\alpha(x - c_\alpha t - \sigma \delta (1-\e^{-\beta t}) - h) - \delta \e^{-\beta t}, 
\end{align*}
we deduce that
$$
u(x,t) \geq \phi_\alpha(x - c_\alpha t + \xi(t) - h(t)) - \delta(t)
\quad \mbox{ for all } \ x \in \R, \ t > \tau_+,
$$
where $h(t)$ is given by
\begin{align*}
h(t) &:= \sigma \delta (1-\e^{-\beta t}) + h + \xi(t)\\
&= \sigma \delta (2-\e^{-\beta t}) + h - \sigma \vep_+\bar h\\
&\leq h - \sigma(\vep_+ \bar h - 2 \delta). 
\end{align*}
We also note that $h(t) \geq h - \bar h/2 \geq h/2 \geq 0$. Thus we have obtained the assertion of the lemma.

We next treat the case \eqref{a1}. We set
$$
\xi_t^- := c_\alpha t + \sigma \delta (1 - \e^{-\beta t}) \quad \mbox{ for } \ t > 0.
$$
Here we remark that the sign of $\xi_t^-$ may be indefinite, although $\xi_t^+$ is non-positive. We shall divide $\R$ into three intervals $(-\infty,h+2\rho_0]$, $(h+2\rho_0,r)$ and $[r,+\infty)$ for some $r > 0$ large enough.

\subsubsection{Behavior on the right-interval for the case \eqref{a1}}

Recall that
$$
u(x,t) \geq \phi_\alpha(x -\xi_t^- -h) - \delta \e^{-\beta t} \quad \mbox{ for any } (x,t) \in \R \times \R_+
$$
and note that
\begin{align*}
\lefteqn{
 \phi_\alpha(x-\xi_t^- -h) - \phi_\alpha(x-\xi_t^- + 2\sigma \vep \bar h-h)
}\\&
\geq - 2 \sigma \vep \bar h \sup_{\theta \in (0,1)} \phi_\alpha'(x-\xi_t^-+2\sigma\vep\theta\bar h-h).
\end{align*}
Here choose $r = r(\phi_\alpha,\delta_0,\sigma,h_0) > 0$ large enough that
\begin{equation}\label{r:-}
\sup_{y \geq r-\sigma\delta_0-h_0} \phi_\alpha'(y) < \frac 1 {2\sigma}. 
\end{equation}
Noting that $x-\xi_t^-+2\sigma\vep\theta\bar h-h \geq x - \sigma \delta_0 - h_0$, we infer that
$$
\sup_{x \geq r}\sup_{\theta \in (0,1)} \phi_\alpha'(x-\xi_t^-+2\sigma\vep\theta\bar h-h) < \dfrac 1 {2\sigma},
$$
which yields
$$
\phi_\alpha(x-\xi_t^- -h) - \phi_\alpha(x-\xi_t^- + 2\sigma \vep \bar h -h) > - \vep \bar h.
$$
Thus
$$
u(x,t) \geq \phi_\alpha(x-\xi_t^-+2\sigma\vep\bar h-h) - \delta \e^{-\beta t} - \vep \bar h
$$
for all $x \geq r$, $t > 0$ and $\vep > 0$.

\subsubsection{Behavior on the left-interval for the case \eqref{a1}} 

We shall estimate $u(x,t)$ on $(-\infty, h+2\rho_0]$. We see that
\begin{align*}
 u(x,{t}) &\geq \phi_\alpha(x-\xi_{t}^--h)-\delta\e^{-\beta {t}}\\
&\geq \phi_\alpha(x-\xi_{t}^-+2\sigma \vep\bar h-h)  - \delta \e^{-\beta {t}}
\\
&\quad - 2\sigma \vep\bar h \sup_{\theta \in (0,1)} \phi_\alpha'(x-\xi_{t}^-+2\sigma\vep\bar h \theta - h)
\end{align*}
for any $x \in \R$, $t \in \R^+$ and $\vep > 0$. Choose $\tau_-^0 = \tau_-^0(\rho_0,c_\alpha)$ by
$$
\tau_-^0 = \frac{\rho_0}{-2c_\alpha} > 0.
$$
Then we deduce by \eqref{rho0} that
\begin{align*}
 \sup_{\theta \in (0,1)} \phi_\alpha'(x-\xi_t^-+2\sigma\vep\bar h \theta - h) < \dfrac 1 {2\sigma}
\end{align*}
by noting that
$$
x-\xi_t^-+2\sigma\vep\bar h \theta - h 
\leq 2\rho_0-c_\alpha t + \frac{\rho_0}2
< 3\rho_0
$$
for all $x \leq h + 2 \rho_0$, $t \in (0,\tau_-^0)$ and $\vep \in (0,\frac{\rho_0}{4\sigma})$. Consequently, one obtains
\begin{align*}
u(x,t) \geq \phi_\alpha(x-\xi_t^-+2\sigma \vep\bar h-h) 
- \delta \e^{-\beta t} - \vep \bar h
\end{align*}
for all $x \leq h + 2 \rho_0$, $t \in (0,\tau_-^0)$ and $\vep \in (0,\tfrac{\rho_0}{4\sigma})$.

\subsubsection{Behavior in the middle-interval for the case \eqref{a1}}

We recall that $w^-$ is a subsolution to \eqref{ac-w} in $Q_-^h := \{(x,t) \in \R \times \R_+ \colon x > \xi_t^- + h \}$. By Lemma \ref{L:comfunc} and (ii) of Remark \ref{R:comparison},
\begin{align*}
\partial_t W^- &\geq \partial_x^2 u - f(u) - \partial_x^2 w^- + f(w^-)\\
&= \partial_x^2 W^- - f(u) + f(w^-)\\
&\geq \partial_x^2 W^- - M W^-,
\end{align*}
where $M$ is given as in \S \ref{Sss:+:mid}, for all $(x,t) \in Q_-^h$. Hence $W^-$ becomes a supersolution to \eqref{w1}, \eqref{w2} with $W^+(\cdot,0)$ replaced by $W^-(\cdot,0)$ in $Q_-^h$. Now, one can take a number $\tau_- = \tau_-(\phi_\alpha, \alpha,\sigma,\delta_0,\beta,\rho_0)$ such that
\begin{equation}\label{tau:-}
0 < \tau_- < \tau_-^0 \quad \mbox{ and } \quad \xi_t^- < \rho_0 \ \mbox{ for } \ t \in [0,\tau_-].
\end{equation}
Hence, $\partial_t W^- \geq \partial_x^2 W^- - M W^-$ in $(\rho_0+h,+\infty) \times (0,\tau_-)$. Thus replacing $r_0$ by $\rho_0 + h$ in \eqref{w+-SMP} with $t = \tau_-$, we can deduce that, for any $x \in (2\rho_0+h,r)$,
\begin{align*}
 W^-(x,\tau_-) &\geq \dfrac{\e^{-M\tau_-}}{\sqrt{4\pi \tau_-}} \left[ 1 -\e^{- \frac{\rho_0(2-\rho_0)}{\tau_-}} \right] \e^{- \frac{r^2 + (3+h_0)^2 }{2\tau_-}} \int^{3+h_0}_{2+h_0} W^-(y,0) \, \d y\\
&=: C_-(M,\tau_-,r,\rho_0,h_0) \int^{3+h_0}_{2+h_0} W^-(y,0) \, \d y\\
&> C_-(M,\tau_-,r,\rho_0,h_0) \int^{3+h_0}_{2+h_0} [u(y,0)-\phi_\alpha(y-\bar h)] \, \d y\\
&\stackrel{\eqref{a1}}\geq C_-(M,\tau_-,r,\rho_0,h_0) k \bar h.
\end{align*}
On the other hand, one deduces that
\begin{align*}
 W^-(x,{\tau_-}) &= u(x,{\tau_-}) - \phi_\alpha(x-\xi_{\tau_-}^--h) + \delta \e^{-\beta {\tau_-}}\\
&\leq u(x,{\tau_-}) - \phi_\alpha(x - \xi_{\tau_-}^- + 2 \sigma \vep \bar h-h)\\
&\quad + 2 \sigma \vep \bar h \sup_{\theta \in (0,1)} \phi_\alpha'(x - \xi_{\tau_-}^- + 2 \sigma \vep \theta \bar h-h) + \delta \e^{-\beta {\tau_-}} 
\end{align*}
for any $\vep > 0$. Choose $\vep_- = \vep_-(\phi_\alpha,\alpha,\sigma,\delta_0,\beta,\rho_0,M,h_0)$ by
\begin{equation}\label{vep:-}
\vep_- := \min \left\{ \frac{\rho_0}{4\sigma}, \dfrac{\delta_0}2, \dfrac{C_-(M,\tau_-,r,\rho_0,h_0) k}{2 \sigma \sup_{s\in \R} \phi_\alpha'(s)} \right\}>0
\end{equation}
(it is enough to choose $\vep_-$ not greater than the minimum above). Thus
$$
u(x,{\tau_-}) \geq \phi_\alpha(x - \xi_{\tau_-}^- + 2 \sigma \vep_- \bar h-h) - \delta \e^{-\beta {\tau_-}}
$$
for $x \in (2\rho_0+h,r)$.

\subsubsection{Conclusion for the case \eqref{a1}}

Combining all these facts, we have
\begin{equation}\label{case-}
u(x,\tau_-) \geq \phi_\alpha(x-\xi_{\tau_-}^-+2\sigma \vep_-\bar h -h) - \delta \e^{-\beta \tau_-} - \vep_- \bar h 
\end{equation}
for all $x \in \R$. Recall that $\tau_-$ and $\vep_-$ are chosen as in \eqref{tau:-} and \eqref{vep:-}, respectively, and they depend only on $\phi_\alpha,\alpha,\sigma,\delta_0,\beta,\rho_0,M,h_0$. Repeating the same argument as in \S \ref{sss:conc+}, we can verify \eqref{u-xt} by setting $\xi(t) = \sigma \delta \geq \sigma \delta(1-\e^{-\beta t})$, $0 \leq \delta(t) \leq (\delta+\vep_- \min \{h,1\}) \e^{-\beta(t-\tau_-)}$ and $0\leq h(t) \leq \sigma \delta - \sigma \vep_- \bar h + h + \xi(t) = h - \sigma(\vep_-\bar h - 2\delta)$.

Consequently, we have proved Lemma \ref{L:enclo} by setting $\tau_0 := \max \{\tau_+,\tau_-\} > 0$, $\vep_0 := \min \{\vep_+,\vep_-\} > 0$, $\xi_t := \xi(t)$, $\delta_t := \delta(t)$ and $h_t := h(t)$ defined above with $\vep_\pm$ replaced by $\vep_0$.
\end{proof}

\subsection{Exponential convergence}

In this subsection, we shall prove the following lemma by modifying an iterative argument used in~\cite{XChen} to take into account of the additional requirement appeared in Lemma \ref{L:enclo}, i.e., there exists $r_0 \in (-\infty,\rho_0]$ satisfying \eqref{hypo-r0} (see also \eqref{rho0} for $\rho_0$), which roughly means that the free boundary $r_0$ of the region $\{x \in \R \colon u(x,0) = \alpha\}$ is close enough to that of a barrier function.
\begin{lemma}\label{L:asym-stbl}
There exist constants $x_0 \in \R$ and $K, \kappa > 0$ such that
\begin{equation}\label{exp-conv}
\|u(\cdot,t)-\phi_\alpha(\cdot-c_\alpha t+x_0)\|_{L^\infty(\R)} \leq K \e^{-\kappa t}
\end{equation}
for all $t \geq 0$.
\end{lemma}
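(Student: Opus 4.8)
The plan is to iterate the enclosing lemma (Lemma~\ref{L:enclo}) starting from the configuration furnished by the third phase (Lemma~\ref{L:quasi-conv}) and to track the parameters $(\delta,h)$ along the iteration, showing they decay geometrically. First I would fix $\beta \in (0,\beta_0)$ and $\sigma > \sigma_\beta$, then $\rho_0 \in (0,1/4)$ as in \eqref{rho0}, and obtain from Lemma~\ref{L:enclo} the constants $\tau_0 > 0$ and $\vep_0 \in (0,\sigma^{-1})$. By the second phase (Lemma~\ref{Cl:Ph2}) and suitable translation in space and time we may assume, without loss of generality, that $u$ solves \eqref{obs}, is non-decreasing in time, satisfies $u(\cdot,0) = \alpha$ on $(-\infty,r_0]$, $u(\cdot,0) > \alpha$ on $(r_0,+\infty)$, and $\alpha \le u(x,t) \le \phi_\alpha(x-c_\alpha t)$ for all $x, t$. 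Since $r(t)$ is non-increasing (see \S\ref{S:OL}), the free-boundary requirement $r_0 \le \rho_0$ will persist under the iteration once it holds; this is precisely why the third phase is needed before invoking the enclosing lemma (cf.~Remark after Lemma~\ref{L:exp-stbl}).

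The key step is the \textbf{initialization via Lemma~\ref{L:quasi-conv}}: given any $\delta_1 > 0$ (small, say $\delta_1 < \delta_0/2$ and $\delta_1 < 1$), there is a time $t_{n_{\delta_1}}$ at which
$$
\phi_\alpha(\cdot - c_\alpha t_{n_{\delta_1}} - h_0) - \delta_1 \le u(\cdot, t_{n_{\delta_1}}) \le \phi_\alpha(\cdot - c_\alpha t_{n_{\delta_1}} - h_0) + \delta_1
$$
for some fixed $h_0 \in \R$ (absorbing the shift $-h_0$ into a translation, we may write this as $\phi_\alpha(\cdot) - \delta_1 \le u(\cdot,0) \le \phi_\alpha(\cdot) + \delta_1$ at the reset time, which forces $h = 0$ in \eqref{hyp-ini} after renaming; more precisely one takes $h$ to be the small remaining shift). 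Then I apply Lemma~\ref{L:enclo} with $(\delta, h) = (\delta_1, h_1)$ for some $h_1 \in [0,h_0)$: after time $\tau_0$ one obtains \eqref{u-xt} with new parameters $(\delta_t, h_t)$ bounded by
$$
0 \le \delta_t \le (\delta_1 + \vep_0 \bar h_1)\e^{-\beta(t-\tau_0)}, \qquad 0 \le h_t \le h_1 - \sigma(\vep_0 \bar h_1 - 2\delta_1).
$$
The point is to choose the scale $\delta_1$ small enough relative to $h_1$ (say $2\delta_1 \le \frac{1}{2}\vep_0 \bar h_1$) so that $h_t \le h_1 - \frac{\sigma \vep_0}{2}\bar h_1$, i.e.~the shift contracts by a definite factor at each round; and simultaneously $\delta_t$ is, at the restart time $\tau_0$ of the next round, bounded by a fixed fraction of its previous value by choosing the gap $t-\tau_0$ in the application large enough (exploiting the exponential factor $\e^{-\beta(t-\tau_0)}$). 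Running this iteration on a sequence of times $t^{(k)} = k T_*$ for a suitably large fixed step $T_*$, one gets $\delta^{(k)} \le q^k \delta^{(0)}$ and $h^{(k)} \le q^k h^{(0)}$ for some $q \in (0,1)$; since $T_*$ is fixed, geometric decay in $k$ is exponential decay in $t$. At each step one must re-verify the hypothesis $r_0^{(k)} \le \rho_0$, which follows because $r(t)$ is non-increasing and the enclosing conclusion \eqref{u-xt} keeps the interface within an $O(\delta^{(k)}+h^{(k)})$-neighborhood of $c_\alpha t$ plus the converging shift, hence eventually inside $(-\infty,\rho_0]$ after the translation normalization; one chooses the translation constant $x_0$ as the limit of the accumulated shifts, which converges since $\sum_k h^{(k)} < \infty$.

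The remaining step is \textbf{passing to the continuous-time estimate}: the two-sided bound \eqref{u-xt} together with the mean value theorem and $\|\phi_\alpha'\|_\infty < \infty$ gives, on each interval $[t^{(k)}, t^{(k+1)}]$,
$$
\|u(\cdot,t) - \phi_\alpha(\cdot - c_\alpha t + x_0)\|_{L^\infty(\R)} \le \delta^{(k)} + \|\phi_\alpha'\|_{L^\infty(\R)}\,|\xi_t - \xi^{(k)} - x_0| + \delta^{(k)} \cdot (\cdots),
$$
and since $|\xi_t|, h^{(k)} \le C q^k$ and the shifts converge to $x_0$ with tail $O(q^k)$, the right side is $O(q^k) = O(\e^{-\kappa t})$ for $\kappa := -T_*^{-1}\log q > 0$, with a constant $K$ depending on $\delta^{(0)}, h^{(0)}, \|\phi_\alpha'\|_\infty$. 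This yields \eqref{exp-conv}. The main obstacle I expect is the bookkeeping in the iteration: one must keep $\delta^{(k)}$ comparable to $\vep_0 \bar h^{(k)}$ throughout so that the contraction of $h^{(k)}$ in Lemma~\ref{L:enclo} is not destroyed (if $\delta$ is too large relative to $h$, the term $+2\sigma\delta$ can overwhelm $-\sigma\vep_0\bar h$), while also ensuring $\delta^{(k)} < \delta_0/2$ and $h^{(k)} < h_0$ persist — this forces a careful, coupled choice of the step $T_*$ and of the initial scale, and is exactly the place where the argument in~\cite{XChen} must be modified to respect the degeneracy and the free-boundary hypothesis \eqref{hypo-r0}. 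The subsequent claim about $r(t) - c_\alpha t \to x_0$ at rate $O(\e^{-\kappa t/2})$ then follows from \eqref{u-xt}, the behavior \eqref{bc-u-2} near the interface (so that $u(\cdot,t)$ rises from $\alpha$ with a definite second-order rate $f(\alpha)>0$), and the $C^1$ regularity of $u$ via Taylor expansion at $x = r(t)$, which converts the $L^\infty$ proximity $\delta_t = O(\e^{-\kappa t})$ of $u$ to the shifted profile into an $O(\sqrt{\delta_t}) = O(\e^{-\kappa t/2})$ bound on the displacement of the interface — this is the novel last step, carried out in detail in \S\ref{S:exp-conv}.
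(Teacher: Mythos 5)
Your proposal is correct and follows essentially the same iterative route as the paper: initialize via Lemma~\ref{L:quasi-conv} with $h=0$ and a carefully scaled $\delta_*$, iterate Lemma~\ref{L:enclo} on a fixed time step $T_*$ chosen so the exponential prefactor beats the error growth, maintain the coupling $\delta_k \lesssim \vep_0 h_k$ (the paper uses $\delta_* \leq \tfrac{\vep_0}{4}h_*$ with $h_* := \rho_0/2$ as an \emph{a priori} bound, not the initial shift, which starts at zero), and then sum the geometrically decaying shift increments to define $x_0$. One small imprecision: the persistence of the free-boundary condition $r_0^{(k)} \le \rho_0$ is \emph{not} a consequence of $r(t)$ being non-increasing (since $c_\alpha t \to -\infty$, $r(t)-c_\alpha t$ can grow); it follows entirely from evaluating the lower bound of \eqref{u-xt} at $x = r(t)$, which gives $r(t) - c_\alpha t + \xi_t \le h_t + \phi_\alpha^{-1}(\alpha+\delta_t) < \rho_0$.
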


\begin{proof}
Let $\rho_0 > 0$ be given by \eqref{rho0}. By Lemma \ref{L:quasi-conv} with translation in space and time, one can ensure that \eqref{hyp-ini} holds with
\begin{equation}\label{h_and_delta}
h = 0 \quad \mbox{ and } \quad \delta = \delta_* := \min\{\tfrac{\delta_0}2,\tfrac{\rho_0}{4\sigma},\tfrac{\vep_0}{4}\cdot\tfrac{\rho_0}{2}, \phi_\alpha(\tfrac{\rho_0}2)-\alpha\} > 0.
\end{equation}
Set
$$
\kappa_* := 1 - \frac{\sigma\vep_0}2 \in (0,1).
$$
Put $r_0 := \sup\{r \in \R \colon u(x,0) = \alpha \ \mbox{ for all } \ x \leq r\}$. Then \eqref{hypo-r0} holds with $r_0 \leq \frac{\rho_0}2$; indeed, it follows that $r_0 \leq \phi_\alpha^{-1}(\alpha+\delta) \leq \frac{\rho_0}2$ from the fact that $\phi_\alpha(\cdot) - \delta \leq u(\cdot,0)$ and \eqref{h_and_delta}. Therefore applying Lemma \ref{L:enclo}, we assure that \eqref{u-xt} holds with $\xi_t$, $\delta_t$ and $h_t$ satisfying
\begin{equation}\label{as:1}
\xi_t = \sigma \delta,\ 
0 \leq \delta_t \leq \delta \e^{-\beta (t-\tau_0)} \leq \delta_*, \
0 \leq h_t \leq 2 \sigma \delta \leq h_*:=\dfrac{\rho_0}2<1. 
\end{equation}
We further note by the first inequality of \eqref{u-xt} at $x = r(t) := \sup \{r \in \R \colon u(x,t) = \alpha \ \mbox{ for all } \ x \leq r\}$ that 
$$
\phi_\alpha(r(t)-c_\alpha t + \xi_t - h_t) - \delta_t \leq u(r(t),t) = \alpha,
$$ 
which implies
$$
r(t) - c_\alpha t + \xi_t \leq h_t + \phi_\alpha^{-1} ( \alpha + \delta_t)
\leq \frac{\rho_0}2 + \phi_\alpha^{-1} ( \alpha + \delta_t).
$$
We note by $\delta_t < \delta$ and \eqref{h_and_delta} that
\begin{equation*}
\phi_\alpha^{-1} ( \alpha + \delta_t) < \phi_\alpha^{-1} ( \alpha + \delta ) \leq \frac{\rho_0}2,
\end{equation*}
which gives
\begin{equation}\label{r0}
r(t)-c_\alpha t+\xi_t < \rho_0 \quad \mbox{ for } \ t > 0.
\end{equation}
Now, let us take $t_* > \tau_0$ large enough that
\begin{equation}\label{kappa*}
(1+\vep_0 h_*/\delta_*) \e^{-\beta(t_*-\tau_0)} \leq \kappa_*,
\end{equation}
which will be used later. We set $T_1 = t_*$, $\delta_1 = \delta_*$, $h_1 = h_*$ and $\xi_1 = \xi_{t_*} = \sigma \delta$.

We next claim that, for any $k \in \mathbb{N}$, 
\begin{enumerate}
 \item[(a)] it holds that
\begin{equation}\label{es-cl-a}
\phi_\alpha(x-c_\alpha T_k+\xi_k-h_k) - \delta_k \leq u(x,T_k) \leq \phi_\alpha(x-c_\alpha T_k+\xi_k) +\delta_k
\end{equation}
with
$$
T_k := k t_*, \quad \delta = \delta_k := \kappa_*^{k-1} \delta_*, \quad h = h_k := \kappa_*^{k-1} h_*, \quad \xi_k \in \R\/{\rm ;} 
$$
 \item[(b)] the interfacial point $r(T_k)$ of $u(\cdot,T_k)$ satisfies
$$
r(T_k) - c_\alpha T_k + \xi_k < \rho_0,
$$
where the left-hand side corresponds to $r_0$ of $\eqref{hypo-r0}$ for $u(\cdot,T_k)$ shifted in space by $c_\alpha T_k - \xi_k$.
\end{enumerate}
Indeed, both (a) and (b) have already been proved for $k = 1$. Suppose that (a) and (b) hold for $k$. Then by Lemma \ref{L:enclo} along with both (a) and (b) for $k$, we have 
\begin{align}\label{order-T_k+1}
\phi_\alpha(x-c_\alpha T_{k+1}+\xi_k+\xi_{k,t_*}-h_{k,t_*}) - \delta_{k,t_*} 
\leq u(x,T_{k+1}) \qquad
\\
\leq \phi_\alpha(x-c_\alpha T_{k+1}+\xi_k+\xi_{k,t_*}) +\delta_{k,t_*}
\nonumber
\end{align}
for some constants $\xi_{k,t_*}$, $h_{k,t_*}$ and $\delta_{k,t_*}$ satisfying
\begin{gather*}
 \xi_{k,t_*} \in \{\sigma \delta_k - \sigma \vep_0 h_k, \sigma \delta_k\},\\
0 \leq \delta_{k,t_*} \leq (\delta_k + \vep_0 h_k) \e^{-\beta(t_*-\tau_0)}
= \kappa_*^{k-1} (\delta_* + \vep_0 h_*) \e^{-\beta(t_*-\tau_0)}
\stackrel{\eqref{kappa*}}\leq \kappa_*^k \delta_* = \delta_{k+1}
\end{gather*}
and
\begin{align*}
0 \leq h_{k,t_*} &\leq h_k - \sigma (\vep_0 h_k - 2 \delta_k)\\
&= \kappa_*^{k-1} \left[ (1-\sigma \vep_0) h_* + 2\sigma \delta_*\right]
 \leq \kappa_*^{k-1} \left(1 - \frac{\sigma\vep_0}2\right) h_*
= \kappa_*^k h_* = h_{k+1}.
\end{align*}
Here we used the fact that $\delta_* \leq  \frac{\vep_0}4 \cdot \frac{\rho_0}2 = \frac{\vep_0}4 h_*$. Hence (a) is satisfied for $k + 1$, where $\xi_{k+1} := \xi_k + \xi_{k,t_*} \in \{\xi_k +\sigma\delta_k - \sigma \vep_0 h_k, \xi_k+ \sigma \delta_k\}$. Moreover, substituting $x = r(T_{k+1})$ in the first inequality of \eqref{order-T_k+1} and using $h_{k,t_*} \leq \kappa_*^k h_* < \rho_0/2$ and $\delta_{k,t_*} < \delta_* = \delta$, we can also deduce as in \eqref{r0} that
$$
r(T_{k+1}) - c_\alpha T_{k+1} + \xi_{k+1}
\leq h_{k,t_*} + \phi_\alpha^{-1}(\alpha + \delta_{k,t_*}) < \rho_0,
$$
which ensures (b) for $k+1$. Thus the claim is proved.

The rest of the proof for Lemma \ref{L:asym-stbl} runs as in~\cite{XChen}. However, for the convenience of the reader, we shall complete it. By virtue of Lemma \ref{L:comfunc}, we deduce that
\begin{equation}\label{u-ext}
\phi_\alpha(x-c_\alpha t+\xi-h) - \delta \leq u(x,t) \leq \phi_\alpha(x-c_\alpha t+\xi) +\delta, 
\end{equation}
where $\xi = \xi_k +\sigma \delta_k$, $\delta = \delta_k$ and $h = h_k + 2 \sigma \delta_k$, for any $t \geq T_k$ and $x \in \R$. Now, define piecewise-constant interpolants,
$$
\delta(t)=\delta_k, \quad \xi(t)=\xi_k+\sigma \delta_k, \quad h(t)=h_k+2\sigma\delta_k \quad \mbox{ for } \ t \in [T_k, T_{k+1})
$$
for $k \geq 1$. Then it follows from \eqref{u-ext} that
\begin{equation}\label{AAA}
\phi_\alpha(x-c_\alpha t+\xi(t)-h(t)) - \delta(t) \leq u(x,t) \leq \phi_\alpha(x-c_\alpha t +\xi(t)) + \delta(t)
\end{equation}
for $x \in \R$ and $t \geq T_1$. 

For each $t \geq T_1$, denote by $k_t$ the integer satisfying
$$
k_t \leq \frac{t}{t_*} < k_t + 1, \ \mbox{ i.e., } \ T_{k_t} = k_t t_* \leq t < (k_t+1) t_* = T_{k_t+1}.
$$
Then one observes by $\kappa_* \in (0,1)$ that
\begin{align}
 \delta(t) &= \delta_{k_t} = \kappa_*^{k_t-1} \delta_*
 = \delta_* \exp \left((k_t-1)\log \kappa_*\right)
 \leq \delta_* \exp \left( \frac{\log \kappa_*}{t_*} (t-2t_*) \right),\label{exp-d}\\
 h(t) &= h_{k_t} + 2 \sigma \delta_{k_t} = \kappa_*^{k_t-1} h_* + 2 \sigma \delta_* \kappa_*^{k_t-1} \leq (h_* + 2 \sigma \delta_*) \exp \left( \frac{\log \kappa_*}{t_*} (t-2t_*) \right),\label{exp-h}
\end{align}
which particularly yields that $h(t) \to 0$, $\delta(t) \to 0$  as $t \to +\infty$. Moreover, let $s \in [T_{k_t - 1}, T_{k_t})$ (i.e., $T_{k_s} = T_{k_t-1}$). Then noting that
$$
\xi(t) \in \xi(s) + \sigma \delta(t) + \{- \sigma \vep_0 h(s) + 2 \sigma^2 \vep_0 \delta(s), 0 \},
$$
we derive that
\begin{align}\label{H(t)}
|\xi(t)-\xi(s)| &\leq \sigma \delta(t) + \sigma \vep_0 h(s)+2\sigma^2 \vep_0 \delta(s)\\
&\leq C \exp \left( \frac{\log \kappa_*}{t_*} (s-2t_*) \right) =: H(s).\nonumber
\end{align}
Moreover, for any $t \geq s$, we observe that
\begin{align*}
\lefteqn{
 |\xi(t)-\xi(s)|
}\\
&\leq |\xi(t)-\xi(T_{k_t})| + |\xi(T_{k_t})-\xi(T_{k_t-1})| + \cdots + |\xi(T_{k_s+1}) - \xi(s)|\\
&\leq H(T_{k_t-1}) + \cdots + H(s)
\leq \dfrac{H(T_{k_s})}{1 - \kappa_*}
\end{align*}
from the fact that $H(T_{k+1}) = \kappa_* H(T_k)$, and hence, $\xi(t)$ converges to a limit $x_0 \in \R$ as $t \to +\infty$. Furthermore, it follows that
\begin{equation}\label{xi-exp-conv}
|x_0-\xi(s)| \leq \dfrac{H(T_{k_s})}{1 - \kappa_*}
\leq \dfrac C {1 - \kappa_*} \e^{{\frac{\log \kappa_*}{t_*}(s-3t_*)}}.
\end{equation}

Now, we also observe by \eqref{AAA} that
\begin{align*}
\lefteqn{
 u(x,t)-\phi_\alpha(x-c_\alpha t - x_0)
}\\
&\leq \phi_\alpha(x-c_\alpha t+\xi(t))+\delta(t)-\phi_\alpha(x-c_\alpha t - x_0)\\
&\leq \delta(t) + \sup_{x \in \R} |\phi_\alpha'(x)| |\xi(t)-x_0|
\end{align*}
and
\begin{align*}
 u(x,t)-\phi_\alpha(x-c_\alpha t - x_0)
&\geq - \delta(t) - \sup_{x \in \R} |\phi_\alpha'(x)| |\xi(t)-x_0-h(t)|.
\end{align*}
Thus using \eqref{exp-d}, \eqref{exp-h} and \eqref{xi-exp-conv}, we have proved the lemma.
\end{proof}

\subsection{Exponential convergence of the free boundary}

We finally prove the following:

\begin{lemma}\label{L:r-conv}
It holds that
\begin{equation}\label{r-exp-conv}
 |r(t)-c_\alpha t + x_0| = O(\e^{-\frac{\kappa t}2}) \ \mbox{ as } \ t \to +\infty.
\end{equation}
\end{lemma}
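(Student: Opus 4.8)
The plan is to locate the free boundary $r(t)$ by combining the exponential $L^\infty$ estimate \eqref{exp-conv} of Lemma \ref{L:asym-stbl} with the precise behavior of $u$ near $r(t)$ established in \S\ref{Sss:BFB}. Recall that by \S\ref{S:OL} we may assume $u$ solves \eqref{pde-alpha} in $\R\times\R_+$ with $u(\cdot,t)>\alpha$ precisely on $\{x>r(t)\}$, and that (see \S\ref{Sss:BFB}) $u(\cdot,t)\in C^2([r(t),+\infty))$ in the one-sided sense, $u(r(t),t)=\alpha$, $\partial_x u(r(t),t)=0$, $\partial_t u(r(t),t)=0$ by \eqref{ut=0}, $\partial_x^2 u=\partial_t u+f(u)$ on $(r(t),+\infty)$ with $\partial_x^2 u(r(t)+0,t)=f(\alpha)>0$, and $\partial_t u\ge0$, $u\ge\alpha$ everywhere. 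The profile $\phi_\alpha$, normalized by \eqref{phi-base}, has the same structure at $0$: $\phi_\alpha(0)=\alpha$, $\phi_\alpha'(0)=0$, $\phi_\alpha''(0+)=f(\alpha)$, and $\phi_\alpha$ is strictly increasing on $(0,+\infty)$; hence there are $\ell_0,c_0>0$ with $\phi_\alpha(s)-\alpha\ge c_0s^2$ for $s\in[0,\ell_0]$, and the inverse $\phi_\alpha^{-1}$ on $[0,\ell_0]$ satisfies $\phi_\alpha^{-1}(\alpha+\rho)\le\sqrt{\rho/c_0}$ for small $\rho>0$. Write $z(t):=r(t)-c_\alpha t+x_0$; the goal is $|z(t)|\le C\e^{-\kappa t/2}$ for $t$ large.

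For the upper bound, evaluate \eqref{exp-conv} at $x=r(t)$: since $u(r(t),t)=\alpha$ and $\phi_\alpha(z(t))=\phi_\alpha(r(t)-c_\alpha t+x_0)$, we obtain $\phi_\alpha(z(t))\le\alpha+K\e^{-\kappa t}$. As $\phi_\alpha\equiv\alpha$ on $(-\infty,0]$ and is strictly increasing on $(0,+\infty)$, this forces $z(t)\le\phi_\alpha^{-1}(\alpha+K\e^{-\kappa t})\le\sqrt{K\e^{-\kappa t}/c_0}=:C_1\e^{-\kappa t/2}$ for $t$ large (and $z(t)\le0<C_1\e^{-\kappa t/2}$ otherwise).

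For the lower bound, fix $\vep_*>0$ so small that $f\ge d:=f(\alpha)/2$ on $[\alpha,\alpha+\vep_*]$, and $\ell\in(0,\ell_0]$ so small that $\phi_\alpha(2\ell)-\alpha<\vep_*/2$. By the upper bound, $z(t)\le\ell$ for $t$ large, so for $x\in[r(t),r(t)+\ell]$ one has $x-c_\alpha t+x_0\le2\ell$, whence \eqref{exp-conv} gives $u(x,t)\le\phi_\alpha(2\ell)+K\e^{-\kappa t}\le\alpha+\vep_*$ for $t$ large, and therefore $\partial_x^2 u(x,t)=\partial_t u(x,t)+f(u(x,t))\ge f(u(x,t))\ge d$ on $(r(t),r(t)+\ell)$. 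Since $u(\cdot,t)\in C^2([r(t),+\infty))$ and $\partial_x u(r(t),t)=0$, Taylor's formula in integral form yields $u(x,t)\ge\alpha+\tfrac d2(x-r(t))^2$ on $[r(t),r(t)+\ell]$. Taking $x_1:=r(t)+\sqrt{4K\e^{-\kappa t}/d}$, which lies in $[r(t),r(t)+\ell]$ for $t$ large, we get $u(x_1,t)\ge\alpha+2K\e^{-\kappa t}$, whereas \eqref{exp-conv} gives $\phi_\alpha(x_1-c_\alpha t+x_0)\ge u(x_1,t)-K\e^{-\kappa t}\ge\alpha+K\e^{-\kappa t}>\alpha$. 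Hence $x_1-c_\alpha t+x_0>0$, i.e.\ $z(t)>-\sqrt{4K\e^{-\kappa t}/d}=:-C_2\e^{-\kappa t/2}$. Combining the two estimates gives \eqref{r-exp-conv}.

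The main difficulty is the uniform-in-$t$ coercivity $\partial_x^2 u(\cdot,t)\ge d$ on a fixed right-neighborhood of $r(t)$: near the free boundary $u$ is genuinely only $C^1$ (it fails to be $C^2$ across $r(t)$), so this step crucially uses the one-sided $C^2$ regularity and the continuity of $\partial_t u$ from Theorem \ref{T:C1} and \eqref{EQonQ+}, the monotonicity $\partial_t u\ge0$ (to discard the time-derivative term, which need not be small), and the $L^\infty$ closeness \eqref{exp-conv} to confine $u$, hence $f(u)$, above $f(\alpha)/2$ there. The appearance of the slower rate $\e^{-\kappa t/2}$ instead of $\e^{-\kappa t}$ is precisely an effect of the quadratic (rather than linear) vanishing of both $u$ and $\phi_\alpha$ at their interfacial points; a secondary and routine point is that Taylor's formula is legitimate up to the endpoint $x=r(t)$, which holds because $\partial_x^2 u=\partial_t u+f(u)$ extends continuously to $[r(t),+\infty)$.
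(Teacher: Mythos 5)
Your proof is correct, and it uses the same essential ingredients as the paper's argument: the exponential $L^\infty$ estimate \eqref{exp-conv}, the one-sided $C^2$ regularity of $u(\cdot,t)$ on $[r(t),+\infty)$ coming from \S\ref{Sss:CR}--\S\ref{Sss:BFB}, the monotonicity $\partial_t u\ge 0$ (to drop the time-derivative in $\partial_x^2 u=\partial_t u+f(u)\ge f(u)$), and the quadratic vanishing $\phi_\alpha(s)=\alpha+\tfrac{f(\alpha)}{2}s^2+O(s^3)$ at the interface.

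The organization, however, is genuinely different. The paper proves the one-sided bound $(r(t)-c_\alpha t+x_0)_+\le O(\e^{-\kappa t/2})$ just as you do, but then first establishes the \emph{unquantified} convergence \eqref{r-conv} by a contradiction argument (a sequence $t_n$ with $r(t_n)<c_\alpha t_n-x_0-\delta_0$, then a Taylor expansion of length $\delta_0$), and only afterwards upgrades to the exponential rate by a Lagrange-form Taylor expansion from $r(t)$ to the test point $x=c_\alpha t-x_0$; the prior convergence is needed there to guarantee that the intermediate point lands where $u$ is close to $\alpha$. You avoid the intermediate convergence step entirely by Taylor-expanding on an \emph{a priori fixed} short interval $[r(t),r(t)+\ell]$ (where $\partial_x^2 u\ge f(\alpha)/2$ is secured directly from the already-proved upper bound on $z(t)$ together with \eqref{exp-conv}) and then choosing a test point $x_1$ at the explicit, exponentially small distance $\sqrt{4K\e^{-\kappa t}/d}$ from $r(t)$. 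This makes the lower bound a one-pass quantitative argument rather than a soft convergence followed by a quantitative refinement. Both routes are valid; yours is a bit shorter and avoids the proof by contradiction, at the modest cost of having to check that $x_1\in[r(t),r(t)+\ell]$ for $t$ large, which you do.
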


\begin{proof}
We first prove that
\begin{equation}\label{r-conv}
 \lim_{t \to +\infty} |r(t)-c_\alpha t + x_0| = 0.
\end{equation}
To this end, substitute $x = r(t)$ in \eqref{exp-conv} to find that
$$
\phi_\alpha(r(t)-c_\alpha t + x_0) \leq \alpha + K \e^{-\kappa t}.
$$
From the fact that $\phi_\alpha(s) = \alpha + \frac{f(\alpha)}2 s^2 + O(s^3)$ for $s > 0$ small enough, we find that
\begin{equation}\label{r-sup-exp}
\left( r(t) - c_\alpha t + x_0 \right)_+ \leq O(\e^{-\frac{\kappa t}2}),
\end{equation}
which in particular implies
\begin{equation}\label{limsup}
\limsup_{t \to +\infty} \left( r(t) - c_\alpha t + x_0 \right) \leq 0. 
\end{equation}

We further claim that
\begin{equation}\label{liminf}
\limsup_{t \to +\infty} \left( c_\alpha t - x_0 - r(t) \right) \leq 0. 
\end{equation}
To this end, suppose on the contrary that \eqref{liminf} does not hold. Namely, there exist $\delta_0 > 0$ (small enough without any loss of generality) and a sequence $t_n \to +\infty$ such that
\begin{equation}\label{liminf0}
r(t_n) < c_\alpha t_n - x_0 - \delta_0 \quad \mbox{ for all } \ n \in \mathbb{N}.
\end{equation}
On the other hand, \eqref{exp-conv} yields
$$
\alpha \leq \sup_{x \leq c_\alpha t_n - x_0} u(x, t_n) \leq \alpha + o(1) \quad \mbox{ as } \ n \to +\infty.
$$
Here we find by \eqref{liminf0} that $c_\alpha t_n - x_0 > r(t_n) + \delta_0$. Hence it follows that
\begin{equation}\label{u-conv-cont}
\sup_{y \in (-\infty,\delta_0]}u(r(t_n)+y,t_n) \to \alpha. 
\end{equation}
Now, since $\partial_t u \geq 0$ and $u$ solves \eqref{ac-w} in $(r(t_n),+\infty)$ at $t = t_n$, we have
\begin{align*}
\partial_x^2 u(r(t_n)+y,t_n) 
&= \partial_t u (r(t_n)+y,t_n) + f(u(r(t_n)+y,t_n))\\
&\geq f(u(r(t_n)+y,t_n))\\
& \to f(\alpha) > 0 \ \mbox{ uniformly for } y \in (0,\delta_0]
\end{align*}
as $n \to +\infty$. By Taylor's theorem (see \S \ref{Sss:CR} and \S \ref{Sss:BFB}), we can take $\theta_n \in (0,1)$ such that
\begin{align*}
u(r(t_n)+\delta_0,t_n) - \alpha
&=u(r(t_n)+\delta_0,t_n) - u(r(t_n),t_n)\\
&= \dfrac 1 2 \partial_x^2 u(r(t_n)+\theta_n \delta_0,t_n) \delta_0^2\\
&> \dfrac{f(\alpha)} 4 \delta_0^2 > 0 \quad \mbox{ for } n \mbox{ large enough},
\end{align*}
which yields a contradiction to \eqref{u-conv-cont}. Thus we have proved \eqref{liminf}, which along with \eqref{limsup} implies \eqref{r-conv}.

Furthermore, by Taylor's theorem, one can take $\theta_t \in (0,1)$ such that
\begin{equation}\label{Taylor}
u(c_\alpha t - x_0 , t)
= \alpha + \frac 1 2 \partial_x^2 u \big(r(t) + \theta_t (c_\alpha t - x_0 - r(t)),t \big)
 (c_\alpha t - x_0 - r(t))^2,
\end{equation}
whenever $c_\alpha t - x_0 > r(t)$. Hence by \eqref{exp-conv} and $\partial_t u \geq 0$, one has $\theta_t \in (0,1)$ such that
\begin{align*}
\alpha + K \e^{-\kappa t}
&\stackrel{\eqref{exp-conv}}\geq u(c_\alpha t - x_0 , t)\\
&\stackrel{\eqref{Taylor}}= \alpha + \frac 1 2 \partial_x^2 u\big(r(t) + \theta_t (c_\alpha t - x_0 - r(t)),t \big) (c_\alpha t - x_0 - r(t))^2.\\
&\geq \alpha + \frac 1 2 f \left( u\big(r(t) + \theta_t (c_\alpha t - x_0 - r(t)),t \big)\right)
 (c_\alpha t - x_0 - r(t))^2,
\end{align*}
whenever $c_\alpha t - x_0 > r(t)$. Here we note by \eqref{exp-conv} and \eqref{r-conv} that
\begin{align*}
\lefteqn{
u\big(r(t) + \theta_t (c_\alpha t - x_0 - r(t)),t \big)
}\\
&= \phi_\alpha \left( (1-\theta_t)\left(r(t)-c_\alpha t + x_0\right)\right)
+ O(\e^{-\kappa t}) \in I(\alpha),
\end{align*}
where $I(\alpha)$ is a neighbourhood of $\alpha$ such that $\inf_{s \in I(\alpha)} f'(s) > 0$ for $t$ large enough. Thereby
\begin{align*}
 \alpha + K \e^{-\kappa t}
&\geq \alpha + \frac 1 2 \left( \inf_{s \in I(\alpha)}f (s) \right) (c_\alpha t - x_0 - r(t))^2,
\end{align*}
whenever $c_\alpha t - x_0 > r(t)$. Thus
$$
\left( c_\alpha t - x_0 - r(t) \right)_+ \leq \left( \frac{2K \e^{-\kappa t}}{\inf_{s \in I(\alpha)}f (s)}\right)^{1/2}
$$
for $t$ large enough. Combining this fact with \eqref{r-sup-exp}, we conclude that \eqref{r-exp-conv} holds.
\end{proof}

Thus we have proved Lemma \ref{L:exp-stbl}.

% The Acknowledgments are an un-numbered section
\section*{Acknowledgments}
GA is supported by the Alexander von Humboldt Foundation and by the Carl Friedrich von Siemens Foundation and by JSPS KAKENHI Grant Number JP16H03946, JP18K18715, JP20H01812 and JP17H01095. He is also deeply grateful to the Helmholtz Zentrum M\"unchen and the Technishce Universit\"at M\"unchen for their kind hospitality and support during his stay in Munich. The authors would also like to acknowledge the kind hospitality of the Erwin Schr\"odinger International Institute for Mathematics and Physics, where a part of this research was developed under the frame of the Thematic Program {\it Nonlinear Flows} in 2016. CK acknowledges support via a Lichtenberg Professorship of the VolkswagenStiftung.

\appendix

\section{Proof of Lemma \ref{L:cp-sub}}\label{Apdx:S:cp}

Subtract equations for $u^-$ and $u^+$ and set $w := u^- - u^+$. It follows that
\begin{equation}\label{w-ineq}
\partial_t w - \partial_x^2 w + f(u^-)-f(u^+)\leq 0 \ \mbox{ for } \ x \geq d(t) \ \mbox{ and } \ t > 0.
\end{equation}
Test \eqref{w-ineq} by $w_+ \zeta_R^2$  where  $w^+ = \max\{w,0\}$ and  $\zeta_R$  is a smooth non-negative cut-off function such that
$$
 \zeta_R \equiv 1 \ \mbox{ on } \ (-\infty,R], \quad \zeta_R \equiv 0 \ \mbox{ on } \ [2R, +\infty), \quad \|\zeta_R'\|_{L^\infty(\R)} \leq \frac2R
$$
 for $R > 0$.  Note that 
$$
\left( f(u^-)-f(u^+) \right)w_+ = \left(\beta(u^-)-\beta(u^+)\right)w_+ - \lam w_+^2
\geq -\lam w_+^2
$$
by the monotonicity of $\beta$. The integration of both sides over $(d(t), +\infty)$ yields
\begin{align*}
\lefteqn{
\dfrac 1 2 \dfrac \d {\d t} \int^{+\infty}_{d(t)} w_+^2 \zeta_R^2 \, \d x
+ \frac{d'(t)}2 w_+(d(t),t)^2 \zeta_R^2(d(t)) - \left[(\partial_x w) w_+ \zeta_R^2 \right]^{+\infty}_{d(t)}
}\\
&\quad + \int^{+\infty}_{d(t)} |\partial_x w_+|^2 \zeta_R^2 \, \d x\\
&\leq -  2\int^{+\infty}_{d(t)} (\partial_x w) \zeta_R\zeta_R' w_+ \, \d x + \lam \int^{+\infty}_{d(t)} w_+^2 \zeta_R^2 \, \d x\\
&\leq \frac12 \int_{d(t)}^{+\infty} |\partial_x w_+|^2 \zeta_R^2 \, \d x
+ \frac{ 8}{R^2} [2R-d(t)] \|w_+\|_{L^\infty(Q^-_T)}^2 + \lam \int^{+\infty}_{d(t)} w_+^2 \zeta_R^2 \, \d x
\end{align*}
for $R > 0$ large enough. Note by assumption that $w_+(d(t),t) = 0$ and multiply both sides by $\e^{-2\lam t}$. Then
\begin{align*}
\dfrac 1 2 \dfrac \d {\d t} \left( \e^{-2\lam t} \int^{+\infty}_{d(t)} w_+^2 \zeta_R^2 \, \d x \right)
+ \frac12\e^{-2\lam t} \int^{+\infty}_{d(t)} |\partial_x w_+|^2 \zeta_R^2 \, \d x
\\
\leq \frac{ 8}{R^2} [2R-d(t)] \e^{-2\lam t} \|w_+\|_{L^\infty(Q^-_T)}^2.
\end{align*}
Integrating both sides over $(0,t)$, we infer that
\begin{align*}
\lefteqn{
\dfrac 1 2 \e^{-2\lam t} \int^R_{d(t)} w_+(\cdot,t)^2 \, \d x
+ \frac12\int^t_0 \left( \e^{-2\lam \tau} \int^{R}_{d(\tau)} |\partial_x w_+|^2 \, \d x \right) \d \tau
}\\
&\leq \dfrac 1 2 \int^{2R}_{d(0)} w_+(\cdot,0)^2 \, \d x
+ \frac8{R^2} \left( \int^t_0 \e^{-2\lam \tau} [2R-d(\tau)] \, \d \tau \right) \|w_+\|_{L^\infty(Q^-_T)}^2.
\end{align*}
Due to the fact that $w_+(\cdot,0) = 0$  a.e.~in $(d(0),+\infty)$,  one can verify that
$$
\int^t_0 \left(\e^{-2\lam \tau} \int^{R}_{d(\tau)} |\partial_x w_+|^2 \, \d x \right) \d \tau
\leq \frac{C}{R^2} \left( \int^T_0 \e^{-2\lam \tau} [2R-d(\tau)] \, \d \tau \right) \|w_+\|_{L^\infty(Q^-_T)}^2.
$$
Passing to the limit as $R \to +\infty$, we deduce that
$$
\partial_x w_+ \equiv 0 \ \mbox{ a.e.~in } Q^-_T,
$$
and hence, $w_+ \equiv 0$. This completes the proof. \qed

\section{Existence of approximate solutions}\label{S:A0}

In~\cite{ae1}, existence of $L^2$ solutions for \eqref{irAC} is proved for the Cauchy-Dirichlet problem posed on an arbitrary (smooth) \emph{bounded} domain $\Omega$ of $\R^N$, and the boundedness of the domain is only used to apply the Rellich and Aubin-Lions compactness lemma for deriving a strong compactness in $C([0,T];H)$ with $H = L^2(\Omega)$ of approximate solutions $u_\mu$ which solves the Cauchy problem for \eqref{aprx} with $\psi$ defined with $H^1(\R)$ replaced by $H^1_0(\Omega)$. However, even for unbounded domains, e.g., $\Omega = \R^N$, one can derive a strong convergence of $u_\mu$ in $C([0,T];H)$. Indeed, as in~\cite[Proof of Theorem 6.1]{ae1}, one can prove that $u_\mu$ also solves the obstacle problem,
$$
\partial_t u_\mu + \partial \psi_\mu (u_\mu) + \eta_\mu = {\bar\lam} u_\mu, \ \eta_\mu \in \partial I_{[u_0(x),+\infty)}(u_\mu) \ \mbox{ in } H.
$$
Then since $\partial I_{[u_0(x),+\infty)}$ is monotone and acting on $u_\mu$, we can verify that $(u_\mu)$ forms a Cauchy sequence in $C([0,T];H)$ based on a similar argument to~\cite[p.56]{HB1}. Therefore $u_\mu$ converges to a limit $u$ strongly in $C([0,T];H)$ as $\mu \to +0$.

\section{Maximum principle for $L^2$ solutions}\label{S:A1}

Let $u$ be a (still possibly unbounded) $L^2$ solution of \eqref{pde-obs0} in $\R \times (0,T)$ with initial data $u_0 \in C^\infty_c(\R)$ satisfying \eqref{u0n-snd}. Then noting that $u(\cdot,t) \in H^1(\R)$, one can check $(u(\cdot,t)-M_+)_+ \in H^1(\R)$ for a.e.~$t \in (0,T)$. Testing \eqref{pde-obs0} by $(u-M_+)_+$, we have
\begin{align*}
 \dfrac 1 2 \dfrac \d {\d t} \int_{\R} (u-M_+)_+^2 \, \d x
 + \int_{\R} \partial_x u \partial_x (u-M_+)_+ \, \d x
 + \int_{\R} f(u) (u-M_+)_+ \, \d x\\
 = - \int_{\R} \eta (u-M_+)_+ \, \d x.
\end{align*}
Note that
$$
f(u) (u-M_+)_+ \geq \left[f(u)-f(M_+)\right](u-M_+)_+
\geq - \lam (u-M_+)_+^2,
$$
since $f(M_+) \geq 0$ and $\beta(u) = f(u) + \lam u$ is monotone.
Moreover, it follows that
$$
 \int_{\R} \eta (u-M_+)_+ \, \d x
= \int_{\{u=u_0\} \cap \{u > M_+\}} \eta (u-M_+)_+ \, \d x = 0,
$$
since $M_+ \geq u_0$ a.e.~in $\R$. Thus noting that $\partial_x (u-M_+)_+ = \mathrm{sgn}(u-M_+) \partial_x u$, we deduce that
\begin{align*}
 \dfrac 1 2 \dfrac \d {\d t} \int_{\R} (u-M_+)_+^2 \, \d x
 \leq \lam \int_{\R} (u-M_+)_+^2 \, \d x,
\end{align*}
which along with $(u-M_+)_+|_{t=0} = 0$ by \eqref{u0n-snd} yields $(u-M_+)_+ \equiv 0$, that is, $u \leq M_+$ a.e.~in $\R \times (0,T)$. One can similarly prove that $u \geq M_-$ a.e.~in $\R \times (0,T)$. Thus \eqref{loc:u-bdd} follows.

\end{document}